\def\@settitle{%
	\vspace*{-20pt}
	\begin{flushleft}%
		\baselineskip14\p@\relax
		\normalfont\bfseries\LARGE
		%    \uppercasenonmath\@title
		\@title
	\end{flushleft}%
}
\def\@setauthors{%
	\begingroup
	\def\thanks{\protect\thanks@warning}%
	\trivlist
	%\centering
	\large \@topsep30\p@\relax
	\advance\@topsep by -\baselineskip
	\item\relax
	\author@andify\authors
	\def\\{\protect\linebreak}%
	%  \MakeUppercase{\authors}%
	\authors
	\ifx\@empty\contribs
	\else
	,\penalty-3 \space \@setcontribs
	\@closetoccontribs
	\fi
	\normalfont
	\@setaddresses
	\endtrivlist
	\endgroup
}
\def\@setaddresses{\par
	\nobreak \begingroup\raggedright
	\small
	\def\author##1{\nobreak\addvspace\smallskipamount}%
	\def\\{\unskip, \ignorespaces}%
	\interlinepenalty\@M
	\def\address##1##2{\begingroup
		\par\addvspace\bigskipamount\noindent
		\@ifnotempty{##1}{(\ignorespaces##1\unskip) }%
		{\ignorespaces##2}\par\endgroup}%
	\def\curraddr##1##2{\begingroup
		\@ifnotempty{##2}{\nobreak\noindent\curraddrname
			\@ifnotempty{##1}{, \ignorespaces##1\unskip}\/:\space
			##2\par}\endgroup}%
	\def\email##1##2{\begingroup
		\@ifnotempty{##2}{\smallskip\nobreak\noindent E-mail address%
			\@ifnotempty{##1}{, \ignorespaces##1\unskip}\/:\space
			\ttfamily##2\par}\endgroup}%
	\def\urladdr##1##2{\begingroup
		\def~{\char`\~}%
		\@ifnotempty{##2}{\nobreak\noindent\urladdrname
			\@ifnotempty{##1}{, \ignorespaces##1\unskip}\/:\space
			\ttfamily##2\par}\endgroup}%
	\addresses
	\endgroup
	\global\let\addresses=\@empty
}
\def\@setabstracta{%
	\ifvoid\abstractbox
	\else
	\skip@25\p@ \advance\skip@-\lastskip
	\advance\skip@-\baselineskip \vskip\skip@
	%    \hrule\vskip2pt
	\box\abstractbox
	\prevdepth\z@ % because \abstractbox is a vtop
	%    \vskip2pt\hrule
	\vskip-15pt
	\fi
}
\renewenvironment{abstract}{%
	\ifx\maketitle\relax
	\ClassWarning{\@classname}{Abstract should precede
		\protect\maketitle\space in AMS document classes; reported}%
	\fi
	\global\setbox\abstractbox=\vtop \bgroup
	\normalfont\small
	\list{}{\labelwidth\z@
		\leftmargin0pc \rightmargin\leftmargin
		\listparindent\normalparindent \itemindent\z@
		\parsep\z@ \@plus\p@
		
	}%
	\item[\hskip\labelsep\bfseries\abstractname.]%
}{%
	\endlist\egroup
	\ifx\@setabstract\relax \@setabstracta \fi
}
\def\ps@headings{\ps@empty
	\def\@evenhead{%
		\setTrue{runhead}%
		\normalfont\scriptsize
		\rlap{\thepage}\hfill
		\def\thanks{\protect\thanks@warning}%
		\leftmark{}{}}%
	\def\@oddhead{%
		\setTrue{runhead}%
		\normalfont\scriptsize
		\def\thanks{\protect\thanks@warning}%
		\rightmark{}{}\hfill \llap{\thepage}}%
	\let\@mkboth\markboth
}\ps@headings
\def\section{\@startsection{section}{1}%
	\z@{-1.2\linespacing\@plus-.5\linespacing}{.8\linespacing}%
	{\normalfont\bfseries\Large}}
\def\subsection{\@startsection{subsection}{2}%
	\z@{-.8\linespacing\@plus-.3\linespacing}{.3\linespacing\@plus.2\linespacing}%
	{\normalfont\bfseries\large}}
\def\subsubsection{\@startsection{subsubsection}{3}%
	\z@{.7\linespacing\@plus.1\linespacing}{-1.5ex}%
	{\normalfont\bfseries}}
\def\@secnumfont{\bfseries}
\newtheorem{theorem}{Theorem}[section]
\newtheorem*{theorem*}{Theorem}
\newtheorem{proposition}[theorem]{Proposition}
\newtheorem{corollary}[theorem]{Corollary}
\newtheorem{lemma}[theorem]{Lemma}
\theoremstyle{definition}
\newtheorem{definition}[theorem]{Definition}
\newtheorem*{remark}{Remark}
\numberwithin{equation}{section}
\newcommand{\lartial}{\mathbin{\rotatebox[origin=c]{180}{\(\partial\)}}}
\newcommand{\cdkh}{CDKh}
\newcommand{\dkh}{DKh}
\newcommand{\Q}{\mathbb{Q}}
\newcommand{\Z}{\mathbb{Z}}
\newcommand{\sg}{\mathfrak{s}}
\newcommand{\vup}{v^{\text{u}}_+}
\newcommand{\vum}{v^{\text{u}}_-}
\newcommand{\vlp}{v^{\text{l}}_+}
\newcommand{\vlm}{v^{\text{l}}_-}
\newcommand{\vulp}{v^{\text{u/l}}_+}
\newcommand{\vulm}{v^{\text{u/l}}_-}
\DeclareRobustCommand{\CloseDef}{%
	\leavevmode\unskip\penalty9999 \hbox{}\nobreak\hfill
	\quad\hbox{$\lozenge$}%
}
\begin{document}
	
\vspace*{-50pt}

\title[Doubled Khovanov Homology]{Doubled Khovanov Homology}

\author{William Rushworth}
\address{Department of Mathematical Sciences, Durham University, United Kingdom}
\email{william.rushworth@durham.ac.uk}

\def\subjclassname{\textup{2010} Mathematics Subject Classification}
\expandafter\let\csname subjclassname@1991\endcsname=\subjclassname
\expandafter\let\csname subjclassname@2000\endcsname=\subjclassname
\subjclass{57M25, 57M27, 57N70}

\keywords{Khovanov homology, virtual knot concordance, virtual knot theory}

\begin{abstract}
We define a homology theory of virtual links built out of the direct sum of the standard Khovanov complex with itself, motivating the name \textit{doubled Khovanov homology}. We demonstrate that it can be used to show that some virtual links are non-classical, and that it yields a condition on a virtual knot being the connect sum of two unknots. Further, we show that doubled Khovanov homology possesses a perturbation analogous to that defined by Lee in the classical case and define a \emph{doubled Rasmussen invariant}. This invariant is used to obtain various cobordism obstructions; in particular it is an obstruction to sliceness. Finally, we show that the doubled Rasmussen invariant contains the odd writhe of a virtual knot, and use this to show that knots with non-zero odd writhe are not slice.
\end{abstract}

\maketitle

\section{Introduction}\label{Sec:intro}
\subsection{Statement of results}
\label{Subsec:summary}
This paper defines and investigates the properties of a homology theory of virtual links, the titular \emph{doubled Khovanov homology}. For a virtual link \( L \) we denote by \( \dkh ( L ) \) its doubled Khovanov homology, a bigraded finitely generated Abelian group. Below are two examples of the doubled Khovanov homologies of links, split horizontally by the first (homological) grading and vertically by second (quantum) grading; for more detail see \Cref{Fig:21,Fig:vH}. The position of \( = \) indicates \( 0 \) in the quantum grading, and the right-hand column of the first pair of grids is at homological degree \(0\):

\begin{center}
		\( \dkh \left( ~ \raisebox{-7pt}{\includegraphics[scale=0.2]{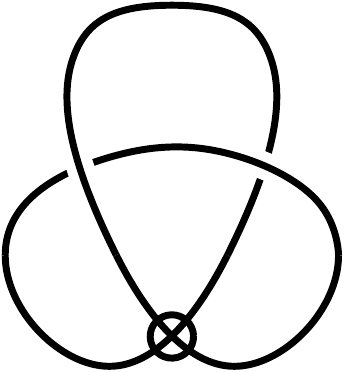}} ~ \right) \quad = \quad	\raisebox{-101.5pt}{
		\begin{tikzpicture}[scale=0.455]		
			%Doubled
			\node[] (-2-3)at(-2,-3) {$ \Z $} ;
			\node[] (-2-2)at(-2,-2) {$ \Z $} ;
			\node[] (-2-1)at(-2,-1) {$ \Z $} ;
			\node[] (-20)at(-2,-0) {$ \Z $} ;

			\node[] (0-1)at(0,-1) {$ \Z $} ;
		
			\node[] (21)at(2,1) {$ {\Z}_2 $} ;
			\node[] (23)at(2,3) {$ \hspace*{-5pt}\Z $} ;

			%Dye
			\node[] (-20)at(6,-2) {$ \Z $} ;
			
			\node[] (02)at(8,0) {$ {\Z}_2 $} ;
			\node[] (04)at(8,2) {$ \hspace*{-5pt}\Z $} ;
			
			\node[] (23)at(10,1) {$ \Z $} ;
			\node[] (25)at(10,3) {$ \Z $} ;
		
			%Axes
			\draw[black, -] (-1,6) -- (-1,-4) ;
			\draw[black, -] (1,6) -- (1,-4) ;
			\draw[black, -] (-3,-2.5) -- (3,-2.5) ;
			\draw[black, -] (-3,-1.5) -- (3,-1.5) ;
			\draw[black, -] (-3,-0.5) -- (3,-0.5) ;
			\draw[black, -] (-3,0.5) -- (3,0.5) ;
			\draw[black, -] (-3,1.5) -- (3,1.5) ;
			\draw[black, -] (-3,2.5) -- (3,2.5) ;
			\draw[black, -] (-3,3.5) -- (3,3.5) ;
			\draw[black, -] (-3,4.5) -- (3,4.5) ;
			\draw[black, -] (-3,5.5) -- (3,5.5) ;
			
			\draw[black, -] (7,6) -- (7,-4) ;
			\draw[black, -] (9,6) -- (9,-4) ;
			\draw[black, -] (5,-2.5) -- (11,-2.5) ;
			\draw[black, -] (5,-1.5) -- (11,-1.5) ;
			\draw[black, -] (5,-0.5) -- (11,-0.5) ;
			\draw[black, -] (5,0.5) -- (11,0.5) ;
			\draw[black, -] (5,1.5) -- (11,1.5) ;
			\draw[black, -] (5,2.5) -- (11,2.5) ;
			\draw[black, -] (5,3.5) -- (11,3.5) ;
			\draw[black, -] (5,4.5) -- (11,4.5) ;
			\draw[black, -] (5,5.5) -- (11,5.5) ;
		
			\end{tikzpicture}}
			\quad = \quad vKh \left( ~ \raisebox{-7pt}{\includegraphics[scale=0.2]{knot21.pdf}} ~ \right)
		\)
\end{center}

\vspace*{5pt}

\begin{center}
		\( \dkh \left( ~ \raisebox{-7pt}{\includegraphics[scale=0.25]{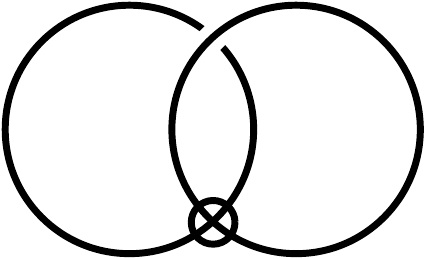}} ~ \right) \quad = \quad	\raisebox{-49.5pt}{
		\begin{tikzpicture}[scale=0.455]		
			%Doubled
			\node[] (0-3)at(0,-3) {$ \Z $} ;

			\node[] (2-1)at(2,-1) {$ {\Z}_2 $} ;
			\node[] (21)at(2,1) {$ \hspace*{-5pt}\Z $} ;
			
			%Dye
			\node[] (6-1)at(6,-1) {$ \Z $} ;
			\node[] (6-3)at(6,-3) {$ \Z $} ;
			
			\node[] (8-2)at(8,-2) {$ \Z $} ;
			\node[] (80)at(8,0) {$ \Z $} ;
			
			%Axes
			\draw[black, -] (7,2) -- (7,-4) ;
			\draw[black, -] (1,2) -- (1,-4) ;
			
			\draw[black, -] (-1,-3.5) -- (3,-3.5) ;
			\draw[black, -] (-1,-2.5) -- (3,-2.5) ;
			\draw[black, -] (-1,-1.5) -- (3,-1.5) ;
			\draw[black, -] (-1,-0.5) -- (3,-0.5) ;
			\draw[black, -] (-1,0.5) -- (3,0.5) ;
			\draw[black, -] (-1,1.5) -- (3,1.5) ;
			\draw[black, -] (5,-3.5) -- (9,-3.5) ;
			\draw[black, -] (5,-2.5) -- (9,-2.5) ;
			\draw[black, -] (5,-1.5) -- (9,-1.5) ;
			\draw[black, -] (5,-0.5) -- (9,-0.5) ;
			\draw[black, -] (5,0.5) -- (9,0.5) ;
			\draw[black, -] (5,1.5) -- (9,1.5) ;
			
		\end{tikzpicture}}
		\quad = \quad vKh \left( ~ \raisebox{-7pt}{\includegraphics[scale=0.25]{virtualhopflink.pdf}} ~ \right)
		\)
\end{center}

\noindent Also depicted is the homology of virtual links first defined by Manturov \cite{Manturov2006} and reformulated by Dye, Kaestner, and Kauffman \cite{Dye2014}, denoted by \( vKh \). One observes that, while the groups assigned to the links by each theory are not disjoint, they differ substantially. Specifically, we see that \( vKh \left( \raisebox{-4pt}{\includegraphics[scale=0.15]{knot21.pdf}} \right) \) and \( vKh \left( \raisebox{-4pt}{\includegraphics[scale=0.2]{virtualhopflink.pdf}} \right) \) both contain a \( \Z^{\oplus 2} \) term for each component of the argument, and that \( vKh \left( \raisebox{-4pt}{\includegraphics[scale=0.15]{knot21.pdf}} \right) \) also contains the knight's move familiar from classical Khovanov homology \cite{Bar-Natan2002}. In contrast \( \dkh \left( \raisebox{-4pt}{\includegraphics[scale=0.15]{knot21.pdf}} \right) \) contains a knight's move and a \( \Z^{\oplus 4} \) term, whereas \( \dkh \left( \raisebox{-4pt}{\includegraphics[scale=0.2]{virtualhopflink.pdf}} \right) \) contains only a single knight's move.

Doubled Khovanov homology can sometimes detect non-classicality of a virtual link.
\begin{theorem*}[\Cref{Cor:nonclass} of \Cref{Subsec:nonclassical}]
	 Let \( L \) be a virtual link. If
	\begin{equation*}
	\dkh ( L ) \neq  G \oplus G \lbrace -1 \rbrace
	\end{equation*}
	for \( G \) a non-trivial bigraded Abelian group, then \( L \) is non-classical.
\end{theorem*}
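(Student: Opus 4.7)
The plan is to argue by contrapositive: I want to show that if $L$ is classical, then $\dkh(L) = G \oplus G\{-1\}$ for some non-trivial bigraded Abelian group $G$. Given the description of doubled Khovanov homology as being built from the direct sum of the standard Khovanov complex with itself, the natural candidate for $G$ is the ordinary Khovanov homology $Kh(L)$ of a classical diagram of $L$.

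First I would pick a classical diagram $D$ for $L$ (which exists precisely because $L$ is classical) and examine the doubled Khovanov chain complex of $D$. By the construction sketched in the introduction, one expects that at the chain level the underlying module splits as $CKh(D)\oplus CKh(D)\{-1\}$, where $CKh$ is the ordinary Khovanov chain complex. The crucial point is that the differential on the doubled complex will have ``off-diagonal'' terms coming from the machinery used to handle virtual crossings, and for a purely classical diagram these terms must vanish, so that the differential reduces to $d_{Kh}\oplus d_{Kh}$ on the two summands. I would verify this by inspecting the cases of the differential that appear in the earlier sections of the paper and noting that every contribution outside the standard Khovanov differential is supported on configurations that can arise only in the presence of virtual crossings.

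Once the chain-level splitting is established, taking homology commutes with direct sums and grading shifts, so
\begin{equation*}
\dkh(L) \;=\; Kh(L) \oplus Kh(L)\{-1\}.
\end{equation*}
Setting $G = Kh(L)$ gives the required decomposition; $G$ is non-trivial because ordinary Khovanov homology of any link contains at least the generators corresponding to all-$0$ and all-$1$ smoothings with the unit/counit labellings, which give non-zero classes. Passing to the contrapositive then yields the statement: if $\dkh(L)$ cannot be written as $G\oplus G\{-1\}$ with $G$ non-trivial bigraded Abelian, then no classical diagram of $L$ can exist, so $L$ is non-classical.

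The main obstacle I anticipate is the verification that the doubled differential genuinely diagonalises on a classical diagram; this is essentially a bookkeeping check that the ``extra'' maps introduced to accommodate one-sided (non-orientable) saddles and source/sink crossings contribute zero in the absence of virtual crossings. Everything else is formal: the decomposition at the chain level propagates to homology, and the non-triviality of $Kh$ is standard.
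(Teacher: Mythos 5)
Your proposal is correct and follows essentially the same route as the paper: the paper proves the contrapositive (its Proposition~\ref{Prop:classical}) by observing that a classical diagram has no single-cycle smoothings, hence no \( \eta \) maps, so the cube of smoothings splits at the chain level into \( CKh(D) \oplus CKh(D)\lbrace -1 \rbrace \) and the homology splits accordingly. The one small clarification worth making is that the ``off-diagonal'' terms do not merely \emph{vanish} on a classical diagram — they simply never occur, because for a planar diagram without virtual crossings the Jordan curve theorem ensures every saddle either merges two cycles or splits one cycle into two, so the single-cycle bifurcation that produces \( \eta \) cannot arise.
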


The graded Euler characteristic of the theory contains no new information.

\begin{theorem*}
	Let \( L \) be a virtual link. Denote by \( \chi_q \) the graded Euler characteristic of \( \dkh ( L ) \) with respect to the quantum grading. Then \( \chi_q = (1 + q^{-1} ) V_L ( q ) \), for \( V_L (q) \) the Jones polynomial of \(L\).
\end{theorem*}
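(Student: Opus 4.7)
The plan is to exploit the fact that the graded Euler characteristic is a chain-level invariant: for any complex of finitely generated free bigraded abelian groups, the graded Euler characteristic of the complex equals that of its homology. Since the paper announces that doubled Khovanov homology arises as the homology of a complex constructed as the direct sum of two copies of the (virtual) Khovanov complex, the Euler characteristic calculation should reduce to an identification of the quantum grading shift between the two copies, followed by an appeal to the known graded Euler characteristic of the Khovanov complex of a virtual link.

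Concretely, I would proceed in three short steps. \emph{Step one:} Unpack the definition of the doubled Khovanov chain complex and establish a bigraded isomorphism of underlying chain groups of the form \( CDKh(L) \cong CKh(L) \oplus CKh(L)\{-1\} \), where the quantum shift \( \{-1\} \) is the one that appears in the non-classicality theorem above (for a classical \( L \), this splitting survives to homology, so the shift must be visible already at the chain level). \emph{Step two:} Apply the Euler--Poincar\'e principle to get \( \chi_q (\dkh(L)) = \chi_q (CDKh(L)) \), and use additivity of \( \chi_q \) with respect to direct sums together with the behaviour \( \chi_q(C\{-1\}) = q^{-1} \chi_q(C) \) to conclude
\[
\chi_q(CDKh(L)) = (1 + q^{-1}) \, \chi_q(CKh(L)).
\]
\emph{Step three:} Recall that, for the virtual Khovanov complex of Manturov / Dye--Kaestner--Kauffman, the graded Euler characteristic recovers the Jones polynomial \( V_L(q) \); substituting this into the previous line yields the claim.

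The main obstacle, and really the only thing requiring care, is Step one: one has to verify that the doubled differential genuinely respects the direct sum splitting into the two quantum-graded copies of \( CKh \), rather than mixing them. If there are cross terms between the two copies, the Euler characteristic argument still goes through provided the cross terms pair grading-matched generators between the two summands (so that their contributions cancel in \( \chi_q \)), but the cleanest route is to arrange the construction so that \( CDKh(L) \) is genuinely a direct sum of complexes \( CKh(L) \oplus CKh(L)\{-1\} \), after which Steps two and three are formal. Given the framing of the paper, I expect the construction is set up precisely to make Step one immediate from the definition.
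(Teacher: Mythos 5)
Your proposal is correct in substance, and since the paper states this theorem in the introduction without giving an explicit proof, there is no written argument to compare against; the reasoning you give is essentially the only sensible one and matches what the construction is clearly designed to make immediate. Indeed, from \Cref{Eq:tqft} each smoothing is assigned \( \mathcal{A}^{\otimes m} \oplus \mathcal{A}^{\otimes m}\{-1\} \), so at the level of bigraded chain groups \( \cdkh(L) \) really is two copies, one shifted by \( \{-1\} \), of the underlying bigraded group of the Khovanov-type cube; the graded Euler characteristic of that group is the Kauffman-bracket state sum, i.e.\ \( V_L(q) \), and Euler--Poincar\'e transfers the computation to \( \dkh(L) \).

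One small conceptual point is worth correcting, though it does not affect the validity of the argument. You treat the potential failure of the differential to respect the splitting as ``the main obstacle'' and suggest that the construction might be ``arranged'' so that \( \cdkh(L) \) splits as a direct sum of complexes. Neither is needed, nor is the latter possible: the whole point of the \( \eta \) map of \Cref{Eq:etamap} is that it mixes the upper and lower summands, so for a non-classical link the complex genuinely does not split (this is precisely \Cref{Cor:nonclass}). But the graded Euler characteristic is a function of the graded ranks of the chain groups alone; the differential simply does not enter. So the worry about cross terms ``pairing grading-matched generators so their contributions cancel'' is misplaced --- cross terms in the differential make no contribution to \( \chi_q \) to begin with. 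The argument reduces cleanly to: chain groups split, Euler characteristic is additive and \( \{-1\} \)-shift multiplies it by \( q^{-1} \), and \( \chi_q \) of the unshifted copy is \( V_L(q) \) by the usual state-sum identification.
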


The connect sum operation on virtual knots exhibits more complicated behaviour than that of the classical case: the result of a connect sum between two virtual knots depends on both the diagrams used and the site at which the connect sum is conducted. Indeed, there are multiple inequivalent virtual knots which can be obtained as connect sums of a fixed pair of virtual knots. A surprising consequence of this that there are non-trivial virtual knots which can be obtained as a connect sum of a pair of unknots. Doubled Khovanov homology yields a condition met by such knots.

\begin{theorem*}[\Cref{Thm:unknotcondition} of \Cref{Subsec:trivialdiagrams}]
	 Let \( K \) be a virtual knot which is a connect sum of two trivial knots. Then \( \dkh ( K ) = \dkh \left( \raisebox{-1.75pt}{\includegraphics[scale=0.3]{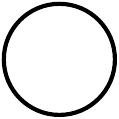}} \right)\). 
\end{theorem*}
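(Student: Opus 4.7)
The plan is to mimic the classical connect-sum argument for Khovanov homology, adapted to the doubled construction and the virtual setting.

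The first step is to establish a chain-level connect-sum formula for \( \dkh \). Because a connect sum introduces no new crossings, every Kauffman state of \( D_1 \# D_2 \) decomposes naturally as a state of \( D_1 \) together with a state of \( D_2 \), in which the two circles meeting the cut arcs are merged into one. I would exploit this to produce a chain-homotopy equivalence
\[
	\cdkh(D_1 \# D_2) \;\simeq\; \cdkh(D_1) \otimes_V \cdkh(D_2),
\]
where \( V \) denotes the Frobenius algebra underlying doubled Khovanov homology and each \( \cdkh(D_i) \) carries a \( V \)-module structure coming from its cut arc. This is the doubled analogue of the classical formula \( Kh(K_1 \# K_2) \cong Kh(K_1) \otimes_A Kh(K_2) \).

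The second step is to strengthen the invariance of \( \dkh \) so that it respects the \( V \)-module structure attached to a chosen arc. Each \( D_i \) is a diagram of the virtual unknot, so by ordinary invariance \( \cdkh(D_i) \simeq \cdkh(U) = V \) as chain complexes; upgrading this to an equivalence of \( V \)-modules would then give
\[
	\cdkh(D_1 \# D_2) \;\simeq\; V \otimes_V V \;\cong\; V \;\simeq\; \cdkh(U),
\]
and passing to homology yields the desired \( \dkh(K) = \dkh(U) \).

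The hard part will be this enhanced invariance. In the classical setting, any Reidemeister move on an unknot diagram can be slid away from the marked arc, so module-level invariance is painless. In the virtual setting this fails: virtual Reidemeister moves may be forced to interact with the arc---sliding through a virtual crossing is not free, which is precisely why virtual connect sum is not well-defined on isotopy classes of knots. The \( V \)-equivariance of the relevant chain homotopy equivalences will therefore have to be verified move by move, and this case analysis is where I expect the main obstacle to lie.
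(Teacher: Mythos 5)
Your first step — the connect-sum formula $\cdkh(D_1 \# D_2) \simeq \cdkh(D_1) \otimes_V \cdkh(D_2)$ — is where the proposal breaks, and it breaks in a way that is structural rather than technical. Doubled Khovanov homology is not a TQFT over any Frobenius algebra: the space assigned to a smoothing with $m$ cycles is $\mathcal{A}^{\otimes m} \oplus \mathcal{A}^{\otimes m}\lbrace -1 \rbrace$, of rank $2^{m+1}$, and this cannot be of the form $V^{\otimes m}$ for any fixed $V$ (comparing ranks, $(\dim V)^m$ would have to equal $2^{m+1}$ for all $m$). The paper flags this explicitly in the remark following Definition~\ref{Def:cdkh}: the $\eta$ map is not an $\mathcal{R}$-module map and the theory cannot be interpreted as an unoriented TQFT. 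Concretely, if a smoothing of $D_1$ has $m_1$ cycles and the corresponding smoothing of $D_2$ has $m_2$, then $\cdkh(D_1 \# D_2)$ assigns rank $2^{m_1+m_2}$ to the merged smoothing, while $\cdkh(D_1) \otimes_{\mathcal{A}} \cdkh(D_2)$ (with $\mathcal{A}$-module structure from the cut arcs) would give rank $2^{m_1+m_2+1}$ — the doubling of the two factors produces four summands, not two. So the tensor-over-$V$ formula fails already on underlying graded vector spaces, before any question of invariance arises.

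You correctly anticipated that the arc-equivariance of chain homotopy equivalences would be delicate in the virtual setting, but that is the second obstacle, not the first. The paper sidesteps both at once by defining \emph{reduced} doubled Khovanov homology $\mathcal{H}(L)$, taking the subcomplex where all marked cycles are decorated by $v^{\mathrm{u/l}}_-$. Marking the connect-sum arc on $K = D_1\#D_2$ and the two cut arcs on $D_1 \sqcup D_2$, one gets a direct chain isomorphism $\mathcal{C}(K) \cong \mathcal{C}(D_1 \sqcup D_2)\lbrace 1 \rbrace$ with no rank mismatch, because the reduced complex kills exactly the overcounting that wrecks the tensor formula. Basepoint-independence and Reidemeister invariance of $\mathcal{C}$ then give $\mathcal{C}(K) \simeq \mathcal{C}$ of the marked unknot, and a short exact sequence $0 \to \mathcal{C}(K) \to \cdkh(K) \to \mathcal{C}(K)\lbrace 2 \rbrace \to 0$ (the analogue of the classical reduced/unreduced sequence, via \Cref{Lem:reducedrelation}) recovers $\dkh(K)$ from $\mathcal{H}(K)$ once one observes the resulting exact triangle splits. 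If you want to rescue the spirit of your approach, the $\mathcal{A}$-action you invoke does exist (\Cref{Lem:action} constructs it on $\cdkh'$), but it should be used to pass to the reduced theory rather than to form a tensor product of unreduced complexes.
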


Further, there is a perturbation of doubled Khovanov homology akin to Lee's perturbation of Khovanov homology; we denote it by \( \dkh ' ( L ) \) and refer to it as \emph{doubled Lee homology}. Unlike the classical case, however, doubled Lee homology vanishes for certain links. We show this in two steps. Firstly, we prove that the rank of doubled Lee homology behaves analogously to that of classical Lee homology.

\begin{theorem*}[\Cref{Thm:leerank} of \Cref{Subsec:dleedef}]
	 Given a virtual link \( L \)
	\begin{equation*}
	\text{rank} \left( \dkh' ( L ) \right) = 2 \left| \left\{ \text{alternately coloured smoothings of}~ L \right\} \right |.
	\end{equation*}
\end{theorem*}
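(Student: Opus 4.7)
My plan is to mimic Lee's proof of the rank of classical Lee homology, carefully adjusted for the subtleties of the virtual and doubled settings. Recall that in the classical case, Lee associates to each orientation $o$ of a link $L$ a canonical cycle $\sg_o$ constructed from the oriented resolution, by labelling each resolution circle with one of the Lee eigenvectors $\mathbf{a} = v_+ + v_-$ or $\mathbf{b} = v_+ - v_-$ according to a local rule inherited from the orientation. The $2^{|L|}$ resulting cycles form a basis for rational Lee homology. The task is to carry this construction across to the doubled Lee complex of a virtual link, where not every orientation supports such a canonical generator.

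First I would set up the doubled Lee complex, which by definition is the direct sum of two copies of the (perturbed) doubled Khovanov chain complex. This direct sum structure gives the factor of $2$ in the statement for free, so the entire content of the theorem is the computation of the rank of a single copy of the doubled Lee chain complex's homology.

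Next, I would attempt the Lee labelling construction in the virtual setting. Given an oriented resolution of a virtual link diagram, I would try to label each circle by $\mathbf{a}$ or $\mathbf{b}$ according to Lee's local rule. The crucial point is that, whereas in the classical case this rule is automatically globally consistent, in the virtual case virtual crossings can force a single circle to receive both labels, because it threads through arcs belonging to different local source/sink configurations. I would check that the labelling is globally consistent on a given smoothing if and only if that smoothing is alternately coloured in the sense of the theorem, so that one obtains a canonical cycle $\sg_o \in \dkh'(L)$ for exactly the alternately coloured oriented smoothings, and no others.

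The main step, and the principal obstacle, is then to show that these canonical cycles form a basis of the single doubled Lee homology. Linear independence should follow from a direct leading-term argument with respect to the filtration coming from the cube of resolutions. The harder part is spanning: one must show that no further classes exist, i.e.\ that non-alternately-coloured smoothings contribute nothing to the homology. I would approach this by analysing the doubled Lee differential on the subcomplex coming from a non-alternately-coloured resolution, and exhibiting cancelling pairs (for instance via an acyclic matching or Gaussian-elimination argument on the cube of resolutions) which trivialise those contributions in homology. Combining the rank count on a single copy of the complex with the doubling factor of $2$ then yields the stated formula.
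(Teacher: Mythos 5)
Your high-level plan is close to what the paper actually does, but it contains one significant error and one vague step that would need real work.

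The claim that the doubled Lee complex ``is the direct sum of two copies of the (perturbed) doubled Khovanov chain complex,'' so that the factor of $2$ is ``free,'' is false. The chain \emph{spaces} at each vertex of the cube are direct sums $\mathcal{A}^{\otimes m} \oplus \mathcal{A}^{\otimes m}\{-1\}$, but the differential does not respect this splitting: the single-cycle map $\eta'$ sends $v^{\text{u}}_-$ to $v^{\text{l}}_-$ and $v^{\text{l}}_\pm$ to multiples of $v^{\text{u}}_\mp$, mixing upper and lower summands. So there is no global decomposition of the complex from which to harvest the factor of $2$. What is actually true, and what drives the factor of $2$ in the paper's proof, is a local fact: an alternately coloured smoothing cannot have an incoming or outgoing $\eta'$ (such a map requires a crossing through which only one cycle passes, which violates the alternately coloured condition), so at those particular cube vertices the upper/lower decomposition \emph{is} respected and one gets two independent generators $\sg^{\text{u}}, \sg^{\text{l}}$ per alternately coloured smoothing. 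That observation must be made and proved; it is not a consequence of the global structure of the complex.

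Second, ``linear independence should follow from a direct leading-term argument with respect to the filtration coming from the cube of resolutions'' is too vague to cover the hardest case: two alternately coloured generators lying over the \emph{same} underlying smoothing (hence at the same cube vertex, same homological degree, and the same quantum filtration level), differing only in the red/green colouring. A filtration-by-cube-coordinates or leading-term argument does not separate these. The paper handles this by introducing an $\mathcal{A}$-module action on $\dkh'(L)$ (marking a point on the diagram and multiplying the corresponding tensor factor) under which the two generators are eigenvectors with opposite eigenvalues $\pm 1$, hence distinct in homology. You would need something of that kind. Aside from these two issues, the remainder of the strategy — identifying the canonical generators with alternately coloured smoothings, showing non-triviality via the restricted incoming/outgoing differentials, and Gauss-eliminating the contributions of non-alternately-coloured smoothings — is essentially the paper's argument.
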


Secondly, in \Cref{Thm:acs} of \Cref{Subsec:dleedef}, we determine the number of alternately coloured smoothings of a virtual link. In abbreviated form, \Cref{Thm:acs} states that a virtual link \( L \) has either \( 2^{|L|} \) or \( 0 \) alternately coloured smoothings, and that one can determine which case holds via a simple check on a (Gauss diagram of a) diagram of \( L \). This explains why \( \dkh \left( \raisebox{-4pt}{\includegraphics[scale=0.2]{virtualhopflink.pdf}} \right) \) is a single knight's move: a knight's move cancels when we pass to doubled Lee homology and \raisebox{-4pt}{\includegraphics[scale=0.2]{virtualhopflink.pdf}} has no alternately coloured smoothings.

Kauffman related alternately coloured smoothings of virtual link diagrams to perfect matchings of \(3\)-valent graphs \cite{Kauffman2004b}, and using that correspondence we observe that doubled Lee homology yields the following equivalent to the Four Colour Theorem. (A graph is \emph{bridgeless} if it does not possess an edge the removal of which increases the number of connected components of the graph.)

\begin{theorem*}
	Let \( G \) be a planar, bridgeless, \(3\)-valent graph and \( \mathcal{E} \) a perfect matching of \( G \). Associated to the pair \( (G, \mathcal{E} ) \) is a family of virtual link diagrams. Denote a member of this family by \( D (G, \mathcal{E} ) \). Then there exists a perfect matching \( \mathcal{E} \) such that 
	\begin{equation*}
	\dkh ' ( D (G, \mathcal{E} ) ) \neq 0
	\end{equation*}
	for all \( D (G, \mathcal{E} ) \).
\end{theorem*}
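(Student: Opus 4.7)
The plan is to combine Kauffman's correspondence between perfect matchings of planar bridgeless $3$-valent graphs and alternately coloured smoothings of virtual link diagrams \cite{Kauffman2004b} with the rank formula (\Cref{Thm:leerank}) and the dichotomy (\Cref{Thm:acs}), invoking the Four Colour Theorem in its Tait formulation to select the appropriate matching.

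First I would review the construction of $D(G, \mathcal{E})$: each edge of $G$ is replaced by one of a short list of local virtual tangles arranged so that a canonical smoothing of the resulting diagram cuts along precisely the edges of $\mathcal{E}$. Consequently, the state determined by $\mathcal{E}$ is an alternately coloured smoothing of $D(G, \mathcal{E})$ if and only if the cycles of the $2$-regular subgraph $G \setminus \mathcal{E}$ can each be properly $2$-edge-coloured, that is, if and only if every such cycle has even length.

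Next I would invoke the Tait reformulation of the Four Colour Theorem: every planar bridgeless $3$-valent graph admits a proper $3$-edge-colouring. Taking any one of the three colour classes as $\mathcal{E}$, the remaining two colour classes properly $2$-edge-colour $G \setminus \mathcal{E}$, forcing each of its cycles to have even length. This $\mathcal{E}$ therefore realises an alternately coloured smoothing of \emph{every} member of the family $D(G, \mathcal{E})$. The existence of one such smoothing triggers \Cref{Thm:acs}, which upgrades the count to $2^{|D(G,\mathcal{E})|}$ alternately coloured smoothings; \Cref{Thm:leerank} then gives
\[
\mathrm{rank}\bigl(\dkh'(D(G,\mathcal{E}))\bigr) \;=\; 2 \cdot 2^{|D(G,\mathcal{E})|} \;>\; 0.
\]

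The main obstacle is the diagram-independence step: one must verify that every virtual diagram produced from the pair $(G, \mathcal{E})$ realises $\mathcal{E}$ as the \emph{same} alternately coloured smoothing, so that the conclusion applies uniformly across the entire family rather than to a single chosen representative. Once this combinatorial bookkeeping is in place, the Four Colour Theorem supplies the key input and the homological content is entirely absorbed by the two earlier theorems.
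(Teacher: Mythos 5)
The paper does not spell out a proof of this theorem; it is stated in the introduction with only the remark that it follows from Kauffman's correspondence \cite{Kauffman2004b} together with \Cref{Thm:leerank} and \Cref{Thm:acs}. Your argument fills in exactly that sketch: via the Tait reformulation of the Four Colour Theorem you choose a colour class as $\mathcal{E}$, observe that the remaining two classes properly $2$-edge-colour $G \setminus \mathcal{E}$ so that the $\mathcal{E}$-state of $D(G,\mathcal{E})$ is alternately colourable, and then invoke \Cref{Thm:acs} and \Cref{Thm:leerank} to conclude $\dkh'(D(G,\mathcal{E})) \neq 0$. This is the intended route. The diagram-independence point you flag is indeed a step that needs to be voiced, but it is not a genuine obstacle: the alternately-coloured condition depends only on the shadow $S(D)$ (equivalently, by the proof of \Cref{Thm:acs}, only on the Gauss diagram and its degenerate circles), and every member of the family $D(G,\mathcal{E})$ has the same shadow, differing only in over/under crossing decorations. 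One small correction of emphasis: once a single alternately coloured smoothing is exhibited, \Cref{Thm:acs} immediately rules out the degenerate-circle case and gives the full count $2^{|L|}$; there is no further combinatorics needed beyond verifying that the $\mathcal{E}$-state is one such smoothing.
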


Doubled Lee homology cannot vanish for virtual knots, however; we show that a virtual knot has exactly \( 2 \) alternately coloured smoothings, so that its homology is of rank \(4\). In \Cref{Sec:Ras} we show that the information contained in \( \dkh ' ( K ) \) is equivalent to a pair of integers, denoted \( \mathbbm{s} ( K ) = ( s_1 ( K ), s_2 ( K ) ) \), and referred to as the \emph{doubled Rasmussen invariant}; \(s_1 (K) \) contains information regarding the quantum grading, \( s_2(K) \) the homological grading. Using \( \mathbbm{s} ( K ) \) we are able to give the following obstructions to the existence of various kinds of cobordisms.

\begin{theorem*}[\Cref{Thm:vgenusbound2} of \Cref{Subsec:genusbounds}]
	 Let \( K_1 \) and \( K_2 \) be a pair of virtual knots with \( s_2 ( K_1 ) = s_2 ( K_2 ) \), and \( S \) be a certain type of cobordism from between them such that every link appearing in \( S \) has a generator in homological degree \( s_2 ( K ) \). Then
	\begin{equation*}
	\dfrac{| s_1 ( K_1 ) - s_1 ( K_2 ) |}{2} \leq g ( S ).
	\end{equation*}
\end{theorem*}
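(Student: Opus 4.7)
The plan is to adapt the standard proof of Rasmussen's slice-genus bound to the doubled setting. The argument proceeds by decomposing the cobordism \( S \) into elementary handles and tracking the effect of the induced maps on the distinguished generators whose quantum gradings encode \( s_1 \).

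First I would decompose \( S \) as a sequence of elementary cobordisms: births (\(0\)-handles, adding an unknotted component), saddles (\(1\)-handles), and deaths (\(2\)-handles, capping off a trivial component). Writing \( b, h, d \) for their respective numbers, the Euler characteristic identity \( \chi(S) = b - h + d \) combined with \( \chi(S) = -2 g(S) \) (for a connected cobordism between two knots) relates the handle count to the genus.

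Second, each elementary cobordism induces a chain map on the doubled Khovanov complex, and hence on \( \dkh' \), whose quantum degree matches the Euler characteristic of that piece via the usual Frobenius-algebra bookkeeping; composing them, the full cobordism map has quantum degree \( \chi(S) = -2g(S) \). The hypothesis \( s_2(K_1) = s_2(K_2) \), together with the assumption that every intermediate link in \( S \) has a generator in homological degree \( s_2(K) \), allows me to restrict the cobordism map to the subcomplex concentrated in this homological degree. Within that subcomplex, the distinguished Lee-type generators of \( \dkh'(K_i) \) (whose quantum gradings define \( s_1(K_i) \)) must survive the composition up to lower-order filtered terms — this is the doubled analogue of the classical fact that Lee canonical generators pair non-trivially under cobordism maps. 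Reading off quantum filtrations before and after then gives \( |s_1(K_1) - s_1(K_2)| \leq -\chi(S) = 2g(S) \).

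The main obstacle will be the non-triviality claim in the second step, i.e.\ verifying that the distinguished generator of \( \dkh'(K_1) \) really has a non-zero projection onto the distinguished generator of \( \dkh'(K_2) \) after the composition. Classically this is automatic from the behaviour of Lee generators under saddle and (co)unit maps; in the virtual, doubled setting one has to check each of the extra saddle types (in particular those involving virtual crossings and orientation-reversing bands) case by case, and confirm that the hypothesis on intermediate links is exactly what rules out the pathological situation where the distinguished generator is forced into the wrong homological degree and thereby killed. Once this is established, the bound follows from a purely formal comparison of quantum filtrations, just as in the classical Rasmussen proof.
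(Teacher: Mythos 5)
Your overall architecture — decompose into handles, show the cobordism map is filtered of degree $-2g(S)$, compare filtration levels of the distinguished generators, then double up using mirrors — matches the paper's proof of the analogous unknot-target bound (\Cref{Thm:vgenusbound}), which the paper then says can be adapted to prove \Cref{Thm:vgenusbound2}. But there is a genuine gap in your treatment of the one obstacle you correctly flag: the non-vanishing of the composite map $\phi_S$.

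You propose to establish non-vanishing by a case-by-case check of elementary handle types, on the strength of the hypothesis that every intermediate link has a generator in homological degree $s_2(K)$ (i.e.\ that $s_2(K)$ is a shared degree). This cannot work. The paper gives an explicit counterexample (\Cref{Fig:21tounknot}): a genus-one cobordism from $2.1 \sqcup$ Hopf link to the unknot $\sqcup$ Hopf link with shared degree $-2$ whose induced map is zero. Each elementary piece is assigned a non-zero map, yet the composite vanishes — so vanishing is a collective phenomenon of the composition and cannot be ruled out handle by handle. The ``certain type of cobordism'' in the abbreviated statement is not merely ``cobordism with shared degrees'' but the paper's more restrictive notion of a \emph{targeted} cobordism: $S$ decomposes as $S_1 \cup S_2$ along a link $L$, with $S_1$ containing only single-component $1$-handles and $S_2$ only inter-component handles, and the distinguished alternately coloured smoothing of $L$ coming from $\mathrm{im}(\phi_{S_1})$ must actually lie in the family $\mathscr{L}$ associated to $\mathrm{coker}(\phi_{S_2})$. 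Non-vanishing for such cobordisms is the content of \Cref{Thm:nonzero2}, which is proved structurally (via \Cref{Thm:nonzero1} on concordances, which in turn needs the analysis of degenerate Gauss circles, \Cref{Lem:acskiller}, \Cref{Cor:surjection}, etc.), not by a case analysis of saddle types. Without invoking that targeted hypothesis and the machinery behind \Cref{Thm:nonzero2}, your second step does not go through; with it, the rest of your filtration bookkeeping is fine and matches the paper.
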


\begin{theorem*}[\Cref{Thm:merging} of \Cref{Subsec:combining}]
	 Let  \( L \) be a virtual link of \( | L | \) components. Further, let \( S \) be a connected concordance between \( L \) and a virtual knot \( K \) such that \( {\dkh '}_{s_2 ( K ) } ( L ) \neq 0 \). Let \( M(L) \) be the maximum non-trivial quantum degree of elements \(x \in \dkh ' ( L ) \) such that \( \phi_S ( x ) \neq 0 \). Then
	 \begin{equation*}
	 M ( L ) \leq s_1( K ) + | L |.
	 \end{equation*}
\end{theorem*}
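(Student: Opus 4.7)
The plan is to track how the quantum grading transforms under the cobordism map \( \phi_S \colon \dkh' ( L ) \to \dkh' ( K ) \) induced by \( S \). As a first step, one needs to know that \( \phi_S \) is a well-defined chain map which preserves homological grading and shifts quantum grading by \( \chi ( S ) \); this would be done by decomposing \( S \) into elementary Morse pieces---births, deaths, and saddles---and verifying the quantum degree shift of each corresponding elementary map on doubled Lee homology, paralleling the classical Lee case. Since \( S \) is connected of genus \( 0 \) with \( | L | + 1 \) boundary components (the \( | L | \) components of \( L \) together with the single component of \( K \)), the Euler characteristic is \( \chi ( S ) = 2 - ( | L | + 1 ) = 1 - | L | \).

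Now let \( x \in \dkh' ( L ) \) be a non-trivial class of quantum degree \( M ( L ) \) with \( \phi_S ( x ) \neq 0 \). Then \( \phi_S ( x ) \) is a non-trivial class of \( \dkh' ( K ) \) at quantum degree \( M ( L ) + 1 - | L | \). The second ingredient is a control on the maximum quantum degree attained by a non-trivial class of \( \dkh' ( K ) \): from the definition of the doubled Rasmussen invariant, any non-trivial class of \( \dkh' ( K ) \) in homological degree \( s_2 ( K ) \) has quantum degree at most \( s_1 ( K ) + 1 \), with the maximum realised by a doubled Lee generator. The hypothesis \( {\dkh '}_{s_2 ( K ) } ( L ) \neq 0 \) ensures that this is the relevant homological slice, since \( \phi_S \) preserves homological grading and the witnessing class \( x \) may then be taken in this degree. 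Combining these two observations gives
\[ M ( L ) + 1 - | L | \leq s_1 ( K ) + 1, \]
which rearranges to the desired inequality \( M ( L ) \leq s_1 ( K ) + | L | \).

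The principal technical obstacle is establishing the functoriality of \( \dkh' \) and verifying the quantum degree shift of \( \phi_S \) in the virtual setting. Virtual link cobordisms incorporate moves with no classical analogue, so one must check that each elementary piece contributes the expected shift in doubled Lee theory and that the composite is independent of the chosen Morse decomposition. A secondary concern is to confirm that the doubled structure does not perturb the quantum degree shifts of elementary cobordism maps away from their classical values; once these points are in place, the argument is a direct degree count.
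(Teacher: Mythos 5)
Your argument is correct and follows essentially the same route as the paper: identify the filtration degree of \( \phi_S \), use the fact that \( \phi_S \) preserves homological degree together with the concentration of \( \dkh'(K) \) in degree \( s_2(K) \), and bound the filtered degree of the image by \( s^{\text{u}}_{\text{max}}(K) = s_1(K)+1 \). Two small remarks: the map \( \phi_S \) is \emph{filtered} of degree \( \chi(S) = 1 - |L| \), not graded, so the claim that \( \phi_S(x) \) sits exactly at quantum degree \( M(L) + 1 - |L| \) should be an inequality \( s(\phi_S(x)) \geq M(L) + 1 - |L| \) (which is what the paper uses and which still yields the bound); and the paper first invokes \Cref{Thm:nonzero1} to ensure \( \phi_S \neq 0 \) so that the set over which \( M(L) \) is maximised is nonempty, a point worth noting even though the statement is vacuous otherwise. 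Your Euler-characteristic bookkeeping \( \chi(S) = 2 - (|L|+1) \) is the conceptual version of the paper's handle-count (\( |L|-1 \) merging \( 1 \)-handles, surplus \( 1 \)-handles paired with \( 0 \)- or \( 2 \)-handles), and they agree.
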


Both components of the doubled Rasmussen invariant are concordance invariants and obstructions to sliceness; in \Cref{Subsec:cobordisms,Subsec:obstructions} we use the functorial nature of doubled Lee homology to show this. In addition, the homological degree information contained in the invariant is equivalent to the odd writhe, so that we are able to show that this well known invariant is also an obstruction to sliceness.

\begin{theorem*}[\Cref{Prop:s2odd} of \Cref{Subsec:rasoddwrithe}]
	Let \( K \) be a virtual knot. Then \( s_2 ( K ) = J ( K ) \), where \( J ( K ) \) is the odd writhe of \( K \).
\end{theorem*}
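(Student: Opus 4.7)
The plan is to compute the homological degree of the canonical generators of \( \dkh'(K) \) that come from the alternately coloured smoothings, and to see that this degree equals \( J(K) \) on the nose.

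First I would use \Cref{Thm:leerank} together with \Cref{Thm:acs} to conclude that \( \dkh'(K) \) is of rank \( 4 \), generated by the two alternately coloured smoothings \( \sigma_a \) and \( \sigma_b \) of \( K \) (each contributing a \( \Z \oplus \Z \) from the doubled structure). Consequently \( s_2(K) \) may be read off as the homological degree of these canonical generators. The involution swapping the two colours exchanges \( \sigma_a \) and \( \sigma_b \) and preserves homological grading, so it suffices to compute the grading at \( \sigma_a \).

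Next I would analyse the structure of an alternately coloured smoothing crossing-by-crossing. Fix an oriented diagram \( D \) of \( K \). Since an alternately coloured smoothing amounts to a \( 2 \)-colouring of the complementary faces that alternates across each arc, the \( 0 \)- or \( 1 \)-resolution at each classical crossing is forced once a colour is selected in one face. My claim is that this forced choice agrees with the oriented (Seifert-type) resolution at every \textbf{even} crossing and disagrees with it at every \textbf{odd} crossing. In the classical case every chord of a Gauss diagram is even, so the alternately coloured smoothing reduces to the Seifert smoothing, sits in homological degree \( 0 \), and recovers Rasmussen's classical normalisation.

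Granting this local description, the computation is immediate. Write \( n_- \) for the number of negative crossings of \( D \) and let \( o_+ \) and \( o_- \) denote the numbers of positive and negative odd crossings, so that \( J(K) = o_+ - o_- \). The Seifert smoothing uses \( n_- \) one-resolutions; flipping the resolution at each odd crossing replaces a \( 0 \) with a \( 1 \) at each positive odd crossing and a \( 1 \) with a \( 0 \) at each negative odd crossing, so the alternately coloured smoothing has \( r = n_- + o_+ - o_- \) one-resolutions. Its homological degree is therefore \( r - n_- = o_+ - o_- = J(K) \), yielding \( s_2(K) = J(K) \).

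The main obstacle, and the only step requiring genuine work, is establishing the "even agrees, odd disagrees" local rule. I expect this to follow by tracing the face colour assignments around each chord in the Gauss diagram of \( K \): the parity of a chord measures precisely whether the alternating colouring extends consistently across the corresponding crossing if one keeps the Seifert resolution, which is exactly the combinatorial content underlying \Cref{Thm:acs}. Once this rule is proved the identity is diagram-independent by the invariance of both \( s_2 \) and \( J \).
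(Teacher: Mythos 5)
Your proposal takes essentially the same route as the paper: both hinge on the key lemma that a classical crossing is odd if and only if it is resolved into its unoriented resolution in the alternately colourable smoothing, and both conclude with the same bookkeeping (the paper tabulates contributions crossing-by-crossing; you compute \(r = n_- + o_+ - o_-\) one-resolutions and hence homological degree \(r - n_- = o_+ - o_- = J(K)\)). One slip worth correcting: an alternately coloured smoothing is a colouring of the \emph{cycles} of the smoothing (equivalently a proper colouring of the \emph{arcs} of the shadow, as in \Cref{Thm:acs}), not a \(2\)-colouring of the complementary faces --- the face picture is the checkerboard colouring, a different and essentially unique object that does not see classical versus virtual crossings --- but your proposed argument of tracing colour changes as one traverses the knot between the two endpoints of a chord is exactly the paper's argument once ``face'' is replaced by ``arc.''
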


\begin{theorem*}[\Cref{Thm:owritheslice} of \Cref{Subsubsec:oddwritheslice}]
	Let \( K \) be a virtual knot and \( J ( K ) \) its odd writhe. If \( J ( K ) \neq 0 \) then \( K \) is not slice.
\end{theorem*}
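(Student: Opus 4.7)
The plan is to deduce this theorem as a short corollary of two results already stated in the introduction. First, the discussion in \Cref{Subsec:cobordisms,Subsec:obstructions} asserts that both components of the doubled Rasmussen invariant \( \mathbbm{s}(K) = (s_1(K), s_2(K)) \) are obstructions to sliceness; in particular, if \( K \) is slice then \( s_2(K) = 0 \). Second, \Cref{Prop:s2odd} identifies \( s_2(K) \) with the odd writhe \( J(K) \). Combining these two statements, if \( K \) were slice we would have \( J(K) = s_2(K) = 0 \). The contrapositive yields the result.

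So the only thing I would need to write, beyond citing these two inputs, is a short contrapositive argument. Explicitly, I would assume for contradiction that \( K \) is slice with \( J(K) \neq 0 \). By \Cref{Prop:s2odd} this gives \( s_2(K) \neq 0 \). But since \( s_2 \) is a concordance invariant vanishing on the unknot (as established in \Cref{Subsec:obstructions} via the functorial behaviour of \( \dkh' \) under concordance cobordisms), any slice knot must have \( s_2 = 0 \), a contradiction.

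The main obstacle is not in this final theorem itself but in the two inputs it relies on: verifying functoriality of \( \dkh' \) under concordances strongly enough that the induced maps on canonical generators force \( s_2(K) = 0 \) for slice \( K \), and establishing the identification \( s_2(K) = J(K) \). Once those are in place, the proof of \Cref{Thm:owritheslice} is essentially immediate and needs no further calculation.
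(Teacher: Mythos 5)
Your proposal is correct and follows essentially the same route as the paper. The paper's proof of \Cref{Thm:owritheslice} is precisely the argument you sketch: assume \(K\) slice with \(J(K) \neq 0\), invoke \Cref{Prop:s2odd} to get \(s_2(K) \neq 0\), take a concordance \(S\) to the unknot, and use \Cref{Thm:nonzero1} to get a non-zero map \(\phi_S\) that preserves homological degree, so that some \(x \in {\dkh'}_{s_2(K)}(K)\) maps to a non-zero element of \({\dkh'}_{s_2(K)}(\text{unknot}) = 0\), a contradiction. One small caveat about how you cite the inputs: the statement that \(s_2\) is a concordance invariant (your \Cref{Thm:owritheconc} analogue) is not proved in \Cref{Subsec:obstructions} prior to this theorem; the paper actually establishes \Cref{Thm:owritheslice} first and then observes that \Cref{Thm:owritheconc} follows \emph{mutatis mutandis}. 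So appealing to concordance invariance of \(s_2\) as a pre-existing black box would be mildly circular in the paper's own ordering; what you should cite directly are \Cref{Thm:nonzero1} (the concordance map is non-zero), the fact that \(\phi_S\) preserves homological degree, and the vanishing of \({\dkh'}_i\) of the unknot outside degree \(0\). With that adjustment, the argument matches the paper's.
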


\begin{theorem*}[\Cref{Thm:conceobs} of \Cref{Subsec:genusbounds}]
	Let \( K \) and \( K' \) be virtual knots such that \( s_2 ( K ) = s_2 ( K' ) \). If \( s_1 ( K ) \neq s_1 ( K' ) \) then \(K\) and \(K'\) are not concordant.
\end{theorem*}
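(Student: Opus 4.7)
The plan is to deduce this immediately from the genus bound \Cref{Thm:vgenusbound2} applied with genus zero. Suppose for contradiction that $K$ and $K'$ are concordant via a concordance $S$, which by definition has $g(S) = 0$. The strategy is to verify that $S$ belongs to the class of cobordisms to which \Cref{Thm:vgenusbound2} applies, and then read off the contradiction.

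First I would check that every virtual link arising as a cross-section of $S$ admits a non-zero generator in homological degree $s_2(K) = s_2(K')$ of its doubled Lee homology. A concordance decomposes, after a suitable Morse function, into a sequence of elementary birth, death, and saddle moves between virtual link diagrams starting at $K$ and ending at $K'$. Using the functoriality of doubled Lee homology established in \Cref{Subsec:cobordisms}, I would propagate a non-zero class in homological degree $s_2(K)$ from the $K$ end through each elementary move, checking that it survives at every stage; the hypothesis $s_2(K)=s_2(K')$ is what makes the homological degree at the two ends consistent, so no ``degree drop'' can obstruct the propagation. Once this condition is verified, \Cref{Thm:vgenusbound2} yields
\begin{equation*}
\frac{|s_1(K) - s_1(K')|}{2} \leq g(S) = 0,
\end{equation*}
forcing $s_1(K) = s_1(K')$ and contradicting the hypothesis.

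The main obstacle is confirming the generator-in-homological-degree-$s_2(K)$ condition along the concordance rather than merely at its endpoints. Because the Euler characteristic of $S$ vanishes, the births, deaths, and saddles must balance, and a saddle that splits a component produces a multi-component intermediate link whose doubled Lee homology could in principle lose a generator in the required homological degree. Ruling this out — and pinpointing it as the reason the $s_2$ hypothesis cannot be dropped — is the crux of the argument; once established, the contradiction with $g(S)=0$ is automatic.
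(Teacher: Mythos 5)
Your plan is to invoke \Cref{Thm:vgenusbound2} with \( g(S) = 0 \), but that theorem is stated only for \emph{targeted} cobordisms, and a concordance between two virtual knots is not automatically targeted. Recall that a targeted cobordism (by the sentence following \Cref{Thm:nonzero2}) must decompose as \( S_1 \cup S_2 \) with \( S_1 \) containing only \( 1 \)-handles within a single link component and \( S_2 \) only handles between distinct components, together with the compatibility requirement \( \mathscr{S} \in \mathscr{L} \) on alternately colourable smoothings of the middle link \( L = S_1 \cap S_2 \). A generic genus-zero cobordism has births, splitting saddles, merging saddles, and deaths freely interleaved, so this split-then-merge structure need not hold, and you have not argued that it can always be arranged. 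You focus on the shared-degree hypothesis, which is a genuine concern, but it is not the only hypothesis of \Cref{Thm:vgenusbound2} and not the one that fails most visibly.

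The paper's own route is different and sidesteps this: rather than citing \Cref{Thm:vgenusbound2} as a black box, it re-runs the filtered-degree argument from the proof of \Cref{Thm:vgenusbound}, but with \Cref{Thm:nonzero1} (the non-vanishing result for concordance maps) replacing \Cref{Thm:nonzero2}. Since \( K' \) is a knot, \( \dkh'(K') \neq 0 \) automatically, so \Cref{Thm:nonzero1} gives \( \phi_S \neq 0 \) directly without any targeting hypothesis. Because \( \phi_S \) preserves homological degree and is filtered of degree \( -2g(S) = 0 \), one gets \( s_1(K') \geq s_1(K) \); running the same argument on the reverse concordance (or on mirrors, via \Cref{Prop:mirror}) gives the other inequality, so \( s_1(K) = s_1(K') \). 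The propagation-through-elementary-cobordisms argument you sketch is essentially the content of the proof of \Cref{Thm:nonzero1}, so you are on the right track morally, but you should cite that result directly rather than route through \Cref{Thm:vgenusbound2}, whose hypotheses you cannot verify.
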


Finally, using the above results, we show that there exist virtual knots which are not concordant to any classical knots.

\begin{theorem*}[\Cref{Cor:classicalconcordance} of \Cref{Subsubsec:oddwritheslice}]
	Let \( K \) be a virtual knot. If \( J ( K ) \neq 0 \) then \( K \) is not concordant to a classical knot. 
\end{theorem*}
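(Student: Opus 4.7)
The plan is to combine the concordance invariance of the doubled Rasmussen invariant with the identification of $s_2$ with the odd writhe, reducing the corollary to the observation that every classical knot has vanishing odd writhe.

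First, by \Cref{Prop:s2odd} we have $s_2 ( K ) = J ( K )$ for every virtual knot $K$, and the results of \Cref{Subsec:cobordisms,Subsec:obstructions} establish that $s_2$ is an invariant of virtual concordance. Consequently, if $K_1$ and $K_2$ are concordant virtual knots, then $J ( K_1 ) = J ( K_2 )$.

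Second, I would verify that $J ( K' ) = 0$ whenever $K'$ is a classical knot. A crossing of a virtual knot is odd precisely when the corresponding chord in the Gauss diagram is linked with an odd number of other chords. Since the Gauss diagram of a classical knot is realisable in the plane, a short parity argument shows that each chord is linked with an even number of other chords; hence every crossing of a classical diagram is even, and $J ( K' ) = 0$.

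Combining the two facts, if $K$ were concordant to a classical knot $K'$ then $J ( K ) = J ( K' ) = 0$, contradicting the hypothesis $J ( K ) \neq 0$. The only non-routine step is the parity statement of the second paragraph, which would be the main obstacle if it had not been recorded earlier in the paper; in that case I would record it as a short standalone lemma, although the argument itself is a standard parity computation on realisable Gauss diagrams. Note that this strategy bypasses any direct appeal to the slice obstruction \Cref{Thm:owritheslice}, which would otherwise require additivity of $J$ under a suitable notion of virtual connect sum together with the equivalence between virtual concordance of $K_1,K_2$ and sliceness of an appropriate connect sum $K_1 \# ( - K_2 )$; both of these are subtler in the virtual setting than in the classical one, and the concordance-invariance route cleanly avoids them.
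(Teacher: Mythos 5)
Your argument is correct and follows essentially the same route as the paper: the paper derives \Cref{Cor:classicalconcordance} directly from \Cref{Thm:owritheconc} (which asserts that $J$ is an invariant of virtual concordance, itself a consequence of \Cref{Prop:s2odd} and the degree-preservation of $\phi_S$ established in \Cref{Thm:nonzero1}), combined with the observation that every classical knot has $J = 0$. The one minor divergence is how you verify $J(K') = 0$ for classical $K'$: you propose a direct parity argument on the chord diagram of a realisable (planar) Gauss diagram, whereas the paper gets the equivalent fact $s_2(K') = 0$ by noting that the alternately colourable smoothing of a classical diagram is its oriented smoothing, which sits at height $0$ by construction of the chain complex. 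Your route is more self-contained and purely combinatorial for that step, but both are standard and the difference is cosmetic; the body of the argument is the same.
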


\subsection{Extending Khovanov homology}
The first successful extension of Khovanov homology to virtual links was produced by Manturov \cite{Manturov2006}, as mentioned above. His work was reformulated by Dye, Kaestner, and Kauffman in order to define a virtual Rasmussen invariant \cite{Dye2014}. Tubbenhauer has also developed a virtual Khovanov homology using non-orientable cobordisms \cite{Tubbenhauer2014a}. Doubled Khovanov homology is as an alternative extension of Khovanov homology to virtual links.

Any extension of Khovanov homology to virtual links must deal with the fundamental problem presented by the \textit{single cycle smoothing}, also known as the \textit{one-to-one bifurcation}. This is depicted in \Cref{Fig:121}: altering the resolution of a crossing no longer either splits one cycle or merges two cycles, but can in fact take one cycle to one cycle. The realisation of this as a cobordism between smoothings is a once-punctured M\"{o}bius band. How does one associate an algebraic map, \( \eta \), to this? Looking at the quantum grading (where the module associated to one cycle is \( \mathcal{A} = \langle v_+, v_- \rangle \){)} we notice that
\begin{center}
	\begin{tikzpicture}[scale=0.8,
	roundnode/.style={}]
	
	\node[roundnode] (s0)at (-1,0)  {
		\(\begin{matrix}
		0 \\
		v_+ \\
		0 \\
		v_-
		\end{matrix}\)};
	
	\node[roundnode] (s1)at (1,0)  {
		\(\begin{matrix}
		v_+ \\
		0 \\
		v_- \\
		0
		\end{matrix}\)};
	
	\draw[->,thick] (s0)--(s1) node[above,pos=0.5]{\( \eta \)} ;
	
	\end{tikzpicture}
\end{center}
from which we observe that the map \( \eta : \mathcal{A} \rightarrow \mathcal{A} \) must be the zero map if it is to be grading-preserving (we have arranged the generators vertically by quantum grading). This is the approach taken by Manturov and subsequently Dye et al. 

Another way to solve this problem is to ``double up'' the complex associated to a link diagram in order to plug the gaps in the quantum grading, so that the \( \eta \) map may be non-zero. The notion of ``doubling up'' will be made precise in \Cref{Sec:definition}, but for now let us look at the example of the single cycle smoothing: if we take the direct sum of the standard Khovanov chain complex with itself, but shifted in quantum grading by \( -1 \), we obtain \( \eta : \mathcal{A} \oplus \mathcal{A} \lbrace -1 \rbrace \rightarrow  \mathcal{A} \oplus \mathcal{A} \lbrace -1 \rbrace \), that is
\begin{center}
	\begin{tikzpicture}[scale=0.8,
	roundnode/.style={}]
	
	\node[roundnode] (s0)at (-1,0)  {
		\(\begin{matrix}
		0 \\
		\vup \\
		\vlp \\
		\vum \\
		\vlm
		\end{matrix}\)};
	
	\node[roundnode] (s1)at (1,0)  {
		\(\begin{matrix}
		\vup \\
		\vlp \\
		\vum \\
		\vlm \\
		0
		\end{matrix}\)};
	
	\draw[->,thick] (s0)--(s1) node[above,pos=0.5]{\( \eta \)} ;
	
	\end{tikzpicture}
\end{center}
where \( \mathcal{A} = \langle v^\text{u}_+, v^\text{u}_- \rangle \) and \( \mathcal{A} \lbrace -1 \rbrace = \langle v^\text{l}_+, v^\text{l}_- \rangle \) (u for ``upper'' and l for ``lower'') are graded modules and for \( W \) a graded module \( W_{l-k} = {W \lbrace k \rbrace}_l\). Thus the map associated to the single cycle smoothing may now be non-zero while still degree-preserving. It is this approach which we take in the following work.

\begin{figure}
	\centering
	\begin{tikzpicture}[scale=0.8,
	roundnode/.style={}]
	
	\node[roundnode] (s0)at (-3,0)  {
		\includegraphics[scale=0.7]{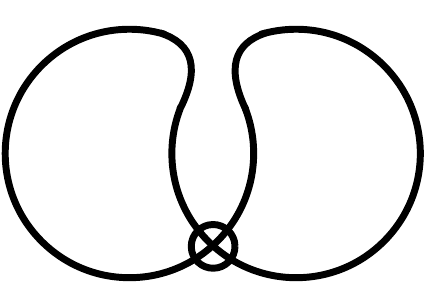}};
	
	\node[roundnode] (s1)at (3,0)  {
		\includegraphics[scale=0.7]{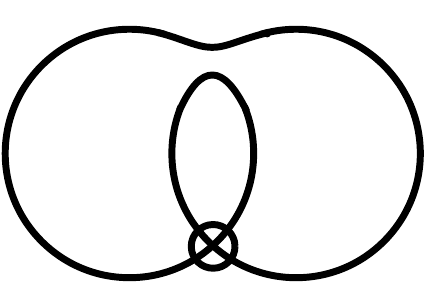}};
	
	\draw[->,thick] (s0)--(s1) node[above,pos=0.5]{\( \eta \)} ;			
	\end{tikzpicture}
	\caption{The single-cycle smoothing.}\label{Fig:121}
\end{figure}

\subsection{Plan of the paper}
In \Cref{Sec:definition} we define the doubled Khovanov homology theory and describe some of its properties: we find the doubled Khovanov homology of classical links, and illustrate a method to produce an infinite number of non-trivial virtual knots with doubled Khovanov homology of the unknot. In \Cref{Sec:leehom} we define a perturbation analogous to Lee homology of classical links and show that, as in the classical case, the rank of this perturbed theory can be computed in terms of alternately coloured smoothings. We then investigate the functorial nature of the perturbed theory. \Cref{Sec:Ras} contains the definition of the doubled Rasmussen invariant and a description of its properties. Finally, in \Cref{Sec:applications} the invariants are put to use, yielding topological applications.

We assume familiarity with classical Khovanov homology and the rudiments of virtual knot theory.

\section{Doubled Khovanov homology}\label{Sec:definition}
\subsection{Definition}

In the tradition of classical Khovanov homology and its descendants \textit{doubled Khovanov homology} assigns to an oriented virtual link diagram a bigraded Abelian group which is the homology of a chain complex; the result is an invariant of the link represented. In contrast to other virtual extensions of Khovanov homology, however, the work of dealing with the single cycle smoothing (see \Cref{Fig:121}) is done in the realm of algebra so that certain verifications require no new technology to complete (c.f. with the \textit{order} construction used in \cite{Dye2014}). 
\begin{definition}[Doubled Khovanov complex]
	\label{Def:cdkh}
	Let \( L \) be an oriented virtual link diagram with \( n_+ \) positive classical crossings and \( n_- \) negative classical crossings. Form the cube of smoothings associated to \( L \) in the standard manner by resolving classical crossings and leaving virtual crossings unchanged -- see the example given in \Cref{Fig:cube}.
	
	Let \( \mathcal{A} = \mathcal{R} [X]/X^2 = \langle v_-, v_+ \rangle \) (under the identification \( X=v_- \), \( 1=v_+ \)) where \( \mathcal{R} \) is either \( \Q \) or \( \Z \). Form a chain complex by associating to a smoothing consisting of \( m \) cycles (that is, \( m \) copies of \( S^1 \) immersed in the plane) a vector space in the following way
	\begin{equation}
	\label{Eq:tqft}
	\underset{ 1 \leq i \leq m}{\bigsqcup} ~ S^1_i \longmapsto \left( \mathcal{A}^{\otimes m} \right) \oplus \left( \mathcal{A}^{\otimes m} \right) \lbrace -1 \rbrace .
	\end{equation}
	We refer to the unshifted (shifted) summand as the \emph{upper} \emph{(lower)} summand and denote elements in the upper summand by a superscript {u} and those in the lower summand by a superscript {l}. We also suppress tensor products, concatenating them into one subscript e.g.
	\[
	v^{\text{\emph{u}}}_{--+-} \coloneqq \left( v_- \otimes v_- \otimes v_+ \otimes v_- \right)^{\text{\emph{u}}} \in \mathcal{A}^{\otimes 4}
	\]
	or
	\[
	v^{\text{\emph{l}}}_{++} \coloneqq \left( v_+ \otimes v_+ \right)^{\text{\emph{l}}} \in \left( \mathcal{A}^{\otimes 2} \right) \lbrace -1 \rbrace .
	\]
	
	\begin{figure}
		\begin{tikzpicture}[scale=0.6,
		roundnode/.style={}]
		
		\node[roundnode] (s4)at (-13,0)  {\includegraphics[scale=0.45]{knot21.pdf}
		};
		
		\node[roundnode] (s0)at (-7,0)  {\includegraphics[scale=0.45]{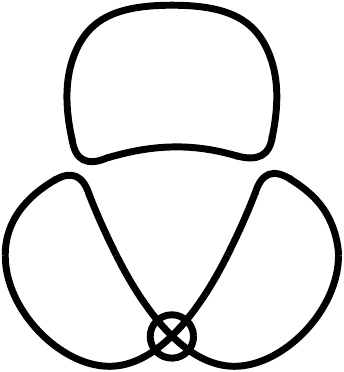}
		};
		
		\node[roundnode] (s1)at (-2,2)  {\includegraphics[scale=0.45]{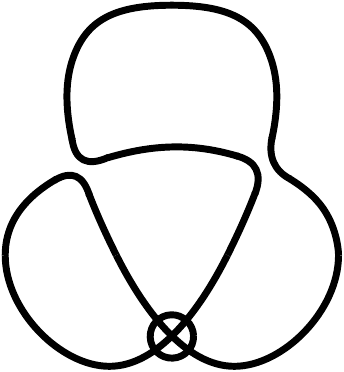}
		};
		
		\node[roundnode] (s2)at (-2,-2)  {\includegraphics[scale=0.45]{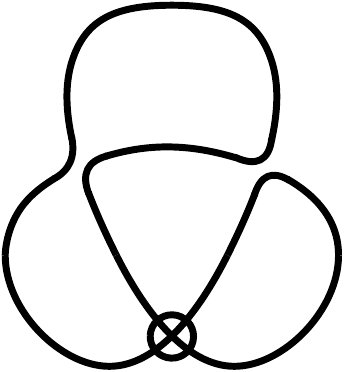}
		};
		
		\node[roundnode] (s3)at (3,0)  {\includegraphics[scale=0.45]{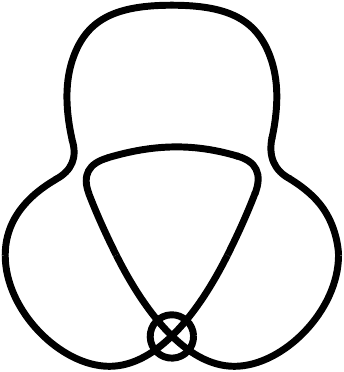}
		};
		
		\draw[->,double] (s4)--(s0) ;
		
		\draw[->,thick] (s0)--(s1) node[above left,pos=0.6]{\( m \)} ;
		
		\draw[->,thick] (s0)--(s2) node[below left,pos=0.6]{\( m \)} ;
		
		\draw[->,thick] (s1)--(s3) node[above right,pos=0.4]{\( \eta \)} ;
		
		\draw[->,thick] (s2)--(s3) node[below right,pos=0.3]{\( -\eta \)} ;
		\end{tikzpicture}
		\caption{The cube of smoothings associated to the virtual knot diagram depicted on the left of the figure.}
		\label{Fig:cube}
	\end{figure}

	\begin{figure}
		\begin{tikzpicture}[scale=0.6,
		roundnode/.style={}]
		\node[roundnode] (s5)at (-11,-7) {\( \cdkh \left( \raisebox{-10pt}{\includegraphics[scale=0.25]{knot21.pdf}} \right) ~ = ~ \begin{matrix}
			\mathcal{A}^{\otimes 2} \\
			\oplus \\
			\mathcal{A}^{\otimes 2} \lbrace -1 \rbrace \\
			\end{matrix} \)
		};
		
		\node[roundnode] (s6)at (-1.5,-7) {\( \begin{matrix}
			\mathcal{A} \\
			\oplus \\
			\mathcal{A} \lbrace -1 \rbrace \\
			\oplus \\
			\mathcal{A} \\
			\oplus \\
			\mathcal{A} \lbrace -1 \rbrace
			\end{matrix} \)
		};
		
		\node[roundnode] (s7)at (5.5,-7) {\( \begin{matrix}
			\mathcal{A} \\
			\oplus \\
			\mathcal{A} \lbrace -1 \rbrace \\
			\end{matrix} \)
		};
		
		\node[roundnode] (s8)at(-8.5,-10.5) {\( -2 \)};
		
		\node[roundnode] (s9)at(-1,-10.5) {\( -1 \)};
		
		\node[roundnode] (s10)at(5.5,-10.5) {\( 0 \)};
		
		\draw[->,thick] (s5)--(s6) node[above,pos=0.5]{\( d_{-2} =  \begin{pmatrix}
			m \\
			m
			\end{pmatrix} \)
		};
		
		\draw[->,thick] (s6)--(s7) node[above,pos=0.5]{\( d_{-1} =  \left( \eta, - \eta \right) \)
		};
		\end{tikzpicture}
		\caption{The chain complex associated to the cube of smoothings depicted in \Cref{Fig:cube} (homological degree is denoted beneath the chain groups).}
		\label{Fig:algcomp}
	\end{figure}
	
	The components of the differential are built in the standard way as matrices with entries the maps \( \Delta \), \( m \), and \( \eta \), whose positions are read off from the cube of smoothings. The \( \Delta \) and \( m \) maps are given by
	\begin{equation}
	\label{Eq:diffcomp}
	\begin{aligned}
	m( v^{\text{\emph{u/l}}}_+ \otimes v^{\text{\emph{u/l}}}_+ ) & = v^{\text{\emph{u/l}}}_+ \qquad &\Delta ( v^{\text{\emph{u/l}}}_+ ) & = v^{\text{\emph{u/l}}}_+ \otimes v^{\text{\emph{u/l}}}_-  + v^{\text{\emph{u/l}}}_- \otimes v^{\text{\emph{u/l}}}_+  \\
	m( v^{\text{\emph{u/l}}}_+ \otimes v^{\text{\emph{u/l}}}_- ) &= m( v^{\text{\emph{u/l}}}_- \otimes v^{\text{\emph{u/l}}}_+ ) = v^{\text{\emph{u/l}}}_- \qquad &\Delta  ( v^{\text{\emph{u/l}}}_- ) & = v^{\text{\emph{u/l}}}_- \otimes v^{\text{\emph{u/l}}}_- \\
	m(v^{\text{\emph{u/l}}}_- \otimes v^{\text{\emph{u/l}}}_- ) & = 0 & & 
	\end{aligned}
	\end{equation}
	(so that they do not map between the upper and lower summands). The map associated to the single cycle smoothing as in \Cref{Fig:121} is given by
	\begin{equation}
	\label{Eq:etamap}
	\begin{aligned}
	\eta ( v^{\text{\emph{u}}}_+ ) & = v^{\text{\emph{l}}}_+ \qquad & \eta ( v^{\text{\emph{l}}}_+ ) & = 2 v^{\text{\emph{u}}}_- \\
	\eta ( v^{\text{\emph{u}}}_- ) & = v^{\text{\emph{l}}}_- \qquad & \eta ( v^{\text{\emph{l}}}_- ) & = 0.
	\end{aligned}
	\end{equation}
	The effect of the \( \eta \) map on tensor products is (possibly) to alter the superscript of entire string and the subscript of the tensorand in question. For example, if the cycle undergoing the single cycle smoothing is corresponds to the second tensor factor
	\begin{equation*}
	\begin{aligned}
	\eta ( v^{\text{\emph{u}}}_{-+-} ) &= v^{\text{\emph{u}}}_{---} \\
	\eta ( v^{\text{\emph{l}}}_{++-} ) &= 2v^{\text{\emph{u}}}_{+--}.
	\end{aligned}
	\end{equation*}
	Any assignment of signs to the maps within the cube of smoothings which yields anticommutative faces produces isomorphic chain complexes.
	
	Let \( C_i \) denote the direct sum of the vector spaces assigned to the smoothings with exactly \( i \) \( 1 \)-resolutions.	Define the chain spaces of the \emph{doubled Khovanov complex} to be
	\begin{equation}
	\label{Eq:doubled}
	\cdkh_i ( L ) = C_i [-n_-] \lbrace n_+ - 2 n_- \rbrace
	\end{equation}
	(where \( [-n_-] \) denotes a shift in homological degree). An example of such a chain complex is given in \Cref{Fig:algcomp}.\CloseDef
\end{definition}

\begin{remark}
	The map given in \Cref{Eq:etamap} is not an \( \mathcal{R} \)-module map, so that \( \left( \mathcal{A}, m, \Delta, \eta \right) \) is not an extended Frobenius algebra in the sense of \cite{Turaev2006}, and doubled Khovanov homology seemingly cannot be interpreted as an unoriented topological field theory. (Also, doubled Lee homology, as defined in \Cref{Sec:leehom}, does not satisfy the multiplicativity axiom of an unoriented topological field theory.)
\end{remark}

\begin{proposition}
	\label{Prop:chaincom}
	Equipped with the differential given by matrices of maps as described in \Cref{Def:cdkh} \( \cdkh ( L ) \) is a chain complex.
\end{proposition}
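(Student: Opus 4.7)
The plan is to establish $d^{2} = 0$ by verifying that every $2$-dimensional face of the cube of smoothings commutes up to sign, then invoking the remark in \Cref{Def:cdkh} that any sign convention rendering the faces anticommutative yields an isomorphic chain complex. This reduces the proposition to a face-by-face local check.

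The faces split according to the types of their two edges. Since $m$ and $\Delta$ act as the classical Khovanov Frobenius structure within each of the u and l summands, preserving the splitting, any face with two $m$/$\Delta$ edges decomposes as a direct sum of two copies of a classical Khovanov face, and commutativity is inherited from the classical argument with no new work required. The novel faces are those carrying at least one $\eta$ edge, and I would handle these by a short case-analysis. When the $\eta$ and an accompanying $m$/$\Delta$ act on disjoint cycles they involve disjoint tensor factors and trivially commute; when they interact, the content reduces to verifying on generators the identities
\[
\eta \circ m = m \circ (\eta \otimes \mathrm{id}), \qquad \Delta \circ \eta = (\eta \otimes \mathrm{id}) \circ \Delta,
\]
together with their $\mathrm{id} \otimes \eta$ variants, using only the defining rules~(2.3) and~(2.4). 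Faces with two $\eta$ edges are the easiest: if the two single-cycle smoothings affect distinct tensorands the maps commute because they act on different factors, and if they affect the same tensorand both routes compute the same iterate of $\eta$.

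The main obstacle is keeping the case-analysis for the mixed $\eta$-$m$/$\Delta$ faces clean. The structural reason the identities above hold is that $\eta$ acts on a single tensorand yet uniformly swaps the u/l superscript of the entire element, whilst $m$ and $\Delta$ preserve the u/l splitting summandwise; tensor-linearity then forces the two compositions around a face to agree on each basis element $v_{\pm \cdots \pm}^{\mathrm{u}/\mathrm{l}}$. Once all faces are known to commute up to sign, the sign assignment is the standard one from classical Khovanov homology---propagate compatible signs along a spanning tree of the edge graph---and this extends mechanically to the enlarged edge set, completing the verification that the differential squares to zero.
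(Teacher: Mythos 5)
Your high-level strategy --- establish $d^{2}=0$ by checking that every $2$-face of the cube commutes up to sign, defer the purely $m/\Delta$ faces to the classical argument, and verify the $\eta$-faces by local computation --- is the same strategy the paper uses, and the identities $\eta\circ m = m\circ(\eta\otimes\mathrm{id})$ and $\Delta\circ\eta = (\eta\otimes\mathrm{id})\circ\Delta$ are exactly what is needed for two of the three new face types. However, your case analysis omits the third face type, and it is the one that carries the new content.

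A $2$-face can have its two $\eta$ edges both on the \emph{same} route: one route then computes $\eta\circ\eta$ while the other computes $m\circ\Delta$. This is the first face the paper draws in its proof; it arises when a crossing whose change is a single-cycle smoothing sits adjacent to a second self-crossing of the same cycle that, from the initial state, splits it. Your treatment of ``faces with two $\eta$ edges'' tacitly assumes the two $\eta$'s are distributed one per route, so that ``both routes compute the same iterate of $\eta$'' --- but here only one route is an iterate of $\eta$, and the other is $m\circ\Delta$. The identity
\[
\eta \circ \eta \;=\; m \circ \Delta
\]
must be verified by hand on generators, and it is precisely this face that forces the factor $2$ in $\eta(\vlp)=2\vum$: one has $m\circ\Delta(\vup)=2\vum$, hence $\eta(\eta(\vup))=\eta(\vlp)$ must also equal $2\vum$; drop the $2$ and the cube fails to anticommute. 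A complete argument should also dispose of the remaining combinatorial possibility, an $\eta\circ\eta$ route opposite a $\Delta\circ m$ route; a short analysis of the cycles (an $\eta$ at one crossing leaves a disjoint merge crossing a merge crossing, never a self-crossing) shows this face cannot occur, as the paper points out. Adding the $\eta\circ\eta=m\circ\Delta$ check and the non-occurrence argument closes the gap.
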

\begin{proof}
	It is enough to verify the commutativity of the faces
	
	\begin{center}
		\begin{tikzpicture}[scale=0.6,
		roundnode/.style={}]
		
		\node[roundnode] (s0)at (-2,0)  {
		};
		
		\node[roundnode] (s1)at (0,2)  {
		};
		
		\node[roundnode] (s2)at (0,-2)  {
		};
		
		\node[roundnode] (s3)at (2,0)  {
		};

		\draw[->,thick] (s0)--(s1) node[above left,pos=0.6]{\( \eta \)} ;
		
		\draw[->,thick] (s0)--(s2) node[below left,pos=0.6]{\( \Delta \)} ;
		
		\draw[->,thick] (s1)--(s3) node[above right,pos=0.4]{\( \eta \)} ;
		
		\draw[->,thick] (s2)--(s3) node[below right,pos=0.4]{\( m \)} ;
		\end{tikzpicture}
		\qquad
		\begin{tikzpicture}[scale=0.6,
		roundnode/.style={}]
		
		\node[roundnode] (s0)at (-2,0)  {
		};
		
		\node[roundnode] (s1)at (0,2)  {
		};
		
		\node[roundnode] (s2)at (0,-2)  {
		};
		
		\node[roundnode] (s3)at (2,0)  {
		};

		\draw[->,thick] (s0)--(s1) node[above left,pos=0.6]{\( \eta \)} ;
		
		\draw[->,thick] (s0)--(s2) node[below left,pos=0.6]{\( \Delta \)} ;
		
		\draw[->,thick] (s1)--(s3) node[above right,pos=0.4]{\( \Delta \)} ;
		
		\draw[->,thick] (s2)--(s3) node[below right,pos=0.4]{\( \eta \)} ;
		\end{tikzpicture}
		\qquad
		\begin{tikzpicture}[scale=0.6,
		roundnode/.style={}]
		
		\node[roundnode] (s0)at (-2,0)  {
		};
		
		\node[roundnode] (s1)at (0,2)  {
		};
		
		\node[roundnode] (s2)at (0,-2)  {
		};
		
		\node[roundnode] (s3)at (2,0)  {
		};

		\draw[->,thick] (s0)--(s1) node[above left,pos=0.6]{\( \eta \)} ;
		
		\draw[->,thick] (s0)--(s2) node[below left,pos=0.6]{\( m \)} ;
		
		\draw[->,thick] (s1)--(s3) node[above right,pos=0.4]{\( m \)} ;
		
		\draw[->,thick] (s2)--(s3) node[below right,pos=0.4]{\( \eta \)} ;
		\end{tikzpicture}
	\end{center}
	as the face
	\begin{center}
		\begin{tikzpicture}[scale=0.6,
		roundnode/.style={}]
		
		\node[roundnode] (s0)at (-2,0)  {
		};
		
		\node[roundnode] (s1)at (0,2)  {
		};
		
		\node[roundnode] (s2)at (0,-2)  {
		};
		
		\node[roundnode] (s3)at (2,0)  {
		};

		\draw[->,thick] (s0)--(s1) node[above left,pos=0.6]{\( \eta \)} ;
		
		\draw[->,thick] (s0)--(s2) node[below left,pos=0.6]{\( m \)} ;
		
		\draw[->,thick] (s1)--(s3) node[above right,pos=0.4]{\( \eta \)} ;
		
		\draw[->,thick] (s2)--(s3) node[below right,pos=0.4]{\( \Delta \)} ;
		\end{tikzpicture}
	\end{center}
	cannot occur. We leave the algebra to the reader and note that, as in the classical case, sprinkling signs appropriately yields a chain complex.
\end{proof}

\begin{theorem}
	\label{Thm:invariance}
	Given an oriented virtual link diagram \( D \) the chain homotopy equivalence class of \( \cdkh ( D ) \) is an invariant of the oriented link represented by \( D \).
	The homology of \( \cdkh ( D ) \), denoted \( \dkh ( D ) \), is therefore also an invariant of the link represented by \( D \).
\end{theorem}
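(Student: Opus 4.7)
The plan is to verify that the chain homotopy equivalence class of $\cdkh(D)$ is preserved under each of the generalised Reidemeister moves for virtual link diagrams: the classical moves R1, R2, R3, their virtual analogues vR1, vR2, vR3, and the mixed (semi-virtual) move. Since any two diagrams of a virtual link are related by a finite sequence of such moves, this will suffice.

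For the virtual moves and the mixed move, the argument is essentially automatic. Because virtual crossings are not resolved in building the cube of smoothings, the cube itself --- its vertices, edges, and the $m$, $\Delta$, $\eta$ labels on each edge --- is literally unchanged by the move. Only the planar presentation of the underlying immersed cycles changes, and so $\cdkh$ on the two sides of the move is identified cell-for-cell. The one subtlety is to confirm that the number of cycles (and hence the chain groups) before and after the mixed move match; this is immediate from the fact that the classical crossing's two local resolutions produce the same abstract smoothings on either side, with only the virtual arrangement differing.

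For the classical moves I would follow the standard invariance proof for classical Khovanov homology, for instance via Bar-Natan's Gauss elimination of acyclic subcomplexes. The crucial observation is that each classical Reidemeister move involves only ordinary merges and splits of cycles in the plane; no single-cycle bifurcation arises within the local region affected by the move. Consequently the edge maps inside the local subcomplex are only $m$ and $\Delta$. Since these preserve the upper/lower decomposition coming from \Cref{Eq:diffcomp}, the local subcomplex of $\cdkh(D)$ is precisely two copies of the classical local subcomplex, one in each summand with an appropriate quantum shift. The classical chain homotopies from Khovanov's or Bar-Natan's invariance proof can therefore be applied summand-by-summand and extended by the identity on the rest of the complex, producing candidate chain homotopy equivalences in the doubled setting.

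The main obstacle is checking that these extended homotopies commute with the $\eta$ maps appearing elsewhere in the complex, since the doubled complex does not split as a direct sum of two classical Khovanov complexes. However, this reduces to face-commutativity of the same kind already verified in \Cref{Prop:chaincom}: one needs that $\eta$, acting on a cycle outside the local Reidemeister region, commutes up to sign with $m$ and $\Delta$ acting within the region, which is immediate from \Cref{Eq:diffcomp,Eq:etamap} because $\eta$ only alters the superscript of the whole tensor string and the subscript in a single tensor factor. Granted this, the classical chain homotopies lift to chain homotopies of $\cdkh(D)$ without modification, and passing to homology gives the second statement.
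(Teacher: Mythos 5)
Your handling of the purely virtual moves and the mixed move is correct and matches the paper exactly: the cube of smoothings is literally unchanged, so $\cdkh(D_1) = \cdkh(D_2)$. The treatment of the classical moves, however, rests on a false claim. You assert that ``no single-cycle bifurcation arises within the local region affected by the move,'' but this fails already for R2 on a virtual diagram. Label the four endpoints of the R2 tangle $a,b,c,d$ in cyclic boundary order; of the two circle-free local smoothings, one pairs them $\{a,b\},\{c,d\}$ and the other $\{a,d\},\{b,c\}$ (these occupy three of the four cube vertices, since the $00$ and $11$ vertices happen to carry the same local pairing). In a classical planar diagram the external arcs must pair the endpoints in one of these two nested patterns, so passing between circle-free vertices is always an honest merge or split. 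But a virtual diagram may route the external arcs through virtual crossings so that they realise the third, interleaved pairing $\{a,c\},\{b,d\}$. In that case each circle-free local smoothing closes up to a \emph{single} cycle, and the corresponding cube edge inside the R2 region is precisely the map $\eta$. (One obtains such a configuration, for example, by taking a classical unknot diagram with an R2 kink and inserting virtual crossings among its external arcs.) Consequently the local subcomplex of $\cdkh(D)$ does \emph{not} split as two copies of the classical local subcomplex, and your plan to apply the classical chain homotopies summand-by-summand -- with the only worry being $\eta$ maps ``elsewhere in the complex'' -- does not go through as stated.

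What does work, and is the route the paper actually gestures at, is to deloop the small circle and apply Gauss elimination directly. The two cube edges used there -- the split that creates the small circle and the merge that absorbs it -- are forced to be $\Delta$ and $m$ by the simple fact that a map that changes the number of cycles cannot be a one-to-one bifurcation. Together with $\iota$ and $\epsilon$ from delooping, these all preserve the $\mathrm{u}/\mathrm{l}$ decomposition, so the required isomorphisms exist in the doubled complex exactly as in the classical one. Gauss elimination is a statement about arbitrary chain complexes and is entirely indifferent to what the remaining differentials are; in particular it absorbs any $\eta$ edges whether they lie inside or outside the local Reidemeister region, with no need for the explicit commutativity check you propose. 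Your observation about $\eta$ commuting with local $m$ and $\Delta$ is true, but it is neither sufficient (it doesn't address $\eta$ inside the local cube) nor the right load-bearing step; the load is carried by the fact that the isomorphisms being cancelled involve the small circle and hence never have $\eta$-type.
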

\begin{proof}
	We are required to construct chain homotopy equivalences for each of the virtual Reidemeister moves. It is readily observed that if two diagrams \( D_1 \) and \( D_2 \) are related by a finite sequence of the purely virtual moves and mixed move (depicted in \Cref{Fig:vrm}) then \( \cdkh ( D_1 ) = \cdkh ( D_2 ) \) as these moves do not alter the number of cycles in a smoothing nor the incoming and outcoming maps.
	\begin{figure}		
		\includegraphics[scale=0.60]{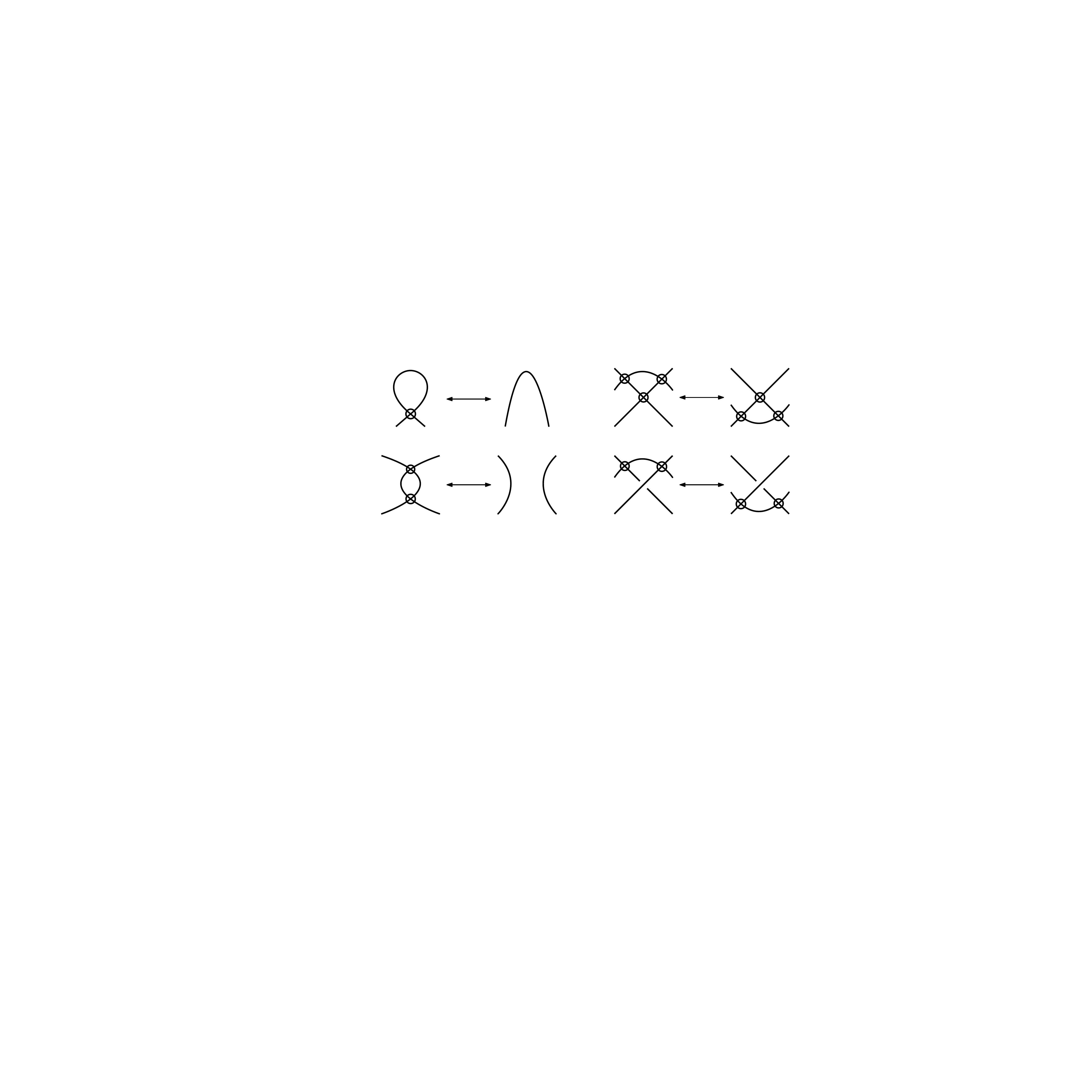}
		\caption{The purely virtual Reidemeister moves and the mixed move (bottom right of the figure).}
		\label{Fig:vrm}
	\end{figure}
	Concerning the classical moves, we follow Bar-Natan \cite{Bar-Natan2002}, using \cite[Lemma \(3.7\)]{Bar-Natan2002} and Gauss elimination (specifically, \cite[Lemma \( 3. 2\)]{Bar-Natan2014}). We leave the details to the reader.
\end{proof}

The homology of the complex given in \Cref{Fig:algcomp} is depicted in \Cref{Fig:21}.

\subsection{Detection of non-classicality}\label{Subsec:nonclassical} We say that a virtual link is \emph{non-classical} if all diagrams representing it have at least one virtual crossing. Conversely, we say that a virtual link is \emph{classical} if it has a diagram with no virtual crossings. Doubled Khovanov homology can sometimes be used to detect non-classicality.

Consider the complex associated to the classical diagram of the unknot given in \Cref{Fig:cdkhclass}: the reader notices immediately that not only do the chain spaces decompose as direct sums, the entire complex does also (as there are no \( \eta \) maps). That is
\begin{equation}
\cdkh \left( \raisebox{-10pt}{\includegraphics[scale=0.25]{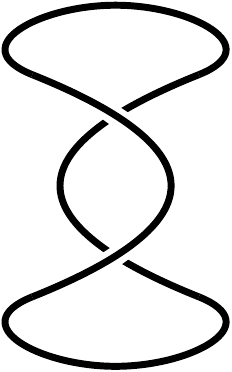}} \right) = CKh \left( \raisebox{-10pt}{\includegraphics[scale=0.25]{unknot.pdf}} \right) \oplus CKh \left( \raisebox{-10pt}{\includegraphics[scale=0.25]{unknot.pdf}} \right) \lbrace -1 \rbrace
\label{Eq:classhom}
\end{equation}
where \( CKh ( D ) \) denotes the classical Khovanov complex of a diagram \( D \). This motivates the following proposition.
\begin{figure}
	\begin{tikzpicture}[scale=0.6,
	roundnode/.style={}]
	\node[roundnode] (s5)at (-11,-7) {\( \cdkh \left( \raisebox{-10pt}{\includegraphics[scale=0.25]{unknot.pdf}} \right) ~ = ~ \begin{matrix}
		\mathcal{A}^{\otimes 2} \\
		\oplus \\
		\mathcal{A}^{\otimes 2} \lbrace -1 \rbrace \\
		\end{matrix} \)
	};
	
	\node[roundnode] (s6)at (-2,-7) {\( \begin{matrix}
		\mathcal{A}^{\otimes 3} \\
		\oplus \\
		\mathcal{A}^{\otimes 3} \lbrace -1 \rbrace \\
		\oplus \\
		\mathcal{A} \\
		\oplus \\
		\mathcal{A} \lbrace -1 \rbrace
		\end{matrix} \)
	};
	
	\node[roundnode] (s7)at (5.5,-7) {\( \begin{matrix}
		\mathcal{A}^{\otimes 2} \\
		\oplus \\
		\mathcal{A}^{\otimes 2} \lbrace -1 \rbrace \\
		\end{matrix} \)
	};
	
	\node[roundnode] (s8)at(-8.5,-10.5) {\( -1 \)};
	
	\node[roundnode] (s9)at(-1,-10.5) {\( -0 \)};
	
	\node[roundnode] (s10)at(5.5,-10.5) {\( 1 \)};
	
	\draw[->,thick] (s5)--(s6) node[above,pos=0.5]{\( d_{-2} =  \begin{pmatrix}
		\Delta \\
		m
		\end{pmatrix} \)
	};
	
	\draw[->,thick] (s6)--(s7) node[above,pos=0.5]{\( d_{-1} =  \left( m, - \Delta \right) \)
	};
	\end{tikzpicture}
	\caption{The doubled Khovanov complex of a classical diagram.}
	\label{Fig:cdkhclass}
\end{figure}

\begin{proposition}
	\label{Prop:classical}
	Let \( L \) be a virtual link. If \( L \) is classical then there exists a diagram of \( L \), denoted \( D \), which has no classical crossings. Then
	\begin{equation*}
	\dkh ( L ) = Kh ( D ) \oplus Kh ( D ) \lbrace -1 \rbrace
	\end{equation*}
	where \( Kh ( D ) \) denotes the standard Khovanov homology of a classical link.
\end{proposition}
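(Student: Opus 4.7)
The plan is to exploit the observation already made in the excerpt, namely that for the classical unknot diagram the cube of smoothings contains no $\eta$ maps, and to show that this phenomenon holds for any virtual link diagram with no virtual crossings. Once that is established, the chain complex splits as a direct sum and the result follows from taking homology.

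First, I would invoke the definition of a classical virtual link to pick a diagram $D$ of $L$ with no virtual crossings. The key step is then to verify that every edge of the cube of smoothings of such a diagram carries either an $m$ or a $\Delta$ map, and never an $\eta$ map. This is a planarity argument: since $D$ lies in the plane and has only classical crossings, every complete smoothing of $D$ is a disjoint union of planar (embedded) circles, and changing a single crossing from its $0$-resolution to its $1$-resolution amounts to performing oriented surgery on an arc connecting two points on such a planar diagram. Such a surgery either joins two distinct circles into one (a merge, giving the map $m$) or splits one circle into two (a split, giving the map $\Delta$); the one-to-one single-cycle bifurcation of \Cref{Fig:121}, whose cobordism is a once-punctured M\"obius band, cannot be realised by embedded planar circles. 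I would spell this out in one or two sentences, referring to \Cref{Fig:121}.

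With this in hand, every differential in $\cdkh(D)$ is a matrix whose non-zero entries lie in $\{m,\Delta,-m,-\Delta\}$; by the formulae in \Cref{Eq:diffcomp}, each of these preserves the upper/lower grading. Consequently the complex splits as
\begin{equation*}
\cdkh(D) \;=\; C^{\text{u}} \oplus C^{\text{l}},
\end{equation*}
where $C^{\text{u}}$ is the subcomplex generated by vectors with superscript {u} and $C^{\text{l}}$ that with superscript {l}. By construction $C^{\text{u}}$ is, on the nose, the classical Khovanov complex $CKh(D)$, and $C^{\text{l}}$ is $CKh(D)\{-1\}$; the global shifts $[-n_-]\{n_+-2n_-\}$ in \Cref{Eq:doubled} agree with the ones used for $CKh(D)$ since $D$ is a classical diagram. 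Taking homology commutes with direct sums, and by \Cref{Thm:invariance} the result $\dkh(L) = Kh(D) \oplus Kh(D)\{-1\}$ is independent of the choice of such $D$.

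The only genuine content is the planarity step ruling out single-cycle smoothings; the remainder is a direct unpacking of \Cref{Def:cdkh}. I expect no real obstacle, only the need to state the planarity argument cleanly.
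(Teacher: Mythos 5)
Your proposal is correct and takes essentially the same route as the paper, which also observes that $\cdkh(D)$ splits as $CKh(D)\oplus CKh(D)\{-1\}$ because a classical diagram produces no $\eta$ maps in its cube of smoothings. The paper simply asserts this (``as there are no $\eta$ maps'') and declares the proposition an obvious consequence, whereas you supply the underlying planarity justification --- a saddle move on embedded planar circles always changes the number of components by exactly one, so the one-to-one bifurcation of \Cref{Fig:121} cannot occur --- which is a worthwhile detail to spell out.
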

\begin{proof}
	This is an obvious consequence of \Cref{Eq:classhom}, which holds for all classical diagrams.
\end{proof}

The contrapositive statement to that of \Cref{Prop:classical} is:
\begin{corollary}
	\label{Cor:nonclass}
	Let \( L \) be a virtual link. If
	\begin{equation}
	\label{Eq:nonclassical}
	\dkh ( L ) \neq  G \oplus G \lbrace -1 \rbrace
	\end{equation}
	for \( G \) a non-trivial bigraded Abelian group, then \( L \) is non-classical.
\end{corollary}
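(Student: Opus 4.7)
The plan is to argue by contrapositive: this corollary is just the logical contrapositive of \Cref{Prop:classical}, so the substantive content has already been established and only a short argument is required to put the pieces together.

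I would start by assuming that $L$ is classical, with the goal of producing a non-trivial bigraded Abelian group $G$ such that $\dkh(L) = G \oplus G\{-1\}$. Since $L$ is classical, there exists a diagram $D$ of $L$ with no virtual crossings, and \Cref{Prop:classical} then yields $\dkh(L) = Kh(D) \oplus Kh(D)\{-1\}$. Setting $G = Kh(D)$, the only thing left to check is that $G$ is non-trivial as a bigraded Abelian group. This is standard for classical Khovanov homology: for any classical link diagram, the element $v_+^{\otimes m}$ sitting at the all-zero smoothing (where $m$ is the number of cycles in that smoothing) is a cycle that is never a boundary, so $Kh(D) \neq 0$. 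This contradicts the hypothesis that $\dkh(L)$ cannot be decomposed as $G \oplus G\{-1\}$ with $G$ non-trivial, completing the contrapositive.

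Because \Cref{Prop:classical} has already done all the real work, no new estimates, spectral sequences, or invariance arguments are needed; there is essentially no obstacle to overcome. The only minor subtlety is confirming non-triviality of $G$, and this is immediate from the surviving generator at the all-zero resolution in the standard Khovanov complex, so the proof reduces to a one-line invocation of \Cref{Prop:classical}.
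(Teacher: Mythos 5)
Your overall structure is exactly the paper's: the corollary is stated there as nothing more than ``the contrapositive of \Cref{Prop:classical}'', so invoking \Cref{Prop:classical} and taking $G = Kh(D)$ is the intended argument. Your instinct to pause and check that $G$ is non-trivial is also good --- the paper glosses over this point.

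However, the specific justification you give for non-triviality is wrong. The element $v_+^{\otimes m}$ at the all-zero smoothing is \emph{not} a cycle in general. The all-zero smoothing has outgoing differentials given by $m$ and $\Delta$ maps, and neither kills $v_+$: we have $m(v_+ \otimes v_+) = v_+ \neq 0$ and $\Delta(v_+) = v_+ \otimes v_- + v_- \otimes v_+ \neq 0$. So $v_+^{\otimes m}$ is a cycle only when there are no outgoing differentials at all, i.e.\ when $D$ is crossingless. (Note also that since the all-zero smoothing sits at the minimal homological degree $-n_-$ after shifting, it has no incoming differentials, so there is no ``not a boundary'' issue --- the problem is purely that your element is not closed.) To close the gap, appeal instead to a standard non-triviality fact about classical Khovanov homology: for any non-empty classical link the graded Euler characteristic is the unnormalized Jones polynomial, which is non-zero, so $Kh(D) \neq 0$; alternatively, Lee homology of a non-empty classical $|L|$-component link has rank $2^{|L|} > 0$ and arises as a page of a spectral sequence starting from $Kh(D)$, forcing $Kh(D) \neq 0$. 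With either of these in place of the faulty cycle argument, your contrapositive goes through exactly as the paper intends.
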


As an example consider the virtual knot \( 2.1 \) in Green's table \cite{Green}, depicted in \Cref{Fig:21}, along with its doubled Khovanov homology, split by homological grading (horizontal axis) and quantum grading (vertical axis).

\begin{figure}
	\( \dkh \left( ~ \raisebox{-32pt}{\includegraphics[scale=0.65]{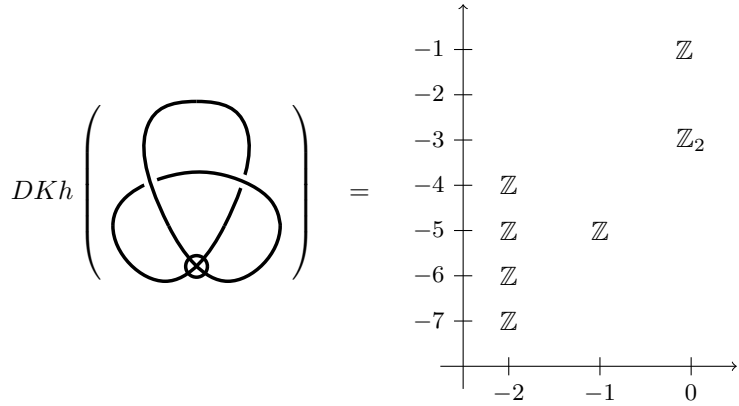}} ~ \right) \quad = \quad	\raisebox{-80pt}{\begin{tikzpicture}[scale=0.6]
		
		%Axes
		\draw[black, ->] (-3.5,-4) -- (3,-4) ;
		\draw[black, ->] (-3,-4.5) -- (-3,4) ;
		
		\draw (-2,-3.8) -- (-2,-4.2) node[below] {\small $-2$} ;
		\draw (0,-3.8) -- (0,-4.2) node[below] {\small $-1$} ;
		\draw (2,-3.8) -- (2,-4.2) node[below] {\small $0$} ;
		
		\draw (-2.8,-3) -- (-3.2,-3) node[left] {\small $-7$} ;
		\draw (-2.8,-2) -- (-3.2,-2) node[left] {\small $-6$} ;
		\draw (-2.8,-1) -- (-3.2,-1) node[left] {\small $-5$} ;
		\draw (-2.8,0) -- (-3.2,0) node[left] {\small $-4$} ;
		\draw (-2.8,1) -- (-3.2,1) node[left] {\small $-3$} ;
		\draw (-2.8,2) -- (-3.2,2) node[left] {\small $-2$} ;
		\draw (-2.8,3) -- (-3.2,3) node[left] {\small $-1$} ;
		
		%Homology
		\node[] (-2-3)at(-2,-3) {$ \Z $} ;
		\node[] (-2-2)at(-2,-2) {$ \Z $} ;
		\node[] (-2-1)at(-2,-1) {$ \Z $} ;
		\node[] (-20)at(-2,-0) {$ \Z $} ;
		
		\node[] (0-1)at(0,-1) {$ \Z $} ;
		
		\node[] (21)at(2,1) {$ {\Z}_2 $} ;
		\node[] (23)at(2,3) {$ \hspace*{-5pt}\Z $} ;
		
		\end{tikzpicture}}
	\)
	\caption{The doubled Khovanov homology of the virtual knot \( 2.1 \).}
	\label{Fig:21}
\end{figure}

Another interesting example is given by the so-called virtual Hopf link, given in \Cref{Fig:vH}; we shall look into it further in \Cref{Sec:leehom}.

\begin{figure}
	\(
	\dkh \left( ~ \raisebox{-20pt}{\includegraphics[scale=0.65]{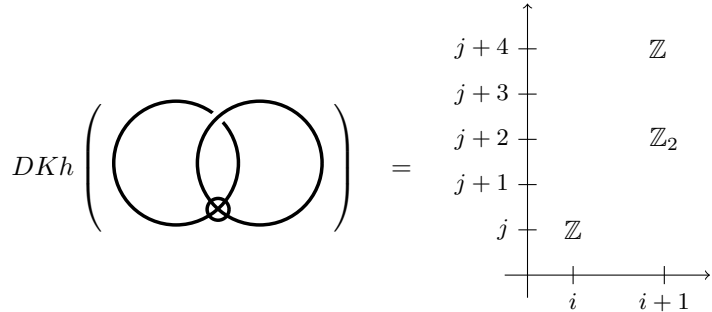}} ~ \right)  \quad = \quad	\raisebox{-55pt}{\begin{tikzpicture}[scale=0.6]
		
		%Axes
		\draw[black, ->] (-1.5,-4) -- (3,-4) ;
		\draw[black, ->] (-1,-4.5) -- (-1,2) ;
		
		\draw (0,-3.8) -- (0,-4.2) node[below] {\small $i$} ;
		\draw (2,-3.8) -- (2,-4.2) node[below] {\small $i+1$} ;
		
		\draw (-0.8,-3) -- (-1.2,-3) node[left] {\small $j$} ;
		\draw (-0.8,-2) -- (-1.2,-2) node[left] {\small $j+1$} ;
		\draw (-0.8,-1) -- (-1.2,-1) node[left] {\small $j+2$} ;
		\draw (-0.8,0) -- (-1.2,0) node[left] {\small $j+3$} ;
		\draw (-0.8,1) -- (-1.2,1) node[left] {\small $j+4$} ;
		
		%Homology
		\node[] (0-3)at(0,-3) {$ \Z $} ;
		\node[] (-2-2)at(2,-1) {$ {\Z}_2 $} ;
		\node[] (-2-2)at(2,1) {$ \hspace*{-5pt}\Z $} ;
		
		\end{tikzpicture}} 
	\)
	\caption{The doubled Khovanov homology of the virtual Hopf link (\( i \) and \(j\) depend on the orientation of the components).}
	\label{Fig:vH}
\end{figure}

The statement within \Cref{Cor:nonclass} cannot be upgraded to an equivalence, however. A counterexample is given by the virtual knot \( 3.7 \) in Green's table, depicted on the right of \Cref{Fig:flank37} (the non-classicality of \( 3.7 \) is demonstrated by its generalised Alexander polynomial \cite{Kauffman}). The cube of smoothings associated to \( 3.7 \) does not contain any \( \eta \) maps, and therefore \( \dkh ( 3.7 ) = G \oplus G \lbrace -1 \rbrace  \) for some non-trivial Abelian group \( G \). In fact, \( \dkh ( 3.7 ) = Kh \left( \raisebox{-1.75pt}{\includegraphics[scale=0.3]{unknotflat.pdf}} \right) \oplus Kh \left( \raisebox{-1.75pt}{\includegraphics[scale=0.3]{unknotflat.pdf}} \right) \lbrace -1 \rbrace = \dkh \left( \raisebox{-1.75pt}{\includegraphics[scale=0.3]{unknotflat.pdf}} \right) \). This follows from the fact that \( 3.7 \) can be obtained from a diagram of the unknot by applying the following move on diagrams

\begin{definition}
	Within an oriented virtual link diagram one may place a virtual crossing on either side of a classical crossing in the following manner
	\begin{center}
		\includegraphics[scale=0.7]{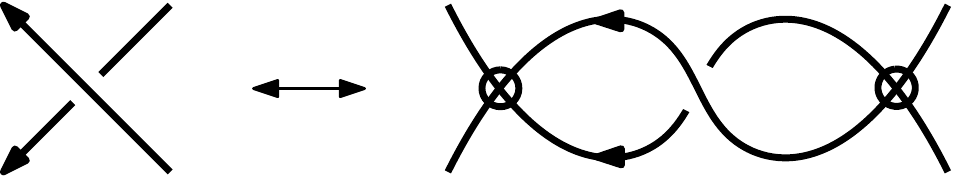}
	\end{center}
	This move is known as \textit{flanking}.\CloseDef
\end{definition}

Flanking is also known \emph{virtualization}, but as there is some confusion in the literature regarding that term we avoid it.

\begin{figure}
	\includegraphics*[scale=0.65]{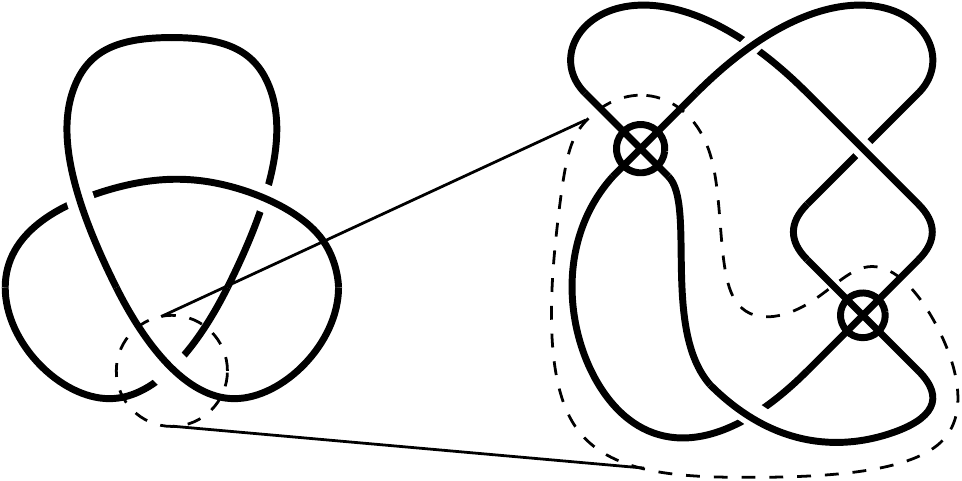}
	\caption{Obtaining virtual knot \( 3.7 \) from the unknot via flanking.}
	\label{Fig:flank37}
\end{figure}

\begin{proposition}
	If a virtual link diagram \( D_2 \) can be obtained from another, \( D_1 \), by a flanking move then \( \cdkh ( D_1) = \cdkh ( D_2 ) \).
\end{proposition}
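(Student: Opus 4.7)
The plan is to show that the cubes of smoothings of $D_1$ and $D_2$ are the same not merely up to chain homotopy but on the nose: the chain complexes agree literally as bigraded modules with differentials. This is in keeping with the invariance statement already used in the proof of \Cref{Thm:invariance} for the purely virtual and mixed Reidemeister moves.

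First, I would observe that flanking preserves both the number and the sign of each classical crossing of the diagram, since it only inserts virtual crossings and leaves the classical crossing itself untouched. Hence the values of $n_+$ and $n_-$ agree for $D_1$ and $D_2$, and therefore the overall homological and quantum shifts $[-n_-]\lbrace n_+ - 2n_- \rbrace$ applied in \Cref{Eq:doubled} are identical.

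Next, I would compare the cubes of smoothings vertex by vertex. Because the classical crossings of $D_1$ and $D_2$ are in bijection (flanking does not create or destroy classical crossings), both cubes are indexed by the same set $\{0,1\}^n$. At each vertex, the smoothing of $D_2$ is obtained from the corresponding smoothing of $D_1$ by inserting two virtual crossings near the crossing being flanked. Since virtual crossings are left alone by the smoothing procedure and do not alter the topological type of the immersed $1$-manifold in the plane, the number of cycles in each smoothing of $D_2$ equals the number of cycles in the corresponding smoothing of $D_1$. Hence the module assigned by \Cref{Eq:tqft} at every vertex of the cube coincides for the two diagrams.

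Finally, I would check the edges of the cube. Each edge corresponds to changing a $0$-resolution into a $1$-resolution at one classical crossing, which locally merges two cycles into one, splits one cycle into two, or realises a single-cycle smoothing; the assigned map ($m$, $\Delta$, or $\eta$) depends only on which of these three combinatorial types occurs and on which tensor factors are affected. Since the placement of the extra virtual crossings from flanking does not change how the arcs reconnect across the resolved crossing, the combinatorial type of each edge is the same in both cubes, and the edge maps coincide. Therefore $\cdkh(D_1)$ and $\cdkh(D_2)$ are identical chain complexes. There is no real obstacle here; the only thing to be careful about is spelling out that virtual crossings do not affect the cycle count, which follows directly from the definition since virtual crossings are never smoothed.
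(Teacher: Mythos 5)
Your proof is correct and takes essentially the same route as the paper: both arguments establish a vertex-by-vertex bijection between the two cubes of smoothings (the paper phrases this via the tangle diagrams $T_1,T_2$ and their local resolutions $0(T_i),1(T_i)$), observe that the bijection preserves cycle counts and the way cycles meet at crossings so that chain groups and edge maps coincide, and conclude that the complexes are literally equal. Your explicit remark that flanking preserves $n_\pm$ and hence the shifts in \Cref{Eq:doubled} is a detail the paper leaves implicit, but it is a welcome addition rather than a departure.
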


\begin{proof}
	Let \( D_1 \) and \( D_2 \) be as in the proposition. Consider the tangle diagrams by produced by isolating a neighbourhood of the classical crossing undergoing the flanking move in \(D_1\) and a neighbourhood of the result of the flanking move in \( D_2 \). We construct an identification of the smoothings of \( D_1 \) with those of \( D_2 \) using the smoothings of the tangle diagrams depicted in \Cref{Fig:flanksmooth}: a smoothing of \( D_1 \) must contain either \( 0 ( T_1 ) \) or \( 1 ( T_1 ) \), and we associate to it the smoothing of \( D_2 \) formed by replacing \( 0 ( T_1 ) \) with \( 0 ( T_2 ) \), or \( 1 ( T_1 ) \) with \( 1 ( T_2 ) \). One readily sees that this identification is a bijection which does not change the number of cycles in a smoothing nor how those cycles are linked. Thus the chain spaces of \( \cdkh ( D_1 ) \) and \( \cdkh ( D_2 ) \) are equal, and so are the components of the differential.
\end{proof}

\begin{figure}
	\includegraphics[scale=0.65]{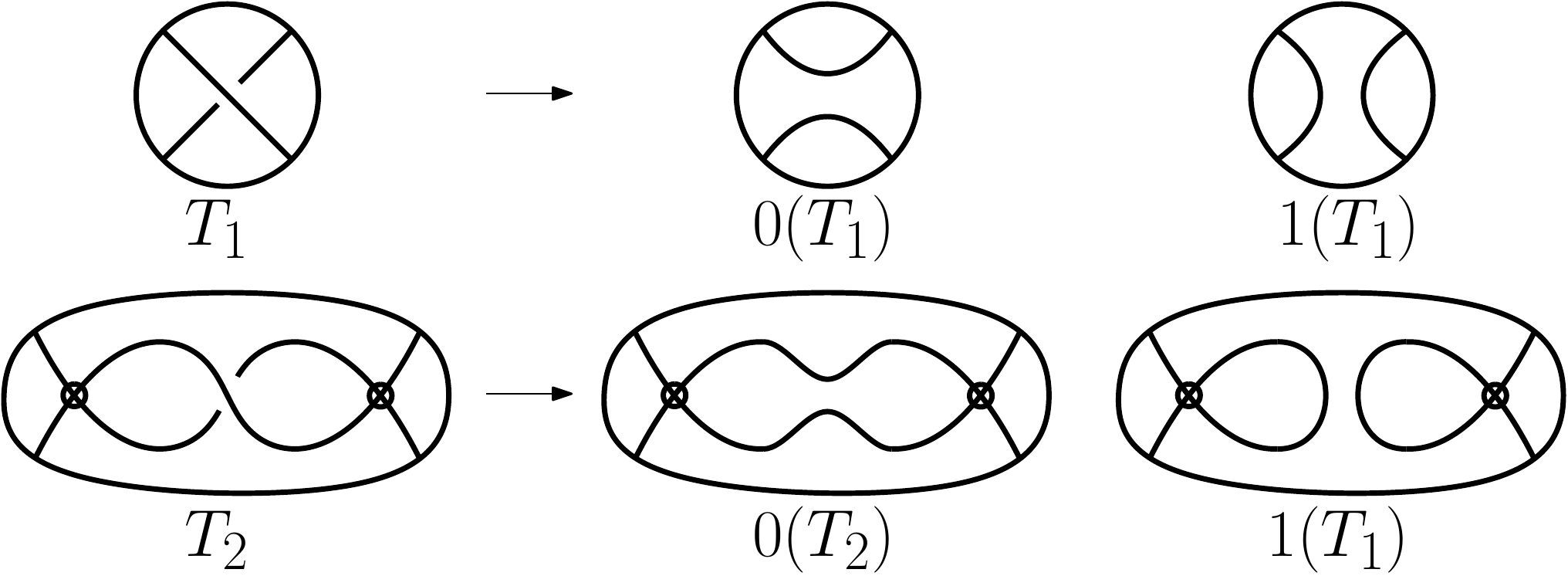}
	\caption{Smoothings of the tangle diagrams related to the flanking move.}
	\label{Fig:flanksmooth}
\end{figure}

\begin{corollary}
	There is an infinite number of non-trivial virtual knots with doubled Khovanov homology equal to that of the unknot.
\end{corollary}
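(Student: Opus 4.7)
The plan is to exhibit an explicit infinite family \( \{ K_n \}_{n \geq 1} \) of non-trivial, pairwise distinct virtual knots, each obtained from a diagram of the unknot by a finite sequence of flanking moves. The preceding proposition will then automatically deliver \( \dkh ( K_n ) = \dkh \left( \raisebox{-1.75pt}{\includegraphics[scale=0.3]{unknotflat.pdf}} \right) \) for every \( n \), so the only real work lies in verifying that the family contains infinitely many pairwise distinct non-trivial knots.

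For the construction, iterate the procedure illustrated in \Cref{Fig:flank37}: for each \( n \geq 1 \), start with a diagram of the unknot containing \( n \) disjoint classical crossings of the same sign (introduced by Reidemeister I moves), apply a flanking move to each of these crossings, and let \( D'_n \) be the resulting diagram and \( K_n \) the virtual knot it represents. Applying the preceding proposition once per flanked crossing yields
\begin{equation*}
\cdkh ( D'_n ) \; = \; \cdkh ( D_n ),
\end{equation*}
where \( D_n \) is the pre-flanking unknot diagram, whence \( \dkh ( K_n ) \) equals the doubled Khovanov homology of the unknot by \Cref{Thm:invariance}.

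To certify that infinitely many of the \( K_n \) are non-trivial and pairwise distinct, one appeals to an auxiliary invariant lying outside doubled Khovanov homology. The natural choice is the generalised Alexander polynomial \( \Delta_0 \), which the paper has already invoked to certify non-classicality of \( 3.7 \) (the \( n = 1 \) case of the construction). Because \( K_n \) is, up to virtual Reidemeister moves, a virtual connect sum of \( n \) copies of \( 3.7 \), multiplicativity of \( \Delta_0 \) under this operation forces \( \Delta_0 ( K_n ) \) to assume infinitely many distinct non-zero values as \( n \) varies; in particular each \( K_n \) is non-trivial and \( K_n \neq K_m \) for \( n \neq m \). As an alternative distinguishing invariant, one could instead count odd crossings of a reduced Gauss diagram, which grows linearly with \( n \).

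The main obstacle is precisely this distinctness step: the doubled Khovanov homology of every \( K_n \) is identical by design, so one cannot use \( \dkh \) to separate them. Some external invariant — \( \Delta_0 \) being the cleanest, as it directly parallels the paper's own treatment of \( 3.7 \) — must be brought in to rule out collapse of the family under the virtual Reidemeister moves.
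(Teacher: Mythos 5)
Your overall structure — iterate flanking on unknot diagrams, invoke the preceding proposition to get \( \cdkh(D'_n) = \cdkh(D_n) \), then separate the resulting knots with an external invariant — mirrors the paper's. But the paper simply cites \cite{Dye2005,Kauffman1998,Silver2004} for the existence of infinitely many pairwise-distinct non-trivial virtual knots with unit Jones polynomial produced by flanking, whereas you try to supply the distinctness step yourself, and that step has real gaps.

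First, the explicit construction you propose (flanking \( n \) Reidemeister~I kinks on an unknot diagram) is never verified to yield non-trivial knots, and the paper's own example gives reason for concern: \( 3.7 \) is obtained by flanking a three-crossing unknot diagram, not a one-crossing kink, so your \( n=1 \) case is \emph{not} \( 3.7 \). A flanked kink is a strand with only virtual crossings crossing a short arc, and such configurations tend to simplify by detour moves; without an explicit invariant computation you have no guarantee the \( K_n \) aren't all the unknot.

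Second, the multiplicativity of the generalised Alexander polynomial \( \Delta_0 \) under virtual connect sum is asserted without justification, and it is dubious: connect sum of virtual knots is not well-defined (as the paper itself emphasises in \Cref{Subsec:trivialdiagrams}), so ``the'' connect sum of \( n \) copies of \( 3.7 \) is not a single object, and it is unclear which connect sum your \( K_n \) would realise even if it were a connect sum. Moreover, were \( \Delta_0 \) multiplicative, then since \( \Delta_0 \) vanishes on the unknot, any connect sum with an unknot diagram would force \( \Delta_0 = 0 \), which conflicts with the fact that some such connect sums return the original (non-classical) knot.

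Third, the fallback invariant you suggest — counting odd crossings — cannot work for \emph{any} flanking-based family. Flanking adds only virtual crossings, which do not appear in the Gauss diagram, so \( G(D'_n) = G(D_n) \); and \( D_n \) is a classical unknot diagram, for which every crossing is even. Hence \( J(K_n) = 0 \) for all \( n \). This is also forced by the paper's own machinery: since \( \cdkh(D'_n) = \cdkh(D_n) \) entails \( \cdkh'(D'_n) = \cdkh'(D_n) \), one gets \( s_2(K_n) = 0 \), and \( s_2 = J \) by \Cref{Prop:s2odd}. The correct fix is the paper's: appeal to the cited literature for non-triviality and pairwise distinctness of the flanked family, rather than attempting to prove it in situ.
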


\begin{proof}
	There is an infinite number of non-trivial virtual knot diagrams with unit Jones polynomial, produced via flanking \cite{Dye2005,Kauffman1998,Silver2004}. Each of these knots must also have the doubled Khovanov homology of the unknot.
\end{proof}

\section{Doubled Lee homology}
\label{Sec:leehom}
In \Cref{Subsec:dleedef} we define doubled Lee homology and prove some of its properties, and in \Cref{Subsec:cobordisms} we investigate aspects of the functorial nature of the theory.

\subsection{Definition}
\label{Subsec:dleedef}
The reader may have noticed that there are generators of the homologies depicted in \Cref{Fig:21} and \Cref{Fig:vH} which are \( 4 \) apart in quantum degree. Quantum degree separations of length \( 4 \) are important in classical Khovanov homology; Lee's perturbation of Khovanov homology \cite{Lee2005} is defined by adding to the differential a component of degree \( 4 \). Such a perturbation of doubled Khovanov homology exists also.

\begin{definition}[Doubled Lee homology]
	\label{Def:dkhprime}
	Let \( D \) be an oriented virtual link diagram and \( \cdkh ' ( D ) \) denote the chain complex with the chain spaces of \( \cdkh ( D ) \) but with altered differential, and \( \mathcal{R} = \Q \). The components of this differential are as follows
	\begin{equation*}
	\begin{aligned}
	m'( v^{\text{\emph{u/l}}}_+ \otimes v^{\text{\emph{u/l}}}_+ ) & = v^{\text{\emph{u/l}}}_+ \qquad &\Delta' ( v^{\text{\emph{u/l}}}_+ ) & = v^{\text{\emph{u/l}}}_+ \otimes v^{\text{\emph{u/l}}}_-  + v^{\text{\emph{u/l}}}_- \otimes v^{\text{\emph{u/l}}}_+ \notag \\
	m'( v^{\text{\emph{u/l}}}_+ \otimes v^{\text{\emph{u/l}}}_- ) &= m'( v^{\text{\emph{u/l}}}_- \otimes v^{\text{\emph{u/l}}}_+ ) = v^{\text{\emph{u/l}}}_- \qquad &\Delta'  ( v^{\text{\emph{u/l}}}_- ) & = v^{\text{\emph{u/l}}}_- \otimes v^{\text{\emph{u/l}}}_- + v^{\text{\emph{u/l}}}_+ \otimes v^{\text{\emph{u/l}}}_+ \\
	m'(v^{\text{\emph{u/l}}}_- \otimes v^{\text{\emph{u/l}}}_- ) & = v^{\text{\emph{u/l}}}_+ & & \notag
	\end{aligned}
	\end{equation*}
	and
	\begin{equation*}
	\begin{aligned}
	\eta' ( v^{\text{\emph{u}}}_+ ) & = v^{\text{\emph{u}}}_- \qquad & \eta' ( v^{\text{\emph{l}}}_+ ) & = 2 v^{\text{\emph{u}}}_- \\
	\eta' ( v^{\text{\emph{u}}}_- ) & = v^{\text{\emph{l}}}_- \qquad & \eta' ( v^{\text{\emph{l}}}_- ) & = 2 v^{\text{\emph{u}}}_+.
	\end{aligned}
	\end{equation*}
	The above maps are no longer graded, but filtered (as in the classical case). That \( \cdkh' ( D ) \) is a chain complex is verified as in \Cref{Prop:chaincom}. Setting \( \dkh' ( D ) \) to be the homology of \( \cdkh' ( D ) \), define the doubled Lee homology of \( L \)
	\begin{equation*}
	\dkh' ( L ) \coloneqq \dkh' ( D )
	\end{equation*}
	where \( L \) is the link represented by \( D \).\CloseDef
\end{definition}

\begin{theorem}
	For a virtual link diagram \( D \), \( \dkh ' ( D ) \) is an invariant of the link represented by \( D \).
\end{theorem}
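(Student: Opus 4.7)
The plan is to mirror the proof of \Cref{Thm:invariance} for undoubled Khovanov homology, but working in the filtered rather than graded setting. First I would check that \( \cdkh' ( D ) \) really is a chain complex: this reduces, just as in \Cref{Prop:chaincom}, to verifying that the four kinds of mixed face containing an \( \eta' \) edge (\( \eta' \eta' \), \( \eta' m' \), \( \eta' \Delta' \), and the forbidden \( \eta' m' / \Delta' \eta' \) configuration) either commute or do not occur, and then distributing signs exactly as before. The extra nonzero summands in \( m' \), \( \Delta' \), \( \eta' \) relative to the unperturbed maps only add further terms to check; each is a short, direct calculation on the generators \( v^{\text{u}}_\pm, v^{\text{l}}_\pm \).

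For invariance under the purely virtual Reidemeister moves and the mixed move (cf.~\Cref{Fig:vrm}), the argument used in \Cref{Thm:invariance} carries over verbatim: these moves do not change the set of smoothings, nor the cycle counts, nor any of the incoming/outgoing arrows in the cube, so the chain complex \( \cdkh' ( D ) \) is literally unchanged. Nothing about the perturbation affects this step.

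For the three classical Reidemeister moves I would follow Bar-Natan's strategy \cite{Bar-Natan2002} and apply Gauss elimination as formalised in \cite[Lemma 3.2]{Bar-Natan2014}. Here is where the perturbation must be handled with care: the isomorphisms and homotopies produced by Gauss elimination on the Khovanov cube are built out of the local pieces of the differential, so one has to confirm that the extra terms in \( m' \) and \( \Delta' \) (the \( v_- \otimes v_- \mapsto v_+ \) term and the \( v_- \mapsto v_- \otimes v_- + v_+ \otimes v_+ \) term), together with the full \( \eta' \) map, still produce cancellations that yield chain equivalences of the same shape as in the unperturbed case. The standard trick is to split the differential as \( d' = d + \Phi \), where \( d \) is the doubled Khovanov differential and \( \Phi \) is the (filtration-decreasing) perturbation, run the Bar-Natan cancellation for \( d \), and then note that the resulting chain homotopy equivalence is automatically filtered and carries \( \Phi \) along consistently because Gauss elimination only uses the existence of an invertible component, not the grading.

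The main obstacle I expect is bookkeeping around the \( \eta' \) map in the R2 and R3 arguments, since \( \eta' \) has no classical analogue and appears in the cube whenever a single-cycle smoothing occurs near the crossings involved in the move. One has to verify that in each local tangle used in R2 and R3, the single-cycle bifurcations produced by the tangle smoothings match up under the candidate chain equivalence, and that the sign conventions chosen to anticommute the faces of \Cref{Prop:chaincom} remain compatible after cancellation. Once this local check is done, the rest of the argument is routine and the filtered homotopy equivalence class of \( \cdkh' ( D ) \) — and hence \( \dkh' ( D ) \) — is an invariant of the link represented by \( D \).
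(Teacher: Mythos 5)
The paper states this theorem without any proof, implicitly deferring to the same strategy as \Cref{Thm:invariance} (and to Bar-Natan's approach and \cite[Lemma 3.2]{Bar-Natan2014}), so there is nothing explicit to compare against; your proposal is a reasonable reconstruction of the proof the paper has in mind, and the overall structure (chain-complex check, trivial invariance under virtual moves, Gauss elimination for the classical moves) is the right one.

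One place where your phrasing is imprecise and should be tightened: you say you would ``run the Bar-Natan cancellation for $d$, and then note that the resulting chain homotopy equivalence\dots carries $\Phi$ along consistently.'' Gauss elimination applied to $(C, d)$ and Gauss elimination applied to $(C, d' = d + \Phi)$ do not produce the same smaller complex or the same homotopy equivalence, so one cannot literally run cancellation on $d$ and then transport $\Phi$. The correct (and standard) way to say what you intend is: run Gauss elimination directly on $d'$, at the same component of the differential one would use in the unperturbed case. That component is $d_{ij} + \Phi_{ij}$; since $d_{ij}$ is an isomorphism and $\Phi_{ij}$ strictly lowers the (bounded) quantum filtration, $d_{ij}^{-1}\Phi_{ij}$ is nilpotent and hence $d_{ij} + \Phi_{ij}$ is again invertible, with filtered inverse. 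All the maps produced by the cancellation lemma (inclusion, projection, and the zig-zag correction built from $d'$ and $(d'_{ij})^{-1}$) are then themselves filtered, so one obtains a \emph{filtered} chain homotopy equivalence of the perturbed complexes, which is what invariance of $\dkh'$ requires. With that correction your argument is complete; the other steps (verifying the $\eta'$ faces commute up to sign, literal equality under the purely virtual and mixed moves, and matching the single-cycle bifurcations locally in the R2/R3 tangles) are as you describe.
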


As in the classical case, doubled Khovanov homology and doubled Lee homology are related in the following manner.
\begin{theorem}
	For any virtual link \( L \) there is a spectral sequence with \(E_2\) page \( \dkh ( L ) \) converging to \( \dkh ' ( L ) \).
\end{theorem}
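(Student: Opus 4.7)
The plan is to adapt the classical Lee spectral sequence construction to the doubled setting. I would equip $\cdkh'(L)$ with a decreasing filtration indexed by quantum grading, setting $F^p \cdkh'(L)$ to be the submodule generated by elements of quantum degree at least $p$. Since the complex is finitely generated with bounded quantum support, this filtration is bounded and the associated spectral sequence converges automatically.

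The central step is to verify that the doubled Lee differential respects this filtration. Writing $d' = d + \phi$ with $d$ the doubled Khovanov differential of \Cref{Def:cdkh}, one compares the formulas in \Cref{Def:dkhprime} with those in \Cref{Eq:diffcomp} and \Cref{Eq:etamap}. The perturbation $\phi$ collects the newly introduced terms: the merge $v^{\text{u/l}}_- \otimes v^{\text{u/l}}_- \mapsto v^{\text{u/l}}_+$, the comultiplication $v^{\text{u/l}}_- \mapsto v^{\text{u/l}}_+ \otimes v^{\text{u/l}}_+$, and the altered $\eta'$ components on $v^{\text{u}}_+$ and $v^{\text{l}}_-$. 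A direct computation of quantum degrees, tracking the $\{-1\}$ shift of the lower summand in \Cref{Eq:tqft} together with the global shift in \Cref{Eq:doubled}, shows that $\phi$ strictly raises the quantum grading, so that $d'(F^p) \subseteq F^p$.

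On the associated graded complex only the quantum-degree-preserving pieces of $d'$ survive, namely the components of $d$; the associated graded is therefore $\cdkh(L)$, whose homology is $\dkh(L)$. This recovers the indicated page of the spectral sequence, and convergence to $\dkh'(L)$ then follows from the standard result for bounded filtered chain complexes. (The $E_2$ indexing, as opposed to the $E_1$ one finds in some classical treatments, reflects the convention that the Khovanov differential is recovered only after one step of the spectral sequence on the associated graded.)

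The main technical obstacle is the bookkeeping for $\eta'$: the interleaving of the upper and lower summands, combined with the $\{-1\}$ shift on the lower summand and the per-vertex homological position, makes the filtration behavior considerably more delicate than in the classical Lee argument, where only the purely algebraic merge and comultiplication perturbations appear. One must check case by case that each newly introduced component of $\eta'$ lands in a strictly higher filtration level than its input, for every quadruple (input in upper/lower summand, output in upper/lower summand).
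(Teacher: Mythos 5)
The paper states this theorem without proof, so there is nothing to compare against; your filtered-complex argument is the natural route and is in essence correct. However, the load-bearing step — that the perturbation $\phi = d' - d$ strictly raises the quantum grading — is asserted rather than checked, and if you actually carry out the case-by-case verification you flag in your last paragraph \emph{using the formulas as printed in \Cref{Def:dkhprime}}, it fails. With $p(\vup)=1$, $p(\vum)=-1$, $p(\vlp)=0$, $p(\vlm)=-2$, the printed map $\eta'(\vup)=\vum$ has $\Delta p = -2$, hence $\Delta j = -1$ along the cube edge: it \emph{lowers} the filtration level, so $d'(F^p)\not\subseteq F^p$. Worse, the printed $\eta'$ has dropped the degree-$0$ component $\vup\mapsto\vlp$ that $\eta$ has, so the associated graded differential would not even equal the doubled Khovanov differential, and the $E_1$ page would not be $\dkh(L)$.

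The resolution is that \Cref{Def:dkhprime} almost certainly contains a typo: one should have $\eta'(\vup) = \vlp$, not $\vum$. This is forced by \Cref{Eq:etaprime}, which asserts $\eta'(r^{\text{u}}) = r^{\text{l}}$ and $\eta'(g^{\text{u}}) = g^{\text{l}}$; a direct expansion with $r = (v_+ + v_-)/2$ and the printed $\eta'(\vup)=\vum$ gives $\eta'(r^{\text{u}}) = (\vum + \vlm)/2 \neq r^{\text{l}}$, whereas $\eta'(\vup)=\vlp$ recovers exactly \Cref{Eq:etaprime}. With this correction the only new $\eta$-component is $\vlm \mapsto 2\vup$, and one checks that every new component of $m'$, $\Delta'$, $\eta'$ raises $j$ by exactly $4$ (the perturbation is homogeneous, not merely filtered), so the filtration argument goes through cleanly. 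You should make this correction explicit rather than leaving the $\eta'$ bookkeeping as an unexamined caveat, since as stated the check does not go through. One further small point: your parenthetical justification of the $E_2$ indexing (that one ``recovers the Khovanov differential only after one step'') does not really hold here, since unlike the classical case the doubled complex is supported in both quantum parities, so the re-indexing that makes Lee's $E_2$ statement natural is not available; it is cleaner to simply note that the degree-$4$ perturbation forces $E_1 = E_2 = E_3 = E_4 = \dkh(L)$ with the first possibly nonzero higher differential at the fourth page.
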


The rank of the classical Lee homology of a link depends only on the number of its components. Precisely, for a classical link \( L_c \)
\begin{equation}
\label{Eq:clee1}
\text{rank} \left( Kh' ( L_c ) \right) = 2^{|L_c|}
\end{equation}
where \( |L_c| \) denotes the number of components of \( L_c \) and \( Kh' ( L_c ) \) its classical Lee homology. In fact, \Cref{Eq:clee1} follows from the following two statements \cite{Bar-Natan2006}:
\begin{equation}
\label{Eq:clee2}
\text{rank} \left( Kh' ( L_c ) \right) = \left| \left\lbrace \text{alternately coloured smoothings of}~ L_c \right\rbrace \right|
\end{equation}
and
\begin{equation}
\label{Eq:clee3}
\left\{ \text{alternately coloured smoothings of}~ L_c \right\} = \left\{ \text{orientations of}~ L_c \right\}
\end{equation}
given the following definition:
\begin{definition}
	\label{Def:ac}
	A smoothing of a virtual link diagram is \emph{alternately coloured} if its cycles are coloured exactly one of two colours in such a way that in a neighbourhood of each classical crossing the two incident arcs are different colours. A smoothing which can be coloured in such a way is known as an \emph{alternately colourable}.\CloseDef
\end{definition}
(Any potential issue raised by the fact that \Cref{Def:ac} regards diagrams while \Cref{Eq:clee2,Eq:clee3} are statements about links is resolved by \Cref{Thm:leerank}, which shows that the number of alternately coloured smoothings is a link invariant.)

In the virtual case we recover \Cref{Eq:clee2} (up to a scalar) but not \Cref{Eq:clee3}.
\begin{theorem}
	\label{Thm:leerank}
	Given a virtual link \( L \)
	\begin{equation*}
	\text{rank} \left( \dkh' ( L ) \right) = 2 \left| \left\{ \text{alternately coloured smoothings of}~ L \right\} \right |.
	\end{equation*}
\end{theorem}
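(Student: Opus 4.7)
The strategy is to adapt Lee's classical argument (in Bar-Natan's reformulation) to the doubled setting. Working over $\mathbb{Q}$, introduce on each chain space the change of basis $a^\epsilon := v^\epsilon_+ + v^\epsilon_-$, $b^\epsilon := v^\epsilon_+ - v^\epsilon_-$ for $\epsilon \in \{\text{u}, \text{l}\}$. In this basis, $m'$ and $\Delta'$ act independently on the upper and lower summands with the standard Lee idempotent relations
\[
m'(a^\epsilon \otimes a^\epsilon) = 2a^\epsilon,\ m'(b^\epsilon \otimes b^\epsilon) = 2b^\epsilon,\ m'(a^\epsilon \otimes b^\epsilon) = 0,\ \Delta'(a^\epsilon) = a^\epsilon \otimes a^\epsilon,\ \Delta'(b^\epsilon) = -b^\epsilon \otimes b^\epsilon,
\]
mirroring the decomposition $\mathbb{Q}[X]/(X^2 - 1) \cong \mathbb{Q} \times \mathbb{Q}$ underlying classical Lee.

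For each alternately coloured smoothing $(s, c)$ and each $\epsilon \in \{\text{u}, \text{l}\}$, define a canonical chain element
\[
\mathfrak{s}^\epsilon_{(s, c)} := \bigotimes_{\gamma \in s} w^\epsilon_{c(\gamma)}, \qquad w^\epsilon_+ := a^\epsilon,\ w^\epsilon_- := b^\epsilon,
\]
yielding $2|\{(s, c)\}|$ generators. The key observation is that \emph{no} $\eta'$-edge of $\cdkh'(L)$ is incident to an alternately coloured smoothing: a $1$-to-$1$ bifurcation at a classical crossing $x$ forces the two arcs at $x$ to lie on the same cycle in the ambient smoothing and hence to share a colour, violating the alternate colouring condition. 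Moreover, every $0$-resolution classical crossing in an alternately coloured smoothing is a $2$-to-$1$ merge of two differently coloured cycles, and $m'$ sends $a^\epsilon \otimes b^\epsilon \mapsto 0$. Hence each $\mathfrak{s}^\epsilon_{(s, c)}$ is annihilated by the out-differential.

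Next I would show that these $2|\{(s, c)\}|$ cycles form a $\mathbb{Q}$-basis of $\dkh'(L)$. Linear independence at the chain level is immediate since distinct canonical generators are supported on disjoint direct summands (different smoothings $s$, or different $\epsilon$). Spanning follows from a Gauss elimination on the cube of resolutions in the $\{a^\epsilon, b^\epsilon\}$ basis, the doubled analogue of Bar-Natan's reduction of classical Lee: the $m'$ and $\Delta'$ components act independently on upper and lower copies as in classical Lee, and one iteratively cancels contractible pairs at non-alternately-coloured smoothings, leaving precisely the canonical generators as survivors.

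The principal obstacle is controlling the $\eta'$ component during this reduction. Unlike $m'$ and $\Delta'$, it mixes the upper and lower summands and fails to diagonalise over $\mathbb{Q}$: its characteristic polynomial on a single-cycle chain space is $\lambda(\lambda^3 - 2)$, whose non-zero factor is irreducible. My plan is to exploit the isolation observation above: since every $\eta'$-edge lies entirely in the non-alternately-coloured portion of the cube, it enters the Gauss elimination only among generators already scheduled for cancellation. A careful bookkeeping, for instance by filtering the complex by the number of $1$-to-$1$ crossings placed at $0$-resolution, should show that each $\eta'$-edge pairs previously uncancelled upper and lower generators at non-alternately-coloured smoothings, completing their elimination without disturbing the canonical survivors and so delivering the asserted rank.
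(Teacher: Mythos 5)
Your architecture is essentially the paper's: the red/green-type basis (your $a^\epsilon, b^\epsilon$ are $2r^\epsilon, 2g^\epsilon$ in the notation of \Cref{Def:redgreen}), the canonical generators $\sg^\epsilon_{(s,c)}$ attached to alternately coloured smoothings, the observation that no $\eta'$-edge is incident to such a smoothing, and Gauss elimination to finish. There are, however, two genuine gaps. First, the ``principal obstacle'' you identify---that $\eta'$ fails to be invertible, with characteristic polynomial $\lambda(\lambda^3-2)$---is an artefact of a typographical slip in \Cref{Def:dkhprime}: the line $\eta'(v^{\text{u}}_+) = v^{\text{u}}_-$ should read $\eta'(v^{\text{u}}_+) = v^{\text{l}}_+$, as one sees by comparing with the degree structure, with the unprimed $\eta$ in \Cref{Eq:etamap}, and with \Cref{Eq:etaprime}. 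In the $\{r,g\}$ basis one then has $\eta'(r^{\text{u}}) = r^{\text{l}}$, $\eta'(r^{\text{l}}) = 2r^{\text{u}}$, $\eta'(g^{\text{u}}) = g^{\text{l}}$, $\eta'(g^{\text{l}}) = -2g^{\text{u}}$, so $\eta'$ is an \emph{isomorphism} over $\mathbb{Q}$ with characteristic polynomial $\lambda^4-4$. Far from being an obstacle, the invertibility of $\eta'$ at a single-cycle bifurcation is exactly what lets you cancel non-alternately-coloured generators, and the paper's argument relies on it; your bookkeeping plan is aimed at a problem that is not there, and the elimination is in any case only sketched, not carried out.

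Second, your linear-independence step does not close. You assert that distinct canonical generators ``are supported on disjoint direct summands (different smoothings $s$, or different $\epsilon$),'' but two alternate colourings $c_1 \neq c_2$ of the \emph{same} underlying smoothing $s$ (with the same $\epsilon$) give elements in the same summand of $\cdkh'(L)$. Chain-level independence there is fine---they are distinct tensor monomials in $\{a,b\}$---but that by itself does not show $[\sg^\epsilon_{(s,c_1)}]\neq[\sg^\epsilon_{(s,c_2)}]$ in homology. The paper handles this case by marking a point at which $c_1$ and $c_2$ assign opposite colours and invoking the $\mathcal{A}$-module action of \Cref{Lem:action}: the two classes are eigenvectors of the action of $v_-$ with eigenvalues $+1$ and $-1$ respectively, hence distinct. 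You would need either to reproduce that argument, or to push the Gauss elimination through to a complex with zero differential whose surviving generators are exactly the $\sg^\epsilon_{(s,c)}$, in which case homology-level independence is automatic---but then the elimination must actually be executed, including checking that the zig-zag corrections it produces do not introduce nontrivial differentials among the surviving alternately coloured generators.
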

We postpone stating the virtual generalisation of \Cref{Eq:clee3} until we have proved \Cref{Thm:leerank}, for which we require the following analogue of a classical result.
\begin{lemma}
	\label{Lem:action}
	Let \( D \) be a diagram of a virtual link \(L\). There is an action of \( \mathcal{A} \) on \( \cdkh' ( D ) \) which descends to an action on \( \dkh' ( L ) \).
\end{lemma}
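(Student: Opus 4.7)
The plan is to follow the classical basepoint construction: fix a point $p$ on $D$ and define the action of $X \in \mathcal{A}$ to be an explicit $\mathcal{R}$-linear endomorphism $\xi_p$ of $\cdkh'(D)$. In every smoothing of $D$ the point $p$ lies on a unique cycle, which we call the \emph{marked cycle}, and $\xi_p$ will act on the tensor factor indexed by the marked cycle via some map $\xi \colon \mathcal{A}\oplus\mathcal{A}\{-1\}\to\mathcal{A}\oplus\mathcal{A}\{-1\}$, and as the identity on the remaining tensor factors. The initial candidate for $\xi$ is modelled on classical Lee theory, namely Lee--Frobenius multiplication by $v_-$ performed on each summand separately.

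Verifying that $\xi_p$ is a chain map reduces to checking commutation with each of the three components of the differential. For $m'$ and $\Delta'$, both preserve the upper/lower decomposition and act via the Lee--Frobenius multiplication and comultiplication on each summand separately, so commutation is identical to the classical Lee computation and follows from the fact that left multiplication in a commutative Frobenius algebra commutes with the structure maps. For $\eta'$ two subcases arise: if the marked cycle is not the cycle undergoing the single-cycle bifurcation then $\xi_p$ and $\eta'$ act on disjoint tensor factors and commute automatically; if the marked cycle \emph{is} the bifurcating cycle then commutation reduces to a direct check on the four generators $v^u_+, v^u_-, v^l_+, v^l_-$ of $\mathcal{A}\oplus\mathcal{A}\{-1\}$.

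The hard part will be this final check. The map $\eta'$ of \Cref{Eq:etamap} is asymmetric in its treatment of the upper and lower summands and carries scalar factors of $2$, and so the naive candidate $\xi$ fails to commute with $\eta'$ on every generator---for instance, with $\xi$ the Lee-style multiplication by $v_-$ one computes $\xi\eta'(v^u_+) = v^u_+$ whereas $\eta'\xi(v^u_+) = v^l_-$. The resolution is to modify $\xi$ by a controlled mixing between the upper and lower summands, dictated by the explicit formulas for $\eta'$; producing a valid $\xi$ and verifying that the resulting $\xi_p$ satisfies the algebra relation imposed by $\mathcal{A}=\mathcal{R}[X]/X^2$ is the technical core of the proof.

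Once $\xi_p$ is established as a chain endomorphism of the required form, the assignment $X\mapsto\xi_p$ and $1\mapsto\mathrm{id}$ extends $\mathcal{R}$-linearly to the desired $\mathcal{A}$-action on $\cdkh'(D)$. Because $\xi_p$ commutes with the differential, this action passes to an action on homology, yielding the claimed action on $\dkh'(L)$; independence of the choice of basepoint, if subsequently required, will follow from the standard cobordism/chain-homotopy argument comparing the actions associated to basepoints joined by an arc.
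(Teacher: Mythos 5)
Your strategy---mark a basepoint, act on the marked tensor factor in each of the upper and lower summands by Lee--Frobenius multiplication, and check commutation with each edge map---is precisely what the paper does. The obstacle you then hit, and the modification you propose in response, are both artifacts of a typographical error in \Cref{Def:dkhprime}: the printed value $\eta'(v^{\mathrm{u}}_+)=v^{\mathrm{u}}_-$ is inconsistent with the filtration ($\eta'$ must be the sum of a $p$-degree $-1$ piece, namely $\eta$, and a $p$-degree $+3$ perturbation, while $v^{\mathrm{u}}_-$ sits in degree $-1$, two below $v^{\mathrm{u}}_+$), and also with the face relation $(\eta')^2=m'\circ\Delta'$ required by \Cref{Prop:chaincom}, since with it $(\eta')^2(v^{\mathrm{u}}_+)=\eta'(v^{\mathrm{u}}_-)=v^{\mathrm{l}}_-$ whereas $m'\Delta'(v^{\mathrm{u}}_+)=2v^{\mathrm{u}}_-$. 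The correct value is $\eta'(v^{\mathrm{u}}_+)=v^{\mathrm{l}}_+$, the same as $\eta$; the only entry where the perturbation is nontrivial is $\eta'(v^{\mathrm{l}}_-)=2v^{\mathrm{u}}_+$.

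With that correction your naive candidate $\xi$ already commutes with $\eta'$ on all four generators. For instance $\xi\eta'(v^{\mathrm{u}}_+)=\xi(v^{\mathrm{l}}_+)=v^{\mathrm{l}}_-=\eta'(v^{\mathrm{u}}_-)=\eta'\xi(v^{\mathrm{u}}_+)$, and the factor of $2$ in $\eta'(v^{\mathrm{l}}_\pm)$ passes through unchanged on both sides of each identity. No mixing between the upper and lower summands is needed, and none is used in the paper: the action is simply multiplication by $s$ on the marked tensor factor of each summand. One further caveat: your closing check that $\xi_p$ satisfy the relation $X^2=0$ of $\mathcal{A}=\mathcal{R}[X]/X^2$ would also fail, and should. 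The multiplication used is $m'$, under which $v_-\cdot v_-=v_+$, so $\xi_p^2=\mathrm{id}$; the action is really one of the Lee-deformed algebra $\mathcal{R}[X]/(X^2-1)$, and the phrasing of \Cref{Lem:action} is a mild abuse of notation. What is actually needed downstream in the proof of \Cref{Thm:leerank} is only that $r$ and $g$ are $\xi_p$-eigenvectors of eigenvalue $+1$ and $-1$ respectively, which your construction already gives.
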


\begin{proof}
	Given a virtual link diagram \( D \) define an action of \( \mathcal{A} \) on \( \cdkh' (D) \) in the following manner: mark a point on \( D \) and maintain it across the smoothings of \( D \). The action \( \mathcal{A} \times \cdkh' (D) \rightarrow \cdkh' (D) \) is given by
	\begin{equation*}
	\begin{aligned}
	s \cdot \left( \left( x_1 \otimes x_2 \otimes \ldots \otimes x_n \right)^\text{u} + \left( x_1 \otimes x_2 \otimes \ldots \otimes x_n \right)^\text{l} \right) = &\left( x_1 \otimes x_2 \otimes \ldots \otimes s {\small\cdot} x_i \otimes \ldots \otimes x_n \right)^\text{u} + \\ &\left( x_1 \otimes x_2 \otimes \ldots \otimes s {\small\cdot} x_i \otimes \ldots \otimes x_n \right)^\text{l}
	\end{aligned}
	\end{equation*}
	where the \( i \)-th cycle is marked (component-wise multiplication \( \cdot : \mathcal{A} \times \mathcal{A} \rightarrow \mathcal{A} \) is given by \( m' \)). Clearly this action endows \( \cdkh' ( D ) \) with the structure of an \( \mathcal{A} \)-module. To show that \( \dkh' ( D ) \) is also an \( \mathcal{A} \)-module it suffices to show that the action defined above commutes with the differential. We verify this in the case of \( m' \) and multiplication by \( v_- \), with the marked point on the cycle corresponding to the first tensor factor:
	\begin{center}
		\includegraphics[scale=1]{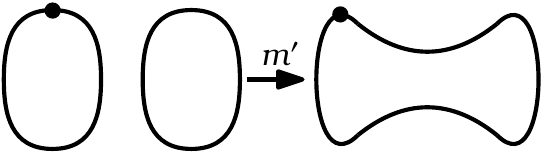}
	\end{center}
	\begin{equation*}
	\begin{aligned}
	v_- \cdot m' ( ( v_+ \otimes v_+ )^\text{u/l} ) = v_- \cdot \vulp &= m' ( ( v_- \otimes v_+ )^\text{u/l} ) = m' ( ((v_- \cdot v_+) \otimes v_+ )^\text{u/l} ) \\
	v_- \cdot m' ( ( v_+ \otimes v_- )^\text{u/l} ) = v_- \cdot \vulm &=  m' ( ( v_- \otimes v_- )^\text{u/l} ) = m' ( ((v_- \cdot v_+) \otimes v_- )^\text{u/l} ) \\
	v_- \cdot m' ( ( v_- \otimes v_+ )^\text{u/l} ) = v_- \cdot \vulm &=  m' ( ( v_- \otimes v_- )^\text{u/l} ) = m' ( ((v_- \cdot v_-) \otimes v_+ )^\text{u/l} ) \\
	v_- \cdot m' ( ( v_- \otimes v_- )^\text{u/l} ) = v_- \cdot \vulp &=  m' ( ( v_- \otimes v_p )^\text{u/l} ) = m' ( ((v_- \cdot v_-) \otimes v_- )^\text{u/l} )
	\end{aligned}
	\end{equation*}
	as required. The other cases are left to the reader.
\end{proof}

Further, we define a new basis for \( \mathcal{A} \): the familiar ``red'' and ``green'' basis first given by Bar-Natan and Morrison.
\begin{definition}
	\label{Def:redgreen}
	Let \( \lbrace r, g \rbrace \) be the basis for \( \mathcal{A} \) where
	\begin{equation*}
	\begin{aligned}
	\text{``red''} &= r = \dfrac{v_+ + v_-}{2} \\
	\text{``green''} &= g = \dfrac{v_+ - v_-}{2}.
	\end{aligned}
	\end{equation*}
	We denote the corresponding generators of \( \mathcal{A} \oplus \mathcal{A}\lbrace -1 \rbrace \) as \( r^\text{u} \), \( r^\text{l} \), \( g^\text{u} \), and \( g^\text{l} \).\CloseDef
\end{definition}
We denote which generator a cycle of a smoothing is labelled with by colouring that cycle either red or green. Thus alternately coloured smoothings are such that given any two cycles which share a crossing (i.e.\ they pass through the same crossing neighbourhood) one is coloured red and the other green.

We shall use the following definition in the remainder of this work.
\begin{definition}
	\label{Def:height}
	Let \( D \) be an oriented virtual link diagram with \( n_- \) negative classical crossings, and \( \mathscr{S} \) an alternately coloured smoothing in which \( m \) classical crossings (positive or negative) resolved into their \(1\)-resolution. Define the \emph{height} of \(\mathscr{S} \) to be \( | \mathscr{S} | \coloneqq m - n_- \). Of course, if \( \sg^{\text{u/l}} \) are the alternately coloured generators assigned to \( \mathscr{S} \) then \( | \mathscr{S} | = i ( \sg^{\text{u/l}} ) \).\CloseDef
\end{definition}

\begin{proof}[Proof of \Cref{Thm:leerank}]
	Let \( D \) be a diagram of \( L \) and \( \mathscr{S} \) be an alternately coloured smoothing of \( D \), with cycles coloured either red or green, and \( \sg^\text{u} \) be the algebraic element given by
	\begin{equation}
	\label{Eq:acgenerators}
	\sg^\text{u} = \bigotimes_{\text{cycles of}~S} \square^{\text{u}}_i
	\end{equation}
	where
	\begin{equation*}
	\square^\text{u}_i = \left\lbrace \begin{array}{c}
	r^u, ~\text{if the}~ i \text{-th cycle is coloured red} \\
	g^u, ~\text{if the}~ i \text{-th cycle is coloured green} 
	\end{array} \right.
	\end{equation*}
	and likewise define \( \sg^\text{l} \), so that to each alternately coloured smoothing we associate two algebraic objects. We refer to such \( \sg^\text{u/l} \)'s as \emph{alternately coloured generators}, a term we justify with the following steps: we shall show that such elements are homologically non-trivial and distinct, and that they do indeed generate \( \dkh'(L) \).
	
	First notice that alternately coloured smoothings have restricted incoming and outgoing differentials: if a smoothing has an \( \eta' \) map either incoming or outgoing then it must have a crossing neighbourhood with only one cycle passing through it. Such a crossing neighbourhood cannot satisfy the alternately coloured condition. Likewise, if a smoothing has an incoming \( m' \) map or an outgoing \( \Delta' \) map it must have a crossing neighbourhood with only one cycle passing through. Thus an alternately coloured smoothing has only incoming \( \Delta' \) maps and outgoing \( m' \) maps and homological non-triviality of the associated \( \sg^\text{u/l} \) is equivalent to \( \sg^\text{u/l} \in \text{ker} ( m' ) \) and \( \sg^\text{u/l} \notin \text{im} ( \Delta' ) \). With respect to the \( \lbrace r, g \rbrace \) basis we have
	\begin{equation}
	\label{Eq:rgmaps}
	\begin{aligned}
	m' ( r \otimes r ) &= r \qquad &\Delta' ( r ) &= 2r \otimes r \\
	m' ( g \otimes g ) &= g \qquad &\Delta' ( g ) &= -2g \otimes g \\
	m' ( r \otimes g ) &= m' ( g \otimes r ) = 0 & &\\
	\end{aligned}
	\end{equation}
	so that clearly \( [ \sg^\text{u/l} ] \neq 0 \in \dkh' ( L ) \).
	
	Let \( \mathscr{S}_1 \) and \( \mathscr{S}_2 \) be two alternately coloured smoothings of \( L \) and \( \sg^\text{u/l}_1 \), \( \sg^\text{u/l}_2 \) their associated alternately coloured generators. Notice that it is possible that \( \mathscr{S}_1 \) and \( \mathscr{S}_2 \) are alternately coloured smoothings associated to the same uncoloured smoothing of \( L \). We shall consider the two cases: \( (i) \) \( \mathscr{S}_1 \) and \( \mathscr{S}_2 \) are not alternately coloured smoothings associated to the same uncoloured smoothing of \( D \) and \( (ii) \) they are.
	
	\noindent\emph{\( (i) \)}: It is possible that \( \mathscr{S}_1 \) and \( \mathscr{S}_2 \) are at different height (that is, they have a different number of \( 1 \)-resolutions). Then \( [ \sg^\text{u/l}_1 ] \neq  [ \sg^\text{u/l}_2 ] \) as they are of differing homological grading. If \( \mathscr{S}_1 \) and \( \mathscr{S}_2 \) are at the same height, \( i \), say, we recall that \( {\cdkh'}_i ( L ) \) is a direct sum of the modules associated to all smoothings of height \(i\) so that \( \sg^\text{u/l}_1 - \sg^\text{u/l}_2 \) can be written
	\begin{equation*}
	\sg^\text{u/l}_1 - \sg^\text{u/l}_2 = \begin{pmatrix}
	0 \\
	\vdots \\
	0 \\
	\sg^\text{u/l}_1 \\
	0 \\
	\vdots \\
	\vdots \\
	0
	\end{pmatrix} - 
	\begin{pmatrix}
	0 \\
	\vdots \\
	\vdots \\
	0 \\
	\sg^\text{u/l}_2 \\
	0 \\
	\vdots \\
	0
	\end{pmatrix} =
	\begin{pmatrix}
	0 \\
	\vdots \\
	0 \\
	\sg^\text{u/l}_1 \\
	0 \\
	\vdots \\
	0 \\
	-\sg^\text{u/l}_2 \\
	0 \\
	\vdots
	\end{pmatrix}
	\end{equation*}
	so that \( \sg^\text{u/l}_1 - \sg^\text{u/l}_2 \notin \text{im}(\Delta') \).
	
	\noindent\emph{\( (ii) \)}: Mark a point on \(L\) such that the cycles of \( \mathscr{S}_1 \) and \( \mathscr{S}_2 \) on which the point lies are opposite colours (such a point always exists as \( \mathscr{S}_1 \neq \mathscr{S}_2 \)), and define the action of \( \mathcal{A} \) as in \Cref{Lem:action}. Notice that \( v_- \cdot r = r \) and \( v_- \cdot g = - g \) so that
	\begin{equation*}
	\begin{aligned}
	v_- \cdot \sg^\text{u/l}_1 &= \pm \sg^\text{u/l}_1 \\
	v_- \cdot \sg^\text{u/l}_2 &= \mp \sg^\text{u/l}_2
	\end{aligned}
	\end{equation*}
	as if the marked cycle is red in \(\mathscr{S}_1\) then it is green in \(\mathscr{S}_2\) and vice versa.
	As the action descends to an action on \( \dkh' ( L ) \) we see that \( [\sg^\text{u/l}_1] \) is an eigenvector of the action of \( v_- \) of eigenvalue \( \pm 1 \) and \( [\sg^\text{u/l}_2] \) is an eigenvector of eigenvalue \( \mp 1 \), so that \( [\sg^\text{u/l}_1] \neq [\sg^\text{u/l}_2] \).
	
	At this point we have
	\begin{equation*}
	\text{rank} \left( \dkh' ( L ) \right) \geq 2 \left| \left\{ \text{alternately coloured smoothings of}~ L \right\} \right |.
	\end{equation*}
	In order to tighten this to an equality we shall again employ Gauss elimination along with the observation that the differential restricted to elements corresponding to non-alternately coloured smoothings is an isomorphism. In the case of the \( \Delta' \) and \( m' \) maps this is evident from \Cref{Eq:rgmaps}. Regarding the \( \eta' \) map, we have
	\begin{equation}
	\label{Eq:etaprime}
	\begin{aligned}
	\eta' ( r^\text{u}) &= r^\text{l} \qquad &\eta' ( r^\text{l}) &= 2r^\text{u}  \\
	\eta' ( g^\text{u}) &= g^\text{l} \qquad &\eta' ( g^\text{l}) &= -2g^\text{u} \\
	\end{aligned}
	\end{equation}
	so that \( \eta' \) is an isomorphism (we are working over \( \Q \)). Thus we Gauss eliminate elements associated to non-alternately coloured smoothings of \( L \) and arrive at the desired equality.
\end{proof}

We now return to \Cref{Eq:clee3}, in order to generalise it to the virtual case. It is clear that we have some work to do, as the virtual Hopf link (as depicted in \Cref{Fig:vH}), for example, has no alternately coloured smoothings (one readily sees that the generators on the right of \Cref{Fig:vH} will cancel in doubled Lee homology). Before describing the virtual situation we make some preliminary definitions.

\begin{definition}
	\label{Def:shadow} Let \( D \) be a virtual link diagram. Denote by \( S ( D ) \) the diagram obtained from \( D \) be removing the decoration at classical crossings; we refer to \( S ( D ) \) as the \emph{shadow} of \( D \). Let a \emph{component} of \( S ( D ) \) be an \( S^1 \) embedded in such a way that at a classical or virtual crossing we have exactly one of the following:
	\begin{itemize}
		\item All the incident arcs are contained in the component.
		\item The arcs contained in the component are not adjacent.
		\item None of the arcs are contained in the component.
	\end{itemize}
	Thus components of \( S ( D ) \) are in bijection with those of \( D \). \CloseDef
\end{definition}

\begin{definition}
	\label{Def:gauss}
	Let \( D \) be an \( n \)-component virtual link diagram and \( S ( D ) \) its shadow. Denote by \( G ( D ) \) the \emph{Gauss diagram of} \( D \), formed in the following manner:
	\begin{center}
		\begin{enumerate}[(i)]
			\item Place \( n \) copies of \( S^1 \) disjoint in the plane. A copy of \( S^1 \) is known as a \emph{circle} of \( G ( D ) \).
			\item Fix a bijection between the components of \( S ( D ) \) and the circles of \( G ( D ) \).
			\item Arbitrarily pick a basepoint on each component of \( S ( D ) \) and on the corresponding circle of \( G ( D ) \).
			\item Pick a component of \( S ( D ) \) and progress from the basepoint around that component (in either direction). When meeting a classical crossing label it and mark that label on the corresponding circle of \( G ( D ) \) (virtual crossings are ignored). Continue until the basepoint is returned to.
			\item Repeat for all components of \( S ( D ) \); if a crossing is met which already has a label, use it.
			\item Add a chord linking the two incidences of each label. These chords may intersect and have their endpoints on different circles of \( G ( D ) \).\CloseDef
		\end{enumerate}
	\end{center}
\end{definition}

Gauss diagrams are more commonly defined for diagrams, rather than shadows, of links but this definition contains all the information we require. An example of a shadow and of a Gauss diagram can be found in \Cref{Fig:sandg}.

\begin{definition}
	\label{Def:degen}
	A circle within a Gauss diagram is known as \emph{degenerate} if it contains an odd number of chord endpoints.\CloseDef
\end{definition}

\begin{figure}
	\includegraphics[scale=0.75]{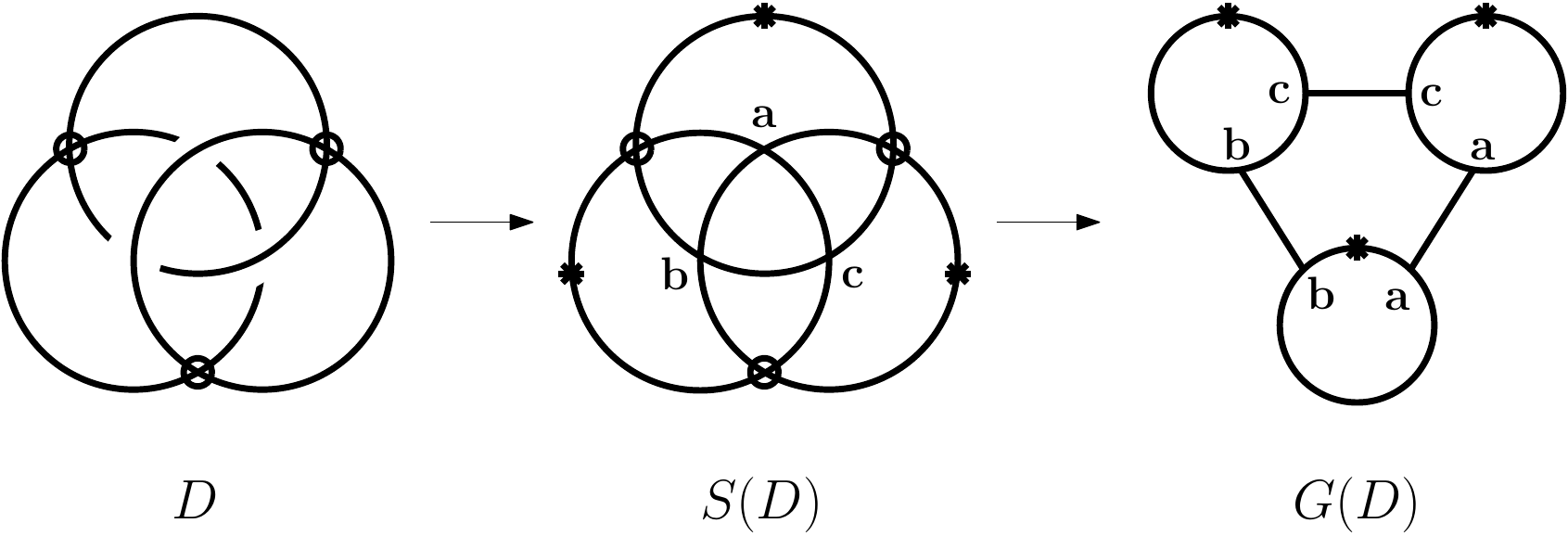}
	\caption{The shadow and Gauss diagram of a virtual link diagram.}
	\label{Fig:sandg}
\end{figure}

\begin{theorem}
	\label{Thm:acs}
	Given a diagram \( D \) of a virtual link \( L \)
	\begin{equation*}
		\begin{aligned}
			\left| \left\{ \text{alternately coloured smoothings of}~ L \right\} \right| &= \left| \left\{ \text{alternately coloured smoothings of}~ D \right\} \right| \\
			&= \left\lbrace \begin{matrix*}[l]
				2^{ | L | }, ~\text{if}~ G ( D ) ~\text{contains no degenerate circles} \\
				0, ~\text{otherwise.}
			\end{matrix*}
			\right.
			\end{aligned}
	\end{equation*}
\end{theorem}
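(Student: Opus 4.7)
The first equality is the invariance of \(|\{\text{alternately coloured smoothings of}~D\}|\) under change of diagram, which follows from \Cref{Thm:leerank}: since this count is half of \(\text{rank}(\dkh'(L))\), and \(\dkh'\) is a link invariant, the count depends only on \(L\).

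The plan for the dichotomy is to establish a bijection between alternately coloured smoothings of \(D\) and what I will call \emph{strand colourings}: 2-colourings by red and green of the pieces of \(L\) lying between crossings, subject to the rules that the colour is constant across each virtual crossing and flips across each classical crossing. Given an alternately coloured smoothing \((\mathscr{S}, c)\), one reads off a strand colouring by labelling each point of \(L\) with the colour of the cycle of \(\mathscr{S}\) through it: this is constant across virtual crossings (which do not affect cycle membership) and flips across each classical crossing, since the two cycles meeting at such a crossing must carry different colours while the two \(L\)-arcs on opposite sides of the crossing lie in these distinct cycles. Conversely, a strand colouring determines at each classical crossing a quadruple of incident-arc colours \((c_N, c_E, c_S, c_W)\) with \(c_N \neq c_S\) and \(c_E \neq c_W\); exactly one of the two valid resolutions pairs like-coloured arcs, and making that choice at every crossing yields a uniquely defined alternately coloured smoothing inducing the given strand colouring.

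With the bijection in hand, counting reduces to a parity computation on each component of \(L\). Tracing a component \(K\) from a basepoint, the colour at every subsequent point is forced by the alternation rule once an initial colour is chosen, and consistency upon return to the basepoint requires the total number of classical-crossing traversals along \(K\) to be even. This total is \(2 s_K + m_K\), where \(s_K\) counts self-crossings of \(K\) and \(m_K\) counts crossings of \(K\) with other components, so its parity equals that of \(m_K\). Since each self-chord contributes two endpoints to the circle of \(K\) in \(G(D)\), the parity of \(m_K\) agrees with the parity of the total number of chord endpoints on \(K\)'s circle. Hence if some circle of \(G(D)\) is degenerate no consistent colouring exists and the count is \(0\); otherwise each of the \(|L|\) components admits exactly two choices of initial colour, giving \(2^{|L|}\) strand colourings and hence \(2^{|L|}\) alternately coloured smoothings. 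The main technical point to pin down is that the pairing forced by a strand colouring at each classical crossing is indeed one of the two valid smoothing resolutions rather than the ``identity'' pairing \((N,S)(E,W)\); this follows from the inequality \(c_N \neq c_S\) being enforced by the alternation along the NS strand.
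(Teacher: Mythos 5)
Your proof is correct and follows essentially the same route as the paper: you establish the bijection between alternately coloured smoothings and proper colourings of the shadow (your ``strand colourings'', which the paper attributes to Kauffman), then count these colourings by a parity argument traced around each component, matching parity of classical-crossing traversals to parity of chord endpoints per circle of \( G(D) \). The paper inserts ``alternately coloured Gauss diagrams'' as an explicit intermediate object before doing the same two-choices-per-circle parity count, whereas you argue directly on the components of \( L \) and only translate to the Gauss-diagram language at the end; this is a presentational difference, not a different argument.
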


\begin{proof}
	As stated above, the number of alternately coloured smoothings is a link invariant, so that we are free to use the Gauss diagram associated to any diagram of \( L \).
	
	As observed by Kauffman \cite{Kauffman2004b} alternately coloured smoothings of a link diagram are in bijection with particular colourings of the shadow of the diagram: colouring the arcs of the shadow either red or green such that at every classical crossing we have the following (up to rotation):
	\begin{center}
		\includegraphics[scale=1]{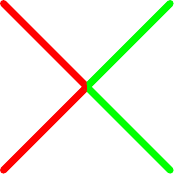}\label{Graphic:pc}
	\end{center}
	Such a colouring is known as a \emph{proper colouring}. Given a virtual link diagram \( D \) and a proper colouring of \( S ( D ) \), one produces an alternately coloured smoothing of \( D \) by resolving each classical crossing in the manner dictated by the proper colouring i.e.\ joining red to red and green to green. Two examples are given in \Cref{Fig:borroshadow}. It is easy to see that this association defines a bijection between the set of proper colourings and the set of alternately coloured smoothings.
	
	\begin{figure}
		\includegraphics[scale=0.75]{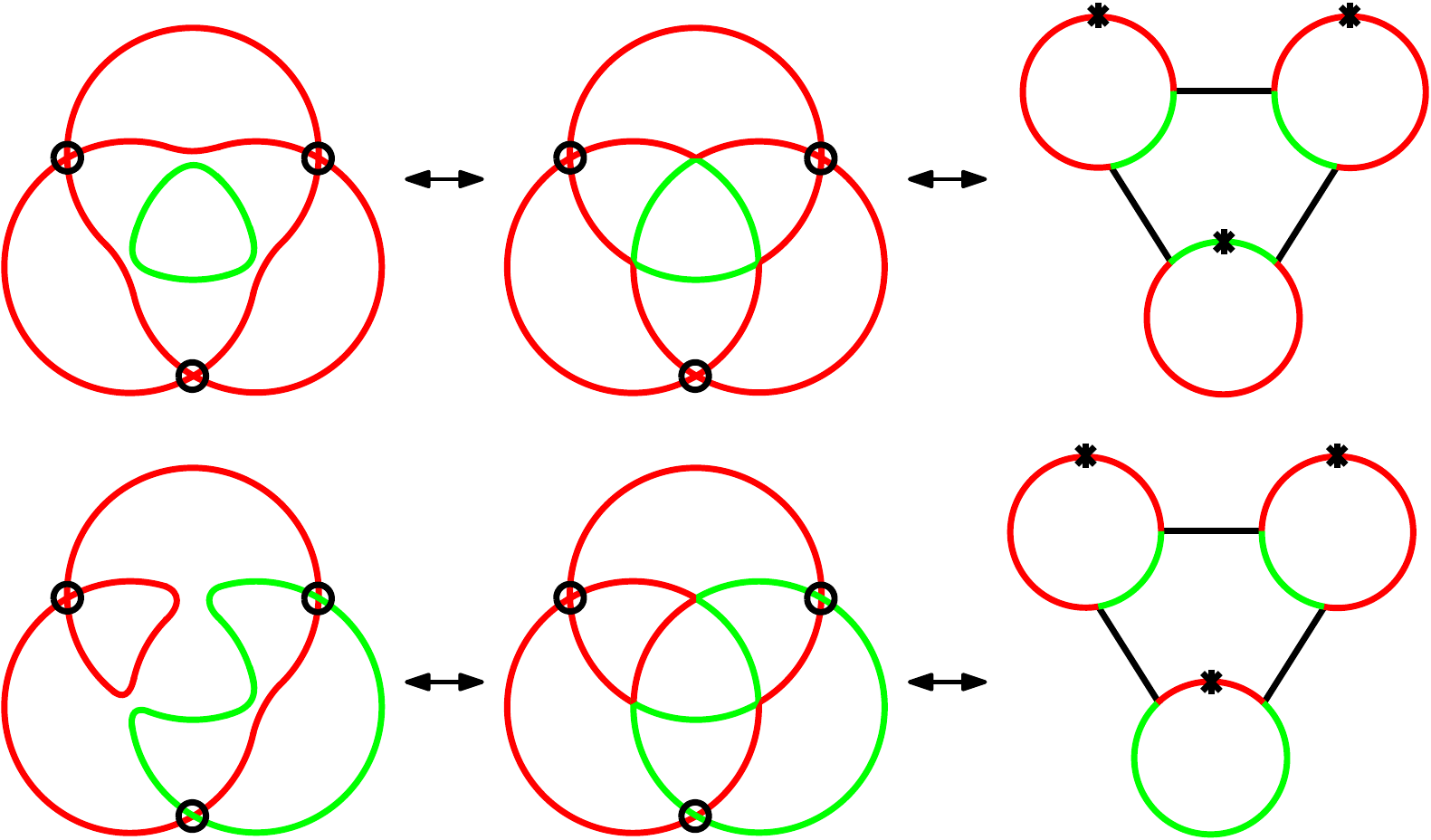}
		\caption{Alternately coloured smoothings (left) and Gauss diagrams (right) associated to proper colourings of a shadow (centre).}
		\label{Fig:borroshadow}
	\end{figure}

	Next, notice that a proper colouring of \( S ( D ) \) induces a colouring of the circles of \( G ( D ) \): colour the connected components of the complement of the chord endpoints in the manner dictated by the colouring of the shadow (so that when an endpoint is passed the colour changes). A Gauss diagram coloured in such a way is known as \emph{alternately coloured}. Examples are given in \Cref{Fig:borroshadow}. It is again easy to see that alternately coloured Gauss diagrams are in bijection with proper colourings, so that we have a bijection between alternately coloured smoothings of \( D \) and alternate colourings of \( G ( D ) \).
	
	In light of the above we see that we are required to verify that a Gauss diagram of \( n \) circles has \( 2^n \) alternate colourings if and only it has no degenerate circles.
	
	Let \( G ( D ) \) contain a degenerate circle. On this circle the number of connected components of the complement of the end points is odd, from which we deduce that it cannot be alternately coloured (as the colour must change when passing an endpoint). That there are \( 2^n \) alternate colourings if there is no degenerate circle follows from the observation that there are two possible configurations for each circle, and that given an alternate colouring flipping the configuration on one circle yields a new alternate colouring.
\end{proof}

\begin{corollary}
	Let \(K\) be a virtual knot. Then \( \text{rank} \left( \dkh' ( K ) \right) = 4 \) and \( \dkh' ( K ) \) is supported in homological degree equal to the height of the alternately colourable smoothing.
\end{corollary}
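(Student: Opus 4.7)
The plan is to combine \Cref{Thm:acs} with \Cref{Thm:leerank} and then extract the homological-degree information from the proof of \Cref{Thm:leerank}.

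First I would observe that since $K$ is a knot, any Gauss diagram $G(D)$ of a diagram $D$ of $K$ consists of a single circle. Every chord of $G(D)$ has both endpoints on this one circle, contributing $2$ to the endpoint count, so the total number of chord endpoints on the unique circle is $2c$, where $c$ is the number of classical crossings of $D$. In particular this is even, so $G(D)$ contains no degenerate circles in the sense of \Cref{Def:degen}. Applying \Cref{Thm:acs} with $|L|=1$ immediately gives that $D$ admits exactly $2^{1}=2$ alternately coloured smoothings. Then \Cref{Thm:leerank} yields
\begin{equation*}
\text{rank}\left(\dkh'(K)\right) \;=\; 2\cdot 2 \;=\; 4.
\end{equation*}

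For the second statement I would argue that the two alternately coloured smoothings of $D$ are related by swapping the roles of red and green globally. Concretely, since $G(D)$ has one circle, any alternate colouring of $G(D)$ is determined by the colour assigned to a single arc between consecutive chord endpoints; the other alternate colouring is obtained by interchanging red and green everywhere. Under the bijection between alternate colourings of $G(D)$ and proper colourings of the shadow $S(D)$ (from the proof of \Cref{Thm:acs}), this global swap induces the same underlying resolution at each classical crossing of $D$, because the rule ``join red to red and green to green'' depends only on the partition of incident arcs into two colour classes, not on the labels. Hence both proper colourings determine the \emph{same} uncoloured smoothing $\mathscr{S}$ of $D$, and in particular both have the same height $|\mathscr{S}|$. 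By \Cref{Def:height} the four alternately coloured generators $\sg^{\text{u}}$, $\sg^{\text{l}}$ associated to these two alternately coloured smoothings all lie in homological degree $|\mathscr{S}|$, and by the proof of \Cref{Thm:leerank} they span $\dkh'(K)$. This gives the claimed support.

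The main obstacle, such as it is, is the verification that the two alternately coloured smoothings of a virtual knot diagram coincide as uncoloured smoothings; this is the only part that is not a direct citation of \Cref{Thm:acs,Thm:leerank}. Once one has unwound the bijection of \Cref{Thm:acs} and observed the red/green symmetry of the resolution rule pictured at \ref{Graphic:pc}, the conclusion follows with no further calculation.
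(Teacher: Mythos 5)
Your proposal is correct and follows the same route as the paper: observe that a knot diagram's Gauss diagram has a single circle bearing an even number of chord endpoints (hence no degenerate circles), apply \Cref{Thm:acs} to get exactly two alternately coloured smoothings, and \Cref{Thm:leerank} to get rank~$4$. Where you differ is in the level of detail on the second claim: the paper closes with the single sentence ``The statement then follows from \Cref{Thm:leerank},'' which covers the homological-degree support only implicitly via that theorem's \emph{proof} (the alternately coloured generators span $\dkh'$). You make this explicit, and moreover you spell out the one fact that the paper silently relies on and that the phrase ``the alternately colourable smoothing'' (singular) presupposes: that the two alternately coloured smoothings arise from the same uncoloured smoothing, because the unique nontrivial alternate colouring of a one-circle Gauss diagram is the global red/green swap, and the resolution rule at a crossing depends only on the two-colour partition, not the labels. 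That is exactly the right justification and it fills a small gap in the published argument rather than introducing a new approach.
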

\begin{proof}
	Let \( D \) be a virtual knot diagram. Then \( G ( D ) \) satisfies the condition of \Cref{Thm:acs} as it contains only one circle, on which all chord endpoints must lie. Of course, every chord has two endpoints so that this circle must contain an even number of them. The statement then follows from \Cref{Thm:leerank}.
\end{proof}

Classically, the alternately colourable smoothing of an oriented knot diagram is its oriented smoothing. Classical Khovanov homology is rigged so that this smoothing is at height \( 0 \), and subsequently classical Lee homology of a knot is supported in homological degree \( 0 \). This is no longer the case with doubled Lee homology: virtual knot \( 2.1 \) (given in \Cref{Fig:21}) provides an example of a knot for which the alternately colourable smoothing is, in fact, the unoriented smoothing. Taking the connect sum of \( 2.1 \) with any classical knot yields a virtual knot for which the alternately colourable smoothing is neither the oriented nor the unoriented smoothing. The height of the alternately colourable smoothing of a knot shall be used in the definition of the doubled Rasmussen invariant in \Cref{Sec:Ras}, and is shown to be equal to the odd writhe of the knot in \Cref{Subsec:rasoddwrithe}.\label{Page:classicalacs}

\begin{corollary}
	\label{Cor:splitlink}
	Let \( L \) be a split link of \( n \) components. Then \( \text{rank} \left( \dkh' ( L ) \right) = 2^{n+1} \).
\end{corollary}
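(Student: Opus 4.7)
The plan is to combine \Cref{Thm:leerank} and \Cref{Thm:acs}: it suffices to exhibit a diagram $D$ of $L$ whose Gauss diagram $G(D)$ has no degenerate circles, for then
\[
\text{rank}\left(\dkh'(L)\right) = 2 \left|\{\text{alternately coloured smoothings of } L\}\right| = 2 \cdot 2^{|L|} = 2^{n+1}.
\]

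First I would choose a split diagram $D$ for $L$, that is, a diagram in which the $n$ components are drawn in pairwise disjoint discs of the plane; such a diagram exists precisely because $L$ is split. The crucial feature of this diagram is that every classical crossing has both of its strands belonging to the same component of $D$; there are no crossings between distinct components.

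Next I would read off $G(D)$ from this diagram. Since each chord of $G(D)$ records a single classical crossing, and both strands of that crossing lie on one component, the two endpoints of every chord lie on the same circle of $G(D)$. Hence each circle of $G(D)$ carries only chords whose other endpoint is also on that circle, so the number of endpoints on each circle is $2$ times the number of self-chords for that component. This is an even number, so no circle of $G(D)$ is degenerate in the sense of \Cref{Def:degen}.

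\Cref{Thm:acs} then gives $|\{\text{alternately coloured smoothings of } L\}| = 2^{|L|} = 2^{n}$, and applying \Cref{Thm:leerank} yields the claimed rank. There is no real obstacle: the entire argument is a reduction to the two main theorems of the section, and the only genuine content is the observation that a split diagram produces a Gauss diagram whose chords never cross between circles, making the parity condition on endpoints automatic.
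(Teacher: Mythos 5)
Your proof is correct and follows exactly the route the paper leaves implicit: the corollary is stated without proof immediately after \Cref{Thm:leerank}, \Cref{Thm:acs}, and the rank-$4$ corollary for knots, and the intended derivation is clearly to verify the no-degenerate-circles hypothesis for a split diagram. Your observation that in a split diagram every chord has both endpoints on a single circle, forcing an even endpoint count on each circle, is the same parity argument the paper uses for the knot case, and the numerics $2\cdot 2^{n}=2^{n+1}$ then follow from \Cref{Thm:leerank}.
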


\subsection{Interaction with cobordisms}
\label{Subsec:cobordisms}

A cobordism between classical links defines a map on classical Lee homology; this behaviour is replicated by doubled Lee homology. Unlike the classical case, however, many connected cobordisms must be assigned the zero map, a consequence, for example, of the vanishing of \( \dkh ' ( L )  \) for certain links or of the possibility of doubled Lee homology of knots being supported in non-zero homological degrees. Nevertheless, there are classes of cobordisms for which the associated maps are non-zero (some of which we use in \Cref{Sec:applications}). In this section we verify that concordances and a certain class of arbitrary genus cobordisms are assigned non-zero maps.

We begin by stating some definitions regarding virtual cobordism (see \cite{Kauffman1998c}).

\begin{definition}
	\label{Def:cobordism}
	Two virtual links \( L_1 \) and \( L_2 \) are \emph{cobordant} if a diagram of one can be obtained from a diagram of the other by a finite sequence of births and deaths of circles, oriented saddles, and virtual Reidemeister moves. Such a sequence describes a compact, oriented surface, \( S \), such that \( \partial S = L_1 \sqcup L_2 \). Births of circles, oriented saddles, and deaths of circles correspond, respectively, to \(0\)-, \(1\)-, and \(2\)-handles of \(S\). If \( g ( S ) = 0 \) we say that \( L_1 \) and \( L_2 \) are \emph{concordant}. If \( K \) is a knot concordant to the unknot we say that \( K \) is \textit{slice}. In general, we define the \emph{slice genus} of a virtual knot \( K \), denoted \( g^\ast \), as
	\begin{equation*}
	g^\ast ( K ) = \min \lbrace g ( S ) ~|~ S ~\text{a compact, oriented, connected surface such that}~ \partial S = K \rbrace
	\end{equation*}
	(here we have simply capped off the unknot in \( \partial S \) with a disc).\CloseDef
\end{definition}

Virtual links are equivalence classes of embeddings of disjoint unions of \( S^1 \) into thickened surfaces \cite{Kuperberg2002}; thus the surface \( S \) is embedded in a \( 4 \)-manifold of the form \( M \times [ 0 , 1 ] \) where \( M \) is a compact, oriented \(3\)-manifold with \( \partial M = \Sigma_k \sqcup \Sigma_l \), where \( \Sigma_i \) denotes a closed surface of genus \( i \). The \( 3 \)-manifold \( M \) is described in the standard way in terms of codimension \( 1 \) submanifolds and critical points: starting from \( \partial M = \Sigma_k \), codimension \( 1 \) submanifolds are \( \Sigma_k \) until we pass a critical point, after which they are \( \Sigma_{k \pm 1} \). Critical points of \( M \) correspond to handle stabilisation (induced by virtual Reidemeister moves). A finite number of handle stabilisations are made to reach \( \Sigma_l \). We say that a link \emph{appears in} \(S\) if a diagram of it appears in the sequence of diagrams describing \(S\).

Of course, the subject of \Cref{Def:cobordism} is really a \emph{presentation} of a cobordism; two distinct presentations may be equivalent under isotopy (relative to their boundary), and represent the same cobordism. However, as mentioned by Rasmussen and others in the classical case, we expect isotopic presentations to be assigned the same map. Like Rasmussen, we do not pursue this further, and suffice ourselves with working with presentations. Moreover, we shall ignore the difference between a presentation and a cobordism, referring to the surface \( S \) as a \emph{cobordism between} \( L_1 \) and \( L_2 \).

\begin{definition}
	Let \( S \) be a cobordism between two virtual links \( L_1 \) and \( L_2 \) which is described by exactly one virtual Reidemeister move or one oriented \(0\)-,\(1\)-, or \(2\)-handle addition. Such a cobordism is known as \emph{elementary}.\CloseDef
\end{definition}

We wish to associate maps to cobordisms such that, where they are non-zero, the maps respect the filtration and send alternately coloured generators (of the homology of the intitial link) to linear combinations of alternately coloured generators (of the homology of the final link).

Of course, any cobordism can be built by gluing elementary cobordisms end to end, so we shall first investigate these simple cobordisms. In all there are ten of them: four given by the purely virtual Reidemeister moves and the mixed move, three given by the classical Reidemeister moves, and three given by the \(0\)-, \(1-\), and \(2\)-handle additions. We separate the work into elementary cobordisms which contain virtual Reidemeister moves and those which contain handle additions.

\noindent(\emph{Virtual Reidemeister moves}): Let \( D_1 \) and \( D_2 \) be diagrams of virtual links \( L_1 \) and \( L_2 \), and \( S \) an elementary cobordism between them which contains a purely virtual Reidemeister move or mixed move (as depicted in \Cref{Fig:vrm}). Then \( \cdkh ' ( D_1 ) = \cdkh ( D_2 ) \), as such moves preserve the number of cycles in a smoothing and the incoming and outgoing differentials. Thus we associate to \( S \) the map \( \phi_S = \text{id} : \dkh ' ( L_1 ) \rightarrow \dkh ' ( L_2 ) \). It is also clear that such a cobordism sends alternately coloured smoothings of \( D_1 \) to those of \( D_2 \), so that alternately coloured generators of \( \dkh ' ( L_1 ) \) are sent to those of \( \dkh ' ( L_1 ) \).

If \( S \) contains a classical Reidemeister move then \( \phi_S \) is one of the maps defined in \cite[Section \(6\)]{Rasmussen2010}, with the addition of the appropriate \( \phantom{}^{\text{u/l}} \) superscripts. We satisfy ourselves with a quick demonstration that classical Reidemeister moves send alternately coloured smoothings to alternately coloured smoothings, via proper colourings of shadows. As mentioned above, given a virtual link diagram \( D \), the set of its alternately coloured smoothings is in bijection with the set of proper colourings of its shadow. Let \( D \) and \( D' \) be related by a classical Reidemeister move. Then \( D \) and \( D' \) are identical except within a neighbourhood of the move. Given a proper colouring of \(  S ( D ) \) define a proper colouring of \( S ( D' ) \) which is identical to that of \( S ( D ) \) outside the proscribed neighbourhood; the colouring within is dictated by that of arcs incident to the neighbourhood. Some examples are given in \Cref{Fig:vrmcolours}. It is clear that this defines a bijection between the proper colourings of \( S ( D ) \) and those of \( S ( D' ) \), and it follows that the maps associated to the classical Reidemeister moves are isomorphisms on doubled Lee homology.

\begin{figure}
	\includegraphics[scale=0.65]{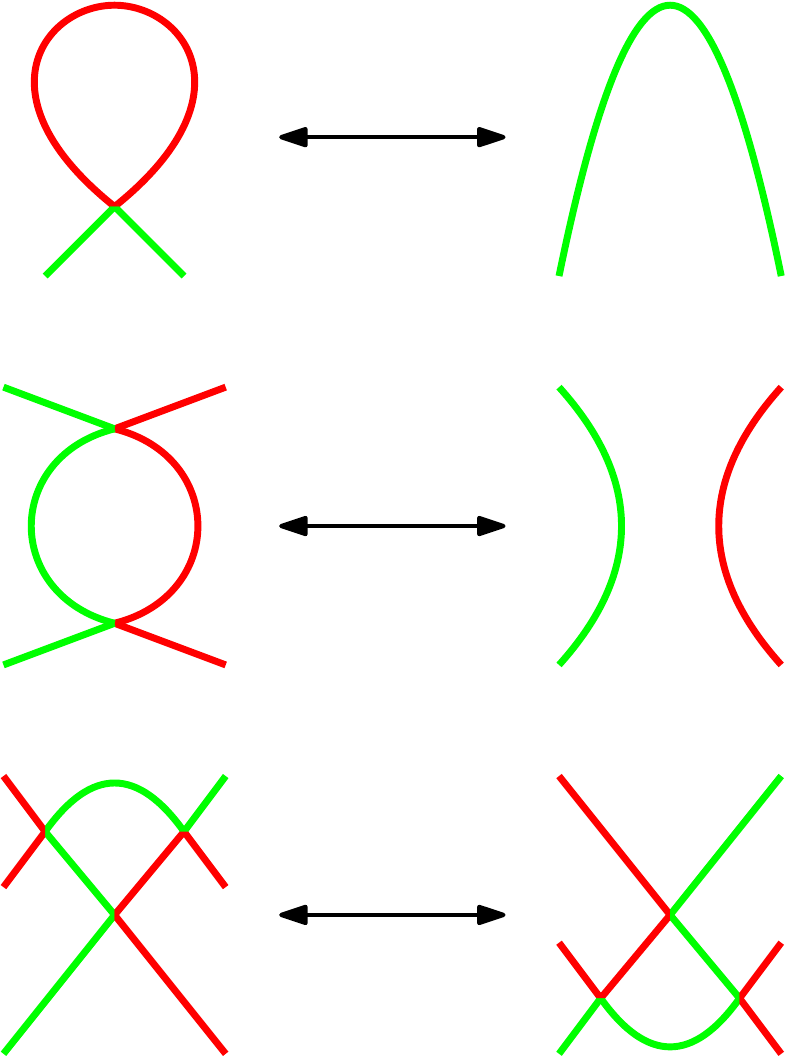}
	\caption{Examples of the affects of classical Reidemeister moves on proper colourings of shadows. Notice the endpoints of the arcs are coloured the same colour on the left- and right-hand sides.}
	\label{Fig:vrmcolours}
\end{figure}

\noindent(\emph{Handle additions})\label{Page:handles}: Let \( D_1 \) and \( D_2 \) be diagrams of virtual links \( L_1 \) and \( L_2 \), and \( S \) an elementary cobordism between them which contains a handle addition. Then \( S \) defines a map of cubes between the cube of smoothings of \( D_1 \) and that of \( D_2 \): removing a neighbourhood of the classical crossings of \( D_1 \) and \( D_2 \), both diagrams look identical except in the region in which the handle is attached. Moreover, as handle additions do not change the number of crossings of diagram, the smoothings of \( D_1 \) and \( D_2 \) are in bijection (a string of \(0\)'s and \(1\)'s defines uniquely a smoothing of \( D_1 \) and of \( D_2 \)). Let the map of cubes defined by \( S \) be the map which sends a smoothing of \( D_1 \) to the associated smoothing of \( D_2 \). As the diagrams are identical exept in a small region this map acts simply on smoothings, and depends on the handle addition contained in \( S \): 
\begin{itemize}
	\item \(0\)-handle: a cycle is added which shares no crossings with any other cycle or itself.
	\item \(1\)-handle: two cycles are merged into one cycle, one cycle is split into two, or one cycle is sent to one cycle (while the \(1\)-handle is necessarily oriented, it is nonetheless possible for it to induce such a term as a map of cubes.)
	\item \(2\)-handle: a cycle which shares no crossings with any other cycle or itself is removed.
\end{itemize}
Thus we define a map \( \psi : \cdkh ' ( D_1 ) \rightarrow \cdkh ' ( D_2) \), whose affect on the specific cycle or cycles involved is as follows (and acts as the identity on the uninvolved cycles)
\begin{itemize}
	\item \(0\)-handle: \( \iota ' : \mathbb{Q} \rightarrow \mathcal{A} \) where \( \iota ' ( 1 ) = \vulp \), so that \( \iota ( 1 ) \otimes \vup = (v_{++})^{\text{u}} \), for example.
	\item \(1\)-handle: either \( m' \), \( \Delta ' \), or \( \eta ' \) as dictated by the corresponding entry in map of cubes.
	\item \(2\)-handle: \( \epsilon ' : \mathcal{A} \rightarrow \mathbb{Q} \) where \( \epsilon ' ( \vulp ) = 0 \), \( \epsilon ' ( \vulm ) = 1 \).
\end{itemize}
We define \( \phi_S : \dkh ' ( L_1 ) \rightarrow \dkh ' ( L_2) \) to be the map induced by \( \psi \). Notice that \( \phi_S \) is filtered of degree \(1\) for \(0\)- and \(2\)-handle additions and filtered of degree \(-1\) for \(1\)-handle additions, and that it preserves homological degree.

\begin{definition}
	Let \( D_1 \) and \( D_2 \) be diagrams of virtual links \( L_1 \) and \( L_2 \), and \( S \) a cobordism between them. Then \( S \) can be decomposed as a finite union of elementary cobordisms, so that
	\begin{equation*}
	S = S_1 \cup S_2 \cup \cdots \cup S_n
	\end{equation*}
	where \( S_i \) is an elementary cobordism. Define \( \phi_S = \phi_{S_n} \circ \phi_{S_{n-1}} \circ \cdots \circ \phi_{S_1} \).\CloseDef
\end{definition}

It is possible that a map associated to a cobordism is necessarily zero, owing to the doubled Lee homology of a link (or links) appearing in it being trivial in particular degrees (or possibly every degree). Homological degrees which survive throughout a cobordism are important, therefore.

\begin{definition}
	\label{Def:shared}
	Let \( D_1 \) and \( D_2 \) be diagrams of virtual links \( L_1 \) and \( L_2 \), and \( S \) a cobordism between them such that the doubled Lee homology of every link appearing in it is non-trivial in homological degree \( k \). Such a homological degree is known as a \emph{shared} degree (of \( S \)).\CloseDef
\end{definition}

\begin{figure}
	\includegraphics[scale=0.65]{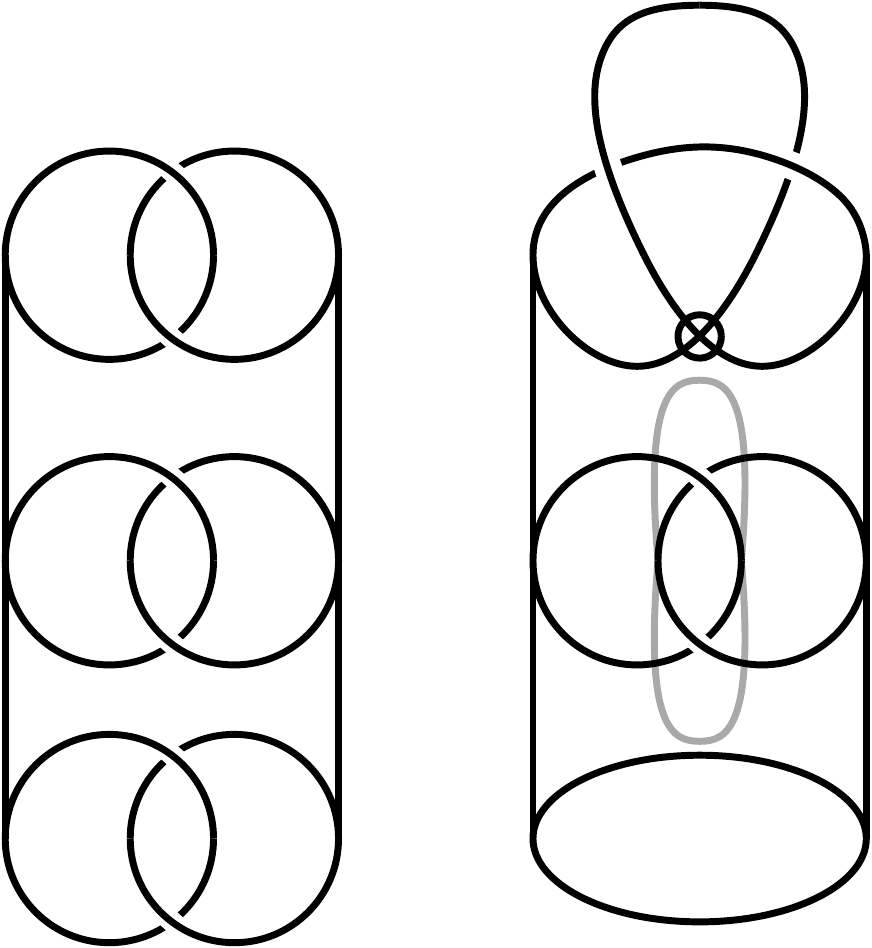}
	\caption{A cobordism with shared degree \(-2\) which is assigned the zero map. The grey lines denote that the right-hand component has genus: it is formed by a \(1\)-handle between a single component followed by a \(1\)-handle between two components.}
	\label{Fig:21tounknot}
\end{figure}

The existence of shared homological degrees is not enough to guarantee that a cobordism is assigned a non-zero map, however. Consider the cobordism depicted in \Cref{Fig:21tounknot}: the left-hand component is the identity cobordism on the classical Hopf link, while the right-hand component is a genus \(1\) cobordism from virtual knot \(2.1\) to the unknot. It can be quickly verified that \(-2\) is a shared degree of this cobordism, but that the map assigned to it is zero.

The remainder of this section is concerned with the task of verifying that concordances and some arbitrary genus cobordisms are assigned non-zero maps, as advertised above. In what follows, a cobordism is said to be \emph{weakly connected} if every connected component has a boundary component in the initial link.

\begin{theorem}
	\label{Thm:nonzero1}
	Let \( S \) be a concordance between a virtual knot \( K \) and a virtual link \( L \). Suppose that \( S \) contains no closed components and that \( \dkh ' ( L ) \neq 0 \). Then \( \phi_S \) is non-zero.
\end{theorem}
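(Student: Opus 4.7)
The plan is to trace a specific alternately coloured generator of $\dkh'(K)$ through $\phi_S$ and show its image in $\dkh'(L)$ does not vanish. By the corollary following \Cref{Thm:acs}, $\dkh'(K)$ has rank $4$, spanned by the alternately coloured generators $\sg_K^{\text{u}}, \sg_K^{\text{l}}$ coming from the two colourings of the unique alternately colourable smoothing of $K$; take $\sg = \sg_K^{\text{u}}$ as the test element. Decompose $S$ into elementary cobordisms, and for each verify that the induced map sends alternately coloured generators to rational linear combinations of alternately coloured generators of the intermediate link. Using the red--green basis of \Cref{Def:redgreen}, the formulae in \Cref{Eq:rgmaps,Eq:etaprime}, and the shadow--colouring bijection underlying \Cref{Thm:acs}: virtual Reidemeister moves act as the identity; classical Reidemeister moves permute alternately coloured generators; a $0$-handle sends $\sg \mapsto \sg \otimes r^{\text{u/l}} + \sg \otimes g^{\text{u/l}}$ (the added cycle is disjoint, so both colourings are alternately coloured); a $2$-handle contributes a factor $\pm\tfrac{1}{2}$ via $\epsilon'$; and a $1$-handle acts by $m'$, $\Delta'$, or $\eta'$, each of which either preserves the alternately coloured form with a rational coefficient or annihilates it (the only way to annihilate being $m'(r \otimes g) = 0$, a ``bichromatic merge'').

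Since alternately coloured generators are linearly independent in $\dkh'(L)$ (by the proof of \Cref{Thm:leerank}), $\phi_S(\sg) \neq 0$ as soon as a single alternately coloured generator $\tau_L$ of $\dkh'(L)$ appears in $\phi_S(\sg)$ with non-zero coefficient. This is where both hypotheses are used. The assumption $\dkh'(L) \neq 0$ guarantees via \Cref{Thm:acs} that $L$ admits alternately coloured smoothings, so the target space is non-trivial. The assumption that $S$ has no closed components avoids the obstruction $\epsilon'(\iota'(1)) = \epsilon'(v_+) = 0$: a closed sphere component of $S$ would force $\phi_S = 0$. I would then rearrange a Morse decomposition of $S$ so that within each connected component the $0$-handles precede the $1$-handles, which in turn precede the $2$-handles, and choose colours at each $0$-handle so that every $1$-handle merger is monochromatic. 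On the component of $S$ containing $K$ the colours are forced by $\sg_K^{\text{u}}$; on the other components (planar surfaces with boundary lying only in $L$) colours are chosen freely, and the absence of closed components means that no cycle of colour-constraints can close up to a contradiction, so a consistent monochromatic extension always exists.

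The resulting ``monochromatic trajectory'' of $\sg$ across $S$ picks out one alternately coloured generator of $\dkh'(L)$ with rational non-zero coefficient (a product of $\pm 2$ from splits, $\pm 1$ from $\eta'$, and $\pm\tfrac{1}{2}$ from caps), which therefore survives the passage to homology. The hardest part will be making the monochromatic-propagation argument watertight, namely showing that the no-closed-components hypothesis suffices to extend a proper shadow colouring of $K$ through $S$ to a proper shadow colouring of a diagram of $L$. This is analogous to Kauffman's correspondence used in the proof of \Cref{Thm:acs}, but now applied to the ``film'' traced by the cycles through each connected component of $S$; planarity plus a prescribed boundary colouring on either $K$ or $L$ should reduce it to a purely combinatorial verification on a handle order, but the bookkeeping for several components and for $1$-handles of type $\eta'$ (which can occur non-classically) is where care is needed.
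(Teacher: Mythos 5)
Your overall strategy — decompose $S$ into elementary cobordisms, pass to the red--green basis, and show that some alternately coloured generator of $\dkh'(L)$ appears with non-zero coefficient in the image — is the same strategy the paper uses, and many of the local observations (that $\eta'$ is an isomorphism on $r,g$; that the only annihilating primitive is $m'(r\otimes g)=0$; that $0$- and $2$-handles contribute $\pm 1$ and $\pm\frac{1}{2}$) match the paper's \Cref{Eq:rgmaps,Eq:etaprime,Eq:02handlemaps}. But there is a genuine gap, and it is exactly the part you flag as ``the hardest part'': you never rule out the possibility that some \emph{intermediate} link $\widetilde{L}$ appearing in $S$ has $\dkh'(\widetilde{L})=0$. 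Since $\phi_S$ is defined as a composition of maps on homology, a single such intermediate link makes $\phi_S=0$ regardless of how cleverly you track a monochromatic representative. This can happen precisely when a $1$-handle attached \emph{within} a single link component joins two strands of opposite colour (\Cref{Lem:acskiller}); such a handle produces degenerate Gauss circles and kills \emph{all} alternately coloured smoothings of the target. Your ``choose colours at each $0$-handle to keep mergers monochromatic'' cannot rescue this, because for a $1$-handle on a single component the colours of the two strands are already forced by the proper colouring, not chosen, and the handle is not a merger between distinct components at all — so it does not appear in the ``cycle of colour-constraints'' you reason about.

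The paper closes this gap with a separate parity-and-genus argument that you do not have: if some $\widetilde{L}$ in $S$ were degenerate, its degenerate Gauss circles would have been produced in pairs by $1$-handles on a single component, and since $K$ and $L$ are both non-degenerate, all such circles would have to be cancelled again by further $1$-handles — but cancelling a pair of degenerate circles created from a common ancestor without introducing genus forces a non-compact cobordism, contradicting the concordance hypothesis. This is where $g(S)=0$ does the real work. Relatedly, your proposal leans on ``no closed components'' to justify the satisfiability of the colour constraints, but in the paper that hypothesis is only needed for the $2$-handle case (to avoid $\epsilon'\circ\iota'=0$ on a closed sphere). It is the genus-$0$ hypothesis — that a $1$-handle merging two distinct link components must join distinct connected components of $S$, one of which was born from a $0$-handle whose image contains both $r$ and $g$ — that prevents the obstruction at mergers, and the genus-$0$ hypothesis again that rules out degenerate intermediates. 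To make your proof watertight you would need to (i) prove that no intermediate link in a concordance between non-degenerate ends is degenerate, which is the paper's contradiction argument, and (ii) replace the ``no cycle of colour-constraints'' heuristic with the explicit observation that the cobordism component being merged in was born from a $0$-handle, so both colours are available in the image.
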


\begin{theorem}
	\label{Thm:nonzero2}
	Let \( S \) be a weakly connected cobordism between virtual knots \( K \) and \( K' \) with a non-empty set of shared degrees. Further assume that \( S \) can be decomposed as the union of two cobordisms, \( S_1 \) and \( S_2 \), such that \( S_1 \cap S_2 = L \) where \( L \) is a virtual link. Suppose that, in addition to virtual Reidemeister moves, \( S_1 \) contains only \(1\)-handles between a single link component, and \( S_2 \) handles between distinct link components. There is exactly one alternately colourable smoothing of \( L \), denoted \( \mathscr{S} \), such that the associated generators are in the image of \( \phi_{S_1} \), and a finite set of alternately colourable smoothings of \( L \), denoted \( \mathscr{L} \), such that the associated generators are in the cokernel of \( \phi_{S_2} \). Then \( \phi_{S} \) is non-zero if and only if \( \mathscr{S} \in \mathscr{L} \).
\end{theorem}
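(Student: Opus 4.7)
The plan is to analyse the image of an alternately coloured generator $\alpha\in\dkh'(K)$ under the factorisation $\phi_S=\phi_{S_2}\circ\phi_{S_1}$. By \Cref{Thm:leerank} and \Cref{Thm:acs}, $\dkh'(K)$ has rank $4$ and is supported in a single homological degree, spanned by the four alternately coloured generators of the unique alternately colourable smoothing of a diagram of $K$; the same is true of $\dkh'(K')$. The hypothesis that the set of shared degrees is non-empty guarantees that this common degree remains non-trivial in every intermediate link appearing in $S$, so nothing vanishes for purely degree-theoretic reasons.

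For the forward implication, assume $\mathscr{S}\in\mathscr{L}$. Since $S_1$ consists of virtual Reidemeister moves and $1$-handles whose two feet lie on a single component, each elementary piece of $\phi_{S_1}$ acts on the relevant tensor factors by one of $\Delta'$, $m'$, or $\eta'$, tensored with identities on the remaining factors. Expressing these maps in the red/green basis of \Cref{Def:redgreen} (see \Cref{Eq:rgmaps} and \Cref{Eq:etaprime}), each elementary map sends a monochromatic tensor to a non-zero $\Q$-scalar multiple of a monochromatic tensor: red tensors remain red, green tensors remain green, and no mixing between the two colour sectors occurs. Combined with the hypothesis that $\mathscr{S}$ is the unique alternately colourable smoothing of $L$ whose associated generators occur in $\text{im}(\phi_{S_1})$, this shows that $\phi_{S_1}(\alpha)$ is a non-zero linear combination of the four generators associated to $\mathscr{S}$. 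Applying $\phi_{S_2}$, which is built (after virtual Reidemeister moves) only from merges $m'$ between distinct components, a red-red or green-green merge forgets a tensor factor and preserves the alternately coloured generator up to a non-zero scalar, while a red-green merge sends it to $0$. Since $\mathscr{S}\in\mathscr{L}$, the generators associated to $\mathscr{S}$ have non-zero image under $\phi_{S_2}$, and hence $\phi_S(\alpha)\neq 0$.

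For the reverse implication, if $\mathscr{S}\notin\mathscr{L}$ then the generators associated to $\mathscr{S}$ lie in the kernel of $\phi_{S_2}$. Since $\text{im}(\phi_{S_1})$ is, by hypothesis, supported on the generators of $\mathscr{S}$, the composite $\phi_S=\phi_{S_2}\circ\phi_{S_1}$ is identically zero.

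The step I expect to be the main obstacle is controlling possible internal cancellation within $\phi_{S_1}(\alpha)$: the $1$-handle decomposition of $S_1$ is not unique, distinct sequences of elementary maps may contribute to the same alternately coloured summand of $\dkh'(L)$, and the elementary maps carry coefficients $\pm 2$ (from $\Delta'$ and $\eta'$). The saving grace is that monochromatic inputs yield only monochromatic outputs with non-zero $\Q$-coefficients, so each alternately coloured summand in the image sits in a basis sector disjoint from the others and the accumulated coefficient is a product of non-zero rationals. Under the hypothesis singling out $\mathscr{S}$, this forces $\phi_{S_1}(\alpha)$ to be the prescribed non-zero combination of $\mathscr{S}$-generators, and the theorem reduces to the behaviour of $\phi_{S_2}$ on that single summand.
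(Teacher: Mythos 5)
Your proof follows essentially the same route as the paper's: factor $\phi_S=\phi_{S_2}\circ\phi_{S_1}$, observe that $\phi_{S_1}$ is injective (so its image is rank $4$ and spanned by the generators of $\mathscr{S}$), and then let the defining property of $\mathscr{L}$ decide whether $\phi_{S_2}$ kills those generators. The one noteworthy difference is that you read ``There is exactly one alternately colourable smoothing $\mathscr{S}$ of $L$ such that\dots'' as a hypothesis, whereas in the paper this sentence is part of what the theorem asserts and is \emph{derived} in the proof from the injectivity of $\phi_{S_1}$ together with the chain of equivalences $\phi_{S_1}(\sg^{\text{u}})\neq 0 \iff \phi_{S_1}(\sg^{\text{l}})\neq 0 \iff \phi_{S_1}(\overline{\sg}^{\text{u}})\neq 0 \iff \phi_{S_1}(\overline{\sg}^{\text{l}})\neq 0$ (and the linear independence of generators from distinct smoothings). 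In practice your red/green computation supplies exactly the ingredients for that derivation — each elementary piece of $\phi_{S_1}$ is colour-preserving with non-zero rational coefficient, so the four generators of $\dkh'(K)$ go injectively to the four generators of a single smoothing — so this is a presentational mismatch rather than a genuine gap. Otherwise the comparison favours you slightly: the paper invokes \Cref{Thm:nonzero1} twice (for the injectivity of $\phi_{S_1}$ and the non-vanishing of $\phi_{S_2}$) where you argue directly from \Cref{Eq:rgmaps,Eq:etaprime}, and you spell out the ``only if'' direction, which the paper's proof leaves implicit after ``By assumption $\mathscr{S}\in\mathscr{L}$\dots''. Your concern about cancellation in $\phi_{S_1}(\alpha)$ is in fact already defused by your own observation that each elementary map sends a red/green basis tensor to a non-zero scalar multiple of a \emph{single} red/green basis tensor, so $\phi_{S_1}(\alpha)$ is a scalar multiple of one generator, not a general linear combination — a cleaner statement than ``non-zero linear combination''.
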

A cobordism satisfying the criteria of \Cref{Thm:nonzero2} is known as a \emph{targeted} cobordism.

We begin our path to the proofs of \Cref{Thm:nonzero1,Thm:nonzero2} by investigating elementary cobordisms; many maps assigned to them are non-zero automatically.

\begin{proposition}
	\label{Prop:nonzero1}
	Let \( D_1 \) and \( D_2 \) be diagrams of virtual links \( L_1 \) and \( L_2 \), and \( S \) an elementary cobordism between them which is a \(0\)- or \(2\)-handle addition, or a \(1\)-handle addition between two distinct link components. If \( L_1 \) has a non-zero number of alternately coloured smoothings, then \( S \) has shared degrees and \( \phi_S \) is non-zero in them.
\end{proposition}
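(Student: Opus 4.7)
The plan is to handle the three cobordism types one at a time, in each case producing an alternately coloured generator of $\dkh'(L_1)$ whose image under $\phi_S$ is a non-zero linear combination of alternately coloured generators of $\dkh'(L_2)$, living at the same homological degree. Since none of the three handle types alters the number of classical crossings, the heights of smoothings are preserved by the map of cubes induced by $S$, so once such an element is produced the conclusion that its height is a shared degree follows immediately.

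For the 0-handle I would observe that the map is $\iota'\otimes \mathrm{id}$, which tensors on a $v_+^{u/l}$ factor for the new cycle. Rewriting $v_+ = r + g$ in the red/green basis shows that any alternately coloured generator $\mathfrak{s}^{u/l}$ of $L_1$ is sent to the sum of the two alternately coloured generators of $L_2$ obtained by colouring the new cycle red or green; these are linearly independent in homology by the arguments of \Cref{Thm:leerank}, so the sum is non-zero. For the 2-handle the map is $\epsilon'\otimes \mathrm{id}$, and a direct computation in the red/green basis yields $\epsilon'(r^{u/l}) = 1/2$ and $\epsilon'(g^{u/l}) = -1/2$. Writing an alternately coloured generator of $L_1$ as $\square^{u/l}\otimes \mathfrak{t}^{u/l}$, where $\square\in\{r,g\}$ labels the isolated cycle being removed, the image is $\pm\tfrac{1}{2}\mathfrak{t}^{u/l}$, a non-zero scalar multiple of the alternately coloured generator of $L_2$ obtained by forgetting the removed cycle.

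The main obstacle is the 1-handle case, in which the map is $m'\otimes \mathrm{id}$ merging two cycles $\gamma_1, \gamma_2$ that lie on distinct link components of $L_1$. From the red/green formulas in \eqref{Eq:rgmaps}, $m'$ sends an alternately coloured generator of $L_1$ to an alternately coloured generator of $L_2$ when $\gamma_1$ and $\gamma_2$ carry the same colour, and to zero otherwise; the task is therefore to locate at least one alternately coloured smoothing of $L_1$ in which the cycles through the two handle endpoints agree in colour.

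To that end I would appeal to the Gauss diagram analysis in the proof of \Cref{Thm:acs}. Because $L_1$ admits alternately coloured smoothings, $G(L_1)$ has no degenerate circles, so each of its circles carries exactly two alternate colourings and these choices are independent across circles. The handle endpoints lie on distinct components of $L_1$ and hence on distinct circles of $G(L_1)$; flipping the colouring on the circle containing one endpoint flips the colour of the arc at that endpoint, and hence the colour of the cycle passing through it in the associated alternately coloured smoothing. Consequently exactly $2^{|L_1|-1}$ of the $2^{|L_1|}$ alternately coloured smoothings of $L_1$ have $\gamma_1$ and $\gamma_2$ matching in colour; choosing any such smoothing supplies the required generator and completes the argument.
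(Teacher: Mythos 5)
Your proof is correct and follows essentially the same strategy as the paper's, combining the explicit red/green formulas for the elementary cobordism maps with the Gauss diagram analysis underlying \Cref{Thm:acs} (the paper argues that $G(D_2)$ has no degenerate circles so that $D_2$ admits alternately coloured smoothings at the relevant heights, while you flip Gauss circle colourings to directly exhibit a smoothing of $D_1$ whose generator survives --- two phrasings of the same idea). The only inaccuracy is harmless: in the $1$-handle case the induced map on a given alternately coloured smoothing need not be $m'$, since the two handle endpoints may lie on the \emph{same} cycle of that smoothing (cycles in a smoothing do not respect the partition into link components), producing $\Delta'$ or $\eta'$ instead; but by \Cref{Eq:rgmaps,Eq:etaprime} those maps never annihilate alternately coloured generators, so the conclusion holds in that subcase with no colour-matching required, and your argument goes through.
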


\begin{proof}
	We are required to verify two criteria \( ( i ) \): that \( D_2 \) has at least one alternately coloured smoothing at the same height as one of the alternately coloured smoothings of \( D_1 \), and \( ( ii ) \): that \( \phi_S \) sends at least one alternately coloured generator of \( \dkh ' ( L_1 ) \) to a linear combination of those of \( \dkh ' ( L_2 ) \). For \(0\)- and \(2\)-handles \( ( i ) \) follows from the fact that the cycle being added or removed  does not take part in any of the crossings in \(D_1\) or \(D_2\), and thus places no restrictions on a smoothing being alternately coloured. As an handle addition does not change the number of classical crossings it is clear that an alternately coloured smoothing of \( D_1 \) is sent to an alternately coloured smoothing of \( D_2 \) of the same height. Further, noticing that
	\begin{equation}
	\label{Eq:02handlemaps}
	\begin{aligned}
	\iota ' ( 1 ) &= (r+g)^{\text{u/l}} \\
	\epsilon ' ( r^{\text{u/l}} ) &= -\dfrac{1}{2} \\
	\epsilon ' ( g^{\text{u/l}} ) &= \dfrac{1}{2}
	\end{aligned}
	\end{equation}
	we see that \( (ii) \) is satisfied. (Note that \(0\)-handles double the number of alternately coloured smoothings, while \(2\)-handles halve it.)
	
	For \(1\)-handle additions between two distinct link components we verify \( (i) \) in the following manner: consider the Gauss diagrams \( G ( D_1 ) \) and \( G ( D_2 ) \). By assumption \( G ( D_1 ) \) contains no degenerate circles. As the \(1\)-handle consituting \( S \) is between two distinct link components, \( G ( S ( D_2 ) ) \) can be obtained from \( G ( D_1 ) \) by combining two circles (those corresponding to the components between which the handle is added) and adding all chord endpoints which lie on them to the new circle, leaving the other circles unchanged. Thus the number of chord endpoints lying on the new circle must be a multiple of \(4\) and it is not degenerate. As the other circles are unchanged it is clear that \( G ( D_2 ) \) has no degenerate circles and \( D_2 \) has alternately coloured smoothings - note that it has half the number that \( D_1 \) has, however. That there are heights at which both \(D_1\) and \(D_2\) have alternately coloured smoothings again follows from the fact that handle additions do not change the number of classical crossings. The statement \( (ii) \) follows from \Cref{Eq:rgmaps,Eq:etaprime}. 
\end{proof}

In the case of \(1\)-handles involving a single link component we are able to determine whether they preserve the existence of alternately coloured smoothings by looking at their effect on Gauss diagrams. Using this we can specify the handle additions which are associated non-zero maps.

\begin{lemma}
	\label{Lem:acskiller}
	Let \( D_1 \) and \( D_2 \) be diagrams of virtual links \( L_1 \) and \( L_2 \), and \( S \) an elementary cobordism between them which is a \(1\)-handle addition involving a single link component. Further, assume \( \dkh ' ( L_1 ) \) is non-trivial. Then \( \dkh ' ( L_2 ) \) is trivial if and only if there is a proper colouring of \( S ( D_1 ) \) such that the handle addition is between two strands of opposite colour.
\end{lemma}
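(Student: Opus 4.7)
The plan is to reduce the statement to a condition on the Gauss diagram \( G(D_2) \), and to read off that condition from proper colourings of \( S(D_1) \) via the correspondence between proper colourings, alternately coloured smoothings, and alternate colourings of Gauss diagrams established in the proof of \Cref{Thm:acs}. Since \( \dkh'(L_1) \) is non-trivial, \Cref{Thm:leerank} and \Cref{Thm:acs} imply \( G(D_1) \) has no degenerate circles; in particular the circle \( c \) corresponding to the single involved component \( C \) carries an even number \( 2k \) of chord endpoints.

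The first step is to describe the effect of the handle addition on \( G(D_1) \). The orientation of the cobordism forces the two feet of the handle to be antiparallel with respect to the orientation of \( C \), and since \( C \) is a single component, the external arcs at the saddle disk are forced to pair the exits of each strand with the entries of the other. A purely local trace (unaffected by virtual crossings elsewhere in the diagram, since these do not alter the abstract component structure) then shows that \( C \) necessarily splits into two components \( C', C'' \) of \( L_2 \). Marking the two attachment points \( P_1, P_2 \) on \( c \) divides it into arcs \( \alpha, \beta \) carrying \( n_\alpha \) and \( n_\beta \) chord endpoints respectively with \( n_\alpha + n_\beta = 2k \); the circles \( c', c'' \) for \( C', C'' \) in \( G(D_2) \) inherit these endpoints, while every other circle of \( G(D_2) \) agrees with its counterpart in \( G(D_1) \).

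Next I would invoke the colour-flip correspondence: since the colour of an arc of \( C \) flips at each chord endpoint, traversing \( \alpha \) from \( P_1 \) to \( P_2 \) flips the colour exactly \( n_\alpha \) times. Because all proper colourings of \( S(D_1) \) are related by component-wise colour flips (from the proof of \Cref{Thm:acs}) and the two strands both lie on \( C \), whether they share or differ in colour is an invariant of the handle. Stringing these observations together gives the chain of equivalences: the two strands carry opposite colours in some (equivalently, every) proper colouring of \( S(D_1) \) iff \( n_\alpha \) is odd iff both \( c', c'' \) are degenerate iff \( G(D_2) \) has a degenerate circle iff \( \dkh'(L_2) \) is trivial, the final step being by \Cref{Thm:leerank} and \Cref{Thm:acs}.

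The principal obstacle is the splitting claim in the second paragraph: one must verify carefully that the antiparallel orientation of the two strands, combined with \( C \) being a single component, forces the external continuations at the saddle disk to pair in the unique way that produces two components after surgery, regardless of the layout of the rest of the diagram. Once this local analysis is in hand, the rest of the argument is essentially bookkeeping on the Gauss diagram.
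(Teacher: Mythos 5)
Your proposal is correct and takes essentially the same route as the paper: both reduce via \Cref{Thm:leerank} and \Cref{Thm:acs} to a parity count of chord endpoints enclosed by the handle's dotted chord on the relevant circle of the Gauss diagram, and both close the argument by observing that the colour relationship between the two strands is the same in every proper colouring because all proper colourings are related by component-wise flips. The splitting of the component into two that you flag as the principal obstacle is exactly what the paper's figure is implicitly using; it follows routinely from the orientation constraint on the saddle (antiparallel feet on one component always split it into two), so there is no genuine gap.
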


\begin{proof}
	It follows from \Cref{Thm:leerank} that \( \dkh ' ( L_2 ) \) is trivial if and only if \( D_2 \) has no alternately coloured smoothings. Consider the Gauss diagram of the shadow of \(D_1\): as the handle addition is between a single link component it can be represented in the following manner:
	\begin{center}
		\includegraphics[scale=0.75]{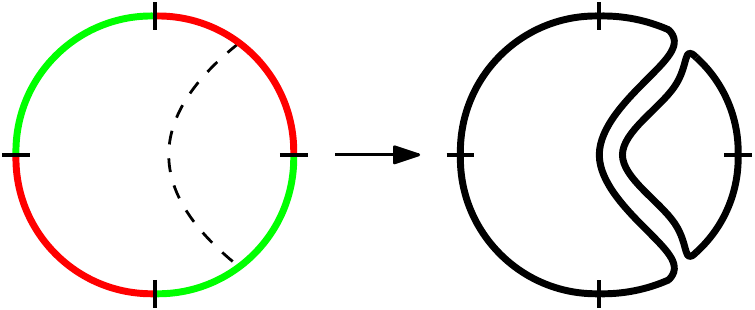}
	\end{center}
	On the left the circle of \( G ( D_1 ) \) corresponding to the component of \(D_1\) undergoing the handle addition is depicted; the dotted line shows the location of the handle addition. Clearly, if the handle is added between two regions of opposite colour the dotted line must enclose an odd number of chord endpoints, so that the newly created circles are degenerate (as depicted on the right). Conversely, it is easy to see that if the handle is between two regions of the same colour then the newly created circles are non-degenerate. To conclude, note the regions are either both coloured the same colour in all proper colourings of \( S (D_1) \) or are coloured opposite colours in all proper colourings, as all proper colourings are related by flipping the colours on a finite number of circles.
\end{proof}

\begin{corollary}
	\label{Cor:nonzero2}
	Let \( D_1 \) and \( D_2 \) be diagrams of virtual links \( L_1 \) and \( L_2 \), and \( S \) an elementary cobordism between them which contains a \(1\)-handle addition between a single link component. Further, assume that \( \dkh ' ( L_1 ) \) is non-trivial and that the \(1\)-handle addition is between strands of the same colour in \( S ( D_1 ) \). Then \( S \) has shared degrees and \( \phi_S \) is non-zero in them.
\end{corollary}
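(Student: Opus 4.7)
The plan is to combine \Cref{Lem:acskiller} with a direct analysis of $\phi_S$ in the red/green basis of \Cref{Def:redgreen}. Since the hypothesis places the two feet of the $1$-handle on strands of the same colour in some (and, by the closing remark in the proof of \Cref{Lem:acskiller}, every) proper colouring of $S(D_1)$, the lemma immediately yields that $\dkh'(L_2)$ is non-trivial; hence $D_2$ admits at least one alternately coloured smoothing.

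The first step I would take is to establish a height-preserving bijection between the alternately coloured smoothings of $D_1$ and those of $D_2$. Given a proper colouring of $S(D_1)$ in which the two feet share a colour, extending that colour across the attached band produces a valid proper colouring of $S(D_2)$; restriction gives the inverse. Because a $1$-handle addition leaves the number of classical crossings unchanged, this bijection preserves the height of the resulting alternately coloured smoothings in the sense of \Cref{Def:height}, so every height achieved by an alternately coloured smoothing of $D_1$ is a shared degree of $S$.

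The second step is to analyse $\phi_S$ on a chosen alternately coloured generator $\sg^{\text{u/l}}_1$ of $\dkh'(L_1)$ associated with such a smoothing. The $1$-handle induces either $m'$, $\Delta'$, or $\eta'$ as the corresponding entry of the map of cubes, according to whether the two feet lie on distinct cycles, on a single cycle that separates, or on a cycle undergoing the single-cycle smoothing. In the red/green basis, \Cref{Eq:rgmaps} gives $m'(r\otimes r)=r$, $m'(g\otimes g)=g$, $\Delta'(r)=2\,r\otimes r$, and $\Delta'(g)=-2\,g\otimes g$, while \Cref{Eq:etaprime} exhibits $\eta'$ as a non-zero scaling on each of $r^{\text{u/l}}$ and $g^{\text{u/l}}$. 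The same-colour hypothesis forces the cycles involved to carry a common colour, so in each case $\phi_S(\sg^{\text{u/l}}_1)$ is a non-zero scalar multiple of the alternately coloured generator $\sg^{\text{u/l}}_2$ of $\cdkh'(D_2)$ associated under the bijection above, and the nontriviality argument in the proof of \Cref{Thm:leerank} then shows that $\sg^{\text{u/l}}_2$ represents a non-zero class in $\dkh'(L_2)$.

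The main obstacle I anticipate is the bookkeeping in the single-cycle case: when the $1$-handle acts via $\eta'$, one must verify that the same-colour hypothesis together with the formulas of \Cref{Eq:etaprime} produces a coherent alternately coloured generator on the image smoothing, rather than an element mixing the red and green subspaces or the upper and lower summands in an incompatible way. This reduces to tracking how the strands entering and leaving the handle neighbourhood determine, via the proper colouring extended across the band, the colour of the unique surviving cycle, and checking that $\eta'$ respects this determination once the $\mathrm{u}/\mathrm{l}$ shift is accounted for; the other two cases ($m'$ and $\Delta'$) are routine from the displayed formulas.
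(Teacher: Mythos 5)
Your argument is correct and fills in exactly the outline the paper gestures at (the paper omits the proof, saying only that it mirrors \Cref{Prop:nonzero1} together with \Cref{Eq:rgmaps,Eq:etaprime}). One small overstatement: the map sending a proper colouring of \( S(D_1) \) to its extension on \( S(D_2) \) is an \emph{injection}, not a bijection. Since both feet lie on a single component of \( L_1 \), the oriented \(1\)-handle splits that component in two, so \( |L_2| = |L_1| + 1 \) and, by \Cref{Thm:acs}, \( D_2 \) has \( 2^{|L_1|+1} \) alternately coloured smoothings --- twice as many as \( D_1 \). The extra ones are exactly the proper colourings of \( S(D_2) \) in which the two arcs passing through the saddle region carry opposite colours, and these do not restrict to proper colourings of \( S(D_1) \). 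Fortunately your argument uses only the forward (extension) direction and the height-preservation of the associated smoothings, so nothing downstream is affected. The remainder --- that the \(1\)-handle acts by \( m' \), \( \Delta' \), or \( \eta' \) on the chosen alternately coloured generator, with the same-colour hypothesis ensuring the relevant cycle(s) share a colour so that \( m'(r\otimes r)=r \), \( \Delta'(r)=2r\otimes r \), and the \( \eta' \) formulas give non-zero output (with the u/l flip propagating correctly across the tensor string) --- is precisely the check the paper's remark points to.
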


We omit the proof of \Cref{Cor:nonzero2} as it uses very similar ideas to that of \Cref{Prop:nonzero1} along with \Cref{Eq:rgmaps,Eq:etaprime}.

Using the map of cubes defined by a handle addition (see \cpageref{Page:handles}) we continue to investigate the maps associated to \(1\)-handle additions further. In what follows we shall suppress the upper/lower subscripts of the generators \( \sg \), as it easy to see that \( \sg^{\text{u}} \in \text{im} ( \phi_{S} ) \) if and only if \( \sg^{\text{l}} \in \text{im} ( \phi_{S} ) \). Also, whenever we state equalities such as \( \phi_S ( \sg ) = \sg ' \), for example, we shall always mean equality up to a (non-zero) scalar.

\begin{proposition}
	\label{Prop:1handles1}
	Let \( D_1 \) and \( D_2 \) be diagrams of virtual links \( L_1 \) and \( L_2 \), and \( S \) an elementary cobordism between them which is a \(1\)-handle addition. Further let \( \dkh ' ( L_1 ) \) and \( \dkh ' ( L_2 ) \) be non-trivial. (Recall that the smoothings of \(D_1\) and \(D_2\) are in bijection.) There are two cases:
	\begin{enumerate}[(i)]
		\item if the \(1\)-handle is between two distinct components of \( L_1 \), then every alternately coloured smoothing of \( D_2 \) is associated to an alternately coloured smoothing of \( D_1 \).
		\item if the \(1\)-handle involves a single component of \(L_1\), then every alternately coloured smoothing of \(D_1\) is associated to an alternately coloured smoothing of \(D_2\).
	\end{enumerate}
	(A smoothing of \(D_1\) is associated to a smoothing of \(D_2\) if and only if it is sent to it under the map of cubes defined by \(S\).)
\end{proposition}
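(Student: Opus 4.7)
The plan is to work with the Gauss diagrams $G(D_1)$ and $G(D_2)$ and to exploit the bijections, established in the proof of \Cref{Thm:acs}, between alternately coloured smoothings, proper colourings of the shadow, and alternate colourings of the Gauss diagram. The key compatibility is that the cube-map correspondence between smoothings of $D_1$ and $D_2$ respects the cube coordinates at each classical crossing, and those coordinates are determined purely by the local colour pattern at the crossing (red-to-red or green-to-green). Since the handle lies away from all classical crossings, two alternately coloured smoothings of $D_1$ and $D_2$ are associated under the cube map precisely when their induced proper colourings agree at every chord endpoint of the Gauss diagram.

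Under the handle addition the Gauss diagram transforms in a controlled way. In case $(i)$ the handle between two distinct components of $L_1$ merges the two corresponding circles of $G(D_1)$ at the handle endpoints, producing a single circle in $G(D_2)$. In case $(ii)$ the handle on a single component of $L_1$ splits the corresponding circle of $G(D_1)$ at the two handle endpoints, producing two new circles in $G(D_2)$, each closed up through the handle arc (which carries no chord endpoints). In either case a proper colouring on one side restricts naturally to a colouring of the arcs on the other; what must be checked is that the restriction is itself proper, that is, that it alternates at every chord endpoint.

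For case $(i)$ I would start with an alternate colouring $c_2$ of $G(D_2)$ and restrict it to the $A$-portion and the $B$-portion of the merged circle. The handle endpoint on each pre-merge circle is not a chord endpoint, so the colour extends across it. Traversing the $A$-portion passes an even number of chord endpoints, since \Cref{Thm:acs} together with $\dkh'(L_1)\neq 0$ gives that the circle for $A$ in $G(D_1)$ is non-degenerate; hence the colour returns to its starting value upon closing the $A$-circle, yielding a valid alternate colouring, and the same holds for $B$. The resulting colouring $c_1$ of $G(D_1)$ agrees with $c_2$ at every chord endpoint, so the cube coordinates match, and the smoothing of $D_1$ associated under the cube map to the given alternately coloured $\mathscr{S}_2$ is itself alternately colourable. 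Case $(ii)$ is the mirror argument: one starts with an alternate colouring $c_1$ of $G(D_1)$ and restricts it to each of the two arcs into which the handle endpoints divide the affected circle. Here \Cref{Lem:acskiller} together with $\dkh'(L_2)\neq 0$ forces the handle to lie between same-coloured strands, which is exactly the condition that each of the two new circles in $G(D_2)$ has an even number of chord endpoints, so the restricted colourings are proper and again the cube coordinates match.

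The chief technical subtlety is keeping careful track of the colours at the handle endpoints themselves, which are not chord endpoints but sit at the interface between the merged or split regions: compatibility of the colouring across each handle endpoint is exactly what forces a parity count on the chord endpoints on the two sides of the handle. In case $(i)$ this parity is supplied by the non-degeneracy of both $A$ and $B$ in $G(D_1)$, and in case $(ii)$ by \Cref{Lem:acskiller}; once the parity is in place, the construction of the restricted colouring and the matching of the cube coordinates are immediate.
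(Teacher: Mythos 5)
Your proof is correct and follows the same fundamental strategy as the paper: pass through the correspondence between alternately coloured smoothings, proper colourings of shadows, and alternate colourings of Gauss diagrams, and then observe that these colourings transfer across the handle region because the cube coordinates only depend on colour data at chord endpoints, which the handle leaves untouched. The one genuine divergence is in case $(i)$. The paper handles $(i)$ with a single sentence by noting that the reverse cobordism $L_2 \to L_1$ is a $1$-handle on a single component and hence already covered by case $(ii)$; you instead run a direct parity argument, restricting a colouring of the merged circle in $G(D_2)$ to its $A$- and $B$-portions and using non-degeneracy of the corresponding circles in $G(D_1)$ to close each portion consistently. Both are sound. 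The paper's reversal trick buys brevity and makes the logical dependence explicit ($(i)$ is literally $(ii)$ run backwards, with no new input needed), while your direct argument is self-contained for each case and makes the relevant parity bookkeeping visible, which is arguably clearer for a reader who has not yet internalised \Cref{Lem:acskiller}. For case $(ii)$ the two proofs coincide: both invoke \Cref{Lem:acskiller} to force the handle between same-coloured strands, then observe the colouring and hence the crossing resolutions carry over unchanged.
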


\begin{proof}
	As observed in \Cref{Sec:leehom} the alternately coloured smoothings of a diagram are in bijection with the proper colourings of the shadow of the diagram. In case \( (ii) \) one readily observes that a proper colouring of \( S ( D_1 ) \) defines a proper colouring of \( S ( D_2 ) \) (as the handle must join two strands of the same colour, a consequence of \Cref{Lem:acskiller}). Moreover this proper colouring of \(S ( D_2 ) \) induces the same crossing resolutions as those of the proper colouring of \( S ( D_1 ) \), so that corresponding alternately coloured smoothings are associated. In case \( (i) \), notice that the reverse cobordism (from \(L_2\) to \(L_1\)) satisfies \( (ii) \).
\end{proof}

\begin{corollary}
	\label{Cor:surjection}
	Let \( D_1 \) and \( D_2 \) be diagrams of virtual links \( L_1 \) and \( L_2 \), and \( S \) an elementary cobordism between them which is a \(1\)-handle addition with shared degrees. Then, for \(k\) a shared degree
	\begin{enumerate}[(i)]
		\item If the handle addition is between two distinct components of \( L_1 \) then \( \phi_S \) surjects onto \( \bigoplus_i {\dkh '}_{k} ( L_2 ) \).
		\item If the handle addition is between a single component of \( L_1 \) then for all \( \sg \in \bigoplus_i {\dkh '}_{k} ( L_1 ) \) \( \phi_S ( \sg ) \neq 0 \).
	\end{enumerate}
\end{corollary}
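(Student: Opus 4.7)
The plan is to exploit the explicit basis for ${\dkh'}_k(L_j)$ supplied by Theorem \ref{Thm:leerank}: the alternately coloured generators $\sg^{u/l}$ attached to alternately coloured smoothings of height $k$. Since $\phi_S$ is induced by the map of cubes of the 1-handle addition, its action on any smoothing is the identity on the uninvolved cycles, and one of $m'$, $\Delta'$, or $\eta'$ on the cycle(s) meeting the handle. In the $\{r,g\}$ basis each of these operations either vanishes or sends an alternately coloured expression to a non-zero scalar multiple of an alternately coloured expression, so the entire proof reduces to tracking which alt coloured smoothings of $D_1$ and $D_2$ are paired by the cube bijection, together with the scalar check from Equations \ref{Eq:rgmaps} and \ref{Eq:etaprime}.

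For part $(i)$, I would fix an alternately coloured generator $\sg'$ of ${\dkh'}_k(L_2)$ corresponding to a smoothing $\mathscr{S}_2$ and apply Proposition \ref{Prop:1handles1}$(i)$ to obtain an alternately coloured smoothing $\mathscr{S}_1$ of $D_1$ associated to $\mathscr{S}_2$, with generator $\sg$. Because the handle joins strands on two distinct link components, in $\mathscr{S}_1$ these strands lie on different cycles and the handle acts as $m'$. The alt coloured condition on $\mathscr{S}_2$ forces the merged cycle to carry a single colour, so the two pre-merge cycles must already share their colour; then $m'(r\otimes r)=r$, $m'(g\otimes g)=g$ yield $\phi_S(\sg) = c\,\sg'$ with $c \neq 0$. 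Every alternately coloured generator of ${\dkh'}_k(L_2)$ is therefore hit, and since such generators span the space by Theorem \ref{Thm:leerank}, surjectivity follows.

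For part $(ii)$, I would start from any alternately coloured generator $\sg$ of ${\dkh'}_k(L_1)$. Proposition \ref{Prop:1handles1}$(ii)$ gives an alt coloured smoothing $\mathscr{S}_2$ of $D_2$ associated to $\mathscr{S}_1$, and Lemma \ref{Lem:acskiller} (together with $\dkh'(L_2)\neq 0$) forces the handle to meet two same-coloured strands; hence the relevant local operation $m'$, $\Delta'$, or $\eta'$ evaluates on $\sg$ to a non-zero scalar multiple of the alt coloured generator $\sg'$ of $\mathscr{S}_2$ by Equations \ref{Eq:rgmaps} and \ref{Eq:etaprime}. Because the map of cubes is a bijection on smoothings, distinct alt coloured generators of $D_1$ map to distinct non-zero multiples of alt coloured generators of $D_2$, which are linearly independent. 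The restriction of $\phi_S$ to ${\dkh'}_k(L_1)$ is thus injective on the spanning set, so $\phi_S(\sg)\neq 0$ for every non-zero $\sg$. The mildly subtle step is the colour-matching argument in $(i)$, verifying that the two strands entering the handle in $\mathscr{S}_1$ must share a colour; this is a small variant of the Gauss diagram / proper colouring analysis from the proofs of Lemma \ref{Lem:acskiller} and Proposition \ref{Prop:1handles1}, and everything else is direct inspection of the $\{r,g\}$ basis.
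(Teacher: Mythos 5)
Your overall strategy mirrors the paper's: use the bijection on cubes coming from the $1$-handle, track how the alternately coloured generators move, and check the $\{r,g\}$ scalars via \Cref{Eq:rgmaps,Eq:etaprime}. However, there is a genuine gap in your argument for part $(i)$. You assert that because the handle joins strands lying on two distinct link components of $L_1$, those strands must lie on different cycles of $\mathscr{S}_1$, and so the handle acts as $m'$. That inference is false: cycles of a smoothing are not link components. A smoothing may fuse arcs from different components into a single cycle, so the two feet of the handle can perfectly well lie on the \emph{same} cycle of $\mathscr{S}_1$, in which case the cube map acts by $\Delta'$ or by $\eta'$, not $m'$. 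The paper's proof explicitly separates these three possibilities. In fact those two cases are the easy ones --- $\Delta'$ and $\eta'$ never annihilate a pure $r$ or $g$ label, so the image is automatically the alternately coloured generator of $\mathscr{S}_2$ --- but they must be stated rather than swept into an unfounded claim that only $m'$ occurs. Once you add those cases, the $m'$ case remains the only nontrivial one, and your colour--matching argument (traced through the proper--colouring correspondence of \Cref{Lem:acskiller,Prop:1handles1}) is the same one the paper uses.

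Your part $(ii)$ actually proves slightly more than the paper writes down: the paper only checks that each alternately coloured generator has non-zero image, whereas you observe that distinct alternately coloured generators of $D_1$ are carried (via the cube bijection) to non-zero scalar multiples of \emph{distinct} alternately coloured generators of $D_2$, and since the latter are linearly independent by \Cref{Thm:leerank}, the restriction of $\phi_S$ is injective. That is the statement actually used downstream in \Cref{Thm:nonzero1} (\emph{trivial kernel}), so making the linear-independence step explicit is a small improvement, provided you also note that different colourings of the same underlying smoothing of $D_1$ still land on different colourings of the same smoothing of $D_2$ (the handle only touches the involved cycle and the colours of the untouched cycles are preserved).
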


\begin{proof}
	\( ( i ) \): Let \( \sg_2 \in {\dkh '}_{k} ( L_2 ) \) be defined by an alternately coloured smoothing \( \mathscr{S}_2 \) of \( D_2 \). Then by \Cref{Prop:1handles1} \( \mathscr{S}_2 \) is associated to \( \mathscr{S}_1 \), an alternately coloured smoothing of \( D_1 \) (and is mapped to it under the map of cubes defined by \( S \)). Let \( \sg_1 \) denote the alternately coloured generator of \( {\dkh '}_{k} ( L_1 ) \) defined by \( \mathscr{S}_1 \). If \( \phi_S \) acts by either \( \Delta ' \) or \( \eta ' \) on \( \sg_1 \) then \( \phi_S ( \sg_1 ) = \sg_2 \) automatically (by \Cref{Eq:rgmaps,Eq:etaprime}). If it acts by \( m ' \), then it is possible that \( \phi_S ( \sg_1 ) = 0 \), if the cycles undergoing the merge map are coloured opposite colours. Notice that if \( \mathscr{S}_2 \) is obtained from \( \mathscr{S}_1 \) by merging two cycles, then \( \mathscr{S}_1 \) is obtained from \( \mathscr{S}_2 \) by splitting two cycles. As observed in the proof of \Cref{Prop:1handles1}, by looking at proper colourings \( S ( D_2 ) \) and \( S ( D_1 ) \) associated to \( \mathscr{S}_2 \) and \( \mathscr{S}_1 \), respectively, we see that the relevant cycles cannot be coloured opposite colours in \( \mathscr{S}_1 \); thus \( \phi_S ( \sg_1 ) = \sg_2 \) (again by \Cref{Eq:rgmaps}).
	
	\noindent\( (ii) \): Let \( \sg \in \bigoplus_i {\dkh '}_{k} ( L_1 ) \) be defined by the alternately coloured smoothing \( \mathscr{S} \) of \( D_1 \). By \Cref{Lem:acskiller} the handle addition must be between cycles of the same colour in \( \mathscr{S} \) so that \( \phi_S ( \sg ) \neq 0 \) by \Cref{Eq:rgmaps,Eq:etaprime}.
\end{proof}

\begin{proof}[Proof of \Cref{Thm:nonzero1}]
	First we shall prove a fact about links appearing in concordances, before using this fact and an induction argument to prove the theorem in this restricted case.
	
	\begin{figure}
		\includegraphics[scale=0.65]{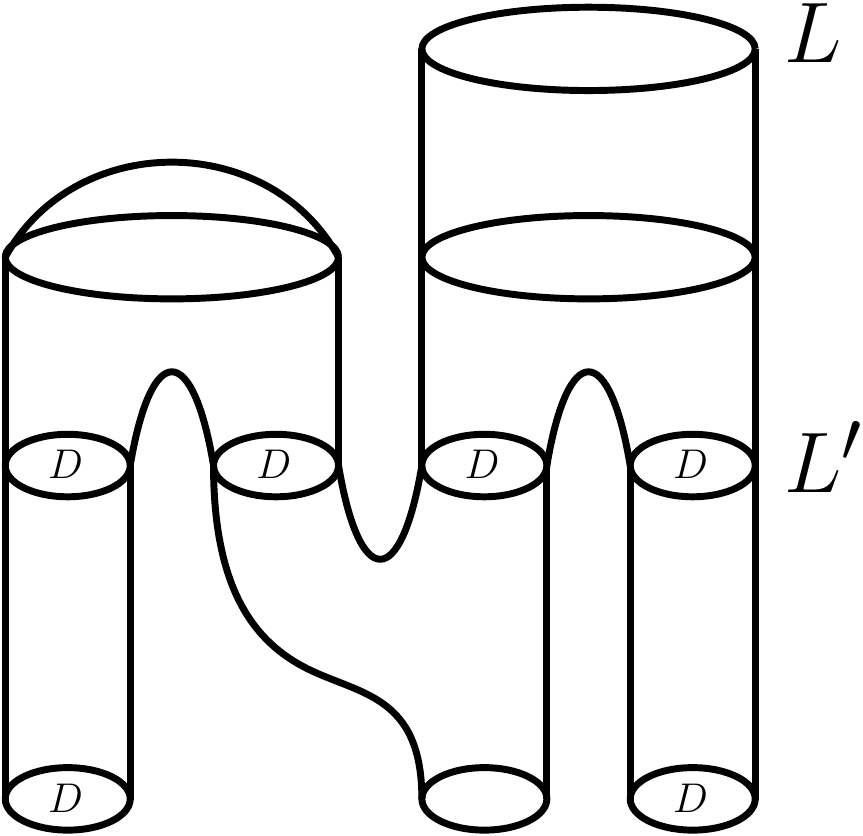}
		\caption{Cancelling degenerate components. The label \( D \) denotes a degenerate component.}
		\label{Fig:degeneratecobordism}
	\end{figure}
	
	Let \( S \) be a concordance between a virtual knot \( K \) and a virtual link \( L \) such that \( \dkh ' ( L ) \neq 0 \). Assume towards a contradiction that a link, \( \widetilde{L} \), appearing in \( S \) is such that \( \dkh ' ( \widetilde{L} ) = 0 \). By \Cref{Thm:leerank,Thm:acs}, \( S ( D ) \) must contain a degenerate circle, for \( D \) any diagram of \( \widetilde{L} \). Further, by \Cref{Lem:acskiller}, we see that degenerate circles are always created in pairs in a cobordism, and that degenerate circles can be cancelled against one another to produce non-degenerate circles (see \Cref{Fig:degeneratecobordism}). This cancelling process is as follows: add a \(1\)-handle between the components of \( \widetilde{L} \) which correspond to the degenerate circles, producing a new circle. Let the two initial degenerate circles be \( C_1 \) and \( C_2 \), and \( N_i \) denote the number of chord endpoints lying on \( C_i \). It is easy to see that the number of chord endpoints lying on the newly created circle is \( N = N_1 + N_2 \), and that \( N \) must be even as \( N_1 \) and \( N_2 \) are odd. Thus the newly created circle is non-degenerate.
	
	In what follows we shall call a component of a link diagram \emph{degenerate} if the circle corresponding to it in the associated Gauss diagram is degenerate. We may also speak of \emph{degenerate} components of links, as virtual Reidemeister moves cannot change the mod \(2\) number of chord endpoints lying on a circle.
	
	As \( K \) has non-trivial doubled Lee homology (it is a knot), no diagram of it contains a degenerate component. Therefore at least one \(1\)-handle involving a single link component must occur in \( S \) to produce \( \widetilde{L} \) (recall again \Cref{Lem:acskiller}). As \( L \) also has non-trivial doubled Lee homology, we see that we must remove all degenerate link components (by the process outlined above) in order to reach \( L \) from \( \widetilde{L} \). But degenerate circles are always formed in pairs, and we see that an attempt to cancel them all without introducing genus - which we are prohibited from doing as \( S \) is a concordance - leads to a non-compact situation; consider \Cref{Fig:degeneratecobordism}. As we are considering only compact cobordisms (recall \Cref{Def:cobordism}) we arrive at the desired contradiction.
	
	We now present the aforementioned induction argument: we shall build up concordances with elementary cobordisms. Let \( S' \) be a concordance between a virtual knot \( J \) and virtual link \( L_1 \) (distinct from \( K \), \( L \), and \( \widetilde{L} \) above) such that \( S' \) contains no closed components, \( \dkh ' ( L_1 ) \neq 0 \) and \( \phi_S \) is non-zero. We claim that if \( S_e \) is an elementary cobordism between \( L_1 \) and \( L_2 \) such that \( g ( S' \cup S_e ) = 0 \) then \( \phi_{S' \cup S_e} \) is non-zero also. Note that the argument above implies that we may restrict to the case in which \( \dkh ' ( L_2 ) \neq 0 \): if this did not hold then \( S' \cup S_e \) could not form part of a concordance between links which both have non-trivial doubled Lee homology.
	
	If \( S_e \) is a virtual Reidemeister move or a \(0\)-handle addition then \( \phi_{S' \cup S_e} \) is non-zero as \( \phi_{S_e} \) has trivial kernel. If \( S_e \) is a \(2\)-handle addition then \( \text{ker} ( \phi_{S_e} ) \) is spanned by the image of the map associated to a \(0\)-handle addition. But if a \(0\)-handle addition preceeds \( S_e \) then \( S' \cup S_e \) would contain a closed component, which it does not by assumption, so that \( \phi_{S' \cup S_e} \) is non-zero.
	
	If \( S_e \) is a \(1\)-handle involving a single link component we see that \( \phi_{S_e} \) has trivial kernel by \Cref{Cor:surjection}, as we are working in the case in which \( \dkh ' ( L_2 ) \neq 0 \).
	
	We are left with the case in which \( S_e \) is a \(1\)-handle between distinct link components. If \( S' \cup S_e \) is to have genus \(0\) the link components of \( L_1 \) involved in \( S_e \) must be belong to different connected components of \( S' \). As \( S' \) begins with \( J \), a virtual knot, at least one of the components of \( S' \) involved in \( S_e \) must be have no boundary component in \( J \) i.e.\ its first appearance in \( S' \) is a \(0\)-handle. (Cutting the cobordism depicted in \Cref{Fig:degeneratecobordism} at the link labelled \( L' \) yields an example.) 
	
	Let \( x \in \text{im} ( \phi_{S'} ) \). We can write \( x = \sum_i \sg_i \), where \( \sg_i \) is an alternately coloured generator of \( L_1 \). Let \( \mathscr{S}_i \) denote the alternately coloured smoothing of \( L_1 \) which defines \( \sg_i \), and \( \mathscr{C}_i \) the associated proper colouring of the shadow (of the appropriate diagram) of \( L_1 \). Then \( \phi_{S' \cup S_e} ( x ) = 0 \) if and only if the link components of \( L_1 \) involved in \( S_e \) are coloured opposite colours in every \( \mathscr{C}_i \) (recall the bijection between components of a link diagram and components of its shadow given in \Cref{Def:shadow}). This can be seen from \Cref{Eq:rgmaps}.
	
	As observed above, at least one of the connected components of \( S' \) involved in \( S_e \) begins with a \(0\)-handle, and \Cref{Eq:02handlemaps} shows that the image of the map assigned to a \(0\)-handle is a linear combination of both red and green. Therefore, given an arc of \( S ( L_1 ) \) lying on a component which begins with a \(0\)-handle, if \( \mathscr{C}_i \) has the arc labelled a particular colour, there must exist a \( \mathscr{C}_j \) in which the arc is coloured the opposite colour, and \( \phi_{S' \cup S_e} \) is non-zero.
	
	The base cases of the induction are the elementary cobordisms: they are all clearly of genus \(0\) and satisfy the induction hypothesis, under our assumption that both the initial and terminal links have non-trivial doubled Lee homology. Thus, given a concordance between a virtual knot and a virtual link with non-trivial doubled Lee homology, the assigned map is non-zero.
\end{proof}

\begin{proof}[Proof of \Cref{Thm:nonzero2}]
	Let \( S \), \( \mathscr{S} \), and \( \mathscr{L} \) be as in the theorem statement. The cobordism \( S_2 \) is a concordance between a link and a knot, and so \( \phi_{S_2} \) is non-zero by \Cref{Thm:nonzero1}. Recall that \( \mathscr{L} \) is the set of alternately colourable smoothings of \( L \) such that the associated generators are in the cokernel of \( \phi_{S_2} \); this is necessarily non-empty as \( \phi_{S_2} \) is non-zero. Further, \( \phi_{S_1} \) has trivial kernel (as observed in the proof of \Cref{Thm:nonzero1}) so that \( \text{img} ( \phi_{S_1} ) \) is of rank \(4\), spanned by the alternately coloured generators associated to exactly one smoothing of \( L \), namely \( \mathscr{S} \): to see this, recall that \( \phi_{S_1} (\sg^{\text{u}}) \neq 0 \iff \phi_{S_1} (\sg^{\text{l}}) \neq 0 \iff \phi_{S_1} (\overline{\sg}^{\text{u}}) \neq 0 \iff \phi_{S_1} (\overline{\sg}^{\text{l}}) \neq 0 \), so that if the generators associated to more than one smoothing of \( L \) lay in the image of \( \phi_{S_1} \), it could not be of rank \(4\) (generators associated to different smoothings are linearly independent). By assumption \( \mathscr{S} \in \mathscr{L} \) so that, if \( \sg^{\text{u/l}}, \overline{\sg}^{\text{u/l}} \) are the generators associated to \( \mathscr{S} \), then \( \sg^{\text{u/l}}, \overline{\sg}^{\text{u/l}} \in \text{coker} ( \phi_{S_2} ) \) and \( \phi_{S} = \phi_{S_2} \circ \phi_{S_1} \) is non-zero.
\end{proof}

\begin{remark}
	In proving \Cref{Thm:nonzero1,Thm:nonzero2} we could not follow Rasmussen's approach of propagating orientations through the cobordism, as we no longer necessarily have the relationship between orientations of a link and its alternately coloured smoothings. Also, while all the maps associated to elementary cobordisms are non-zero (as long as the homologies do not vanish), the full map associated to \(S\) may fail to be non-zero without requiring a non-empty set of shared degrees (in the classical case every cobordism has shared degree \(0\)). Moreover, the proof in the classical case is concerned only with this degree, while we must investigate the map associated to cobordisms in every homological degree.
\end{remark}

\section{A doubled Rasmussen invariant}
\label{Sec:Ras}
As demonstrated in the preceding section, for an oriented virtual knot, \( K \), \( \dkh ' ( K ) \) is a rank \( 4 \) bigraded group, supported in a single homological degree which can be determined easily from any diagram of \( K \). In \Cref{Subsec:rasdefinition} we show that the data provided by the quantum gradings in which \( \dkh ' ( K ) \) is supported are equivalent to a single integer (in the classical case this integer is necessarily even), so that the information contained in \( \dkh ' ( K ) \) is equivalent to a pair of integers. In \Cref{Subsec:rasproperties} we give some properties of this pair of integers. and in \Cref{Subsec:rasoddwrithe} we show that one of the members of the pair is equal to the odd writhe of the given knot. Finally, in \Cref{Subsec:leftmost} we describe a class of knots for which the invariant can be quickly calculated.

\subsection{Definition}\label{Subsec:rasdefinition}
We referred to a filtration of \( \cdkh ' ( K ) \) in \Cref{Def:dkhprime} - let us concretise it (following Rasmussen \cite{Rasmussen2010}). Let \( D \) be an oriented virtual knot diagram of \( K \) with \( n_+ \) positive classical crossings and \( n_- \) classical crossings. The homological grading on \( \cdkh ' ( K ) \), denoted \( i \), is as defined in \Cref{Eq:doubled}. The quantum grading is the standard one: define \( p ( \vup ) = 1 \), \( p ( \vum ) = -1 \), \( p ( \vlp ) = 0 \), \( p ( \vlm ) = -2 \), \( p ( \bigotimes x ) = \sum p ( x ) \), then the quantum grading is the shift \( j ( x ) = p ( x ) + i ( x ) + n_+ - n_- \). Let \( \mathcal{F}_k = \lbrace x \in \cdkh ' ( K ) ~|~ j ( x ) \geq k \rbrace \), so that we have the filtration
\begin{equation*}
	0 = \mathcal{F}_n \subset \mathcal{F}_{n-1} \subset \cdots \subset \mathcal{F}_m = \cdkh ' ( K )
\end{equation*}
for some \( n,m \in \mathbb{Z} \); let \( s \) denote the associated grading i.e.\ \( s ( x ) = k \) if \( x \in \mathcal{F}_k \) and \( x \notin \mathcal{F}_{k+1} \).

\begin{definition}
	\label{Def:sminmax} For a virtual knot \( K \) let
	\begin{equation}
		\begin{aligned}
			s^{\text{u}}_{\text{max}} ( K ) &= \text{max} \lbrace s ( x ) ~|~ x \in \dkh ' ( K ), ~ x \neq 0, ~ x \in \mathcal{A}^{\otimes n} \rbrace \\
			s^{\text{l}}_{\text{max}} ( K ) &= \text{max} ( K ) \lbrace s ( x ) ~|~ x \in \dkh ' ( K ), ~ x \neq 0, ~ x \in \mathcal{A}^{\otimes n} \lbrace -1 \rbrace \rbrace
		\end{aligned}
	\end{equation}
	and similarly define \( s^{\text{u/l}}_{\text{min}} ( K ) \).\CloseDef
\end{definition}

That \( s^{\text{u/l}}_{\text{max}} ( K ) \) can be determined from \( s^{\text{u/l}}_{\text{min}} ( K )  \) (and vice versa) follows in large part from the following augmented version of \cite[Lemma \(3.5\)]{Rasmussen2010}.

\begin{lemma}
	\label{Lem:evenodd}
	For a virtual knot \( K \)
	\begin{equation*}
		\dkh ' ( K ) = {\dkh ' }_1 ( K ) \oplus {\dkh ' }_2 ( K ) \oplus {\dkh ' }_3 ( K ) \oplus {\dkh ' }_0 ( K )
	\end{equation*}
	where \( {\dkh ' }_i ( K ) \) is generated by elements of quantum grading congruent to \( i \mod 4 \). Further
	\begin{enumerate}[(i)]
		\item Either
		\begin{equation*}
			\begin{aligned}
				&\mathfrak{s}^{\text{u}} \pm \overline{\mathfrak{s}}^{\text{u}} \in {\dkh ' }_1 ( K )  \\
				&\mathfrak{s}^{\text{u}} \mp \overline{\mathfrak{s}}^{\text{u}} \in {\dkh ' }_3 ( K )
			\end{aligned}
		\end{equation*}
		or
		\begin{equation*}
		\begin{aligned}
			&\mathfrak{s}^{\text{u}} \pm \overline{\mathfrak{s}}^{\text{u}} \in {\dkh ' }_0 ( K ) \\
			&\mathfrak{s}^{\text{u}} \mp \overline{\mathfrak{s}}^{\text{u}} \in {\dkh ' }_2 ( K ).
		\end{aligned}
		\end{equation*}
		\item Either
		\begin{equation*}
		\begin{aligned}
			&\mathfrak{s}^{\text{u}} \pm \overline{\mathfrak{s}}^{\text{u}} \in {\dkh ' }_{1/3} ( K ) \\
			&\mathfrak{s}^{\text{l}} \pm \overline{\mathfrak{s}}^{\text{l}} \in {\dkh ' }_{0/2} ( K )
		\end{aligned}
		\end{equation*}
		or
		\begin{equation*}
		\begin{aligned}
			&\mathfrak{s}^{\text{u}} \pm \overline{\mathfrak{s}}^{\text{u}} \in {\dkh ' }_{0/2} ( K ) \\
			&\mathfrak{s}^{\text{l}} \pm \overline{\mathfrak{s}}^{\text{l}} \in {\dkh ' }_{3/1} ( K ).
		\end{aligned}
		\end{equation*}
	\end{enumerate}
Here \( \sg^{\text{u/l}} \) denotes an alternately coloured generator as defined in \Cref{Eq:acgenerators}, and \( \overline{\sg}^{\text{u/l}} \) denotes the generator formed by replacing \( r \) with \( g \) and \( g \) with \( r \).
\end{lemma}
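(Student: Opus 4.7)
The plan is to compute the $s$-filtrations of the four homology generators $\sg^{\text{u}} \pm \overline{\sg}^{\text{u}}$ and $\sg^{\text{l}} \pm \overline{\sg}^{\text{l}}$ of $\dkh'(K)$ explicitly, and then to observe that they exhaust all four residues mod $4$.

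First I would expand the canonical generators in the $\{v_+, v_-\}$-basis. Writing $\sg^{\text{u}} = \bigotimes_i \square_i^{\text{u}}$ with $\square_i = (v_+ + \epsilon_i v_-)/2$ for signs $\epsilon_i \in \{+1, -1\}$ recording whether the $i$-th cycle of the unique alternately coloured smoothing $\mathscr{S}$ (of $n$ cycles) is red or green, and observing that $\overline{\sg}^{\text{u}}$ uses the signs $-\epsilon_i$, direct expansion yields
\[
\sg^{\text{u}} + \overline{\sg}^{\text{u}} = \frac{1}{2^{n-1}}\sum_{|S|~\text{even}}\left(\prod_{i\in S}\epsilon_i\right) v_S^{\text{u}}, \qquad \sg^{\text{u}} - \overline{\sg}^{\text{u}} = \frac{1}{2^{n-1}}\sum_{|S|~\text{odd}}\left(\prod_{i\in S}\epsilon_i\right) v_S^{\text{u}},
\]
where $v_S^{\text{u}}$ denotes the pure tensor with $v_-$ at positions $S \subseteq \{1, \ldots, n\}$ and $v_+$ elsewhere. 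Setting $A := n + |\mathscr{S}| + n_+ - n_-$, the identity $j(v_S^{\text{u}}) = A - 2|S|$ shows that the leading $j$-term of $\sg^{\text{u}} + \overline{\sg}^{\text{u}}$ is $v_{+\cdots+}^{\text{u}}$ sitting at $j = A$, while the leading terms of $\sg^{\text{u}} - \overline{\sg}^{\text{u}}$ (those at $|S| = 1$) sit at $j = A - 2$. I would then argue, by adapting Rasmussen's strategy in the classical Lee setting, that the $s$-filtration of each homology class equals this leading $j$-grading, using the fact that in the $\{r, g\}$-basis the maps displayed in \Cref{Eq:rgmaps,Eq:etaprime} show $d'$ acts invertibly off the span of alternately coloured generators, so Gauss elimination identifies $\dkh'(K)$ with the sub-quotient supported on alternately coloured generators.

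Part (i) then follows at once: $A$ and $A - 2$ share a parity, so $\{A, A - 2\} \bmod 4$ is $\{1, 3\}$ when $A$ is odd, matching the first alternative, and $\{0, 2\}$ when $A$ is even, matching the second. For part (ii), the lower summand carries an additional $\{-1\}$ quantum shift, hence $s(\sg^{\text{l}} \pm \overline{\sg}^{\text{l}}) = s(\sg^{\text{u}} \pm \overline{\sg}^{\text{u}}) - 1$, giving lower residues $\{A - 1, A - 3\} \bmod 4$ which equal $\{0, 2\}$ when $A$ is odd (first alternative of (ii)) and $\{3, 1\}$ when $A$ is even (second alternative). Combining, the four filtration values $A, A - 1, A - 2, A - 3 \bmod 4$ exhaust $\{0, 1, 2, 3\}$; since $\dkh'(K)$ has rank $4$ by the corollary following \Cref{Thm:acs}, this forces the direct sum decomposition into four rank-one summands $\dkh'_1 \oplus \dkh'_2 \oplus \dkh'_3 \oplus \dkh'_0$.

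The step I expect to be the main obstacle is the filtration-match claim: the $s$-filtration of a homology class is the maximum over all representatives, so a priori some representative of $[\sg^{\text{u}} \pm \overline{\sg}^{\text{u}}]$ could achieve a higher filtration than the canonical chain above. I would dispatch this via the Gauss-elimination argument sketched in paragraph two, since the invertibility of $d'$ on the non-alternately-coloured subcomplex guarantees that any boundary modification affects only terms strictly below the leading $j$-grading of the canonical representative, leaving that leading grading as the true $s$-filtration of the class.
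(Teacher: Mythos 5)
Your proof is correct, and it reaches the conclusion by a genuinely different route from the paper. The paper's argument is the Rasmussen-style one: it introduces the sign involution \( \iota \) acting by \( \pm 1 \) on the mod-\(4\) pieces together with the chain-level map \( \mathfrak{i}(v_\pm) = \pm v_\pm \) (which swaps \( r \leftrightarrow g \)), observes that \( \iota(\sg^{\text{l}}) = \mathfrak{i}(\sg^{\text{l}}) = \overline{\sg}^{\text{l}} \), and thereby diagonalises the involution on the span of \( \sg^{\text{l}} \pm \overline{\sg}^{\text{l}} \). You instead expand the canonical generators in the \( \{v_+, v_-\} \)-basis and read off, term by term, that \( \sg^{\text{u}} + \overline{\sg}^{\text{u}} \) is supported on pure tensors with \( j \equiv A \pmod 4 \), that \( \sg^{\text{u}} - \overline{\sg}^{\text{u}} \) is supported on \( j \equiv A - 2 \pmod 4 \), and that the lower summand shifts both residues by \( -1 \). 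Your argument is more computational and perhaps less elegant, but it is completely explicit where the paper glosses (the paper calls part (ii) ``obvious''); the paper's buys brevity by following the classical template verbatim.

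One point worth flagging: your second paragraph, about matching the \( s \)-filtration of the homology class to the leading \( j \)-grading via Gauss elimination, is doing more work than the lemma requires, and you needn't worry about it here. The statement only asks you to place each class in the correct mod-\(4\) summand \( {\dkh'}_i(K) \). Since the differential preserves \( j \bmod 4 \), the complex splits as \( \cdkh' = \bigoplus_{i=0}^{3} \cdkh'_i \), and any \emph{cycle} that is homogeneous with respect to \( j \bmod 4 \) automatically represents a class in the corresponding homology summand, regardless of where its true filtration level sits. Your expansion already shows that \( \sg^{\text{u}} \pm \overline{\sg}^{\text{u}} \) and \( \sg^{\text{l}} \pm \overline{\sg}^{\text{l}} \) are mod-\(4\) homogeneous cycles, so (i) and (ii) follow without the filtration argument. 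The exact-filtration claim is needed later, for \Cref{Prop:minmaxrelation} and \Cref{Def:rasinvariant}, but it is a red herring here.
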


\begin{proof}
	That \( \dkh ' ( K ) \) decomposes into the given direct sum follows from the form of the differential: a part graded of degree \(0\) and other graded of degree \( 4 \). The statements within \( ( ii ) \) are obvious consequences of the construction of \( \sg^{\text{u/l}} \).
	
	We are left with \( (i) \): the mod \( 4 \) behaviour of the quantum grading is complicated by the fact that doubled Khovanov homology is supported in both odd and even quantum gradings, a departure from the classical case. We shall prove the case when \( s ( \sg^{\text{l}} ) \in 2 \mathbb{Z} \); this corresponds to the first statement in \( (i) \), the second follows identically modulo a grading shift.
	
	Following Rasmussen, define \( \iota : \dkh ' ( K ) \rightarrow \dkh ' ( K ) \) so that \( \iota \) acts by the identity on \( {\dkh ' }_0 ( K ) \oplus {\dkh ' }_1 ( K ) \) and by multiplication by \( -1 \) on \( {\dkh ' }_2 ( K ) \oplus {\dkh ' }_3 ( K ) \). Next, define \( \mathfrak{i} : \mathcal{A} \rightarrow \mathcal{A} \) by \( \mathfrak{i} ( v_+ ) = v_+ \) and \( \mathfrak{i} ( v_- ) = -v_- \). Then \( \mathfrak{i} ( r ) = g \) and \( \mathfrak{i} ( g ) = r \), and \( \mathfrak{i}^{\otimes n} : \mathcal{A}\lbrace -1 \rbrace \rightarrow \mathcal{A}\lbrace -1 \rbrace \) acts as the identity on \( {\dkh ' }_0 ( K ) \) and by multiplication by \( -1 \) on \( {\dkh ' }_2 ( K ) \). Thus we have
	\begin{equation*}
		\iota ( \sg^{\text{l}} ) = \mathfrak{i} ( \sg^{\text{l}} ) = \overline{\sg}^{\text{l}}
	\end{equation*}
	which yields
	\begin{equation*}
		\begin{aligned}
			\iota ( \sg^{\text{l}} + \overline{\sg}^{\text{l}} ) = \mathfrak{i} ( \sg^{\text{l}} + \overline{\sg}^{\text{l}} ) &= \sg^{\text{l}} + \overline{\sg}^{\text{l}} \\
			\iota ( \sg^{\text{l}} - \overline{\sg}^{\text{l}} ) = \mathfrak{i} ( \sg^{\text{l}} - \overline{\sg}^{\text{l}} ) &= - \left( \sg^{\text{l}} - \overline{\sg}^{\text{l}} \right)
		\end{aligned}
	\end{equation*}
	from which we deduce that \( \sg^{\text{l}} + \overline{\sg}^{\text{l}} \in {\dkh ' }_0 ( K ) \) and \( \sg^{\text{l}} - \overline{\sg}^{\text{l}} \in {\dkh ' }_2 ( K ) \). We conclude by invoking \( ( ii ) \).
\end{proof}

\begin{corollary}
	\label{Cor:maxmin}
	Let \(K\) be a virtual knot. Then
	\begin{equation*}
		s^{\text{u/l}}_{\text{max}} ( K ) > s^{\text{u/l}}_{\text{min}} ( K ).
	\end{equation*}
\end{corollary}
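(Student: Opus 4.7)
The plan is to deduce this directly from \Cref{Lem:evenodd} together with the rank-$4$ structure of \( \dkh ' ( K ) \) established after \Cref{Thm:acs}. Since \( K \) is a virtual knot, there are exactly two alternately coloured smoothings of (any diagram of) \( K \), and so \( \dkh ' ( K ) \) is generated by the four alternately coloured generators \( \sg^{\text{u}}, \overline{\sg}^{\text{u}}, \sg^{\text{l}}, \overline{\sg}^{\text{l}} \), where an overline denotes swapping the roles of \( r \) and \( g \). I will handle the upper summand first; the lower summand is identical modulo a shift.

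First I would note that \( \sg^{\text{u}} \) and \( \overline{\sg}^{\text{u}} \) are linearly independent in \( \dkh ' ( K ) \). This is already contained in the proof of \Cref{Thm:leerank}: choosing a basepoint on \( K \) lying on cycles of opposite colours in the two alternately coloured smoothings, the \( \mathcal{A} \)-action of \Cref{Lem:action} makes \( [\sg^{\text{u}}] \) and \( [\overline{\sg}^{\text{u}}] \) eigenvectors of multiplication by \( v_- \) with eigenvalues \( +1 \) and \( -1 \), respectively. Consequently both \( \sg^{\text{u}} + \overline{\sg}^{\text{u}} \) and \( \sg^{\text{u}} - \overline{\sg}^{\text{u}} \) are non-zero elements of the upper summand \( \mathcal{A}^{\otimes n} \subset \dkh ' ( K ) \).

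Next I invoke \Cref{Lem:evenodd}(i): in either of its two alternatives, the elements \( \sg^{\text{u}} + \overline{\sg}^{\text{u}} \) and \( \sg^{\text{u}} - \overline{\sg}^{\text{u}} \) lie in summands \( {\dkh '}_a ( K ) \) and \( {\dkh '}_b ( K ) \) with \( a \not\equiv b \pmod{4} \); indeed \( |a-b| = 2 \). Since these summands are generated by elements whose quantum (and hence filtration) gradings are congruent to \( a \) and \( b \) modulo \( 4 \), the two elements carry filtration gradings differing by at least \(2\). Therefore, inside the upper summand we have at least two distinct values of the filtration grading \(s\) attained by non-zero elements, giving
\begin{equation*}
s^{\text{u}}_{\text{max}} ( K ) \geq s^{\text{u}}_{\text{min}} ( K ) + 2 > s^{\text{u}}_{\text{min}} ( K ) .
\end{equation*}

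Finally, the same argument applied verbatim to \( \sg^{\text{l}} \pm \overline{\sg}^{\text{l}} \) in the lower summand \( \mathcal{A}^{\otimes n} \lbrace -1 \rbrace \), using the corresponding alternative in \Cref{Lem:evenodd}(ii), gives \( s^{\text{l}}_{\text{max}} ( K ) > s^{\text{l}}_{\text{min}} ( K ) \). I do not foresee any real obstacle: the result is essentially a bookkeeping consequence of the mod \(4\) parity statement in \Cref{Lem:evenodd} together with the linear independence of \( \sg^{\text{u/l}} \) and \( \overline{\sg}^{\text{u/l}} \) inherited from the proof of \Cref{Thm:leerank}.
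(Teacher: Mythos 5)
Your proof is correct and is exactly the argument the paper intends: the paper gives no explicit proof of \Cref{Cor:maxmin}, placing it immediately after \Cref{Lem:evenodd} precisely because it falls out of that lemma together with the non-vanishing of $\sg^{\text{u/l}}\pm\overline{\sg}^{\text{u/l}}$ established in the proof of \Cref{Thm:leerank}. You have simply made explicit the two small steps the paper leaves implicit, namely that $\sg^{\text{u/l}}$ and $\overline{\sg}^{\text{u/l}}$ are independent (so both $\pm$-combinations are non-zero) and that membership in ${\dkh'}_a(K)$ pins the filtration grading to $a \bmod 4$, forcing the two combinations to have filtration levels differing by at least $2$.

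One small remark on presentation: your argument actually proves the stronger inequality $s^{\text{u/l}}_{\text{max}}(K) \geq s^{\text{u/l}}_{\text{min}}(K)+2$, half of \Cref{Prop:minmaxrelation}; the content of that proposition is the reverse inequality, proved via the map $\partial$ induced by connect-summing with a crossingless unknot. It is worth being aware that the corollary as stated is weaker than what you have shown, and that the paper separates the $\geq 2$ direction (immediate from \Cref{Lem:evenodd}) from the $\leq 2$ direction (needing the map $\partial$).
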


\begin{proposition}
	\label{Prop:minmaxrelation}
	Let \(K\) be a virtual knot. Then
	\begin{equation*}
		s^{\text{u/l}}_{\text{max}} ( K ) = s^{\text{u/l}}_{\text{min}} ( K ) + 2.
	\end{equation*}
\end{proposition}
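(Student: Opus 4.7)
The proof combines the mod-\(4\) decomposition of \Cref{Lem:evenodd} with the filtered \(\mathcal{A}\)-module structure from \Cref{Lem:action}. Let \(\alpha = \sg^{\text{u}} + \overline{\sg}^{\text{u}}\) and \(\beta = \sg^{\text{u}} - \overline{\sg}^{\text{u}}\); by \Cref{Thm:leerank} these span the rank-\(2\) ``upper part'' of \(\dkh'(K)\), and both are non-zero as non-trivial combinations of the linearly independent alternately coloured generators. \Cref{Lem:evenodd}(i) places \([\alpha]\) and \([\beta]\) in distinct summands of the direct sum \(\dkh'(K) = {\dkh'}_0(K) \oplus {\dkh'}_1(K) \oplus {\dkh'}_2(K) \oplus {\dkh'}_3(K)\) whose indices differ by \(2\). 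Because this decomposition comes from splitting the differential into a piece of \(j\)-degree \(0\) and a piece of \(j\)-degree \(4\), it respects the filtration, so \(s([\alpha]) - s([\beta]) \equiv 2 \pmod{4}\); in particular \(s([\alpha]) \neq s([\beta])\), consistent with \Cref{Cor:maxmin}.

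Next I would invoke the \(\mathcal{A}\)-module structure. The identities \(v_- \cdot r = r\) and \(v_- \cdot g = -g\) (immediate from the red/green multiplication rules in \Cref{Eq:rgmaps}) give \(v_- \cdot \sg^{\text{u}} = \pm \sg^{\text{u}}\) and \(v_- \cdot \overline{\sg}^{\text{u}} = \mp \overline{\sg}^{\text{u}}\) with \emph{opposite} signs, no matter where the marked point is placed on the alternately colourable smoothing, since \(\sg^{\text{u}}\) and \(\overline{\sg}^{\text{u}}\) assign opposite colours to each cycle. Hence, up to a non-zero sign, \(v_- \cdot [\alpha] = [\beta]\) and \(v_- \cdot [\beta] = [\alpha]\). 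Because \(p(v_-) = -1\) and \(m'\) is filtered of degree \(-1\) in \(p\) (the sole Lee perturbation term \(m'(v_- \otimes v_-) = v_+\) \emph{raises} \(p\) rather than lowering it), the ``action by \(v_-\)'' operator is filtered of degree \(-2\). This gives both \(s([\beta]) \geq s([\alpha]) - 2\) and \(s([\alpha]) \geq s([\beta]) - 2\), so \(|s([\alpha]) - s([\beta])| \leq 2\).

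Combined with the mod-\(4\) constraint above, this forces \(|s([\alpha]) - s([\beta])| = 2\). Since \([\alpha]\) and \([\beta]\) form a basis of the upper part and have distinct filtration values, the \(s\)-filtration on this rank-\(2\) subspace has exactly two jumps, so \(\sumax(K) - \sumin(K) = 2\). The lower case \(\slmax(K) - \slmin(K) = 2\) follows by the identical argument applied to \(\sg^{\text{l}}\) and \(\overline{\sg}^{\text{l}}\), using that \(m'\) and \(\Delta'\) preserve the upper/lower splitting and the \(v_-\)-action does too. The main technical point to check is the filtered degree \(-2\) of the \(\mathcal{A}\)-action; this reduces to a bounded computation on basis elements, enabled by the fact that each Lee-perturbation term in \(m'\) only increases \(p\).
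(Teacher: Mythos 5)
Your argument is correct, and it differs from the paper's in one substantive step: the device used to obtain the upper bound \( s^{\text{u/l}}_{\text{max}}(K) - s^{\text{u/l}}_{\text{min}}(K) \leq 2 \). You employ the \( \mathcal{A} \)-module action of \( v_- \) from \Cref{Lem:action}, filtered of degree \( -2 \) because \( v_- \) has \( p \)-degree \( -1 \) and \( m' \) is filtered of \( p \)-degree \( -1 \) (the sole Lee-perturbation term \( m'(v_- \otimes v_-) = v_+ \) only raises \( p \)), and you observe that this action interchanges \( \sg^{\text{u}} + \overline{\sg}^{\text{u}} \) and \( \sg^{\text{u}} - \overline{\sg}^{\text{u}} \) up to sign. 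The paper instead constructs the connect-sum map \( \partial = \text{id} \otimes m' \) from \( \dkh' \) of \( K \) disjoint-union a crossingless unknot to \( \dkh'(K) \), notes it is filtered of degree \( -1 \), and traces the alternately coloured generators \( \sg^{\text{u/l}}_r \otimes r^{\text{u/l}} \), etc., through it. The two devices are closely related --- the \( v_- \)-action is precisely \( \partial \) restricted to states in which the free unknot factor is labelled \( v_- \) --- so both upper bounds ultimately rest on the same algebraic fact about \( m' \). Your version is slightly more economical: it stays inside \( \dkh'(K) \) and reuses machinery already in play (\Cref{Lem:action} and \Cref{Eq:rgmaps}), whereas the paper's formulation more closely mirrors Rasmussen's classical argument and the connect-sum setup that reappears in \Cref{Prop:additive}. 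Both proofs obtain the complementary lower bound \( s^{\text{u/l}}_{\text{max}}(K) - s^{\text{u/l}}_{\text{min}}(K) \geq 2 \) identically, from the mod-\(4\) decomposition of \Cref{Lem:evenodd} (equivalently \Cref{Cor:maxmin} together with parity), so the genuine divergence is confined to the choice of filtered map used for the other half of the estimate.
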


\begin{proof}
	Consider the map
	\begin{equation*}
		\partial : \dkh ' ( K \sqcup \raisebox{-1.75pt}{\includegraphics[scale=0.3]{unknotflat.pdf}} ) \rightarrow \dkh ' ( K )
	\end{equation*}
	induced by the connect sum \( K \# \raisebox{-1.75pt}{\includegraphics[scale=0.3]{unknotflat.pdf}} = K \) (this is well-defined as it is between \(K\) and a crossingless unknot diagram). This is well-defined, preserves homological degree, and with respect to the quantum degree is graded of degree \( -1 \) (as it is simply \( \text{id} \otimes m ' \)). Again we follow Rasmussen and denote the alternately coloured generators of \( \dkh ' ( K ) \) by their decoration at the connect sum site i.e.\ \( \sg^{\text{u/l}}_{r} \) and \( \sg^{\text{u/l}}_{g} \). The alternately coloured generators of \( \dkh ' ( K \sqcup \raisebox{-1.75pt}{\includegraphics[scale=0.3]{unknotflat.pdf}} ) \) are then \( \sg^{\text{u/l}}_{r} \otimes r^{\text{u/l}} \), \( \sg^{\text{u/l}}_{r} \otimes g^{\text{u/l}} \), \( \sg^{\text{u/l}}_{g} \otimes r^{\text{u/l}} \), and \( \sg^{\text{u/l}}_{g} \otimes g^{\text{u/l}} \). Under \( \partial \) we have
	\begin{equation*}
		\begin{aligned}
			\partial ( \sg^{\text{u/l}}_{r} \otimes g^{\text{u/l}} ) &= \partial ( \sg^{\text{u/l}}_{g} \otimes r^{\text{u/l}} ) = 0 \\
			\partial ( \sg^{\text{u/l}}_{r} \otimes r^{\text{u/l}} ) &= \sg^{\text{u/l}}_{r} \\
			\partial ( \sg^{\text{u/l}}_{g} \otimes g^{\text{u/l}} ) &= \sg^{\text{u/l}}_{g}.
		\end{aligned}
	\end{equation*}
	Noticing that \( s^{\text{u/l}}_{\text{max}} ( K ) = s ( \sg^{\text{u/l}}_{r} \pm \sg^{\text{u/l}}_{g} ) \) and
	\begin{equation*}
		\partial ( ( \sg^{\text{u/l}}_{r} \pm \sg^{\text{u/l}}_{g} ) \otimes r^{\text{u/l}} ) = \sg^{\text{u/l}}_{r}
	\end{equation*}
	we obtain
	\begin{equation*}
		\begin{aligned}
			s ( ( \sg^{\text{u/l}}_{r} \pm \sg^{\text{u/l}}_{g} ) \otimes r^{\text{u/l}} ) &\leq s ( \sg^{\text{u/l}}_{r} ) + 1 \\
			s^{\text{u/l}}_{\text{max}} ( K ) -1 &\leq s^{\text{u/l}}_{\text{min}} ( K ) + 1
		\end{aligned}
	\end{equation*}
	as \( \partial \) is graded of degree \( -1 \) (that \( s^{\text{u/l}}_{\text{min}} ( K ) =  s ( \sg^{\text{u/l}}_{r} ) \) follows from \Cref{Lem:evenodd}).
\end{proof}

Thus any of the four quantities defined in \Cref{Def:sminmax} determines all of the others and we able to make the following definition.

\begin{definition}
	\label{Def:rasinvariant}
	For a virtual knot \( K \) let \( \mathbbm{s} ( K ) = ( s_1 ( K ), s_2 ( K ) ) \in \mathbb{Z} \times \mathbb{Z} \) where
	\begin{equation*}
		\begin{aligned}
			s_1 ( K ) &= s^{\text{l}}_{\text{max}} ( K ) \\
			s_2 ( K ) &= i ( \sg^{\text{u/l}} ) = | \mathscr{S} |
		\end{aligned}
	\end{equation*}
	where \( i \) denotes homological grading and \( \sg^{\text{u/l}} \) an alternately coloured generator of \( K \) associated to the alternately coloured smoothing \( \mathscr{S} \). We refer to \( \mathbbm{s} ( K ) \) as the \emph{doubled Rasmussen invariant} of \( K \).\CloseDef
\end{definition}

\subsection{Properties}\label{Subsec:rasproperties}
\begin{proposition}
	For a classical knot \( K \) \( \mathbbm{s} ( K ) = ( s ( K ), 0 ) \), where \( s ( K ) \) denotes the classical Rasmussen invariant.
\end{proposition}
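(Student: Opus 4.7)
The plan is to reduce to the classical case via the splitting established in Proposition~\ref{Prop:classical}. Since \( K \) is classical, choose a diagram \( D \) representing \( K \) with no virtual crossings. Every classical resolution then produces either a merge or a split of cycles, so the single-cycle smoothing never occurs and the \( \eta' \) component of the differential on \( \cdkh'(D) \) is identically zero. The maps \( m' \) and \( \Delta' \) preserve the upper/lower decomposition by construction, so the entire filtered complex splits:
\begin{equation*}
\cdkh'(D) \;\cong\; CKh'(D) \,\oplus\, CKh'(D)\{-1\},
\end{equation*}
where \( CKh' \) denotes the classical Lee complex. Passing to homology, this identifies the alternately coloured generators \( \sg^{\text{u}},\overline{\sg}^{\text{u}} \) (respectively \( \sg^{\text{l}},\overline{\sg}^{\text{l}} \)) with Lee's canonical generators of \( Kh'(K) \) sitting in the upper summand (respectively lower summand, with quantum grading shifted down by one).

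Next I would compute \( s_2(K) \). Under Kauffman's proper-colouring correspondence used in the proof of \Cref{Thm:acs}, the unique alternately coloured smoothing of a classical knot diagram is precisely the oriented resolution, with the two colours reproducing the Seifert two-colouring of the complementary regions; this is the content of the remark on page~\pageref{Page:classicalacs}. The oriented smoothing has exactly \( n_- \) one-resolutions, so by \Cref{Def:height} its height is \( |\mathscr{S}| = n_- - n_- = 0 \), and therefore \( s_2(K) = 0 \) from \Cref{Def:rasinvariant}.

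Finally I would compute \( s_1(K) = s^{\text{l}}_{\max}(K) \). Because the grading shift \( \{-1\} \) lowers every quantum degree by one, the splitting above yields
\begin{equation*}
s^{\text{l}}_{\max}(K) \;=\; s^{\text{u}}_{\max}(K) - 1.
\end{equation*}
The upper summand carries the standard quantum grading on \( Kh'(K) \), so \( s^{\text{u}}_{\max}(K) \) coincides with the classical quantity \( s_{\max}(K) \), which by the definition of the classical Rasmussen invariant equals \( s(K) + 1 \). Hence \( s_1(K) = s(K) \). The only real obstacle is bookkeeping the \( \{-1\} \) shift carefully between the two summands of the split complex; once that is in place the result follows immediately from \Cref{Prop:classical} together with the normalisation of classical Khovanov homology.
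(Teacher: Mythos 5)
Your proof is correct and follows essentially the same route as the paper: both rest on the splitting \( \dkh'(K) \cong Kh'(K) \oplus Kh'(K)\{-1\} \) for classical diagrams, both compute \( s_1(K) = s^{\text{l}}_{\text{max}}(K) = s^{\text{u}}_{\text{max}}(K) - 1 = s_{\text{max}}(K) - 1 = s(K) \), and both obtain \( s_2(K) = 0 \) from the fact that the alternately colourable smoothing of a classical knot diagram is the oriented smoothing, which sits at height zero. You have merely spelled out in more detail two steps the paper leaves implicit (the vanishing of the \( \eta' \) differential and the height computation for the oriented smoothing).
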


\begin{proof}
	For \( K \) a classical knot \( \dkh ' ( K ) \) decomposes as
	\begin{equation*}
		\dkh ' ( K ) = Kh ' ( K ) \oplus Kh ' ( K ) \lbrace - 1 \rbrace
	\end{equation*}
	so that clearly \( s^{\text{u}}_{\text{max}} = s_{\text{max}} ( K ) \), where \( s_{\text{max}} ( K ) \) denotes the classical quantity. Then
	\begin{equation*}
		\begin{aligned}
			s ( K ) &= s_{\text{max}} ( K ) - 1 \\
			&= s^{\text{u}}_{\text{max}} ( K ) - 1 \\
			&= s^{\text{l}}_{\text{max}} ( K ).
		\end{aligned}
	\end{equation*}
	That \( s_2 ( K ) = 0 \) is observed on \cpageref{Page:classicalacs}.
\end{proof}

The doubled Rasmussen invariant exhibits the same behaviour with respect to mirror image and connect sum as its classical counterpart.

\begin{proposition}
	\label{Prop:mirror}
	Let \( K \) be a virtual knot \( \overline{K} \) denote its mirror image. Then \( \mathbbm{s} ( K ) = - \mathbbm{s} ( \overline{K} ) \).
\end{proposition}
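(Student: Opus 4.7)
The plan is to follow the classical Rasmussen-invariant argument: construct a natural duality pairing between $\cdkh'(K)$ and $\cdkh'(\overline{K})$ that reverses homological grading and quantum filtration, and then extract both coordinates of $\mathbbm{s}$ from this duality. The geometric input is that the cube of smoothings of $\overline{K}$ is obtained from that of $K$ by swapping the $0$- and $1$-resolutions at every classical crossing, yielding a natural bijection on vertices. Tracking the normalisation shift $[-n_-]\{n_+ - 2n_-\}$ and using $n_\pm(\overline{K}) = n_\mp(K)$, this bijection carries the summand of $\cdkh'_i(K)$ coming from a given vertex to a summand in $\cdkh'_{-i}(\overline{K})$.

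On the Frobenius data I would define a non-degenerate bilinear pairing on $\mathcal{A}\oplus\mathcal{A}\{-1\}$ extending the classical form $\langle v_+,v_-\rangle = 1$, with the upper/lower components chosen so that $m'$, $\Delta'$ and, crucially, $\eta'$ are mutually adjoint up to signs; the natural candidate pairs $\vup$ with $\vlm$ and $\vum$ with $\vlp$ (up to fixed rational constants). Extended tensor-wise at each vertex and assembled with the vertex bijection above, this gives a pairing $\cdkh'(K)\otimes\cdkh'(\overline{K})\to\Q$ of fixed quantum degree and homological degree $0$ (after the sign reversal on one factor), which intertwines the two differentials. Standard arguments then show it descends to a perfect pairing on homology.

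Next I would track what the pairing does to alternately coloured generators. In the $\{r,g\}$ basis of \Cref{Def:redgreen} the pairing swaps red and green (up to scalars), so if $\sg^{\text{u/l}}$ is the alternately coloured generator of $\dkh'(K)$ associated with a smoothing $\mathscr{S}$, it pairs non-trivially against the generator of $\dkh'(\overline{K})$ associated to the complementary alternately coloured smoothing $\overline{\mathscr{S}}$ (with upper and lower interchanged). The $s_2$ statement is then essentially combinatorial and can be verified directly without using the pairing at all: if $\mathscr{S}$ is the alternately colourable smoothing of $K$ with $m$ one-resolutions, the same planar picture is the alternately colourable smoothing $\overline{\mathscr{S}}$ of $\overline{K}$ with $n-m$ one-resolutions, giving
\begin{equation*}
s_2(\overline{K}) \;=\; (n-m) - n_+ \;=\; n_- - m \;=\; -s_2(K).
\end{equation*}
For $s_1$, the pairing sends filtration-maxima to filtration-minima in the other complex, and combining \Cref{Prop:minmaxrelation} with the built-in grading shift of $1$ between the upper and lower summands lets one identify $s^{\text{l}}_{\max}(\overline{K})$ with $-s^{\text{l}}_{\max}(K)$, giving the desired $s_1(\overline{K}) = -s_1(K)$.

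The main obstacle I expect is pinning down the pairing so that $\eta'$ is self-adjoint (up to sign). Because $\eta'$ is not $\mathcal{R}$-linear and mixes the upper and lower summands asymmetrically (cf.\ the factors of $2$ in \Cref{Eq:etamap} and the Definition of $\eta'$), the adjointness requirement forces a rigid choice of coefficients in the pairing on $\mathcal{A}\oplus\mathcal{A}\{-1\}$; the hypothesis $\mathcal{R}=\Q$ is essential here to absorb the factors of $2$. Once the coefficients are fixed, verification reduces to a finite sign check on the commutative squares appearing in the chain-map comparison, analogous in structure to the classical Bar--Natan/Rasmussen argument but with an additional layer of bookkeeping caused by the upper/lower decomposition.
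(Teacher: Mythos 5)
Your proposal is correct and follows essentially the same route as the paper: for the quantum coordinate it uses the duality between $\cdkh'(K)$ and $\cdkh'(\overline{K})$ (the paper simply asserts the existence of the isomorphism $r$ to the dual complex, while you make the underlying pairing explicit, $\langle\vup,\vlm\rangle=\langle\vum,\vlp\rangle=1$, which does make $\eta'$ self-adjoint and $m',\Delta'$ mutually adjoint), and for the homological coordinate a short crossing count giving $|\overline{\mathscr{S}}|=-|\mathscr{S}|$, which is an equivalent rephrasing of the paper's bookkeeping in terms of $n^u_p$ and $n^u_n$.
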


\begin{proof}
	The statement \( s_1 ( K ) = - s_1 ( \overline{K} ) \) follows, as in the classical case, from the existence of the isomorphism of dual complexes
	\begin{equation*}
	r : ( \mathcal{A} \oplus \mathcal{A} \lbrace -1 \rbrace, m', \Delta', \eta' ) \rightarrow ( (\mathcal{A} \oplus \mathcal{A} \lbrace -1 \rbrace)^\ast, {\Delta'}^\ast, {m'}^\ast, {\eta'}^\ast ).
	\end{equation*}
	That \( s_2 ( K ) = - s_2 ( \overline{K} ) \) is seen as follows: let \( D \) be a diagram of \( K \) with \( n_+ \) positive classical crossings and \( n_- \) negative classical crossings. Let \( \mathscr{S} \) be the alternately colourable smoothing of \( D \), so that \( s_2 ( K ) = | \mathscr{S} | \), the height of \( \mathscr{S} \). Further, notice that
	\begin{equation*}
		\begin{aligned}
			| \mathscr{S} | &= n^u_p + n^o_n - n_- \\
			&= n^u_p + n^o_n - ( n^u_n + n^o_n ) \\
			& = n^u_p - n^u_n
		\end{aligned}
	\end{equation*}
	where
	\begin{equation*}
		\begin{aligned}
			n^u_p &= \text{the number of positive crossings resolved into their unoriented smoothing} \\
			n^o_p &= \text{the number of positive crossings resolved into their oriented smoothing} \\
		\end{aligned}
	\end{equation*}
	and likewise \( n^u_n \) and \( n^o_n \) (for a classical knot \( n^u_p  = n^u_n = 0 \), of course). It is quickly observed that
	\begin{equation*}
		\begin{aligned}
			\overline{n}^u_n &= n^u_p \\
			\overline{n}^o_p &= n^o_n \\
		\end{aligned}
	\end{equation*}
	where \( \overline{n}^\ast_\ast \) denote the corresponding quantities for \( \overline{D} \). Then
	\begin{equation*}
		\begin{aligned}
			| \overline{\mathscr{S}} | &= \overline{n}^u_p - \overline{n}^u_n \\
			&= \overline{n}_+ - \overline{n}^o_p - n^u_p \\
			&= n_- - n^o_n - n^u_p \\
			&= n^u_n - n^u_p \\
			&= - | \mathscr{S} |.
		\end{aligned}
	\end{equation*}
\end{proof}

\begin{proposition}
	\label{Prop:additive}
	Let \( K_1 \) and \( K_2 \) be virtual knots and denote by \( K_1 \# K_2 \) any of their connect sums. Then
	\begin{equation*}
		\mathbbm{s} ( K_1 \# K_2 ) = \mathbbm{s} ( K_1 ) + \mathbbm{s} ( K_2 ).
	\end{equation*}
\end{proposition}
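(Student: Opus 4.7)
The plan is to prove the two coordinate equalities $s_2(K_1 \# K_2) = s_2(K_1) + s_2(K_2)$ and $s_1(K_1 \# K_2) = s_1(K_1) + s_1(K_2)$ separately, both by a direct computation of the gradings of alternately coloured generators; no cobordism map is needed.

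Fix diagrams $D_i$ of $K_i$ with alternately colourable smoothings $\mathscr{S}_i$ of $n_i$ cycles, and let $D_1 \# D_2$ be any connect sum diagram. A proper colouring of $S(D_1 \# D_2)$ amounts to a pair of proper colourings of $S(D_1)$ and $S(D_2)$ whose strand colours at the connect sum site agree, and such a compatible pair always exists by flipping colours on one side. The resulting smoothing $\mathscr{S}_{1 \# 2}$ has $n_1 + n_2 - 1$ cycles, since the two cycles meeting the connect sum site amalgamate into one. The $s_2$-equality is then immediate: classical crossings, $1$-resolutions, and $n_-$ are all additive under connect sum, so $|\mathscr{S}_{1 \# 2}| = |\mathscr{S}_1| + |\mathscr{S}_2|$, and \Cref{Def:rasinvariant} delivers the claim.

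For the $s_1$-equality, I compute $s(\sg^l_{1 \# 2})$ directly from the formula $j = p + i + n_+ - n_-$. Expanding an alternately coloured lower generator $\sg^l = \bigotimes c^l_k$ (with $c_k \in \{r, g\}$) in the $v_\pm^l$ basis, the $v_-^{l, \otimes n}$ term appears with coefficient $\pm 2^{-n} \neq 0$, so $p_{\min}(\sg^l) = -2n$ (recall $p(v_-^l) = -2$, $p(v_+^l) = 0$). Inserting $p_{\min}(\sg^l_{1 \# 2}) = -2(n_1 + n_2 - 1)$ together with $|\mathscr{S}_{1 \# 2}| = |\mathscr{S}_1| + |\mathscr{S}_2|$ and $n_\pm(D_1 \# D_2) = n_\pm(D_1) + n_\pm(D_2)$, the $+2$ from the extra cycle merger yields
\[
s(\sg^l_{1 \# 2}) = s(\sg^l_1) + s(\sg^l_2) + 2.
\]
Next I observe that $s^l_{\min}(K) = s(\sg^l)$ for every virtual knot $K$: $\dkh'(K)$ has rank $4$ with basis the alternately coloured generators, so any lower element is an $\mathcal{R}$-combination $a \sg^l + b \bar{\sg}^l$, and cancellation of the $v_-^{l, \otimes n}$ coefficient can only raise, never lower, the filtration. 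Combining, $s^l_{\min}(K_1 \# K_2) = s^l_{\min}(K_1) + s^l_{\min}(K_2) + 2$, and applying $s_1(K) = s^l_{\max}(K) = s^l_{\min}(K) + 2$ (from \Cref{Prop:minmaxrelation} and \Cref{Def:rasinvariant}) to each of $K_1$, $K_2$, $K_1 \# K_2$ cancels the three resulting $+2$'s and produces $s_1(K_1 \# K_2) = s_1(K_1) + s_1(K_2)$.

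I expect the main obstacle to be bookkeeping of the several $+2$ contributions — the one arising from the cycle-merger at the connect sum site, and those coming from \Cref{Prop:minmaxrelation} applied to each of the three knots — and confirming that the pure alternately coloured generator $\sg^l$ really attains $s^l_{\min}$ rather than being bested by some linear combination of $\sg^l$ and $\bar\sg^l$. Both checks reduce to the explicit coefficient of $v_-^{l,\otimes n}$ in the $r/g$-expansion, so no genuinely new input beyond what is already developed in \Cref{Sec:leehom,Sec:Ras} is required.
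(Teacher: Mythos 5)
Your treatment of \( s_2 \) is fine and is essentially the paper's: the alternately colourable smoothing of a connect sum diagram restricts to those of the two factors, and heights add. The gap is in the \( s_1 \) half, at the step ``\( s^{\text{l}}_{\text{min}} ( K ) = s ( \sg^{\text{l}} ) \)''. The quantity you compute from the coefficient of \( v_-^{\otimes n} \) is the \emph{chain-level} filtration grading of the particular representative \( \sg^{\text{l}} \), whereas \( s^{\text{u/l}}_{\text{min/max}} \) are defined from the filtration induced on homology, i.e.\ as a maximum over \emph{all} representatives of a class. Your argument that ``cancellation can only raise the filtration'' only rules out cancellation among combinations \( a \sg^{\text{l}} + b \overline{\sg}^{\text{l}} \); it says nothing about representatives of the form \( \sg^{\text{l}} + d'(y) \), which can sit strictly higher in the filtration. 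So you obtain only the inequality \( s^{\text{l}}_{\text{min}} ( K ) \geq j ( \sg^{\text{l}} ) \), and the chain-level quantity \( j ( \sg^{\text{l}} ) \) is not even diagram-independent: under your grading convention (\( p_{\text{min}} ( \sg^{\text{l}} ) = -2n \), which already misreads the global \( \lbrace -1 \rbrace \) shift of the lower summand as a per-tensor-factor shift) the crossingless unknot diagram gives \( j ( \sg^{\text{l}} ) = -2 \) while a one-crossing positive-kink unknot diagram gives \( j ( \sg^{\text{l}} ) = -3 \), yet \( s^{\text{l}}_{\text{min}} \) of the unknot is a single invariant number. Hence the claimed equality cannot hold as stated, and without it your additivity computation for \( s_1 \) does not go through.

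This is exactly why the paper does not argue at the chain level: its proof introduces the map \( \lartial : \dkh ' ( K_1 \# K_2 ) \rightarrow \dkh ' ( K_1 \sqcup K_2 ) \) given by acting with \( \Delta ' \) on the cycle through the connect sum site, and runs Rasmussen's classical argument, using that such maps (together with the merge-type map \( \text{id} \otimes m ' \) as in \Cref{Prop:minmaxrelation}) are filtered of known degree and send alternately coloured generators to alternately coloured generators; this yields inequalities on the \emph{homology-level} filtration values in both directions, which is the content you are missing. To repair your proof you would either have to prove that \( [\sg^{\text{l}}] \) realises \( s^{\text{l}}_{\text{min}} \) \emph{with} the stated chain-level value for the diagrams in question (which is false in general, and true for e.g.\ leftmost diagrams only because there are no incoming differentials, cf.\ \Cref{Subsec:leftmost}), or revert to the filtered-map argument of the paper.
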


\begin{proof}
	It is readily apparent that \( \mathscr{S} = \mathscr{S}_1 \sqcup \mathscr{S}_2 \), where \( \mathscr{S} \) / \( \mathscr{S}_1 \) / \( \mathscr{S}_2 \) is the alternately colourable smoothing of \( K_1 \# K_2 \) / \( K_1 \) / \( K_2 \). Then \( | \mathscr{S} | = | \mathscr{S}_1 | + | \mathscr{S}_2 |\), which proves the claim regarding \( s_2 ( K_1 \# K_2 ) \).
	
	Let \( \lartial : \dkh ' ( K_1 \# K_2 ) \rightarrow \dkh ' ( K_1 \sqcup K_2 ) \) be the map realised by acting \( \Delta' \) on the cycle as dictated by the connect sum. Regarding \( s_1 ( K_1 \# K_2 ) \), the proof follows in identical fashion to the classical proof when one notices that we only require the existence of \( \lartial \) (as opposed to the short exact sequence used in \cite{Rasmussen2010}).
\end{proof}

\subsection{Relationship with the odd writhe}\label{Subsec:rasoddwrithe}
Kauffman defined the odd writhe of a virtual knot in terms of Gauss diagrams \cite{Kauffman2004b}. In this section we show that the doubled Rasmussen invariant contains the odd writhe.

\begin{definition}
	Let \( D \) be a diagram of a virtual knot and \( G ( D ) \) its Gauss diagram. A classical crossing of \( D \), associated to the chord labelled \(c\) in \( G ( D ) \), is known as \emph{odd} if the number of chord endpoints appearing between the two endpoints of \( c \) is odd. Otherwise it is known as \emph{even}. The \emph{odd writhe} of \( D \) is defined
	\begin{equation*}
		J ( D ) = \sum_{\text{odd crossings of}~D} \text{sign of the crossing}.
	\end{equation*}\CloseDef
\end{definition}

\begin{theorem}
	Let \( D \) be a virtual knot diagram of \(K\). The odd writhe is an invariant of \( K \) and we define
	\begin{equation*}
		J ( K ) \coloneqq J ( D ).
	\end{equation*}
\end{theorem}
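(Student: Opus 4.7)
The plan is to verify that $J(D)$ is preserved under each of the (generalized) Reidemeister moves, working entirely at the level of Gauss diagrams. Since the purely virtual moves and the mixed move (see \Cref{Fig:vrm}) do not alter the Gauss diagram at all -- they only shuffle virtual crossings, which leave no trace on $G(D)$ -- invariance under these moves is automatic, and the entire problem reduces to the three classical Reidemeister moves. In each case one must check two things: (a) that the chord(s) created or destroyed do not contribute a net signed quantity to the odd writhe, and (b) that the parity of every other chord (the number of chord endpoints between its endpoints on the circle, taken mod $2$) is preserved.

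For R1, a single chord $c$ is added whose two endpoints are adjacent on the circle. Then one of the arcs cut out by the endpoints of $c$ contains no other chord endpoints, so $c$ is even and contributes $0$ to $J$. For any other chord $c'$, the two endpoints of $c$ either both lie on the same arc determined by $c'$ or both fail to, so the count of endpoints in either arc of $c'$ changes by $0$ or $2$, preserving parity. For R2, two chords $c_1, c_2$ are added (or removed) with opposite signs, and their endpoints interleave on the circle in a specific way that places the two endpoints of $c_2$ on the same arc determined by $c_1$ (and vice versa); hence $c_1$ and $c_2$ have the same parity, so their signed contributions to $J$ cancel. As with R1, the two endpoints of $c_i$ sit together with respect to any external chord, so the parities of all other chords are unchanged.

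For R3, no chords are created or destroyed, only the cyclic arrangement of three chord endpoints near the triangle is permuted. I would enumerate the essentially different configurations of signs/orientations that realize an R3 move and check in each case that the parities of the three triangle chords are all preserved, and that for any external chord $c'$ the rearrangement moves the three triangle endpoints within a small arc without crossing the endpoints of $c'$ -- so the number of the three endpoints lying on either arc of $c'$ is unchanged. In fact the three triangle endpoints sit in a localized region of the circle, and R3 permutes them locally, so no external chord sees a change in its endpoint-count mod $2$, and the signs of the three triangle chords (hence their contributions to $J$) are preserved as well.

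The main obstacle is the bookkeeping for R3: there are several sign/orientation variants of the move, and one must be careful that after translating each variant onto the Gauss diagram, the re-routing through the triangle does not reverse any inclusion of an endpoint from one arc to the other. Once this local-versus-global analysis is organized cleanly -- essentially by observing that all three triangle endpoints are trapped inside a small arc on the circle that no external chord endpoint crosses during the move -- the parity preservation follows, and combining all three cases establishes that $J(D)$ depends only on the virtual knot $K$.
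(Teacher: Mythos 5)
The paper gives no proof of this theorem -- it is Kauffman's result, cited from \cite{Kauffman2004b} -- so there is no internal argument to compare against. Your strategy of checking invariance of $J(D)$ under each generalized Reidemeister move at the Gauss-diagram level is the standard and correct one, and your handling of the purely virtual moves and of R1 is sound.

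The R2 step, however, has a substantive error. You assert, mirroring R1, that ``the two endpoints of $c_i$ sit together with respect to any external chord.'' This is false for R2: the knot passes through the bigon twice, once on each strand, and each pass contributes one endpoint of $c_1$ and one of $c_2$. Hence the four new circle endpoints appear as two \emph{adjacent $\lbrace c_1,c_2\rbrace$-pairs} at two locations that are in general far apart on the circle; the two endpoints of $c_1$ on their own are not adjacent. The conclusion (external parities preserved) is still true, but the correct reason is that each adjacent $\lbrace c_1,c_2\rbrace$-pair lies wholly on one side of any external chord, contributing $0$ or $2$ to its endpoint count. Relatedly, your description of the configuration (``the two endpoints of $c_2$ on the same arc determined by $c_1$'') covers only the anticoherent R2, which yields nested chords $c_1 c_2 \ldots c_2 c_1$; the coherent R2 gives interleaved chords $c_1 c_2 \ldots c_1 c_2$, for which $c_1$ and $c_2$ still share a parity but by a different count ($N+1$ each, where $N$ is the number of endpoints between the two passes). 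A similar caveat applies to R3: the six triangle endpoints occupy \emph{three} short arcs on the circle, one per pass of the knot through the R3 disk, not a single localized region; the external-chord argument still works because each short arc contributes $0$ or $2$ to any count, and each triangle chord sees exactly two endpoint flips (one in each of the two regions it meets) whose parity contributions cancel, but this requires tracking three regions rather than one. You flag R3 as a plan to be executed, so the main defect is the R2 reasoning, which needs the correction above.
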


The odd writhe of a virtual knot \(K\) provides a quick way to calculate \( s_2 ( K ) \).

\begin{proposition}
	\label{Prop:s2odd}
	Let be \( D \) a diagram of a virtual knot \( K \). Then \( s_2 ( K ) = J ( K ) \).
\end{proposition}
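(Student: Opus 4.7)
The plan is to reduce $s_2(K) = J(K)$ to a local combinatorial assertion about the alternately colourable smoothing $\mathscr{S}$ at each classical crossing of a diagram $D$ of $K$. Recall from \Cref{Def:rasinvariant} that $s_2(K) = |\mathscr{S}|$, and the calculation already carried out in the proof of \Cref{Prop:mirror} rewrites this as
\begin{equation*}
|\mathscr{S}| \;=\; n^u_p - n^u_n,
\end{equation*}
where $n^u_p$ (respectively $n^u_n$) counts the positive (negative) classical crossings at which $\mathscr{S}$ takes the unoriented resolution. Since $J(K)$ is by definition the sum of the signs of the odd crossings, it would suffice to prove the following claim: at each classical crossing of $D$, the smoothing $\mathscr{S}$ takes the unoriented resolution if and only if the crossing is odd.

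To establish this claim, I would work via the shadow $S(D)$ and the Gauss diagram $G(D)$, invoking the bijections from the proof of \Cref{Thm:acs} between alternately coloured smoothings of $D$, proper colourings of $S(D)$, and alternate colourings of $G(D)$. Fix such a colouring. Around any classical crossing, label the four arc-ends as $1_{\text{in}}, 1_{\text{out}}, 2_{\text{in}}, 2_{\text{out}}$, where the index $i \in \{1,2\}$ records whether the arc lies on the first or second traversal of the crossing and in/out records the orientation of the traversal. Since the shadow colour switches each time we pass a chord endpoint along the knot, $1_{\text{in}}$ and $1_{\text{out}}$ are opposite colours, and so are $2_{\text{in}}$ and $2_{\text{out}}$. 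Traversing the knot from the first to the second visit of the crossing, the number of intervening chord endpoints is by definition the parity of the crossing, so $1_{\text{out}}$ and $2_{\text{in}}$ have equal colours at an even crossing and opposite colours at an odd crossing.

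The alternately coloured smoothing joins arcs of matching colour. At an odd crossing the matched pairs are therefore $(1_{\text{in}}, 2_{\text{in}})$ and $(1_{\text{out}}, 2_{\text{out}})$, while at an even crossing they are $(1_{\text{in}}, 2_{\text{out}})$ and $(1_{\text{out}}, 2_{\text{in}})$. The latter is the oriented resolution, since it joins an incoming arc of one strand to an outgoing arc of the other and so preserves both orientations; the former is therefore the unoriented resolution. Combining this with the convention that at a positive crossing the oriented resolution is the $0$-resolution (hence the unoriented is the $1$-resolution) and that at a negative crossing the roles swap, we conclude that $n^u_p$ equals the number of positive odd crossings and $n^u_n$ equals the number of negative odd crossings, whence
\begin{equation*}
|\mathscr{S}| \;=\; n^u_p - n^u_n \;=\; \sum_{\text{odd crossings}} \operatorname{sign} \;=\; J(K).
\end{equation*}

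The step most likely to require care is the identification of the "match-the-colours" resolutions at odd and even crossings with the unoriented and oriented resolutions; this is essentially a local convention-check, but one must be careful about the direction of traversal and the spatial arrangement of the over- and under-strands at positive versus negative crossings. Everything else is a direct consequence of results already proved in \Cref{Sec:leehom,Subsec:rasdefinition}.
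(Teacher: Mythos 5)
Your proposal is correct and takes essentially the same approach as the paper. Both arguments rest on the same key observation --- that a classical crossing is odd if and only if it is resolved into its unoriented smoothing in the alternately colourable smoothing --- established by traversing the knot between the two visits to the crossing and tracking colour changes at chord endpoints; you then use the identity $|\mathscr{S}| = n^u_p - n^u_n$ already isolated in \Cref{Prop:mirror}, whereas the paper tabulates per-crossing contributions to $J(K)$ and $s_2(K)$ directly, but the bookkeeping is equivalent.
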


\begin{proof}
	We claim that a classical crossing in \( D \) is odd if and only if it is in its unoriented resolution in the alternately colourable smoothing of \( D \).
	
	\noindent(\( \Rightarrow \)): Let \(c\) denote an odd classical crossing of \(D\). Leaving the crossing from either of the outgoing arcs we must return to a specified incoming arc. Between leaving and returning we have passed through an odd number of classical crossings (which are not \(c\)). Thus the incoming arc must be coloured the opposite colour to the outgoing, and \(c\) is resolved into its unoriented resolution in the both of the alternately coloured smoothings of \(D\), as depicted here:
	\begin{center}
		\includegraphics{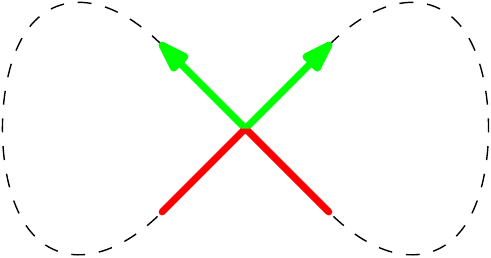}
	\end{center}

	\noindent(\( \Leftarrow \)): Let \(c\) denote a classical crossing of \(D\) which is resolved into its unoriented smoothing in the alternately colourable smoothing of \(D\). The colouring at \(c\) must be as depicted above. Again, leaving \(c\) from either outgoing arc and returning at the specified incoming arc, we see that, as the colours of the arcs are opposite, an odd number of classical crossings must have been passed.
	
	The contributions of odd and even crossings to \( J ( K ) \) and \( s_2 ( K ) \) are summarised in the following table, from which the result follows. The contributions to \( s_2 ( K) \) are clear when one recalls that the height of a smoothing contains the shift \( - n_- \), the total number of negative classical crossings of \(D\).
	\begin{center}
			\begin{tabular}{c | c | c | c | c}
				sign  & parity & reso. & \( J ( K ) \) & \(s_2 ( K )\) \\ \hline
				\(+\) & odd & \(1\) & \( +1 \) & \( +1 \) \\
				\(+\) & even & \( 0 \) & \( 0 \) & \( 0 \) \\
				\(-\) & odd & \(0\) & \( -1 \) & \( -1 \) \\
				\(-\) & even & \(1\) & \( 0 \) & \( 0 \) 
		\end{tabular}
	\end{center}
\end{proof}

\begin{corollary}
	\label{Cor:oddwritheadditive}
	Let \( K_1 \) and \( K_2 \) be virtual knots and \( K_1 \# K_2 \) denote any of their connect sums. Then
	\begin{equation*}
		J ( K_1 \# K_2 ) = J ( K_1 ) + J ( K_2 ).
	\end{equation*}
\end{corollary}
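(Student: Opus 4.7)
The plan is to derive this corollary directly from \Cref{Prop:s2odd} and \Cref{Prop:additive}, which together do all of the work. Since \Cref{Prop:s2odd} identifies the odd writhe with the second component of the doubled Rasmussen invariant, the statement $J(K_1 \# K_2) = J(K_1) + J(K_2)$ is equivalent to $s_2(K_1 \# K_2) = s_2(K_1) + s_2(K_2)$, and this is one coordinate of the equation $\mathbbm{s}(K_1 \# K_2) = \mathbbm{s}(K_1) + \mathbbm{s}(K_2)$ supplied by \Cref{Prop:additive}.

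Concretely, I would first invoke \Cref{Prop:s2odd} to rewrite both sides of the claimed equality in terms of $s_2$, giving
\begin{equation*}
J(K_1 \# K_2) = s_2(K_1 \# K_2), \qquad J(K_1) + J(K_2) = s_2(K_1) + s_2(K_2).
\end{equation*}
Then I would appeal to \Cref{Prop:additive}, which gives $\mathbbm{s}(K_1 \# K_2) = \mathbbm{s}(K_1) + \mathbbm{s}(K_2)$ in $\mathbb{Z} \times \mathbb{Z}$; projecting onto the second coordinate completes the argument. It is worth noting that \Cref{Prop:additive} holds for \emph{any} choice of connect sum between $K_1$ and $K_2$ (as the doubled Rasmussen invariant is insensitive to the choice of connect sum site), so the additivity of the odd writhe automatically inherits this feature.

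There is essentially no obstacle: the whole content of the corollary has been absorbed into the two earlier propositions. The one point worth flagging is simply that, unlike in the classical case where a connect sum of knots is well-defined up to isotopy, the notation $K_1 \# K_2$ here denotes any virtual knot arising as a connect sum of $K_1$ and $K_2$; the conclusion nonetheless holds uniformly for all such choices because both $s_2$ and $J$ are individually determined by the knot type of $K_1 \# K_2$ and the additivity in \Cref{Prop:additive} is site-independent.
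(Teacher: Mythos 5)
Your proof is correct and matches the paper's intended (implicit) argument: the corollary is stated immediately after \Cref{Prop:s2odd} precisely so that it follows by combining $J = s_2$ with the $s_2$-component of the additivity established in \Cref{Prop:additive}. Your flag about the multivaluedness of $K_1 \# K_2$ is apt and consistent with how both cited propositions are phrased.
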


\subsection{Leftmost knots and quick calculations}\label{Subsec:leftmost}
To conclude this section we identity a class of knots for which the calculation of the doubled Rasmussen invariant is trivial, a generalisation of the case of computation of the classical Rasmussen invariant of positive classical knots. The key here, as in the classical case, is that the alternately coloured smoothings of the class of knots in question have no incoming differentials.

\begin{definition}
	Let \( D \) be a virtual knot diagram. We say that \( D \) is \emph{leftmost} if it contains only positive even and negative odd classical crossings. A virtual knot is leftmost if it has a leftmost diagram.\CloseDef
\end{definition}

\begin{proposition}
	\label{Prop:left}
	Let \( D \) be a leftmost diagram of a virtual knot \( K \) with \( n_- \) negative classical crossings. Then \( s_2 ( K ) = - n_- \), the minimal non-trivial homological grading of \( \dkh ' ( K ) \).
\end{proposition}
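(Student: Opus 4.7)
The plan is to reduce the statement to a short check using \Cref{Prop:s2odd} and the formula $s_2(K) = |\mathscr{S}| = m - n_-$, where $m$ is the number of $1$-resolutions in the alternately colourable smoothing $\mathscr{S}$. First I would compute $J(K)$ directly from the leftmost hypothesis: by definition every classical crossing of $D$ is either positive even or negative odd, so odd crossings contribute only negatives, each of sign $-1$. Hence $J(K) = -n_-$, and \Cref{Prop:s2odd} immediately gives $s_2(K) = -n_-$.

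For the second half of the statement I would check that $\mathscr{S}$ sits in the lowest possible homological degree, namely $m = 0$. From the proof of \Cref{Prop:s2odd}, each odd classical crossing is resolved into its unoriented smoothing in $\mathscr{S}$, while each even classical crossing is resolved into its oriented smoothing. Under the standard convention (positive crossing: oriented = $0$-resolution; negative crossing: unoriented = $0$-resolution) this means:
\begin{itemize}
\item positive even crossings take the oriented, i.e. $0$-resolution;
\item negative odd crossings take the unoriented, i.e. $0$-resolution.
\end{itemize}
So every crossing in the leftmost diagram contributes a $0$-resolution to $\mathscr{S}$, giving $m = 0$ and thus $|\mathscr{S}| = -n_-$.

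Finally I would argue that $-n_-$ is the minimal non-trivial homological grading of $\dkh'(K)$. Every smoothing sits in homological degree $m - n_-$ for some $0 \le m \le n_+ + n_-$, so $-n_-$ is the minimum possible degree of any chain group in $\cdkh'(D)$; since $\mathscr{S}$ supplies a non-trivial class (the alternately coloured generator $\sg^{\text{u/l}}$, which is homologically non-trivial by the proof of \Cref{Thm:leerank}), this degree is actually attained in homology, and the result follows.

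The only real subtlety is the sign/convention check in the middle paragraph: one has to verify that the leftmost condition genuinely forces all crossings into their $0$-resolution inside $\mathscr{S}$, i.e. that ``odd'' pairs with ``unoriented'' and ``even'' with ``oriented'' in the way recorded in the table at the end of the proof of \Cref{Prop:s2odd}. Once that bookkeeping is in place the proposition is essentially a one-line corollary of \Cref{Prop:s2odd}.
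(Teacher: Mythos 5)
Your proof is correct and takes the same route as the paper's one-line argument: both combine \Cref{Prop:s2odd} ($s_2(K)=J(K)$) with the observation that in a leftmost diagram a crossing is odd if and only if it is negative, yielding $J(K)=-n_-$. Your second and third paragraphs spell out the resolution bookkeeping and the minimality clause in more detail than the paper, which treats these as immediate, but the underlying reasoning is identical.
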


\begin{proof}
	Let \( D \) be a leftmost diagram of a virtual knot. By \Cref{Prop:s2odd} we have \( s_2 ( K ) = J ( K ) = -n_- \), as a crossing in \( D \) is odd if and only if it is negative.
\end{proof}

\begin{proposition}
	\label{Prop:leftcalc}
	Let \( D \) be a leftmost diagram of a virtual knot \( K \). Then \( s_1 ( K ) = \text{max} \lbrace s ( \sg + \overline{\sg} ), s ( \sg - \overline{\sg} ) \rbrace \), where \( \sg \) is an alternately coloured generator associated to the alternately colourable smoothing of \( D \).
\end{proposition}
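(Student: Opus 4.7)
The plan is to exploit the fact that, for a leftmost diagram $D$, the alternately colourable smoothing $\mathscr{S}$ sits at the very bottom of the cube of smoothings. Using the characterisation established in the proof of \Cref{Prop:s2odd}, every even crossing is placed in its oriented resolution in $\mathscr{S}$, and every odd crossing is placed in its unoriented resolution. In a leftmost diagram the even crossings are positive (so their oriented resolution is the $0$-resolution) and the odd crossings are negative (so their unoriented resolution is also the $0$-resolution). Hence $\mathscr{S}$ is the all-$0$ smoothing of $D$, and it lies in the minimal homological degree $-n_-$ of $\cdkh'(D)$; in particular there is no chain group in homological degree $-n_- - 1$, so no incoming differential reaches $\mathscr{S}$.

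First I would deduce from this that ${\dkh'}_{-n_-}(K)$ is simply the kernel of the outgoing differential restricted to the chain module associated to $\mathscr{S}$, with nothing to quotient by. By \Cref{Prop:left} all of $\dkh'(K)$ lives in homological degree $-n_-$, and by \Cref{Thm:leerank} combined with \Cref{Thm:acs} it has rank $4$, so the four alternately coloured generators $\sg^{\text{u}}, \overline{\sg}^{\text{u}}, \sg^{\text{l}}, \overline{\sg}^{\text{l}}$ form a $\Q$-basis. Consequently every class in the lower summand $\mathcal{A}^{\otimes n_{\mathscr{S}}} \lbrace -1 \rbrace$ part of $\dkh'(K)$ is a unique linear combination of $\sg^{\text{l}} + \overline{\sg}^{\text{l}}$ and $\sg^{\text{l}} - \overline{\sg}^{\text{l}}$, and its filtration value can be read off directly from the chain-level representative.

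Finally, I would invoke \Cref{Lem:evenodd}(ii), which places $\sg^{\text{l}} + \overline{\sg}^{\text{l}}$ and $\sg^{\text{l}} - \overline{\sg}^{\text{l}}$ in distinct residue classes modulo $4$; in particular their filtration values, call them $s_+$ and $s_-$, are different. For a non-zero class $c(\sg^{\text{l}} + \overline{\sg}^{\text{l}}) + d(\sg^{\text{l}} - \overline{\sg}^{\text{l}})$ the value of $s$ is $s_+$ when only $c \neq 0$, $s_-$ when only $d \neq 0$, and $\min(s_+, s_-)$ when both are non-zero, since the two summands cannot cancel against each other. Taking the maximum over such classes yields $s_1(K) = \text{max}\lbrace s(\sg^{\text{l}} + \overline{\sg}^{\text{l}}), s(\sg^{\text{l}} - \overline{\sg}^{\text{l}}) \rbrace$, as required. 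The only delicate point is ruling out cancellation between the two generators, which is precisely the content of the mod-$4$ separation in \Cref{Lem:evenodd}(ii); once that is in place, everything else is forced by the absence of incoming boundaries at $\mathscr{S}$.
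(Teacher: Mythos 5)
Your proof is correct and takes essentially the same route as the paper's (which is much terser): the leftmost hypothesis forces the alternately colourable smoothing to be the all-$0$ smoothing at the unique minimal height, where there are no incoming differentials, so the homology in the lower summand is exactly the span of $\sg^{\text{l}}$ and $\overline{\sg}^{\text{l}}$ and filtration levels can be read off from chain-level representatives; you then use the mod-$4$ separation to rule out cancellation, a step the paper leaves implicit. One small citation note: the mod-$4$ separation of $\sg^{\text{l}} + \overline{\sg}^{\text{l}}$ and $\sg^{\text{l}} - \overline{\sg}^{\text{l}}$ follows from combining parts (i) and (ii) of \Cref{Lem:evenodd} (or directly from its proof, which is in fact phrased in terms of the lower generators), not from part (ii) alone.
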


\begin{proof}
By \Cref{Prop:left} the alternately colourable smoothing of \( D \) is at the minimal non-trivial height of the cube of resolutions. By construction there is only one smoothing at this height. Further, this smoothing has no incoming differentials. Recalling \Cref{Def:rasinvariant}, we obtain the result.
\end{proof}

\section{Applications}\label{Sec:applications}
We shall now describe some applications of the invariants \( \dkh ( L ) \) and \( \mathbbm{s} ( K )  \). All of the given applications are related to virtual link concordance, to a greater or lesser extent.

\subsection{Cobordism obstructions}\label{Subsec:obstructions}
As mentioned in \Cref{Subsec:cobordisms}, we can use the information contained in the quantum degree of \( \dkh ' ( L ) \) to obtain obstructions to the existence of cobordisms between \( L \) and other links. First we repeat the procedure used to show that the classical Rasmussen invariant yields a bound on the slice genus to obtain a bound on the genus of a certain class of cobordisms from a knot to the unknot, and between two given knots. We then obtain an obstruction to the existence of a concordance between a link and a given knot. Finally, we use doubled Lee homology to show that virtual knots with non-zero odd writhe are not slice.

\subsubsection{Genus bounds}
\label{Subsec:genusbounds}
In this section we use the fact that concordances and targeted cobordisms are assigned non-zero maps to obtain obstructions to the existence of cobordisms of certain genera between pairs of virtual knots. First we obtain a lower bound on the genus of targeted cobordisms between pairs of knots whose \( s_2 \) invariants agree.

\begin{theorem}
	\label{Thm:vgenusbound}
	Let \( K \) be a virtual knot with \( s_2 ( K ) = 0 \) and \( S \) a targeted cobordism from \( K \) to the unknot such that \(0\) is a shared degree of \(S\). Then
	\begin{equation}
	\label{Eq:genusbound}
	\dfrac{| s_1 ( K ) | }{2} \leq g(S).
	\end{equation}
\end{theorem}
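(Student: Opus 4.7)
The plan is to imitate Rasmussen's classical genus bound: the cobordism $S$ induces a filtered map $\phi_S : \dkh'(K) \to \dkh'(\text{unknot})$ on doubled Lee homology, and I would compare the quantum filtrations of extremal generators on either side. First, summing over the handle decomposition of $S$ and using the shifts recorded on \cpageref{Page:handles} ($+1$ for $0$- and $2$-handles, $-1$ for $1$-handles; the virtual and classical Reidemeister move maps preserve the filtration), $\phi_S$ is filtered of degree $h_0 - h_1 + h_2 = \chi(S) = -2g(S)$, since $S$ is a connected cobordism with two boundary circles.

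Next I would select an element $x = \sg^{\text{l}} \pm \overline{\sg}^{\text{l}} \in \dkh'(K)$ realising $s^{\text{l}}_{\max}(K) = s_1(K)$, with the sign chosen via \Cref{Lem:evenodd}. Since $S$ is targeted with $0$ a shared degree and $s_2(K) = 0$, \Cref{Thm:nonzero2} guarantees that $\phi_S$ is non-zero in homological degree $0$. Because alternately coloured smoothings have no incoming or outgoing $\eta'$-maps, the four generators of $\dkh'(K)$ split cleanly into upper and lower summands; the handle maps $m'$, $\Delta'$, $\iota'$ and $\epsilon'$ all preserve this decomposition, so $\phi_S$ sends $x$ into the lower summand of $\dkh'(\text{unknot})$. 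A direct computation gives $s^{\text{l}}_{\max}(\text{unknot}) = 0$, hence
\[
s_1(K) - 2g(S) \;\leq\; s(\phi_S(x)) \;\leq\; s^{\text{l}}_{\max}(\text{unknot}) \;=\; 0,
\]
so $s_1(K) \leq 2g(S)$. To upgrade to the absolute value bound I would apply the same argument to $\overline{K}$ with the mirror cobordism $\overline{S}$ from $\overline{K}$ to the unknot (which has $g(\overline{S}) = g(S)$, is again targeted, and has $0$ as a shared degree); combined with \Cref{Prop:mirror} this gives $-s_1(K) = s_1(\overline{K}) \leq 2g(S)$, and consequently $|s_1(K)|/2 \leq g(S)$.

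The main obstacle is establishing $\phi_S(x) \neq 0$ for precisely the $x$ achieving $s_1(K)$: \Cref{Thm:nonzero2} only yields the overall non-vanishing of $\phi_S$. To bridge the gap I would unpack the decomposition $\phi_S = \phi_{S_2} \circ \phi_{S_1}$ coming from the definition of a targeted cobordism, using the description of $\text{img}(\phi_{S_1})$ and $\text{coker}(\phi_{S_2})$ in terms of alternately coloured generators of the intermediate link $L$ obtained in the proof of \Cref{Thm:nonzero2}, together with the explicit red/green formulas of \Cref{Eq:rgmaps,Eq:etaprime}, to conclude that the surviving combination lies in the lower summand and realises $s_1(K)$ (or else one swaps the sign by interchanging the roles of $\sg^{\text{l}}$ and $\overline{\sg}^{\text{l}}$).
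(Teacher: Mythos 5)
Your overall strategy matches the paper's: invoke \Cref{Thm:nonzero2} for non-vanishing, observe $\phi_S$ is filtered of degree $-2g(S)$ via the handle shifts, compare quantum filtrations of extremal elements, and run the mirror argument with \Cref{Prop:mirror} to upgrade to the absolute value. That is exactly the proof in the paper, which works with $s^{\text{u}}_{\text{max}}(K)$ and the bound $s^{\text{u}}_{\text{max}}\left(\raisebox{-1.75pt}{\includegraphics[scale=0.3]{unknotflat.pdf}}\right)=1$, concluding $2g(S)+1 \geq s^{\text{u}}_{\text{max}}(K) = s_1(K)+1$.

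However, the step where you claim $\phi_S$ lands in the lower summand of $\dkh'\left(\raisebox{-1.75pt}{\includegraphics[scale=0.3]{unknotflat.pdf}}\right)$ is not justified, and it is the load-bearing step of your version (you need $s(\phi_S(x)) \leq s^{\text{l}}_{\text{max}} = 0$ rather than merely $\leq s^{\text{u}}_{\text{max}} = 1$ to recover the full bound). Your list of handle maps omits $\eta'$: as recorded on \cpageref{Page:handles}, a $1$-handle whose map-of-cubes entry is one cycle to one cycle is assigned $\eta'$, and by \Cref{Eq:etaprime} $\eta'$ swaps upper and lower summands. Such handles can certainly occur in $S_1$ (they are $1$-handles on a single link component), so there is no reason for $\phi_S$ to preserve the $\mathcal{A} \oplus \mathcal{A}\{-1\}$ decomposition in general. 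The observation that alternately coloured smoothings have no incident $\eta'$ differentials is about the internal differential of a single complex, not about the cobordism map, so it does not help here. The cleanest fix is to drop the summand claim entirely and instead take $x$ realising $s^{\text{u}}_{\text{max}}(K)$, bound against $s^{\text{u}}_{\text{max}}\left(\raisebox{-1.75pt}{\includegraphics[scale=0.3]{unknotflat.pdf}}\right)=1$, and use $s^{\text{u}}_{\text{max}}(K) = s_1(K)+1$ (from \Cref{Prop:minmaxrelation} together with the $\pm 1$ shift between upper and lower), which is what the paper does.

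Your closing remark about the ``main obstacle'' -- that \Cref{Thm:nonzero2} gives $\phi_S \neq 0$ but not $\phi_S(x) \neq 0$ for the particular filtration-maximal $x$ -- is a legitimate observation, and it is worth noting that the paper's proof also simply cites \Cref{Thm:nonzero2} and proceeds, so you are not behind the paper here. Your sketch of how to fill it (unpack $\phi_S = \phi_{S_2} \circ \phi_{S_1}$, use that $\phi_{S_1}$ has trivial kernel so its image is the full rank-$4$ span of the generators attached to $\mathscr{S}$, and use $\mathscr{S} \in \mathscr{L}$ to control $\phi_{S_2}$) is the right place to look, but as written it stops short: one still needs to rule out a cancellation where $\phi_{S_2}$ kills the particular linear combination $\sg \pm \overline{\sg}$ even though it is non-zero on each of $\sg$ and $\overline{\sg}$ separately.
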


\begin{proof}
	Let \( K \) and \( S \) be as in the theorem statement. Then, by \Cref{Thm:nonzero2}, \( \phi_S \) is a non-zero map. As in the classical case, it is easy to see that \( \phi_S \) is filtered of degree \( -2g(S) \). Let \( x \in \dkh ' ( K ) \) realise \( s^{\text{u}}_{\text{max}} ( K ) \) so that
	\begin{equation*}
	1 \geq s ( \phi_S ( x ) ) \geq s^{\text{u}}_{\text{max}} ( K ) -2g(S)
	\end{equation*}
	as \( s^{\text{u}}_{\text{max}} ( \raisebox{-1.75pt}{\includegraphics[scale=0.3]{unknotflat.pdf}} ) = 1 \). This yields
	\begin{equation*}
	\begin{aligned}
	2g(S) + 1 &\geq s^{\text{u}}_{\text{max}} ( K ) \\
	2g(S) &\geq s_1 ( K ).
	\end{aligned}
	\end{equation*}
	Repeating the argument for \( \overline{K} \), and using \Cref{Prop:mirror}, we obtain
	\begin{equation*}
	-2g(S) \leq s_1 ( K )
	\end{equation*}
	which yields the desired result.
\end{proof}

\begin{corollary}
	Let \( K \) be a virtual knot with \( s_2 ( K ) = 0 \) and \( S \) a targeted cobordism from \( K \) to the unknot such that \( 2 g ( S ) \leq | s_1 ( K ) | \). Then there exists a link \( L \) which appears in \( S \) with \( {\dkh '}_0 ( L ) = 0 \).
\end{corollary}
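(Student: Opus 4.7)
The plan is to prove this immediately by taking the contrapositive of \Cref{Thm:vgenusbound}, after unpacking the definition of a shared degree. Suppose, towards a contradiction, that \emph{every} virtual link \( L \) appearing in \( S \) satisfies \( {\dkh'}_0(L) \neq 0 \). By \Cref{Def:shared}, this is precisely the statement that \( 0 \) is a shared degree of the cobordism \( S \). Since \( K \) is a virtual knot with \( s_2(K) = 0 \) and \( S \) is a targeted cobordism from \( K \) to the unknot (which likewise has \( s_2 = 0 \)), the hypotheses of \Cref{Thm:vgenusbound} are all satisfied.

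Applying that theorem then yields
\[
\frac{|s_1(K)|}{2} \leq g(S),
\]
i.e.\ \( |s_1(K)| \leq 2 g(S) \). Combined with the standing hypothesis \( 2g(S) \leq |s_1(K)| \) of the corollary (read as a strict inequality, which is the intended content in order to produce a genuine contradiction rather than equality), this is impossible. Hence some link \( L \) appearing in \( S \) must have \( {\dkh'}_0(L) = 0 \), which is exactly the conclusion.

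There is essentially no obstacle here: the only subtlety is confirming that \( S \) itself (not some auxiliary cobordism) meets the hypotheses of \Cref{Thm:vgenusbound}, and this is immediate since the targeted hypothesis, the \( s_2(K) = 0 \) hypothesis, and the choice of the unknot as the terminal knot are all inherited from the corollary's setup. The rest is simply a logical rephrasing of \Cref{Thm:vgenusbound} in contrapositive form, combined with unwinding \Cref{Def:shared}. No new algebraic or topological input is required.
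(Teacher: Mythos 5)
Your proposal is correct and is precisely the intended argument: the paper states this corollary without proof, and it is indeed nothing more than the contrapositive of \Cref{Thm:vgenusbound} combined with unwinding \Cref{Def:shared}. You are also right to flag the inequality: as literally stated the corollary's hypothesis \(2g(S) \leq |s_1(K)|\) permits the boundary case \(2g(S) = |s_1(K)|\), which does not contradict the conclusion \(|s_1(K)|/2 \leq g(S)\) of \Cref{Thm:vgenusbound}; the hypothesis should be strict, \(2g(S) < |s_1(K)|\), for the contrapositive to go through, and this is clearly what the paper intends.
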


In a very similar manner we able to show the following.

\begin{theorem}
	\label{Thm:vgenusbound2}
	Let \( K_1 \) and \( K_2 \) be a pair of virtual knots with \( s_2 ( K_1 ) = s_2 ( K_2 ) \), and \( S \) be a targeted cobordism between them such that \( s_2 ( K ) \) is a shared homological degree of \(S\). Then
	\begin{equation*}
		\dfrac{| s_1 ( K_1 ) - s_1 ( K_2 ) |}{2} \leq g ( S ).
	\end{equation*}
\end{theorem}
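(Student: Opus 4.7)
The proof follows the template of \Cref{Thm:vgenusbound}, generalised from the unknot target to an arbitrary knot \( K_2 \). The plan proceeds in three main steps.

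First, I would assemble the two key ingredients. Non-vanishing of \( \phi_S \) follows directly from \Cref{Thm:nonzero2}, because \( S \) is assumed to be targeted and the common value \( s_2(K_1)=s_2(K_2) \) is a shared degree. For the filtered degree of \( \phi_S \), I would sum the contributions of the elementary pieces as described on \cpageref{Page:handles}: \( 0 \)- and \( 2 \)-handles each shift the filtration by \( +1 \), \( 1 \)-handles by \( -1 \), and the purely virtual and classical Reidemeister moves by \( 0 \). The total is \( h_0 - h_1 + h_2 = \chi(S) = -2g(S) \), using that \( S \) is a connected oriented surface with two circle boundary components.

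Second, mimicking the computation in the proof of \Cref{Thm:vgenusbound}, I would pick an element \( x \in \dkh'(K_1) \) realising \( s^{\text{u}}_{\text{max}}(K_1) = s_1(K_1) + 1 \) with \( \phi_S(x) \neq 0 \). The filtered degree bound then gives
\[
s_1(K_2) + 1 \;=\; s^{\text{u}}_{\text{max}}(K_2) \;\geq\; s(\phi_S(x)) \;\geq\; s^{\text{u}}_{\text{max}}(K_1) - 2g(S) \;=\; s_1(K_1) + 1 - 2g(S),
\]
which rearranges to \( s_1(K_1) - s_1(K_2) \leq 2g(S) \).

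Third, I would obtain the reverse inequality by applying Step 2 to the mirror cobordism \( \overline{S} : \overline{K}_1 \to \overline{K}_2 \). Mirroring is an involution that swaps positive and negative crossings but preserves the underlying surface and its handle decomposition, so \( \overline{S} \) is still targeted and has the same genus as \( S \); and by \Cref{Prop:mirror} the hypothesis that \( s_2(K_1) \) is a shared degree of \( S \) translates into \( s_2(\overline{K}_1) = -s_2(K_1) \) being a shared degree of \( \overline{S} \), so \Cref{Thm:nonzero2} applies again. The resulting inequality, translated back via \( \mathbbm{s}(\overline{K}_i) = -\mathbbm{s}(K_i) \), becomes \( s_1(K_2) - s_1(K_1) \leq 2g(S) \). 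Combining with Step 2 yields the claimed bound.

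The hard part is justifying the existence of an \( x \) realising \( s^{\text{u}}_{\text{max}}(K_1) \) with \( \phi_S(x) \neq 0 \), since non-vanishing of \( \phi_S \) alone does not rule out collapse on a specific combination such as \( \sg^{\text{u}} \pm \overline{\sg}^{\text{u}} \). The key input is the structural description of targeted cobordism maps supplied by the proof of \Cref{Thm:nonzero2}: \( \phi_{S_1} \) is injective with image spanned by the four alternately coloured generators attached to the distinguished smoothing \( \mathscr{S} \) of \( L \), and \( \phi_{S_2} \) then acts on these by non-zero scalars. Tracking this action through the \( \iota \)-involution eigenspaces of \Cref{Lem:evenodd} shows that the maximising combinations on the source are sent to the analogous maximising combinations on the target, preventing accidental cancellation and completing Step 2.
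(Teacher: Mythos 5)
Your proof follows the same template the paper itself uses: the paper proves \Cref{Thm:vgenusbound} by combining \Cref{Thm:nonzero2}, the filtered degree $-2g(S)$ of $\phi_S$, and the mirror argument via \Cref{Prop:mirror}, and then states \Cref{Thm:vgenusbound2} with only the comment that it follows ``in a very similar manner.'' The one place you go beyond the paper is in flagging that non-vanishing of $\phi_S$ as a map does not by itself give $\phi_S(x)\neq 0$ for the specific maximiser $x$ realising $s^{\text{u}}_{\text{max}}(K_1)$; the paper's proof of \Cref{Thm:vgenusbound} silently assumes this, so your identification of it as the delicate step (and the sketch via the $\iota$-eigenspace decomposition of \Cref{Lem:evenodd} together with the structure of targeted cobordism maps from the proof of \Cref{Thm:nonzero2}) is a legitimate tightening rather than a different route.
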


Further, concordances between virtual knots are obstructed by the quantum degree component of the doubled Rasmussen invariant, \( s_1 \) (in \Cref{Subsubsec:oddwritheslice} we show that the homological component is such an obstruction, also).

\begin{theorem}
	\label{Thm:conceobs}
	Let \( K \) and \( K' \) be virtual knots such that \( s_2 ( K ) = s_2 ( K' ) \). If \( s_1 ( K ) \neq s_1 ( K' ) \) then \(K\) and \(K'\) are not concordant.
\end{theorem}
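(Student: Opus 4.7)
The plan is to argue by contradiction, mimicking the classical proof that the Rasmussen invariant is a concordance invariant. Suppose $K$ and $K'$ are concordant via a surface $S$ with $g(S)=0$, while $s_2(K) = s_2(K')$. After a small modification (absorbing any closed components into cancelling $0$/$2$-handle pairs, which does not alter the concordance type) we may assume $S$ contains no closed components. Then by \Cref{Thm:nonzero1} the induced map $\phi_S \colon \dkh'(K) \to \dkh'(K')$ is non-zero, and because $g(S)=0$ it is filtered of degree $0$ with respect to the quantum filtration while preserving homological degree (and the latter agrees on $K$ and $K'$ by hypothesis).

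Next I would locate a specific element of $\dkh'(K)$ realising $\sumax(K)$ whose image under $\phi_S$ is non-zero. Let $\sg^{\text{u}}$ and $\overline{\sg}^{\text{u}}$ denote the two alternately coloured generators of $\dkh'(K)$ in the upper summand; by \Cref{Lem:evenodd} and \Cref{Prop:minmaxrelation}, $\sumax(K) = s(\sg^{\text{u}} \pm \overline{\sg}^{\text{u}})$ for an appropriate sign. Unwinding the proof of \Cref{Thm:nonzero1}, I would show that $\phi_S$ carries alternately coloured generators of $\dkh'(K)$ to non-zero linear combinations of alternately coloured generators of $\dkh'(K')$, by tracking proper colourings of shadows through each elementary piece of $S$ as in \Cref{Prop:1handles1} and \Cref{Cor:surjection}. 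Since both homologies are four-dimensional (both knots have exactly two alternately coloured smoothings), this forces $\phi_S$ to restrict to an isomorphism on the upper summand, so in particular $\phi_S(\sg^{\text{u}} \pm \overline{\sg}^{\text{u}}) \neq 0$.

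The filtered-degree-zero property then yields
\begin{equation*}
\sumax(K') \;\geq\; s\bigl(\phi_S(\sg^{\text{u}} \pm \overline{\sg}^{\text{u}})\bigr) \;\geq\; \sumax(K),
\end{equation*}
and applying the identical argument to the reversed concordance $\overline{S} \colon K' \to K$ (still a genus-zero cobordism between knots to which \Cref{Thm:nonzero1} applies) gives the opposite inequality. Therefore $\sumax(K) = \sumax(K')$, and by \Cref{Prop:minmaxrelation} together with \Cref{Def:rasinvariant} this gives $s_1(K) = s_1(K')$, contradicting the hypothesis.

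The main obstacle will be verifying that $\phi_S$ is non-zero on the specific element $\sg^{\text{u}} \pm \overline{\sg}^{\text{u}}$, not merely non-zero as a map. In the classical case this is automatic because oriented smoothings correspond bijectively to canonical generators and cobordisms respect orientation; in the virtual setting one loses that correspondence and must instead propagate the red/green data through every elementary handle in the concordance, leveraging the careful analysis of alternately coloured smoothings under elementary cobordisms already carried out in \Cref{Subsec:cobordisms}. Once that is in hand, the remainder is a routine filtered-degree computation analogous to the proof of \Cref{Thm:vgenusbound}.
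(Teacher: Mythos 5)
Your overall route is the same as the paper's: reduce to the filtered-degree argument of \Cref{Thm:vgenusbound} and invoke \Cref{Thm:nonzero1} to get non-vanishing of $\phi_S$. You deserve credit for explicitly flagging the point that the paper's proof of \Cref{Thm:vgenusbound} passes over silently: to run the filtered-degree inequality one needs $\phi_S$ to be non-zero on a \emph{specific} element realising $\sumax(K)$, not merely non-zero as a linear map.

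However, the step where you write that carrying alternately coloured generators to non-zero linear combinations ``forces $\phi_S$ to restrict to an isomorphism on the upper summand'' does not follow. Knowing $\phi_S(\sg^{\text{u}}) \neq 0$ and $\phi_S(\overline{\sg}^{\text{u}}) \neq 0$ does not preclude the images being linearly dependent: if $\phi_S(\sg^{\text{u}}) = c\,\phi_S(\overline{\sg}^{\text{u}})$ for some $c \neq 0$, then $\phi_S$ is not injective on the upper summand and $\phi_S(\sg^{\text{u}} - c\,\overline{\sg}^{\text{u}}) = 0$, which for the appropriate sign could be precisely the element realising $\sumax(K)$. In that event your chain of inequalities only gives $\sumax(K') \geq \sumin(K) = \sumax(K)-2$, which is too weak. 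What you actually need is the stronger Rasmussen-style statement that each alternately coloured generator of $\dkh'(K)$ maps to a non-zero multiple of a \emph{single} alternately coloured generator of $\dkh'(K')$ (so that $\sg^{\text{u}}$ and $\overline{\sg}^{\text{u}}$ have linearly independent images), and the paper's remark at the end of \Cref{Subsec:cobordisms} explicitly says the orientation-propagation argument used to prove this classically is unavailable here. So the "forces an isomorphism" step is a genuine gap in your write-up, though the theorem itself and your overall strategy are sound.
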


The proof of \Cref{Thm:conceobs} follows almost exactly along the lines of that of \Cref{Thm:vgenusbound}, which itself is very similar to the classical case; all we require is that the map assigned to a concordance is non-zero, which is verified in \Cref{Thm:nonzero1}.

\begin{corollary}
	\label{Cor:sliceobs}
	Let \( K \) be a virtual knot with \( s_2 ( K ) = 0 \). If \( s_1 ( K ) \neq 0 \) then \(K\) is not slice.
\end{corollary}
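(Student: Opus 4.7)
The plan is to obtain this as an immediate specialisation of \Cref{Thm:conceobs}, by taking the second virtual knot to be the unknot. The only thing that needs verification is the value of the doubled Rasmussen invariant on the unknot.

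First, I would observe that for the unknot $U$ we have $\mathbbm{s}(U) = (0,0)$. This is a direct application of the proposition in \Cref{Subsec:rasproperties} which computes $\mathbbm{s}$ on classical knots: for classical $K$ one has $\mathbbm{s}(K) = (s(K), 0)$, where $s(K)$ is the classical Rasmussen invariant. Since $s(U) = 0$ (either from the classical computation of $Kh'(U)$ and the identification of $s$, or directly from the decomposition $\dkh'(U) = Kh'(U) \oplus Kh'(U)\{-1\}$ together with $\sumax(U) = 1$ giving $s_1(U) = \smax^{\text{l}}(U) = 0$), we conclude $\mathbbm{s}(U) = (0,0)$.

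Now suppose $K$ is a virtual knot with $s_2(K) = 0$ and $s_1(K) \neq 0$. Then $s_2(K) = 0 = s_2(U)$, so the hypothesis of \Cref{Thm:conceobs} on the homological component is met. Moreover $s_1(K) \neq 0 = s_1(U)$. Hence \Cref{Thm:conceobs} applies and gives that $K$ and $U$ are not concordant. By the definition of slice (\Cref{Def:cobordism}), a knot concordant to the unknot is precisely a slice knot, and so $K$ is not slice.

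There is no real obstacle to surmount here; the corollary is a one-line consequence of \Cref{Thm:conceobs} once the baseline value $\mathbbm{s}(U) = (0,0)$ has been recorded. The only care needed is to be explicit that ``slice'' means concordant to the \emph{unknot} (as opposed to, say, any classical knot), which is the convention adopted in \Cref{Def:cobordism}.
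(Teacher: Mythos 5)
Your proposal is correct and matches the paper's (implicit) argument: \Cref{Cor:sliceobs} is stated without proof immediately after \Cref{Thm:conceobs} precisely because it follows by taking \( K' \) to be the unknot, with \( \mathbbm{s}(\raisebox{-1.75pt}{\includegraphics[scale=0.3]{unknotflat.pdf}}) = (0,0) \) supplied by the classical-knot proposition. Nothing is missing.
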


\subsubsection{Obstructions to concordances between knots and links}
\label{Subsec:combining}

We can extend \Cref{Thm:conceobs} to the case in which one end of the concordance is a link, provided the homologies of the knot and link in question are compatible, and the concordance is connected.

\begin{theorem}
	\label{Thm:merging}
	Let  \( L \) be a virtual link of \( | L | \) components. Further, let \( S \) be a connected concordance between \( L \) and a virtual knot \( K \) such that \( {\dkh '}_{s_2 ( K ) } ( L ) \neq 0 \). Let \( M(L) \) be the maximum non-trivial quantum degree of elements \(x \in \dkh ' ( L ) \) such that \( \phi_S ( x ) \neq 0 \). Then
	\begin{equation*}
	M ( L ) \leq s_1( K ) + | L |.
	\end{equation*}
\end{theorem}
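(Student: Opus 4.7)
The plan is to adapt the slice-genus argument of \Cref{Thm:vgenusbound}, replacing the genus term by a contribution from the Euler characteristic of the cobordism. Since \( S \) is connected, oriented, has genus \(0\) and \(|L|+1\) boundary components, we have \( \chi(S) = 2 - 2g(S) - (|L|+1) = 1 - |L| \). In any handle decomposition of \( S \) with \( h_i \) handles of index \( i \), \( \chi(S) = h_0 - h_1 + h_2 \). Recalling from \cpageref{Page:handles} that \( \phi_{S_e} \) is filtered of degree \( +1 \) for \(0\)- and \(2\)-handles, of degree \( -1 \) for \(1\)-handles, and of degree \(0\) for the virtual and classical Reidemeister moves, the total filtered degree of \( \phi_S \) equals \( h_0 + h_2 - h_1 = \chi(S) = 1 - |L| \).

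Next I would apply \Cref{Thm:nonzero1} to deduce that \( \phi_S \) is non-zero: the hypothesis \( {\dkh'}_{s_2(K)}(L) \neq 0 \) gives \( \dkh'(L) \neq 0 \), and \( S \) is connected with boundary at both ends so contains no closed components. Because \( \phi_S \) preserves homological degree and \( \dkh'(K) \) is supported only in homological degree \( s_2(K) \), any element on which \( \phi_S \) is non-zero must lie in \( {\dkh'}_{s_2(K)}(L) \). Choose \( x \in \dkh'(L) \) with \( \phi_S(x) \neq 0 \) and \( s(x) = M(L) \); the filtered-degree computation then yields
\begin{equation*}
s(\phi_S(x)) \geq M(L) + (1 - |L|).
\end{equation*}

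To conclude I would identify the maximum non-trivial quantum grading of \( \dkh'(K) \) as \( s^{\text{u}}_{\text{max}}(K) = s_1(K) + 1 \). Indeed, the lower summand is shifted by \( \{-1\} \) relative to the upper summand, so \( s(\sg^{\text{u}}) = s(\sg^{\text{l}}) + 1 \) for an alternately coloured generator, and analogously for \( \sg^{\text{u}} \pm \overline{\sg}^{\text{u}} \) versus \( \sg^{\text{l}} \pm \overline{\sg}^{\text{l}} \); hence \( s^{\text{u}}_{\text{max}}(K) = s^{\text{l}}_{\text{max}}(K) + 1 = s_1(K) + 1 \). Since \( \phi_S(x) \) is a non-zero element of \( \dkh'(K) \), \( s(\phi_S(x)) \leq s_1(K) + 1 \), and combining with the preceding inequality gives \( M(L) \leq s_1(K) + |L| \), as required. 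The only subtle point is the Euler-characteristic bookkeeping, where the connectedness hypothesis on \( S \) is essential; once \Cref{Thm:nonzero1} supplies the non-vanishing of \( \phi_S \), the remainder is a direct transcription of the standard filtered-degree argument used in the proof of \Cref{Thm:vgenusbound}.
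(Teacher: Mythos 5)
Your proof is correct and follows essentially the same route as the paper's: invoke \Cref{Thm:nonzero1} to get $\phi_S \neq 0$, observe that a non-zero image must live in homological degree $s_2(K)$, bound the filtered degree of $\phi_S$, and compare against $s^{\text{u}}_{\text{max}}(K) = s_1(K) + 1$. The only difference is presentational: you derive the filtered degree $1 - |L|$ cleanly from $\chi(S) = h_0 - h_1 + h_2 = 1 - |L|$, whereas the paper reaches $-(|L|-1)$ by a slightly more informal count of how many $1$-handles are needed to reduce $|L|$ components to one.
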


\begin{proof}
	Let \( L \), \( K \), and \( S \) be as in the theorem statement. Then \( \phi_S \) is non-zero by \Cref{Thm:nonzero1}. It is clear that \( \phi_{S} \) is filtered of degree \( - ( | L | - 1 ) \): a minimum of \( | L | - 1 \) \(1\)-handles are needed to take a \( | L | \)-component link to a knot, and any surplus \(1\)-handles must be paired with \(2\)-handles. It is also clear that if \( x \in \dkh ' ( L ) \) is such that \( \phi_{S} ( x ) \neq 0 \) then \( x \in {\dkh '}_{ s_2 ( K ) } ( L ) \). For such an \( x \) we have that \( s ( x ) \geq M ( L ) \) and
	\begin{equation*}
		\begin{aligned}
			M ( L ) - ( | L | - 1 ) \leq s ( x ) - ( | L | - 1 ) \leq s ( \phi_S ( x ) ) \leq s^{\text{u}}_{\text{max}} ( K )
		\end{aligned}
	\end{equation*}
	so that
	\begin{equation*}
		\begin{aligned}
			M ( L ) - | L | + 1 \leq s_1 ( K ) + 1
		\end{aligned}
	\end{equation*}
	as required.
\end{proof}

\begin{corollary}
	\label{Cor:concob}
	Let \( L \) be a virtual link of \( | L | \) components such that \( \dkh ' ( L ) \neq 0 \). Further, let \( K \) a virtual knot such that \( \dkh ' ( L ) \) is trivial in homological degree \( s_2 ( K ) \) or
	\begin{equation*}
		M ( L ) \geq s_1( K ) + | L |.
	\end{equation*}
	Then any concordance from \( L \) to \( K \) is disconnected.
\end{corollary}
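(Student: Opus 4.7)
The plan is to prove \Cref{Cor:concob} by contradiction, as the contrapositive of \Cref{Thm:merging}, using \Cref{Thm:nonzero1} to handle the case in which the additional hypothesis of \Cref{Thm:merging} fails. Suppose for contradiction that \( S \) is a connected concordance from \( L \) to \( K \). Since \( S \) is connected with boundary \( L \sqcup K \) it possesses no closed components, so \Cref{Thm:nonzero1}, applied together with \( \dkh ' ( L ) \neq 0 \), gives that the induced map \( \phi_S \) is non-zero. Pick any \( x \in \dkh ' ( L ) \) with \( \phi_S (x) \neq 0 \). Since \( \phi_S \) preserves homological degree (see the discussion of handle additions on \cpageref{Page:handles}) and \( \dkh ' ( K ) \) is supported entirely in the single homological degree \( s_2 ( K ) \), we must have \( x \in {\dkh '}_{s_2 ( K )} ( L ) \).

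If the first alternative of the corollary's hypothesis holds -- that \( \dkh ' ( L ) \) is trivial in homological degree \( s_2 ( K ) \) -- then no such \( x \) can exist, immediately contradicting \( \phi_S \neq 0 \). If instead the second alternative holds, we may assume the first fails (else we are done by the previous case), so \( {\dkh '}_{s_2 ( K )} ( L ) \neq 0 \); this is precisely the additional hypothesis of \Cref{Thm:merging}. Applying \Cref{Thm:merging} to the connected concordance \( S \) then yields \( M ( L ) \leq s_1 ( K ) + | L | \), which contradicts the standing assumption \( M ( L ) \geq s_1 ( K ) + | L | \) (read as a strict inequality, since \Cref{Thm:merging} only forbids \( M ( L ) > s_1 ( K ) + | L | \)).

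The main obstacle is really bookkeeping: one must verify that the two tools above apply cleanly, which hinges on the fact that \( \phi_S \) preserves homological grading together with \( \dkh ' ( K ) \) being supported in the single homological degree \( s_2 ( K ) \), and, implicitly, on the observation that a connected cobordism with non-empty boundary cannot have closed components, ensuring that \Cref{Thm:nonzero1} is applicable. No new techniques beyond those appearing in \Cref{Thm:nonzero1,Thm:merging} should be required.
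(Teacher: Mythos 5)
Your proof has the right structure and the paper does not itself spell out a proof of this corollary, so you are filling a genuine gap. You correctly combine \Cref{Thm:nonzero1} (to get \( \phi_S \neq 0 \), noting that a connected cobordism with non-empty boundary has no closed components) with the homological-degree constraint and the quantum-degree bound of \Cref{Thm:merging}. You are also right to flag the \( \geq \) versus \( > \) discrepancy: the proof of \Cref{Thm:merging} gives only the non-strict bound \( M(L) \leq s_1(K) + |L| \), so the corollary's hypothesis should read \( M(L) > s_1(K) + |L| \) to produce a clean contradiction; the \( 9^2_{61} \) example \( ( 5 > 0 + 2 ) \) is consistent with this correction.

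One further subtlety is worth surfacing, though it originates in the paper's phrasing rather than in your argument. In \Cref{Thm:merging}, \( M(L) \) is defined relative to a fixed concordance \( S \) — it is the maximal quantum filtration degree of \( x \) with \( \phi_S(x) \neq 0 \). In \Cref{Cor:concob} no concordance is fixed, so \( M(L) \) must be read as an \emph{intrinsic} quantity, namely the maximal filtration degree of non-trivial elements of \( {\dkh'}_{s_2(K)}(L) \) (this is exactly what the \( 9^2_{61} \) example computes). With that reading, \Cref{Thm:merging} only bounds the \( S \)-dependent quantity, which is \emph{a priori} no larger than the intrinsic one, so the intrinsic \( M(L) \) exceeding \( s_1(K) + |L| \) does not immediately contradict the theorem unless one also knows that some element realising the intrinsic maximum has non-zero image under \( \phi_S \). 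Your proof silently identifies the two notions of \( M(L) \); to be airtight you should either note explicitly that you are using the \( S \)-dependent quantity throughout (which makes the corollary's hypothesis depend on the \( S \) whose existence is being ruled out, an awkward but logically coherent reading), or supply an argument that \( \phi_S \) is non-zero on the top filtration level of \( {\dkh'}_{s_2(K)}(L) \).
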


A particular consequence of \Cref{Cor:concob} is that, given a virtual link \( L \) for which \( \dkh ' ( L ) \neq 0 \) and \( {\dkh '}_0 ( L ) = 0 \), all concordances from \( L \) to classical knots must be disconnected: no classical knots can be obtained from \(L\) by simply merging its components.

\subsubsection{The odd writhe is an obstruction to sliceness}\label{Subsubsec:oddwritheslice}
The odd writhe of a knot is very easy to calculate. Despite this it can detect non-classicality (and hence non-triviality) and chirality of many virtual knots \cite{Kauffman2004b}. Here we show that it also contains information regarding the concordance class of a virtual knot.

\begin{theorem}
	\label{Thm:owritheslice}
	Let \( K \) be a virtual knot. If \( J ( K ) \neq 0 \) then \( K \) is not slice.
\end{theorem}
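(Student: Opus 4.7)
The plan is to derive a contradiction from the assumption that a virtual knot $K$ with $J(K) \neq 0$ is slice, by comparing the homological degrees in which the doubled Lee homologies of $K$ and the unknot are supported.

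First I would unwind the definition of sliceness: by \Cref{Def:cobordism}, if $K$ is slice then there exists a concordance $S$ from $K$ to the unknot, i.e.\ a genus-zero compact oriented surface with boundary $K \sqcup \raisebox{-1.75pt}{\includegraphics[scale=0.3]{unknotflat.pdf}}$. Such a genus-zero cobordism between two knots is an annulus, hence contains no closed components. Since $\dkh'(\raisebox{-1.75pt}{\includegraphics[scale=0.3]{unknotflat.pdf}}) \neq 0$, \Cref{Thm:nonzero1} applies and guarantees that the induced map
\[
\phi_S : \dkh'(K) \longrightarrow \dkh'(\raisebox{-1.75pt}{\includegraphics[scale=0.3]{unknotflat.pdf}})
\]
is non-zero.

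Next I would invoke the fact (established on \cpageref{Page:handles} for elementary cobordisms, and hence for all cobordisms by composition) that $\phi_S$ preserves homological degree. By \Cref{Thm:leerank} together with \Cref{Thm:acs}, the doubled Lee homology of a virtual knot is concentrated in a single homological degree, namely the height of its unique alternately colourable smoothing; that is, $\dkh'(K)$ is supported in degree $s_2(K)$ and $\dkh'(\raisebox{-1.75pt}{\includegraphics[scale=0.3]{unknotflat.pdf}})$ in degree $s_2(\raisebox{-1.75pt}{\includegraphics[scale=0.3]{unknotflat.pdf}}) = 0$ (the unknot being classical). By \Cref{Prop:s2odd}, $s_2(K) = J(K)$, so the hypothesis $J(K) \neq 0$ forces the source and target of $\phi_S$ to be supported in distinct homological degrees.

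Since a degree-preserving map between groups supported in disjoint degrees must vanish, we conclude $\phi_S = 0$, contradicting the non-vanishing guaranteed by \Cref{Thm:nonzero1}. Hence no such concordance $S$ can exist, and $K$ is not slice. The argument is essentially a one-line incompatibility of homological supports; no genus bound or filtration argument (as in \Cref{Cor:sliceobs}) is required, since here the obstruction comes from $s_2$ rather than $s_1$. The only subtlety to spell out carefully is the verification that a genus-zero cobordism between two knots has no closed components, so that \Cref{Thm:nonzero1} is legitimately applicable.
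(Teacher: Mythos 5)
Your proof is correct and follows essentially the same route as the paper's: both derive a contradiction from $s_2(K) = J(K) \neq 0$ by observing that $\phi_S$ is non-zero (\Cref{Thm:nonzero1}), preserves homological degree, yet would have to map between groups supported in disjoint homological degrees. Your extra check that the concordance contains no closed components is a welcome refinement the paper elides, though strictly speaking a genus-zero cobordism between two knots need not literally be an annulus---it could a priori carry disjoint sphere components---but such components may always be discarded, so the hypothesis of \Cref{Thm:nonzero1} is still met.
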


\begin{proof}
	We prove the contrapositive. Assume towards a contradiction that \( K \) is a slice virtual knot such that \( J ( K ) \neq 0 \). Then \( s_2 ( K ) \neq 0 \) by \Cref{Prop:s2odd}. Let \( S \) realise a slice disc so that \( \phi_S \) is non-zero by \Cref{Thm:nonzero1}. Recall that \( \phi_S \) preserves homological degree by construction. There must exist \( x \in {\dkh '}_{s_2(K)} ( K ) \) such that \( \phi_S ( x ) \neq 0 \). But then
	\begin{equation*}
		\phi_S ( x ) \neq 0 \in {\dkh '}_{s_2(K)} \left( \raisebox{-1.75pt}{\includegraphics[scale=0.3]{unknotflat.pdf}} \right) = 0
	\end{equation*}
	as \( s_2 ( K ) \neq 0 \), a contradiction.
\end{proof}

The proof of \Cref{Thm:owritheslice} can be used \textit{mutatis mutandis} to show that the set of concordance classes of virtual knots is partitioned by the odd writhe.

\begin{theorem}
	\label{Thm:owritheconc}
	Let \( K_1 \) and \( K_2 \) be virtual knots. If \( J ( K_1 ) \neq J ( K_2 ) \) then \( K_1 \) and \( K_2 \) are not concordant.
\end{theorem}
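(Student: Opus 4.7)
The plan is to replay the proof of \Cref{Thm:owritheslice}, replacing the r\^ole of the unknot by $K_2$ and the condition $J(K) \neq 0$ by $J(K_1) \neq J(K_2)$. I would argue by contradiction: suppose $K_1$ and $K_2$ are concordant virtual knots with $J(K_1) \neq J(K_2)$, and let $S$ be a concordance between them. After a small isotopy we may assume $S$ has no closed components (any $2$-sphere component of $S$ contributes to $\phi_{S}$ a factor which is either the identity on the uninvolved summand or zero, and can be discarded without affecting the non-vanishing question). Then, since $\dkh'(K_2) \neq 0$ (the doubled Lee homology of any virtual knot has rank $4$), \Cref{Thm:nonzero1} applies and $\phi_S : \dkh'(K_1) \to \dkh'(K_2)$ is non-zero.

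Next I would invoke the fact, established in \Cref{Subsec:cobordisms}, that $\phi_S$ preserves homological degree. By the corollary to \Cref{Thm:acs}, $\dkh'(K_i)$ is supported in the single homological degree equal to the height of the alternately colourable smoothing of $K_i$, which by \Cref{Prop:s2odd} equals $s_2(K_i) = J(K_i)$. Hence $\phi_S$ factors as a map
\begin{equation*}
\dkh'(K_1) = {\dkh'}_{J(K_1)}(K_1) \longrightarrow {\dkh'}_{J(K_1)}(K_2).
\end{equation*}
But $J(K_1) \neq J(K_2)$, so ${\dkh'}_{J(K_1)}(K_2) = 0$. Therefore $\phi_S = 0$, contradicting the previous paragraph.

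I expect no serious obstacle here; the only subtlety is verifying that closed components of $S$ do not interfere with the appeal to \Cref{Thm:nonzero1}, and this is handled exactly as in the proof of \Cref{Thm:owritheslice} (which implicitly assumes the slice disc has no closed components). Given the machinery already assembled, the argument is entirely parallel and can be signalled by the usual \emph{mutatis mutandis} appeal, so the writeup need only highlight the two replacements $\raisebox{-1.75pt}{\includegraphics[scale=0.3]{unknotflat.pdf}} \leadsto K_2$ and $J(K) \neq 0 \leadsto J(K_1) \neq J(K_2)$ before invoking the contradiction.
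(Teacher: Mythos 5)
Your argument is exactly the \emph{mutatis mutandis} substitution the paper itself invokes to reduce Theorem~\ref{Thm:owritheconc} to the proof of Theorem~\ref{Thm:owritheslice}, with $K_2$ replacing the unknot and the mismatch $J(K_1) \neq J(K_2)$ of supporting homological degrees replacing $J(K) \neq 0$. One small clarification on your parenthetical: a free $2$-sphere component of $S$ actually contributes the factor $\epsilon' \circ \iota' = 0$ to $\phi_S$ (since $\epsilon'(\vulp) = 0$), so it is not harmless to keep it; the correct justification is simply that deleting any closed components from $S$ produces another concordance between $K_1$ and $K_2$, to which one then applies Theorem~\ref{Thm:nonzero1}.
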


\begin{corollary}
	\label{Cor:classicalconcordance}
	Let \( K \) be a virtual knot. If \( J ( K ) \neq 0 \) then \( K \) is not concordant to a classical knot. 
\end{corollary}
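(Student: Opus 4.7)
The plan is to deduce this corollary almost immediately from \Cref{Thm:owritheconc} (concordant knots have equal odd writhe) combined with the observation that every classical knot has vanishing odd writhe. So the real work is verifying that $J(K') = 0$ whenever $K'$ is a classical knot, after which the contrapositive of \Cref{Thm:owritheconc} finishes the job.

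To verify $J(K') = 0$ for classical $K'$, I would invoke \Cref{Prop:s2odd}, which identifies the odd writhe with the homological-grading component $s_2$ of the doubled Rasmussen invariant. The discussion on \cpageref{Page:classicalacs} established that for any classical knot the alternately colourable smoothing is the oriented smoothing, which sits at height $0$ in the cube of resolutions; hence $s_2(K') = |\mathscr{S}| = 0$ for classical $K'$, and therefore $J(K') = 0$ by \Cref{Prop:s2odd}.

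With this in hand, the corollary is immediate: suppose for contradiction that $K$ is a virtual knot with $J(K) \neq 0$ which is concordant to some classical knot $K'$. By \Cref{Thm:owritheconc} (proved via the non-vanishing of $\phi_S$ on the preserved homological grading $s_2$, exactly as in the proof of \Cref{Thm:owritheslice}), concordant virtual knots have equal odd writhe, so $J(K) = J(K') = 0$, contradicting the hypothesis.

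The main obstacle, to the extent there is one, is purely bookkeeping: making sure that the identifications $J(K') = s_2(K') = 0$ for classical $K'$ are invoked correctly, and noting that \Cref{Thm:owritheconc} is already in force. No new cobordism analysis is required beyond what was developed for \Cref{Thm:owritheslice,Thm:owritheconc}, so the corollary is essentially a two-line deduction from results already in hand.
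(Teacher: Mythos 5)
Your proof is correct and matches the paper's intent: the corollary is stated without an explicit proof precisely because it is an immediate consequence of \Cref{Thm:owritheconc} together with the fact that $J$ vanishes on classical knots, which you justify via \Cref{Prop:s2odd} and the observation on \cpageref{Page:classicalacs}. One could equally well cite directly that every crossing in a classical knot diagram is even (a standard property of realizable Gauss diagrams), but routing through $s_2$ as you do is entirely in the spirit of the paper.
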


\subsubsection{Examples}
Consider the classical knot \( T ( 4,3 ) \), as given in \Cref{Fig:819}. By converting a particular subset of its crossings to virtual crossings we are able to produce a virtual knot, \(K\), whose alternately colourable smoothing is its oriented smoothing (\(K\) is also positive, as \( T ( 4,3 ) \) is). Thus \( s_2 (K) = 0 \) and the odd writhe provides no obstruction to sliceness. However, \( K \) is a leftmost knot (as defined in \Cref{Subsec:leftmost}), so that \( s_1 ( K ) = \text{max} \lbrace s ( \sg^{\text{l}} + \overline{\sg}^{\text{l}} ), s ( \sg^{\text{l}} - \overline{\sg}^{\text{l}} ) \rbrace \) by \Cref{Prop:leftcalc}. It can be quickly verified that \( \text{max} \left( s ( \sg^{\text{l}} + \overline{\sg}^{\text{l}} ), s ( \sg^{\text{l}} - \overline{\sg}^{\text{l}} ) \right) = 1 \) so that \( K \) is not slice by \Cref{Cor:sliceobs}.

Further, consider the classical two-component link \( 9^2_{61} \), as depicted in \Cref{Fig:9261}. By an argument identical to that used in the case of leftmost knots we can show that the maxiumum quantum degree of all elements in \( {\dkh '}_0 ( L ) \) is \( 5 \). In the context of \Cref{Thm:merging}, considering connected concordances from \( L \) to the unknot, \( M ( L ) = 5 \) and as \( | L | = 2, s_1 \left( \raisebox{-1.75pt}{\includegraphics[scale=0.3]{unknotflat.pdf}} \right) = 0 \), it follows that there does not exist a connected concordance from \( L \) to the unknot.

The method used in both the above examples can be applied to many positive oriented classical link diagrams in order to produce virtual link diagrams for which the quantum degree information (at particular homological degrees) is easy to compute.

\begin{figure}
	\includegraphics[scale=0.65]{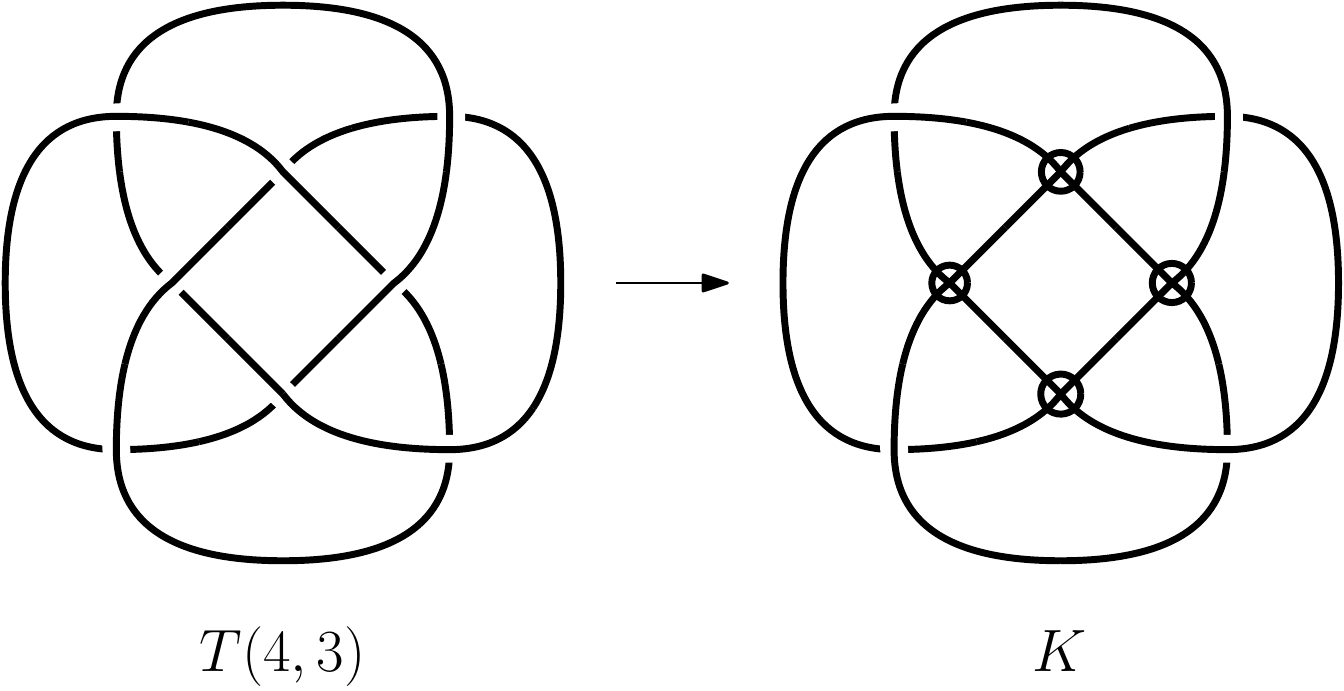}
	\caption{The classical torus knot \( T ( 4,3 ) \), on the left, and a virtual knot \(K\) formed by converting a subset of its crossings to virtual crossings, on the right.}
	\label{Fig:819}
\end{figure}

\begin{figure}
	\includegraphics[scale=1]{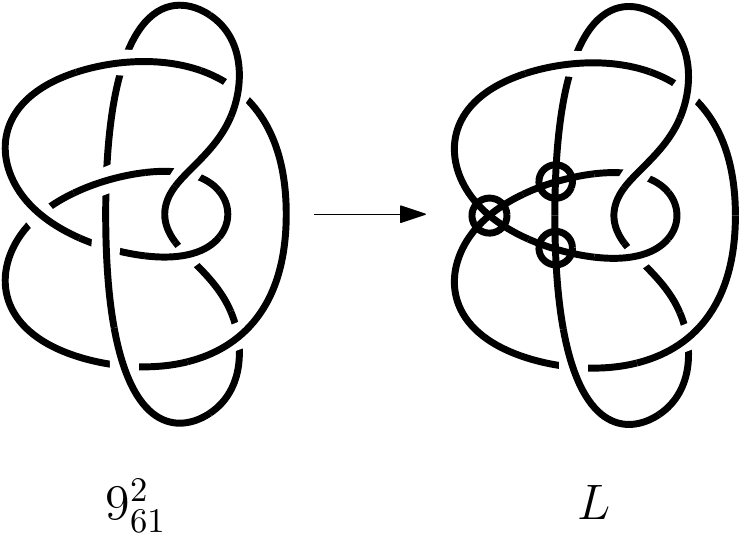}
	\caption{The classical link \( 9^2_{61} \), on the left, and a virtual link \(L\) formed by converting a subset of its crossings to virtual crossings, on the right.}
	\label{Fig:9261}
\end{figure}

\subsection{Connect sums of trivial diagrams}\label{Subsec:trivialdiagrams}
First let us recall the definition of the connect sum of virtual knot diagrams (see \cite{Manturov2008}).

\begin{definition}
	\label{Def:csum}
	Let \( D_1 \) and \( D_2 \) be oriented virtual knot diagrams. If \( D_1 \sqcup D_2 \hookrightarrow \mathbb{R}^2 \) is such that there exists a disc \(B \hookrightarrow \mathbb{R}^2 \) with \( B \cap D_1 = I \) and \( B \cap D_2 = \overline{I} \) (where \( I \) denotes an oriented unit interval and \( \overline{I} \) an interval with reverse orientation) then we denote by \( D_1 \#_B D_2 \) the diagram produced by \(1\)-handle addition with attaching sphere \( S^0 \times D^1 = I \sqcup \overline{I} \).\CloseDef
\end{definition}

The connect sum operation is well defined on classical knots. As mentioned in \Cref{Subsec:summary}, this is not the case for virtual knots. Given a pair of virtual knots \(K_1\) and \(K_2\) with diagrams \( D_1 \) and \(D_2\), respectively, the result of the connect sum operation \( D_1 \#_B D_2 \) depends on the diagrams used and the choice of disc \(B\). By an abuse of notation we use \(K_1 \# K_2 \) to refer to any of the knots produced by a connect sum operation on \( K_1 \) and \( K_2 \).

\begin{figure}
	\includegraphics[scale=0.5]{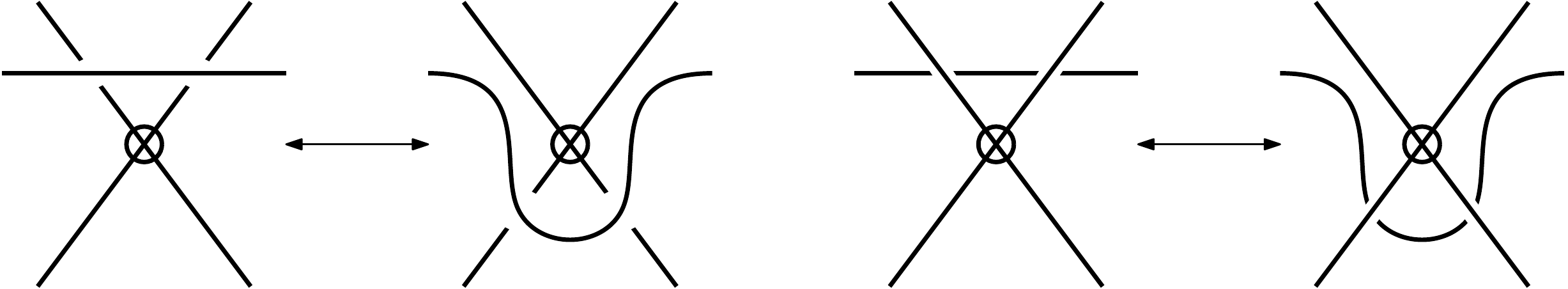}
	\caption{The forbidden moves.}
	\label{Fig:forbidden}
\end{figure}

There are two ways in which to interpret the ill-defined nature of the connect sum operation on virtual knots. The first is in a diagrammatic manner: no longer can one area of the diagram be freely moved over all others, due to presence of the forbidden moves. These are moves on diagrams, depicted in \Cref{Fig:forbidden}, which do not follow from the virtual Reiedemeister moves (in fact, they can be used to unknot any virtual knot \cite{Nelson2001}). Classically, Reidemeister moves commute, in a certain sense, with handle addition: for example, let \( D_1 \) and \( D_2 \) be classical unknot diagrams. Then \( D_2 \) can be treated as a small neighbourhood of \( D_1 \# D_2 \) and slid under (or over) the rest of the diagram. Thus the sequence of Reidemeister moves which takes \( D_1 \) to the crossingless unknot diagram can be replicated on \( D_1 \# D_2 \), taking it to \( D_2 \), which is itself an unknot diagram. Nontrivial diagrams are treated similarly. Virtually, however, this cannot be replicated as areas of a diagram cannot always be moved across others.

The second, deeper, interpretation is as a consequence of the higher-dimensional topological information constituting a virtual knot: as mentioned above, a virtual knot is an equivalence class of embeddings of \( S^1 \) into thickened closed orientable genus \(g\) surfaces, up to isotopy of the thickened surface and handle stabilisations of the (unthickened) surface \cite{Kauffman1998}. Not only does a virtual knot depend on how the copy of \(S^1\) is knotted about itself but also on how it is `knotted' about the topology of the thickened genus \(g\) surface into which it is embedded.

In this light we see that the connect sum operation is not only a \(1\)-dimensional \(1\)-handle addition between the copies of \(S^1\), but that it also induces a \(3\)-dimensional \(1\)-handle addition on the thickened genus \(g\) surfaces\footnote{it is possible for the connect sum operation not to induce a \(3\)-dimensional handle addition but a slightly more complicated operation. We refer the reader to \cite[page 41, Fig. \(2.7\)]{Manturov2013}.}. This contrasts with the classical case in which both copies of \(S^1\) can be contained in one copy of \(S^3\) and only a \(1\)-dimensional \(1\)-handle need be added. Different choices of the disc \(B\) (as in \Cref{Def:csum}) correspond to different choices of \(3\)-dimensional handles. The author suspects that the dependence of the connect sum operation on both the diagrams involved and the choice of \( B \) is inherited ultimately from the non-triviality of the fundamental group of a genus \(g\) surface.

A novel manifestation of this ill-definedness is that there exist non-trivial virtual knots which are connect sums of a pair of trivial virtual knots. The first example of this is given by Kishino's knot \cite{Kishino2004} as depicted in \Cref{Fig:kknot}. Doubled Khovanov homology yields a condition on a virtual knot being a connect sum of two trivial diagrams.

\begin{figure}
	\includegraphics[scale=0.75]{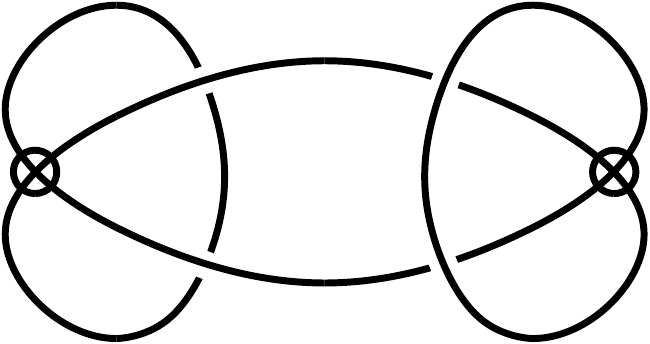}
	\caption{Kishino's knot}
	\label{Fig:kknot}
\end{figure}

\begin{theorem}
	\label{Thm:unknotcondition}
	Let \( K \) be a virtual knot which is a connect sum of two trivial knots. Then \( \dkh ( K ) = \dkh \left( \raisebox{-1.75pt}{\includegraphics[scale=0.3]{unknotflat.pdf}} \right)\). 
\end{theorem}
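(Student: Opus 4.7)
The plan is to exploit the chain homotopy equivalences \( \cdkh ( D_i ) \simeq \cdkh \left( \raisebox{-1.75pt}{\includegraphics[scale=0.3]{unknotflat.pdf}} \right) \) guaranteed by \Cref{Thm:invariance} for the unknot diagrams \( D_1, D_2 \) used in the connect sum \( K = D_1 \#_B D_2 \), and to transfer these equivalences into the larger chain complex \( \cdkh ( D_1 \# D_2 ) \).

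First, I would analyse the combinatorial structure of the cube of smoothings of \( D_1 \#_B D_2 \). Since every classical crossing of \( D_1 \# D_2 \) lies either in \( D_1 \) or in \( D_2 \), each smoothing of \( D_1 \# D_2 \) is determined by a pair \( (s_1, s_2) \) of smoothings of \( D_1 \) and \( D_2 \); moreover, the cycles of the combined smoothing are precisely the cycles of \( s_1 \) together with the cycles of \( s_2 \), except that the two cycles passing through the connect sum site (one from each \( s_i \)) are amalgamated into a single \emph{bridge cycle}. The components of the differential on \( \cdkh ( D_1 \# D_2 ) \) therefore decompose into contributions from crossings of \( D_1 \) and from crossings of \( D_2 \), each acting on the cycles of the corresponding diagram (possibly on the bridge cycle).

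Next, I would replay the chain-level Bar-Natan Gauss eliminations used in the proof of \Cref{Thm:invariance} (via \cite[Lemma \(3.7\)]{Bar-Natan2002} and \cite[Lemma \(3.2\)]{Bar-Natan2014}) that reduce \( \cdkh ( D_1 ) \) to \( \cdkh \left( \raisebox{-1.75pt}{\includegraphics[scale=0.3]{unknotflat.pdf}} \right) \), now applied to the \( D_1 \)-portion of \( \cdkh ( D_1 \# D_2 ) \). Each such elimination cancels an isomorphic pair of generators on a small sub-cube of \( D_1 \); because the components \( m \), \( \Delta \), and \( \eta \) of the differential each act on single tensor factors corresponding to specific cycles, these cancellations lift to valid chain-level cancellations inside \( \cdkh ( D_1 \# D_2 ) \) even when one of the cancelled tensor factors is the bridge cycle (the \( D_2 \)-factors ride along unchanged). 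This reduces \( \cdkh ( D_1 \# D_2 ) \) to \( \cdkh \left( \raisebox{-1.75pt}{\includegraphics[scale=0.3]{unknotflat.pdf}} \# D_2 \right) \). Applying the same procedure to \( D_2 \) yields \( \cdkh \left( \raisebox{-1.75pt}{\includegraphics[scale=0.3]{unknotflat.pdf}} \# \raisebox{-1.75pt}{\includegraphics[scale=0.3]{unknotflat.pdf}} \right) = \cdkh \left( \raisebox{-1.75pt}{\includegraphics[scale=0.3]{unknotflat.pdf}} \right) \), since the connect sum of two crossingless unknot diagrams is itself a crossingless unknot diagram, and hence \( \dkh ( K ) = \dkh \left( \raisebox{-1.75pt}{\includegraphics[scale=0.3]{unknotflat.pdf}} \right) \).

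The hard part will be justifying that the local Gauss eliminations, valid in isolation for \( \cdkh ( D_i ) \), remain valid inside \( \cdkh ( D_1 \# D_2 ) \), where the bridge cycle couples the \( D_1 \)-portion to the \( D_2 \)-portion. Concretely, this requires a tangle-level version of the doubled Khovanov invariance proof: cut \( D_i \) at the connect sum site to obtain a \( (1,1) \)-tangle \( T_i \), and verify that each Bar-Natan cancellation used in reducing \( \cdkh ( D_i ) \) lifts to a chain homotopy equivalence of tangle complexes. The key subtlety is that the \( \eta \) map can act across the bridge cycle, so one must confirm that the relevant cancellations are compatible with the upper/lower summand structure inherited through the bridge; establishing this carefully is the technical heart of the proof.
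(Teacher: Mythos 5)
Your proposal takes a genuinely different route from the paper. The paper defines a reduced theory \( \mathcal{H} ( L ) \) as the homology of the subcomplex \( \mathcal{C} ( L ) \) on which the marked cycle carries \( \vulm \), shows directly that \( \mathcal{C} ( D_1 \# D_2 ) \cong \mathcal{C} ( D_1 \sqcup D_2 ) \lbrace 1 \rbrace \) (a hands-on isomorphism at the bridge), deduces \( \mathcal{H} ( K ) \cong \mathcal{H} \) of the unknot, and then recovers the unreduced statement from the short exact sequence \( 0 \to \mathcal{C} ( K ) \to \cdkh ( K ) \to \mathcal{C} ( K ) \lbrace 2 \rbrace \to 0 \) supplied by \Cref{Lem:reducedrelation}, together with a rank-and-support argument that forces the resulting exact triangle to split. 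You instead attempt a direct chain-level reduction, lifting the Gauss eliminations that realize \( \cdkh ( D_1 ) \simeq \cdkh \) of the unknot into the full complex \( \cdkh ( D_1 \# D_2 ) \).

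You have correctly flagged the lifting step as the technical heart, but I believe the obstruction there is more serious than a bookkeeping subtlety at the bridge cycle, and the gap is genuine. The chain space at a smoothing with \( m \) cycles is \( ( \mathcal{A}^{\otimes m} ) \oplus ( \mathcal{A}^{\otimes m} ) \lbrace -1 \rbrace \), \emph{not} \( ( \mathcal{A} \oplus \mathcal{A} \lbrace -1 \rbrace )^{\otimes m} \): the upper/lower label lives on the whole tensor string rather than on each cycle factor. Hence \( \cdkh ( D_1 \# D_2 ) \) is not a tensor (or planar-algebra gluing) of tangle complexes for the two sides, and the \( \eta \) map, which exchanges the upper and lower summands, couples the \( D_1 \)-factors to the \( D_2 \)-factors even when applied to a cycle of \( D_1 \) entirely disjoint from the bridge. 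So the claim that ``the \( D_2 \)-factors ride along unchanged'' fails precisely for the component of the differential that distinguishes the doubled theory from the classical one, and the zigzag corrections produced by Gauss elimination on the \( D_1 \)-side can mix the two sides through the shared upper/lower factor without any guarantee that the surviving complex is \( \cdkh \) of an unknot joined to \( D_2 \). The Remark after \Cref{Def:cdkh} makes this structural: \( ( \mathcal{A}, m, \Delta, \eta ) \) is not an extended Frobenius algebra because \( \eta \) is not an \( \mathcal{R} \)-module map, and that is exactly the structure a Bar-Natan--style tangle/locality argument would need. The remark closing \Cref{Subsec:trivialdiagrams}, that \( \dkh ( K ) = \mathcal{H} ( K ) \oplus \mathcal{H} ( K ) \lbrace 2 \rbrace \) already fails for virtual knot \( 2.1 \), is further evidence that there is no simple chain-level gluing for the unreduced theory; the paper's reduced route is designed precisely to sidestep this, by freezing the marked cycle at \( v_- \) where the bridge interaction is tame, and then pulling the unreduced conclusion out of the exact triangle.
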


In order prove to \Cref{Thm:unknotcondition} we shall define a reduction of doubled Khovanov homology, in direct analogy to the classical case \cite{Khovanov1999,Shumakovitch2011a}.

\begin{definition}[Reduced doubled Khovanov homology]
	Let \( L \) be an oriented virtual link diagram with a marked point on each component (away from the crossings of \(L\)). Distribute these marked points across the cube of smoothings so that each smoothing of \( L \) contains \( | L | \) marked points. Define \( \mathcal{C} ( L ) \) to be the chain subcomplex of \( \cdkh ( L ) \) spanned by those states in which all the marked cycles are decorated with either \( \vum \) or \( \vlm \) (all the marked cycles are decorated with the same algebra element). That \( \mathcal{C} \) is a subcomplex is evident from \Cref{Eq:etamap,Eq:diffcomp} (it is also graded).

	Let \( \mathcal{H} ( L ) \) denote the homology of \( \mathcal{C} ( L ) \). We refer to \( \mathcal{H} ( L ) \) as the \emph{reduced doubled Khovanov homology} of \( L \). \CloseDef
\end{definition}

The proof of invariance of \( \mathcal{H} ( L ) \) under virtual Reidemeister moves follows as in the classical case. Invariance under the choice of basepoints follows similarly.

\begin{lemma}
	\label{Lem:reducedrelation}
	Let \( L \) be a virtual link diagram. Then \( \cdkh ( L ) / \mathcal{C} ( L ) \cong \mathcal{C} ( L ) \lbrace 2 \rbrace \).
\end{lemma}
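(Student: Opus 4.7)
The plan is to exhibit an explicit bigraded chain isomorphism $\phi \colon \mathcal{C}(L)\lbrace 2 \rbrace \to \cdkh(L)/\mathcal{C}(L)$ defined on basis states by swapping $\vum$ for $\vup$ (and $\vlm$ for $\vlp$) on every marked cycle and leaving all other decorations unchanged. Since a basis state of $\mathcal{C}(L)$ is precisely one whose marked cycles all carry a $v_{-}$, the image of $\phi$ lies in a canonical transversal to $\mathcal{C}(L)$ inside $\cdkh(L)$ and therefore represents a well-defined class in the quotient.

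First I would verify that $\phi$ is an isomorphism of bigraded Abelian groups. Bijectivity on basis states is immediate; the inverse simply replaces each $\vulp$ on a marked cycle by $\vulm$. The homological grading is preserved trivially, and since $p(\vulp) - p(\vulm) = 2$ in both the upper and lower cases, swapping $\vulm$ for $\vulp$ on the (for a virtual knot, unique) marked cycle in each smoothing raises quantum grading by exactly $2$, matching the $\lbrace 2 \rbrace$ shift.

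The substantive step is to verify $\phi \circ d \equiv d \circ \phi \pmod{\mathcal{C}(L)}$. Decomposing the differential as a signed sum of edgewise components of types $m$, $\Delta$, and $\eta$ acting on single cycles (and as the identity on the rest), the only non-trivial cases are those in which the acting cycle is marked. Here I would simply read off \Cref{Eq:diffcomp,Eq:etamap}: for a merge with a non-marked partner the discrepancy $m(\vulm \otimes \vulp) = \vulm$ lies in $\mathcal{C}(L)$; for a split of the marked cycle, the two summands of $\Delta(\vulp) = \vulp \otimes \vulm + \vulm \otimes \vulp$ separate cleanly so that whichever factor now contains the marked point receives exactly one summand with $v_{-}$ on it (landing in $\mathcal{C}(L)$) and one with $v_{+}$ (equal to $\phi(\Delta(\vulm))$); and for $\eta$, the identities $\eta(\vup) = \vlp$, $\eta(\vum) = \vlm$, $\eta(\vlm) = 0$, and $\eta(\vlp) = 2\vum \in \mathcal{C}(L)$ together yield the required congruence.

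The main obstacle I anticipate is the $\eta$ case, since $\eta$ is the only component which is not an $\mathcal{R}$-module homomorphism and which shuttles states between the upper and lower summands, making the bookkeeping less uniform than for $m$ and $\Delta$. However, the crucial point that $\eta$ sends the quotient representative $\vlp$ directly into $\mathcal{C}(L)$ via the factor of $2$ makes the congruence modulo $\mathcal{C}(L)$ drop out immediately; what remains is a routine enumeration of cases.
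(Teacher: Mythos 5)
Your proof is correct and takes essentially the same approach as the paper: you define the inverse of the paper's map $g\colon\cdkh(K)/\mathcal{C}(K)\to\mathcal{C}(K)$, swapping $v_\mp^{\text{u/l}}$ on marked cycles, and verify the chain-map congruence case by case, including the key observation that $\eta(\vlp)=2\vum$ lands inside $\mathcal{C}(L)$ so the factor of $2$ is harmless modulo the subcomplex. The paper states this same point more tersely, deferring the case analysis to a schematic figure.
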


\begin{proof}
We prove the statement for a virtual knot diagram \( K \) (link diagrams follow essentially identically). Let \( \cdkh ( K ) / \mathcal{C} ( K ) = \mathcal{C}' ( K ) \). The isomorphism \( g : \mathcal{C}' ( K ) \rightarrow \mathcal{C} ( K ) \) is straightforward to define. Given a representative, \( x \), of an element of \( \mathcal{C}' ( K ) \) the marked cycle must be decorated with either \( \vup \) or \( \vlp \) i.e.\ we must have \( x = x_1 \otimes x_2 \otimes \ldots \otimes \vulp \otimes \ldots x_n \). Define
\begin{equation*}
	\begin{aligned}
		g ( x_1 \otimes x_2 \otimes \ldots \otimes \vulp \otimes \ldots x_n ) &= x_1 \otimes x_2 \otimes \ldots \otimes \vulm \otimes \ldots x_n \\
		g^{-1} ( x_1 \otimes x_2 \otimes \ldots \otimes \vulm \otimes \ldots x_n ) &= _1 \otimes x_2 \otimes \ldots \otimes \vulp \otimes \ldots x_n.
	\end{aligned}
\end{equation*}

That \( g \) is well defined is clear and that it is a chain map is apparent when one considers the schematic given in \Cref{Fig:schematic}: the only issue that could arise is due to the factor of \( 2 \) in the \( \eta \) map, the position of which ensures that it does not cause any trouble. That the degree of \( g \) is \( -2 \) is obvious.
\begin{figure}
	\includegraphics[scale=0.8]{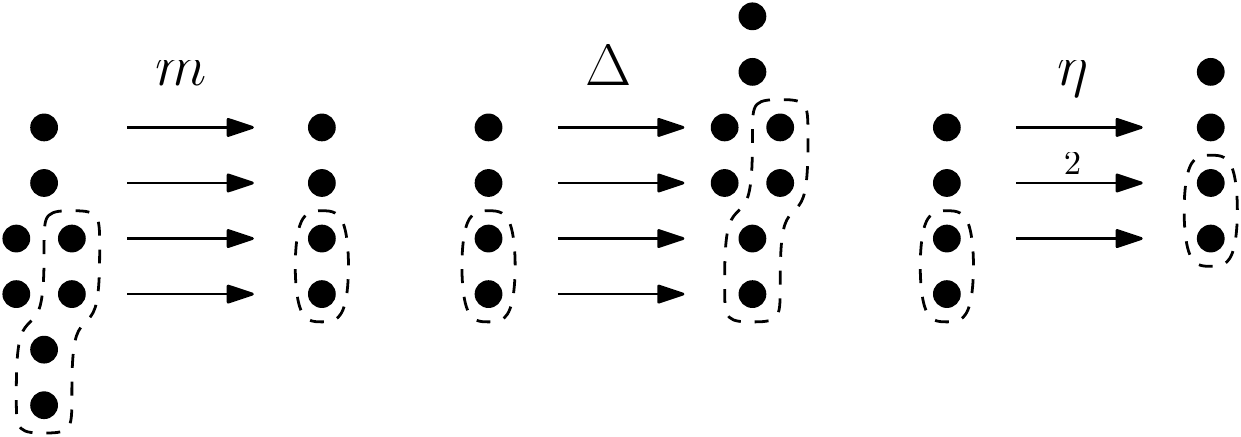}
	\caption{A schematic for the interaction between the map \( g \) and the differential. The enclosed dots depict generators of \( \mathcal{C} ( K ) \); \( g \) sends a dot outside an enclosure to the corresponding dot inside.}
	\label{Fig:schematic}
\end{figure}
\end{proof}

\begin{proof}[Proof of \Cref{Thm:unknotcondition} ]Let \( K \) be as in the proposition. By an abuse of notation let \( K = D_1 \# D_2 \) be the diagram which is the result of a connect sum between \( D_1 \) and \( D_2 \), both of which are unknot diagrams. We are free to pick marked points on the diagrams \( K \) and \( D_1 \sqcup D_2 \) so that the situation is as in \Cref{Fig:reduceddiagram}, from which we observe that there is a chain complex isomorphism from \( f : \mathcal{C} ( K ) \rightarrow \mathcal{C} ( D_1 \sqcup D_2 ) \). The isomorphism is defined as follows
\begin{equation*}
	\begin{aligned}
		f( x_1 \otimes x_2 \otimes \ldots \otimes \vulm \otimes \ldots \otimes x_n ) &=  x_1 \otimes x_2 \otimes \ldots \otimes \vulm \otimes \vulm \otimes \ldots \otimes x_n \\
		f^{-1}( x_1 \otimes x_2 \otimes \ldots \otimes \vulm \otimes \vulm \otimes \ldots \otimes x_n ) &= x_1 \otimes x_2 \otimes \ldots \otimes \vulm \otimes \ldots \otimes x_n
	\end{aligned}
\end{equation*}
where \( \vulm \) and \( \vulm \otimes \vulm \) decorate the marked cycles. That \( f \) is a chain map follows from the observation that if \( \sg^{\text{u/l}} \) is a state of \( \mathcal{C} ( K ) \) then \( f ( \sg^{\text{u/l}} ) \) has the same incoming and outgoing differentials. It is clear that \( f \) is graded of degree \( -1 \).

We have established the isomorphism \( \mathcal{C} ( K ) \cong \mathcal{C} ( D_1 \sqcup D_2 ) \lbrace 1 \rbrace \); further, there is a chain homotopy equivalence between \( \mathcal{C} ( D_1 \sqcup D_2 ) \) and \( \mathcal{C} \left( \raisebox{-2.5pt}{\includegraphics[scale=0.3]{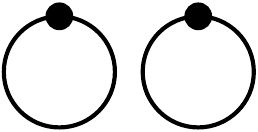}} \right) \) as \( D_1 \) and \( D_2\) are unknot diagrams. It is easy to see that \( \mathcal{C} \left( \raisebox{-2.5pt}{\includegraphics[scale=0.3]{unlink.pdf}} \right) = \mathcal{C} \left( \raisebox{-2.5pt}{\includegraphics[scale=0.3]{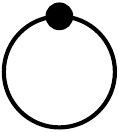}} \right) \lbrace -1 \rbrace \) so that
\begin{equation*}
	\label{Eq:ccr}
	\mathcal{C} ( K ) \cong \mathcal{C} ( D_1 \sqcup D_2 ) \lbrace 1 \rbrace \simeq \mathcal{C} \left( \raisebox{-2.5pt}{\includegraphics[scale=0.3]{unlink.pdf}} \right) \lbrace 1 \rbrace \simeq \left( \mathcal{C} \left( \raisebox{-2.5pt}{\includegraphics[scale=0.3]{unknotmarked.pdf}} \right) \lbrace -1 \rbrace \right) \lbrace 1 \rbrace = \mathcal{C} \left( \raisebox{-2.5pt}{\includegraphics[scale=0.3]{unknotmarked.pdf}} \right)
\end{equation*}
and
\begin{equation}
\label{Eq:hr}
\mathcal{H} ( K ) = \mathcal{H} \left( \raisebox{-2.5pt}{\includegraphics[scale=0.3]{unknotmarked.pdf}} \right).
\end{equation}
In addition, there is an exact triangle
\begin{equation}
\label{Eq:exacttri}
\begin{tikzpicture}[baseline=(current  bounding  box.center)]
\matrix (m) [matrix of math nodes,row sep=30pt,column sep=2.5pt]
{
	\mathcal{H} ( K ) && \dkh( K ) \\
	&\mathcal{H} ( K ) \lbrace 2 \rbrace &\\};
\path[->]
(m-1-1) edge (m-1-3)
(m-2-2) edge (m-1-1)
(m-1-3) edge (m-2-2)  
;
\end{tikzpicture}
\end{equation}
which is arrived at via the short exact sequence
\begin{equation*}
	\begin{tikzpicture}[roundnode/.style={}]
		\node[roundnode] (s0)at (-4,0)  {\( 0 \)
			};
		
		\node[roundnode] (s1)at (-2.5,0)  {\( \mathcal{C} ( K ) \)
			};
		
		\node[roundnode] (s2)at (0.25,0)  {\( \cdkh ( K ) \)
			};
		
		\node[roundnode] (s3)at (3.5,0)  {\( \cdkh ( K ) / \mathcal{C} ( K ) \)
			};
	
		\node[roundnode] (s4)at (6,0)  {\( 0, \)
			};
		
		\draw[->] (s0)--(s1) ;
		
		\draw[->] (s1)--(s2) ;
		
		\draw[->] (s2)--(s3) ;
		
		\draw[->] (s3)--(s4) ;
	\end{tikzpicture}
\end{equation*}
\Cref{Lem:reducedrelation} and the observation that \Cref{Eq:hr} implies that \( \mathcal{H} ( K ) \) is supported in homological degree \( 0 \). Also by \Cref{Eq:hr} we obtain \( \text{rank} ( \mathcal{H} ( K ) )  = 2 \) so that the triangle splits and
\begin{equation*}
	\dkh ( K ) = \mathcal{H} ( K ) \oplus \mathcal{H} ( K ) \lbrace 2 \rbrace = \mathcal{H} \left( \raisebox{-2.5pt}{\includegraphics[scale=0.3]{unknotmarked.pdf}} \right) \oplus \mathcal{H} \left( \raisebox{-2.5pt}{\includegraphics[scale=0.3]{unknotmarked.pdf}} \right) \lbrace 2 \rbrace = \dkh \left( \raisebox{-1.75pt}{\includegraphics[scale=0.3]{unknotflat.pdf}} \right).
\end{equation*}
\begin{figure}
	\includegraphics[scale=1]{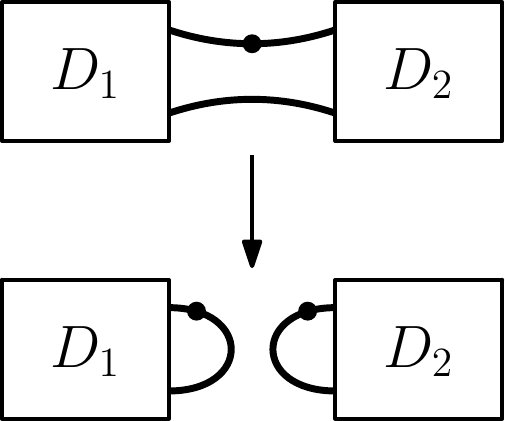}
	\caption{Marked diagrams of \( K \) (above) and \( D_1 \sqcup D_2 \) (below).}
	\label{Fig:reduceddiagram}
\end{figure}
\end{proof}

\begin{proposition}
	\label{Prop:csumred}
	Let \( K \) and \( K' \) be virtual knots which are connect sums of the same pair of initial virtual knots \( J \) and \( J' \): that is, there exist diagrams \( D_1 \) and \( D_2 \) of \( J \) and \( D_3 \) and \( D_4 \) of \( J' \) such that \( K = D_1 \# D_3 \) and \( K' = D_2 \# D_4 \). Then \( \mathcal{C} ( K ) \simeq \mathcal{C} ( K' ) \).
\end{proposition}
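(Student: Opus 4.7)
The plan is to adapt the key intermediate step in the proof of \Cref{Thm:unknotcondition} and then appeal to Reidemeister invariance of $\mathcal{C}$. First, I would place marked points on $K$ and $K'$ immediately adjacent to their respective connect sum sites, so that in every smoothing the marked cycle passes through the connect sum 1-handle, replicating the configuration of \Cref{Fig:reduceddiagram}.

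With this choice of basepoints, I claim the chain complex isomorphism $f$ constructed in the proof of \Cref{Thm:unknotcondition} applies verbatim to yield
\begin{equation*}
\mathcal{C} ( K ) \cong \mathcal{C} ( D_1 \sqcup D_3 ) \lbrace 1 \rbrace
\quad\text{and}\quad
\mathcal{C} ( K' ) \cong \mathcal{C} ( D_2 \sqcup D_4 ) \lbrace 1 \rbrace.
\end{equation*}
The crucial observation is that the definition of $f$ -- doubling the $\vulm$ decoration on the marked cycle at the connect sum site -- and the verification that it commutes with the differential both depend only on the local structure at the connect sum. No property of the summands (such as being unknotted) is used in the original argument, so the same map gives an isomorphism in our more general setting.

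Next, since $D_1$ and $D_2$ are both diagrams of $J$ while $D_3$ and $D_4$ are both diagrams of $J'$, the split diagrams $D_1 \sqcup D_3$ and $D_2 \sqcup D_4$ both represent the two-component split virtual link $J \sqcup J'$. By invariance of reduced doubled Khovanov homology under virtual Reidemeister moves (noted just after the definition of $\mathcal{H}$) and under the choice of basepoints, I obtain a chain homotopy equivalence
\begin{equation*}
\mathcal{C} ( D_1 \sqcup D_3 ) \simeq \mathcal{C} ( D_2 \sqcup D_4 ).
\end{equation*}
Applying the shift $\lbrace 1 \rbrace$ to both sides and composing with the two isomorphisms above yields the desired equivalence $\mathcal{C} ( K ) \simeq \mathcal{C} ( K' )$.

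The only point requiring attention is confirming that $f$ is a chain map in this more general setting, but a direct inspection of \Cref{Eq:etamap,Eq:diffcomp} shows that the relevant commuting squares are local at the marked cycle through the connect sum, and so the verification is identical to that in \Cref{Thm:unknotcondition}. No further obstacle is anticipated.
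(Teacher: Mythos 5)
Your proof is correct and follows essentially the same route as the paper: both place basepoints at the connect sum site, reuse the chain isomorphism $f$ from the proof of Theorem~\ref{Thm:unknotcondition} to relate $\mathcal{C}(K)$ and $\mathcal{C}(K')$ to the reduced complexes of $D_1\sqcup D_3$ and $D_2\sqcup D_4$, and then invoke invariance of $\mathcal{C}$ under virtual Reidemeister moves and basepoint choice to bridge the two split diagrams of $J\sqcup J'$. You are in fact slightly more careful than the paper in recording the $\lbrace 1\rbrace$ grading shift (which cancels on both sides) and in spelling out that $f$ is local at the connect sum so that the unknottedness hypothesis of Theorem~\ref{Thm:unknotcondition} plays no role.
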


\begin{proof}
	We have \( \mathcal{C} ( K ) \cong \mathcal{C} ( D_1 \sqcup D_3 ) \simeq \mathcal{C} ( D_2 \sqcup D_4 ) \cong \mathcal{C} ( K' ) \), as \( D_1 \sqcup D_3 \) and \( D_2 \sqcup D_4 \) are both diagrams of \( J \sqcup J' \) and the isomorphisms are essentially identical to that given in the proof of \Cref{Thm:unknotcondition}.
\end{proof}

\begin{remark}
	Of course, there is still a pair of short exact sequences
	\begin{equation*}
		\begin{tikzpicture}[roundnode/.style={}]
		\node[roundnode] (s0)at (-4,0)  {\( 0 \)
		};
		
		\node[roundnode] (s1)at (-2.5,0)  {\( \mathcal{C} ( K ) \)
		};
		
		\node[roundnode] (s2)at (0.35,0)  {\( \cdkh ( K ) \)
		};
		
		\node[roundnode] (s3)at (3.5,0)  {\( \mathcal{C} ( K ) \lbrace 2 \rbrace \)
		};
		
		\node[roundnode] (s4)at (5.5,0)  {\( 0, \)
		};
	
		\node[roundnode] (t0)at (-4,-1.5)  {\( 0 \)
		};
		
		\node[roundnode] (t1)at (-2.5,-1.5)  {\( \mathcal{C} ( K' ) \)
		};
		
		\node[roundnode] (t2)at (0.35,-1.5)  {\( \cdkh ( K' ) \)
		};
		
		\node[roundnode] (t3)at (3.5,-1.5)  {\( \mathcal{C} ( K' )\lbrace 2 \rbrace \)
		};
		
		\node[roundnode] (t4)at (5.55,-1.5)  {\( 0, \)
		};
		
		\draw[->] (s0)--(s1) ;
		
		\draw[->] (s1)--(s2) ;
		
		\draw[->] (s2)--(s3) ;
		
		\draw[->] (s3)--(s4) ;
		
		\draw[->] (t0)--(t1) ;
		
		\draw[->] (t1)--(t2) ;
		
		\draw[->] (t2)--(t3) ;
		
		\draw[->] (t3)--(t4) ;
		
		\draw[->] (s1)--(t1) node[above,pos=0.5,rotate=-90]{\( \simeq \)			
		};
				
		\draw[->] (s3)--(t3) node[above,pos=0.5,rotate=-90]{\( \simeq \)			
		};
		
		\end{tikzpicture}
	\end{equation*}
but the associated long exact sequences no longer split. Indeed, it is not true in general that
\begin{equation*}
	\dkh ( K ) = \mathcal{H} ( K ) \oplus \mathcal{H} ( K ) \lbrace 2 \rbrace,
\end{equation*}
the aforementioned virtual knot \( 2.1 \) provides a counterexample.
\end{remark}

\bibliographystyle{plain}
\bibliography{library}

\end{document}